\theoremstyle{plain}
\newtheorem{theorem}{Theorem}[section]
\newtheorem{proposition}[theorem]{Proposition}
\newtheorem{lemma}[theorem]{Lemma}
\newtheorem{corollary}[theorem]{Corollary}
\theoremstyle{definition}
\newtheorem{example}[theorem]{Example}
\newtheorem{definition}[theorem]{Definition}
\theoremstyle{remark}
\newtheorem{remark}[theorem]{Remark}
\newcommand\bfGamma{\mathbf{\Gamma}}
\newcommand\bHen{\mathbf{H}^{e\prime}}
\newcommand\bHepi{\mathbf{H}^{e,\varpi}}
\newcommand\bHepiz{\mathbf{H}^{e,\varpi}_{0}}
\newcommand\Hen{\mathrm{H}^{e\prime}}
\newcommand\Hepi{\mathrm{H}^{e,\varpi}}
\newcommand\Hepiz{\mathrm{H}^{e,\varpi}_{0}}
\newcommand\HepizE{\mathrm{H}^{e,\varpi}_{0,\exists}}
\newcommand\F{\mathbf{F}}
\newcommand\rmF{\mathrm{F}}
\newcommand\VF{\mathbf{VF}}
\newcommand\rmVF{\mathrm{VF}}
\newcommand\PAC{\mathbf{PAC}}
\newcommand\rmPAC{\mathrm{PAC}}
\renewcommand\int{\mathbf{int}}
\newcommand{\mred}{\leq_{\rm m}}
\newcommand{\meq}{\simeq_{\rm m}}
\newcommand{\Tred}{\leq_{\rm T}}
\newcommand{\Teq}{\simeq_{\rm T}}
\newcommand{\bridge}{.}
\newcommand{\arch}{|}
\newcommand\res{\mathrm{res}}
\newcommand{\verum}{\top}
\newcommand{\falsum}{\perp}
\newcommand\axsurj{\hyperref[axsurj]{{\rm(sur)}}}
\newcommand\axmono{\hyperref[axmono]{{\rm(mon)}}}
\newcommand\axwmono{\hyperref[axwmono]{{\rm(wm)}}}
\newcommand\Rfour{\hyperref[R4]{{\rm(R4)}}}
\newcommand\RfourF[1]{\hyperref[R4_F]{{\rm(R4)$_{#1}$}}}
\numberwithin{equation}{section}
\title[]{{\large Interpretations of syntactic fragments of theories of fields}}
\author{Sylvy Anscombe and Arno Fehm}
\address{Universit\'{e} Paris Cit\'{e} and Sorbonne Universit\'{e}, CNRS, IMJ-PRG, F-75013 Paris, France}
\email{sylvy.anscombe@imj-prg.fr}
\address{Institut f\"{u}r Algebra, Technische Universit\"{a}t Dresden, 01062 Dresden, Germany}
\email{arno.fehm@tu-dresden.de}
\begin{document}
 \begin{abstract}
We set up general machinery to study interpretations of fragments of theories.
We then apply this to existential fragments of theories of fields,
and especially of henselian valued fields.
As an application we prove many-one reductions
between various existential theories of fields. 
In particular we exhibit several theories of fields many-one equivalent to the existential theory of $\mathbb{Q}$.
 \end{abstract}
\maketitle

\section{Introduction}

\noindent
The notion of an {\em interpretation} of one structure $N$ in a language $\mathfrak{L}_1$ in another structure $M$ in a language $\mathfrak{L}_2$ is well established in contemporary model theory. 
Usually this refers to an isomorphism between $N$ and a definable quotient inside $M$, cf.~\cite{Hodges_long,Hodges,Poizat,Marker}.
For example, every field interprets every linear algebraic group over it (even definably so, i.e.~without the need to pass to a quotient), and every valued field interprets its value group and its residue field. 
The main importance of this notion of interpretation derives from the fact that many properties (like stability, or decidability) are transferred from the interpreting structure $M$ to the interpreted structure $N$.
The purpose of this work
is to
study interpretations of {\em theories} (rather than structures) with a focus on fragments of languages, for example existential fragments,
with our principal aim to transfer decidability statements.

The first point to note here is that there seems to be less agreement in the literature on what an interpretation of an $\mathfrak{L}_1$-theory $T_1$ in an $\mathfrak{L}_2$-theory $T_2$ is.
Older definitions seem to be of little use nowadays, as Hodges remarks in his discussion in \cite[\S5.4]{Hodges_long}.
One possible notion relates to that of a {\em uniform interpretation} $\Gamma$ of the models of $T_1$ in the models of $T_2$,
in the sense that 
there is one $\mathfrak{L}_2$-formula $\delta_\Gamma$ and
for every unnested atomic $\mathfrak{L}_1$-formula $\varphi$
a corresponding $\mathfrak{L}_2$-formula $\varphi_\Gamma$
such that for every $M\models T_2$ there exists $N\models T_1$
and a surjective map 
$f_\Gamma:\delta_\Gamma(M)\rightarrow N$ such that 
$$
 N\models\varphi(f_\Gamma(a_1),\dots,f_\Gamma(a_m))
  \;\Longleftrightarrow\;
  M\models\varphi_\Gamma(a_1,\dots,a_m),\quad a_1,\dots,a_m\in \delta_\Gamma(M),
$$
so that $f_\Gamma$ induces an $\mathfrak{L}_{1}$-isomorphism $\delta_\Gamma(M)/E_\Gamma\cong N$
(where $E_\Gamma$ is the equivalence relation defined 
by the $\mathfrak{L}_2$-formula assigned to the $\mathfrak{L}_1$-formula $x_1=x_2$),
and thus an interpretation of $N$ in $M$.
Many variants of this are discussed in the above mentioned \cite[\S5.4]{Hodges_long}.

Such a uniform interpretation yields in particular the following two things:
A class function $\sigma:{\rm Mod}(T_2)\rightarrow{\rm Mod}(T_1)$ assigning to every model $M$ of $T_2$ the model $\sigma M=\delta_\Gamma(M)/E_\Gamma$ of $T_1$ that the given formulas produce,
and a translation map 
$\iota:{\rm Sent}(\mathfrak{L}_1)\rightarrow{\rm Sent}(\mathfrak{L}_2)$
from the set of $\mathfrak{L}_1$-sentences to
the set of $\mathfrak{L}_2$-sentences
which are related via
\begin{equation}\label{eqn:sigmaiota}
 \sigma M\models\varphi \;\Longleftrightarrow\;  M\models\iota\varphi 
\end{equation}
for every $M\models T_2$ and every $\mathfrak{L}_1$-sentence $\varphi$.
If the structure map $\sigma$ is surjective (say up to isomorphism, or just up to elementary equivalence), this then gives in particular an interpretation of the theory $T_1$ in the theory $T_2$ in the sense that
\begin{equation}\label{eqn:T2T1iota}
    T_1\models\varphi \;\Longleftrightarrow\; T_2\models\iota\varphi. 
\end{equation}
In particular, one obtains (assuming moreover that the translation map $\iota$ is computable)
a Turing reduction, and more precisely even a many-one reduction from $T_1$ to $T_2$,
and in fact a whole family of reductions 
of $\mathfrak{L}_1$-theories extending $T_1$ to corresponding $\mathfrak{L}_2$-theories extending $T_2$.
We remark that there are important examples where a situation as in (\ref{eqn:sigmaiota}) arises without the existence of an actual interpretation of structures, like when $\sigma$ assigns to a field the inverse system of its absolute Galois group.

What this work does a bit more precisely is to study interpretations of theories with respect to a fragment, 
with the goal of obtaining translation maps as in (\ref{eqn:T2T1iota}) but now with $\varphi$ and $\iota\varphi$ in the relevant fragments. 
For example, when the formulas $\delta_\Gamma$, $\varphi_\Gamma$ in a uniform interpretation $\Gamma$ are all say quantifier-free, then the translation map $\iota$ will map existential sentences to existential sentences, thereby giving (many-one) reductions between existential theories,
see for example \cite{Pasten} for foundational work on interpretations in the context of existential theories.
Moreover, one obtains a monotonicity statement for in this case the existential fragment: 
\begin{equation}\label{eqn:mon}
M,M'\models T_2,{\rm Th}_\exists(M)\subseteq{\rm Th}_\exists(M')\;\Longrightarrow\; {\rm Th}_\exists(\sigma M)\subseteq{\rm Th}_\exists(\sigma M').
\end{equation}
We set up a general formalism to deal with such interpretations of fragments, which for applicability still works with a structure map $\sigma$ but accommodates for the fact that when requiring (\ref{eqn:sigmaiota}) only for certain sentences (like existential sentences), asking for an actual interpretation of the whole structure $\sigma M$ in $M$ is certainly way more than necessary, and in fact there are many more settings where a translation $\iota$ between fragments of theories arises without being induced by an interpretation of structures. 

One important such setting is that of valued fields, where as mentioned above every valued field interprets its residue field (in the sense of a uniform intepretation with $T_1$ the theory of fields and $T_2$ the theory of valued fields), but in general the residue field does not interpret the valued field; however, at least in the context of equicharacteristic henselian valued fields, the existential theory of the residue field determines the existential theory of the valued field (this is the main result of \cite{AF16}),
and in fact monotonely so in the sense of (\ref{eqn:mon}) with the arrow reversed,
which in turn gives an interpretation of the existential theory of the valued field in the existential theory of the residue field in the sense of (\ref{eqn:sigmaiota}) and (\ref{eqn:T2T1iota}),
with a translation map $\iota$ going in the other direction
(which is what we will call an elimination map).
So together one obtains something like a mutual interpretation of the existential theory of the valued field with the existential theory of the residue field, without the presence of a mutual interpretation of structures,
and our formalism is designed to deal with such situations.

After setting up this formalism in Section \ref{sec:bridges},
in Section \ref{section:existential_theories_of_fields} we apply it to existential theories of pseudo-algebraically closed fields and henselian valued fields.
 One of the applications to henselian valued fields that we obtain, using and extending \cite{AF16} and \cite{ADF22}, is the following
 (proven in Section \ref{proof:1.1}): 

\begin{theorem}\label{thm:introhens}
Let $R$ be any theory of fields in the language of rings.
\begin{enumerate}[$(a)$]
\item The set of existential sentences in the language of rings that hold in every model of $R$ is many-one equivalent
to the set of existential sentences in the language of valued fields that hold in every equicharacteristic henselian non-trivially valued field with residue field a model of $R$.

\item The same as (a) applies with "nontrivially valued field" replaced by "valued field with discrete value group and distinguished uniformizer", if consequence \Rfour\ of resolution of singularities from \cite{ADF22} holds.

\item Moreover, the same as (a) applies for every $n$ with
"existential sentence" both times replaced by
"finite conjunction of
existential prenex sentences each of which has at most $n$ existential quantifiers".
\end{enumerate}
\end{theorem}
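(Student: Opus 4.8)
The plan is to establish each direction of the many-one equivalence separately by producing computable translation maps between the two fragments, using the formalism of bridges and elimination maps set up in Section~\ref{sec:bridges}. For the easy direction --- reducing the existential theory of fields satisfying $R$ to the existential theory of the valued fields in question --- I would use the fact that every valued field interprets its residue field via a uniform interpretation in which the relevant formulas are quantifier-free (the residue ring localized at the valuation ideal, modulo that ideal). This interpretation induces, as explained in the introduction, a computable translation $\iota$ sending existential $\mathfrak{L}_{\rm ring}$-sentences to existential $\mathfrak{L}_{\rm VF}$-sentences, and the structure map $\sigma$ (residue field) is surjective onto all fields, in particular onto models of $R$, once one checks that every model of $R$ does occur as the residue field of an equicharacteristic henselian non-trivially valued field --- e.g.\ by forming the field of formal Laurent series, or a suitable coarsening thereof, which is elementary. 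This yields $R \models \varphi \Longleftrightarrow \widehat R \models \iota\varphi$ for the valued-field theory $\widehat R$, hence a many-one reduction in one direction.

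For the harder direction --- reducing the existential theory of the henselian valued fields to the existential theory of their residue fields --- I would invoke the main transfer result of \cite{AF16}: for equicharacteristic henselian valued fields the existential theory of the valued field is determined by, and depends monotonely on, the existential theory of the residue field, in the sense of \eqref{eqn:mon} with the arrow reversed. Packaged in the language of the paper, this gives an \emph{elimination map}, i.e.\ a computable $\iota$ translating existential $\mathfrak{L}_{\rm VF}$-sentences to existential $\mathfrak{L}_{\rm ring}$-sentences with $\sigma M \models \iota\varphi \Leftrightarrow M \models \varphi$ across all relevant $M$; composing with surjectivity of $\sigma$ onto $\mathrm{Mod}(R)$ upgrades this to a reduction of the valued-field theory to $R$ at the level of theories. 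The key technical point is that the elimination map extracted from \cite{AF16} is \emph{computable} and, crucially, \emph{controls quantifier complexity}: each existential block of the valued-field sentence is translated into an existential block over the residue field of bounded size. This is exactly what is needed for part~(c), and it is the part I expect to require the most care, since it means one cannot merely cite \cite{AF16} as a black box but must track through its proof (the Ax--Kochen--Ershov style back-and-forth / resultant arguments, and the explicit handling of the value group) how the number of existential quantifiers transforms, keeping it bounded by some function of $n$ and then closing the fragment under finite conjunctions.

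For part~(b), the non-trivial-valuation hypothesis is replaced by a discrete value group with a distinguished uniformizer $\varpi$, and here the transfer result needed is the extension of \cite{AF16} using consequence \Rfour\ of resolution of singularities, as developed in \cite{ADF22}; the same two-direction argument goes through verbatim once the corresponding elimination and interpretation maps are available in that setting, the only new input being that the relevant embedding/surjectivity lemmas (every model of $R$ is the residue field of such a valued field --- again one may take the $\varpi$-adically valued Laurent series field) and the \RfourF{} -conditional transfer are in place. Finally, for part~(c) across all three settings, I would verify that the elimination and interpretation maps, being given by uniform recursive schemes on unnested atomic formulas, send a prenex existential sentence with at most $n$ existential quantifiers to a finite conjunction of prenex existential sentences each with at most $g(n)$ existential quantifiers for an explicit $g$, and that restricting attention to these bounded fragments on both sides is compatible with the many-one reductions already constructed --- so the equivalence descends to each level of the existential hierarchy as claimed. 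The main obstacle throughout is the bookkeeping of quantifier complexity in the \cite{AF16}/\cite{ADF22} transfer, rather than any conceptual gap; everything else is an application of the Section~\ref{sec:bridges} machinery.
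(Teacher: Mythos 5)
Your treatment of parts (a) and (b) follows essentially the same route as the paper: the interpretation $\iota_{\mathbf{k}}$ coming from the (quantifier-free) residue-field interpretation, surjectivity of $\sigma$ via $k\mapsto (k(\!(t)\!),v_t)$, monotonicity from \cite{AF16} (respectively from \cite{ADF22} under \Rfour), and then the abstract machinery (Corollary \ref{lem:pattern}, instantiated in Corollary \ref{cor:henselian}(aII,cII)) to produce the two computable translations and hence the many-one equivalence.

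For part (c), however, your plan diverges from the paper's and contains a genuine gap. You propose to obtain the quantifier bound by tracking, through the proof of \cite{AF16}, how many existential quantifiers the elimination produces. But the transfer in \cite{AF16} is proved by embedding lemmas and compactness/saturation arguments; it is a semantic statement and does not come with an effective, quantifier-bounded syntactic translation that one could ``track''. The paper avoids this entirely: the computable elimination is always produced abstractly (Proposition \ref{prp:elimination}) by the separation lemma together with a search through proofs from $\bHen$ of equivalences $\psi\leftrightarrow\iota_{\mathbf{k}}\varphi$ with $\varphi$ ranging over the target fragment; restricting the search to $\mathrm{Sent}_{\exists_n}(\mathfrak{L}_{\rm ring})$ automatically yields an elimination landing in that fragment, and termination of the search is guaranteed by the mere \emph{existence} of an elimination there. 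What existence requires is the monotonicity property \axmono\ for the $\exists_n$-fragments, and that is the genuinely new mathematical content behind part (c) (Lemma \ref{lem:Hen_n_sat_axioms}): one reduces to substructures of $(K,v)$ generated by at most $n$ elements (Lemma \ref{lem:E}), and uses the Abhyankar inequality together with a separating transcendence base and the primitive element theorem to show that the relevant residue subfields are still generated by at most $n$ elements, so that the $\exists_n$-hypothesis on residue fields suffices; one then concludes with the full (unbounded) transfer of \cite{AF16} applied to henselizations. Your proposal does not identify this step, and the syntactic analysis you substitute for it is not supported by the source you cite.
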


In the following sections \ref{section:algorithms}-\ref{sec:final} we 
deal with interpretations in the context of
limits of theories, in particular the well-known phenomenon of viewing characteristic zero theories as limits of positive characteristic theories.
As a concrete example we obtain the following result
(proven in Section~\ref{proof:1.2}),
which lists various theories many-one equivalent to the existential theory of $\mathbb{Q}$, whose decidability is a famous open question also known under the name Hilbert's tenth problem for $\mathbb{Q}$, 
see for example \cite{Koenigsmann_survey, Poonen_survey}:

\begin{theorem}\label{thm:introQ}
The following theories are 
many-one equivalent:
\begin{enumerate}[$(a)$]
    \item The existential theory of $\mathbb{Q}$ in the language of rings.
    \item The existential theory of $\mathbb{Q}(\!(t)\!)$ in the language of rings.
    \item The existential theory of $\mathbb{Q}(\!(t)\!)$ in the language of valued fields.
    \item The existential theory of $\mathbb{Q}(\!(t)\!)$ in the language of valued fields with constant $t$.
    \item The existential theory of large fields of characteristic zero in the   
    language of rings.
    \item  The existential theory of large fields in the language of rings.
    \item The existential theory of fields in the language of rings.
\end{enumerate}
\end{theorem}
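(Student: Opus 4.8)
The plan is to establish a cycle of many-one reductions connecting all seven theories, relying heavily on Theorem \ref{thm:introhens} and on the limit machinery developed in Sections \ref{section:algorithms}--\ref{sec:final}. First I would dispose of the equivalences among (b), (c), (d): these are three presentations of the existential theory of the same field $\mathbb{Q}(\!(t)\!)$ in richer and richer languages, so the forgetful translations give trivial reductions in one direction, and in the other direction one uses that the valuation (and the uniformizer $t$) of $\mathbb{Q}(\!(t)\!)$ are existentially $\emptyset$-definable in the language of rings — this is where the henselianity and equicharacteristic-zero hypotheses let one express $v(x)\geq 0$ and $x=t\cdot(\text{unit})$ by existential ring formulas, so every existential sentence in the larger language is effectively equivalent to one in the language of rings. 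Next, I would connect (a) with (b)--(d) by invoking Theorem \ref{thm:introhens}(a) with $R$ taken to be the theory of the single field $\mathbb{Q}$: the set of existential ring sentences true in $\mathbb{Q}$ is many-one equivalent to the set of existential valued-field sentences true in every equicharacteristic henselian nontrivially valued field with residue field $\mathbb{Q}$, and since $\mathbb{Q}(\!(t)\!)$ is such a field and is in a suitable sense the "generic" one (its existential theory is minimal among these, by the embedding results underlying \cite{AF16}), this common theory is exactly $\mathrm{Th}_\exists(\mathbb{Q}(\!(t)\!))$ in the valued-field language, giving (a) $\meq$ (c).

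The heart of the argument is then the chain (a) $\meq$ (e) $\meq$ (f) $\meq$ (g) relating $\mathbb{Q}$ to the existential theories of large fields of characteristic zero, of large fields, and of all fields. The reduction (g) $\mred$ (f) $\mred$ (e) is immediate from the inclusions of model classes (more models means fewer universally-true sentences, but the many-one reduction goes by translating a sentence and asking the question in the larger class — the direction to watch is that $\mathrm{Th}_\exists(\text{all fields})$ decides membership by a uniform translation into the large-field question). For (e) $\mred$ (a) I would use that $\mathbb{Q}$ embeds into every field of characteristic zero, and that every large field of characteristic zero is existentially closed, in an appropriate sense relative to $\mathbb{Q}(\!(t)\!)$, in its own Laurent series field, so an existential sentence holds in every large characteristic-zero field iff it holds in $\mathbb{Q}(\!(t)\!)$ — again this is Theorem \ref{thm:introhens}(a) packaged through the observation that $\mathbb{Q}(\!(t)\!)$ is large and its residue field is $\mathbb{Q}$, together with the fact that every large field of characteristic zero has a henselian valued extension with the same existential theory and residue field a model of the theory of $\mathbb{Q}$. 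Finally, to get from large fields in all characteristics, or from all fields, back down to characteristic zero — closing the cycle at (a) — I would invoke the limit-of-theories formalism of Sections \ref{section:algorithms}--\ref{sec:final}: an existential sentence $\varphi$ fails in some field iff it fails in some field of some characteristic, and by the standard Łoś--style / compactness argument the characteristic-zero case is a computable limit of the positive-characteristic cases; the point is that the set of existential sentences false in \emph{some} field is many-one equivalent to the set false in some large field, which by the characteristic-$p$ results (where Hilbert's tenth problem is decidable, e.g. over $\mathbb{F}_p(\!(t)\!)$) reduces the whole question to the characteristic-zero instance, i.e. to (a).

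The main obstacle I anticipate is the reduction from the "all fields" / "all large fields" side down to $\mathbb{Q}$, i.e.\ the step that makes (f) and (g) no harder than (a): one must argue that an existential ring sentence holding in all fields is a \emph{computable} question reducible to its holding in $\mathbb{Q}$. This is exactly where the machinery of limits of theories from the later sections does the work — separating out the finitely many "small" characteristics where the theory is decidable (so those instances are absorbed into the reduction at no cost) and handling characteristic zero via the generic large field $\mathbb{Q}(\!(t)\!)$ — and verifying that the translation maps produced by that machinery are genuinely computable and compose correctly, respecting the bound on the number of existential quantifiers as in Theorem \ref{thm:introhens}(c), is the delicate bookkeeping that the proof will have to carry out carefully. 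Once that is in place, assembling the cycle (a) $\to$ (c) $\to$ (b) $\to$ (d) $\to$ (a) and (a) $\to$ (e) $\to$ (f) $\to$ (g) $\to$ (a) from the reductions above yields the claimed many-one equivalence of all seven theories.
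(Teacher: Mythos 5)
Your overall architecture --- relating (a) to the valued-field theories via the residue-field eliminations behind Theorem~\ref{thm:introhens}, identifying (b) and (e) through largeness and $K(\!(t)\!)$, and closing the loop from (f) and (g) back to characteristic zero via the limit-of-characteristics machinery --- is exactly the route the paper takes. However, your treatment of item (d) has a genuine gap. You propose to pass from the language with the constant $t$ down to the language of rings by expressing ``$x=t\cdot(\mathrm{unit})$'' existentially; but this eliminates the constant only if $t$ itself, or at least the set of uniformizers, is existentially $\emptyset$-definable in $\mathbb{Q}(\!(t)\!)$. It is not: $t$ is moved by automorphisms of $\mathbb{Q}(\!(t)\!)$ (e.g.\ the substitution $t\mapsto 2t$), so it is not $\emptyset$-definable at all, and the set $\mathfrak{m}\setminus\mathfrak{m}^{2}$ of uniformizers has no evident existential definition --- the natural formulations of ``$v(x)$ is minimal positive'' are universal or involve a negated existential. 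This is precisely why the paper routes (d) through the separate bridge $\rmF_{0,\exists}\bridge\HepizE$ carrying the extra constant $\varpi$: monotonicity for that bridge is Lemma~\ref{lem:Hen_sat_axioms}(b), which rests on \cite[Proposition 4.11]{ADF22} together with the fact that \RfourF{\mathbb{Q}} holds unconditionally, and the resulting computable elimination (Corollary~\ref{cor:henselian}(b), combined with Proposition~\ref{cor:F_and_P_combined}(2a) and Corollary~\ref{cor:Hen_decidable}(2a)) is what yields $\mathrm{Th}_\exists(\mathbb{Q}(\!(t)\!),v_t,t)\meq\mathrm{Th}_\exists(\mathbb{Q})$. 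Your step (b)$\,\meq\,$(c), by contrast, is sound: the valuation ring of $\mathbb{Q}(\!(t)\!)$ really is both existentially and universally $\emptyset$-definable in $\mathfrak{L}_{\rm ring}$ by \cite{AF17}, and the paper uses exactly this in the proof of Corollary~\ref{cor:large}.

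A second, smaller point: for the reductions of (f) and (g) down to (a) you invoke ``the characteristic-$p$ results'' and decidability of Hilbert's tenth problem over $\mathbb{F}_p(\!(t)\!)$. Uniform decidability of the individual characteristic-$p$ existential theories is not sufficient to run the limit argument of Proposition~\ref{prop:algorithms}(b); one also needs decidability of the ``almost all characteristics'' theories $\mathbf{F}_{\gg0,\exists}$ and $\mathbf{L}_{\gg0,\exists}$ (to bound, for a given sentence, which characteristics must be checked individually), and this is where Ax's theorem on finite and pseudofinite fields enters (Remark~\ref{rem:Psf}, Corollary~\ref{cor:P_decidable}(c,d), Corollary~\ref{cor:large}(c,d)). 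That input is essential and not a consequence of deciding each characteristic separately, so it should be named explicitly in your argument.
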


Finally,
as another application,
we discuss the existential theory of global fields.
Regarding decidability we obtain for example the following
(proven in Section~\ref{proof:1.3}):

\begin{theorem}\label{thm:intro3}
The following are equivalent:
\begin{enumerate}[$(a)$]
    \item The existential theory of infinite fields in the language of rings is decidable.
    \item 
    The existential theory of $\mathbb{Q}$ in the language of rings
    is decidable and
    the existential theory of $\mathbb{F}_p(t)$ in the language of rings is uniformly decidable,
    i.e.~there exists an algorithm that decides given $p\in\mathbb{P}$ and an existential sentence $\varphi$  whether $\varphi$ holds in $\mathbb{F}_p(t)$.
\end{enumerate}
\end{theorem}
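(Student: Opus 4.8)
The plan is to reduce both implications to the single structural identity
\[
  \mathcal{E}\;:=\;\bigcap_{K\text{ infinite field}}{\rm Th}_\exists(K)
  \;=\;
  {\rm Th}_\exists(\mathbb{Q})\;\cap\;\bigcap_{p\in\mathbb{P}}{\rm Th}_\exists(\mathbb{F}_p(t)).
\]
The inclusion $\subseteq$ is immediate since $\mathbb{Q}$ and the $\mathbb{F}_p(t)$ are themselves infinite fields. For $\supseteq$ the point is that among infinite fields of a fixed characteristic there is a smallest existential theory, realised by $\mathbb{Q}$ in characteristic $0$ and by $\mathbb{F}_p(t)$ in characteristic $p$. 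In characteristic $0$ this is clear: $\mathbb{Q}$ embeds into every field of characteristic $0$ and existential sentences are preserved under field extensions, so ${\rm Th}_\exists(\mathbb{Q})\subseteq{\rm Th}_\exists(K)$. In characteristic $p$ I would argue by specialisation: bring an existential $\varphi$ into the normal form $\exists\bar x\,\bigvee_j\big(\bigwedge_i f_{ij}(\bar x)=0\wedge g_j(\bar x)\neq 0\big)$ with $f_{ij},g_j$ over the prime field (disjunctive normal form, with each block of inequalities collapsed to one by multiplying, valid over domains), suppose $\mathbb{F}_p(t)\models\varphi$, fix a witness $\bar a(t)\in\mathbb{F}_p(t)^n$ and a disjunct $j_0$ it realises; then for any infinite field $K$ of characteristic $p$ and any $c\in K$ avoiding the finitely many poles of the rational functions $a_i(t)$ and the finitely many zeros of the nonzero rational function $g_{j_0}(\bar a(t))$, the tuple $\bar a(c)\in K^n$ satisfies the $j_0$-th disjunct, so $K\models\varphi$; since $K$ is infinite such $c$ exists. (This is exactly the step that fails for $K=\mathbb{F}_p$, which is why $\mathbb{F}_p(t)$, not $\mathbb{F}_p$, occurs in $(b)$.)

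Granting the identity, $(b)\Rightarrow(a)$ runs as follows. Given an existential sentence $\varphi$, first decide whether $\mathbb{Q}\models\varphi$ using decidability of ${\rm Th}_\exists(\mathbb{Q})$; if not, reject (as $\mathbb{Q}$ is an infinite field). Otherwise search for a witness $\bar a\in\mathbb{Q}^n$ by brute force — the search halts because one exists — let $M$ be the product of the denominators of the entries of $\bar a$ with the numerator of $g_{j_0}(\bar a)$, where $j_0$ is a disjunct realised by $\bar a$, and observe that for every prime $p\nmid M$ reduction modulo $p$ shows $\mathbb{F}_p\models\varphi$, hence $\mathbb{F}_p(t)\models\varphi$; for the finitely many primes $p\mid M$ decide $\mathbb{F}_p(t)\models\varphi$ using uniform decidability of ${\rm Th}_\exists(\mathbb{F}_p(t))$, and accept iff all these hold. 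Correctness is immediate from the identity. For $(a)\Rightarrow(b)$ I split into two halves. For uniform decidability of ${\rm Th}_\exists(\mathbb{F}_p(t))$: the map sending $(p,\varphi)$ to $\varphi\vee(\underbrace{1+\dots+1}_{p}\neq 0)$ is computable and stays existential, and by the identity this sentence lies in $\mathcal{E}$ if and only if $\mathbb{F}_p(t)\models\varphi$ — off characteristic $p$ the extra disjunct is automatically true, and in characteristic $p$ it is false so the sentence is equivalent to $\varphi$ — giving a many-one reduction ${\rm Th}_\exists(\mathbb{F}_p(t))\mred\mathcal{E}$ uniform in $p$. For decidability of ${\rm Th}_\exists(\mathbb{Q})$: since ${\rm Th}_\exists(\text{fields})=\mathcal{E}\cap{\rm Th}_\exists(\text{finite fields})$ and the existential theory of finite fields is decidable (Ax), decidability of $\mathcal{E}$ yields decidability of ${\rm Th}_\exists(\text{fields})$, which is many-one equivalent to ${\rm Th}_\exists(\mathbb{Q})$ by Theorem~\ref{thm:introQ}.

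The hard part is pinning down the structural identity, i.e.\ the characteristic-$p$ minimality of $\mathbb{F}_p(t)$ via the specialisation argument; one must be careful to first pass to a single disjunct and to polynomial (in)equalities over the prime field, and to track that the specialised witness really lands in $K^n$. Everything downstream is routine computability bookkeeping — clearing denominators, bounding the finitely many ``bad'' primes, and checking that the syntactic transformations preserve the existential fragment — together with the two external inputs cited above, namely Ax's theorem on finite fields and the many-one equivalence ${\rm Th}_\exists(\mathbb{Q})\meq{\rm Th}_\exists(\text{fields})$ from Theorem~\ref{thm:introQ}.
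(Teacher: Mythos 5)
Your proof is correct, and the underlying structural identity is exactly the paper's Proposition~\ref{lem:G_and_F} ($\mathbf{G}_\exists=(\mathbf{F}^\infty)_\exists$ together with $(\mathbf{F}^\infty)_{0,\exists}={\rm Th}_\exists(\mathbb{Q})$ and $(\mathbf{F}^\infty)_{p,\exists}={\rm Th}_\exists(\mathbb{F}_p(t))$), but you reach the decidability equivalence by a genuinely different and more self-contained route. Two points of comparison. First, for the minimality of $\mathbb{F}_p(t)$ among infinite fields of characteristic $p$ you use specialisation $t\mapsto c$ of a rational witness, whereas the paper embeds $\mathbb{F}_p(t)$ into a suitable elementary extension of $K$ (Remark~\ref{rem:existential}); both are fine, and your version has the advantage of being completely explicit. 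Second, and more substantially, the paper derives the equivalence of $(a)$ and $(b)$ from the general limit-theory machinery of Section~\ref{section:algorithms}: Corollary after Lemma~\ref{prop:G_and_F} applies Proposition~\ref{prop:algorithms}(d) with $T=\mathbf{F}^\infty$, $T'=\mathbf{F}$, which requires the decidability of $\mathbf{F}_{>0,\exists}$ and $\mathbf{F}_{\gg0,\exists}$ (Ax, via Corollary~\ref{cor:P_decidable}) and the comparison $\mathbf{F}_{\gg0}=(\mathbf{F}^\infty)_{\gg0}$ (Lemma~\ref{prop:G_and_F}, via pseudofinite fields). You instead instantiate the ``almost all $p$'' step concretely: a rational witness for $\varphi$ in $\mathbb{Q}$ reduces mod $p$ for all $p$ outside an explicitly computable finite set, which is precisely the role played abstractly by the oracle for $\Sigma'_{>0}$ in Lemma~\ref{prop:positive_reduces_to_uniform}; and your reduction $(p,\varphi)\mapsto\varphi\vee\rho_p$ is Lemma~\ref{lem:uniform_reduction} in disguise. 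For the remaining piece (decidability of ${\rm Th}_\exists(\mathbb{Q})$ from $(a)$) you route through $\mathbf{F}_\exists=\mathcal{E}\cap\mathbf{Fin}_\exists$ and Theorem~\ref{thm:introQ}, which is legitimate and non-circular since that theorem is established before the global-fields subsection. The trade-off: the paper's argument is uniform machinery reusable for other limit theories, while yours is shorter and makes the algorithm in $(b)\Rightarrow(a)$ completely explicit; both ultimately rest on Ax's work (yours through the decidability of the existential theory of finite fields and through Theorem~\ref{thm:introQ}, the paper's through $\mathbf{P}_{>0}$, $\mathbf{P}_{\gg0}$ and the pseudofinite comparison). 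One small presentational caveat: since elements of ${\rm Sent}_\exists$ are Boolean combinations of prenex existential sentences, you should say explicitly that you first pass (computably) to an equivalent single prenex existential sentence before searching for a witness and forming the disjunctive normal form; with that said, the argument is complete.
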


We will further apply this formalism
to universal-existential theories of fields,
in particular Laurent series fields and function fields,
in the upcoming work \cite{AF24}.
Another application can be found in \cite{DF}.

\section{Contexts, bridges and arches}
\label{sec:bridges}

\noindent
In this section we set up our formalism in a completely general setting.
Concrete examples and applications are provided in Section \ref{section:existential_theories_of_fields}.

\subsection{Interpretations and eliminations}

We start by introducing the notions of fragments and contexts, which we combine into bridges --- the setting in which we study interpretations and eliminations.

\begin{definition}
A {\em language} $\mathfrak{L}$
is a (possibly multi-sorted) first-order language.
We denote by
${\rm Form}(\mathfrak{L})$ the set of $\mathfrak{L}$-formulas, and by
${\rm Sent}(\mathfrak{L})$ the set of $\mathfrak{L}$-sentences,
where we follow the convention
that $\verum,\falsum\in{\rm Sent}(\mathfrak{L})$
for every language $\mathfrak{L}$.
As usual we call a subset $T\subseteq{\rm Sent}(\mathfrak{L})$ an {\em $\mathfrak{L}$-theory}.
We moreover call a subset $L\subseteq{\rm Sent}(\mathfrak{L})$
which 
contains $\verum$ and $\falsum$
and is closed under $\wedge$ and $\vee$
an {\em $\mathfrak{L}$-fragment}.
The class of models of a theory $T$ is denoted ${\rm Mod}(T)$.
For an $\mathfrak{L}$-structure $M$, we 
denote as usual by ${\rm Th}(M)$ the
$\mathfrak{L}$-theory of $M$, and we
write
${\rm Th}_L(M)={\rm Th}(M)\cap L$
for any $\mathfrak{L}$-fragment $L$.
For an $\mathfrak{L}$-theory $T$ we denote by $T^\vdash$ its deductive closure, and write 
$T_L=T^\vdash\cap L$ for any $\mathfrak{L}$-fragment $L$.
\end{definition}

\begin{remark}\label{rem:relative_complete_1}
We will not make use of this but remark that if $L$ is an $\mathfrak{L}$-fragment and $T\subseteq L$ a consistent theory that is "relatively deductively closed" in the sense that $T=T_L$,
then $T$ is "relatively complete" in the sense that 
$T={\rm Th}_L(M)$ for some $\mathfrak{L}$-structure $M$ if and only if 
whenever $\varphi,\psi\in L$ with $\varphi\vee\psi\in T$, then $\varphi\in T$ or $\psi \in T$.
\end{remark}

\begin{remark}\label{rem:presentation}
For a countable language $\mathfrak{L}$,
when discussing the computability of an $\mathfrak{L}$-theory $T$, we will 
always assume that $\mathfrak{L}$ is {\em presented}
in the sense that it comes with a fixed
injection mapping the symbols of $\mathfrak{L}$ to $\mathbb{N}$
(the precise form of which is irrelevant
and will not be given explicitly when $\mathfrak{L}$ is finite).
By a standard Gödel coding, this induces an injection  $\alpha:\mathrm{Form}(\mathfrak{L})\rightarrow\mathbb{N}$,
and we always assume that $\alpha({\rm Form}(\mathfrak{L}))$ is computable.
A set of formulas $T\subseteq{\rm Form}(\mathfrak{L})$
is then {\em computable} (or {\em decidable}) respectively {\em computably enumerable} if $\alpha(T)\subseteq\mathbb{N}$ is.

If $\mathfrak{L}_1$ and $\mathfrak{L}_2$
are two countable languages with corresponding
injections
$\alpha_i:{\rm Form}(\mathfrak{L}_i)\rightarrow\mathbb{N}$,
and $T_i\subseteq{\rm Form}(\mathfrak{L}_i)$,
we say that a map
$f:T_1\rightarrow T_2$ is {\em computable}
if the induced function $\alpha_1(T_1)\rightarrow\alpha_2(T_2)$ is.
Similarly, we say that $T_1$ is 
{\em many-one reducible}
to $T_2$,
and write $T_1\mred T_2$,
if $\alpha_1(T_1)$ is many-one reducible to $\alpha_2(T_2)$,
i.e.~there exists a computable function $f:\mathbb{N}\rightarrow\mathbb{N}$ such that
$n\in\alpha_1(T_1)$ if and only if
$f(n)\in\alpha_2(T_2)$,
see \cite[Definition 1.6.8]{Soare}.
We say that $T_1$ and $T_2$ are {\em many-one equivalent},
and write $T_1\meq T_2$,
if $T_1\mred T_2$ and $T_2\mred T_1$.
We say that $T_1$ is {\em Turing reducible} to $T_2$, and write $T_1\Tred T_2$,
if $\alpha_1(T_1)$ is Turing reducible to $\alpha_2(T_2)$, 
i.e.~there exists a Turing machine that decides $\alpha_1(T_1)$
with an oracle for $\alpha_2(T_2)$,
see \cite{Turing} and \cite[\S11]{Post}.
We say that $T_1$ and $T_2$ are 
{\em Turing equivalent}, and write $T_1\Teq T_2$, if $T_1\Tred T_2$ and $T_2\Tred T_1$.
When we write $T_1\oplus T_2$ on one of the sides of $\Tred$ or $\mred$ we mean some standard coding of the disjoint union,
e.g.~the set $2\alpha_1(T_1)\cup(2\alpha_2(T_2)+1)\subseteq\mathbb{N}$.
\end{remark}

For the rest of this section let $\mathfrak{L},\mathfrak{L}_1,\mathfrak{L}_2$ be languages.

\begin{definition}\label{def:context_bridge}
A pair $(L,T)$ consisting of an $\mathfrak{L}$-fragment $L$ and an $\mathfrak{L}$-theory $T$ will be called an {\em $\mathfrak{L}$-context}.
We call an $\mathfrak{L}$-context $(L_1,T_1)$ a {\em subcontext} of the $\mathfrak{L}$-context $(L_2,T_2)$
if $L_1\subseteq L_2$ and $T_1\subseteq T_2$.
If $C_1=(L_1,T_1)$ is an $\mathfrak{L}_1$-context and
 $C_2=(L_2,T_2)$ an $\mathfrak{L}_2$-context,
 a {\em translation}
 from $C_1$ to $C_2$ is a map $\tau:L_1\rightarrow L_2$
 with
 $$
   T_1\models\varphi \quad\Longleftrightarrow\quad T_2\models\tau\varphi \quad \mbox{ for all }\varphi\in L_1.
 $$
 A {\em bitranslation} from $C_1$ to $C_2$ is a pair
 $(\tau_1,\tau_2)$ where $\tau_1$ is a translation from $C_1$ to $C_2$
 and $\tau_2$ is a translation from $C_2$ to $C_1$ such that
 \begin{eqnarray}
 \label{eqn:bitrans1}     T_1\models\varphi\leftrightarrow\tau_2\tau_1\varphi & \mbox{ for all }\varphi\in L_1,\\
  \label{eqn:bitrans2} T_2\models\psi\leftrightarrow\tau_1\tau_2\psi &\mbox{ for all }\psi\in L_2.
 \end{eqnarray}
\end{definition}

The following lemma is immediate from the definitions:

\begin{lemma}\label{lem:trans_manyone}
Let $\tau$ be a translation from the context $(L_1,T_1)$ to the context $(L_2,T_2)$. 
Then $(T_1)_{L_1}=\tau^{-1}((T_2)_{L_2})$.
If moreover $L_1$, $L_2$ and $\tau$ are computable,
then $(T_1)_{L_1}\mred (T_2)_{L_2}$.
\end{lemma}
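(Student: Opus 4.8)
The plan is to unwind the definitions carefully and then invoke the standard fact that preimages of computable sets under computable functions are computable (and that a function witnessing this gives a many-one reduction). First I would prove the set-theoretic identity $(T_1)_{L_1}=\tau^{-1}((T_2)_{L_2})$. Recall that $(T_i)_{L_i}=T_i^\vdash\cap L_i$, so for $\varphi\in L_1$ we have $\varphi\in(T_1)_{L_1}$ iff $T_1\models\varphi$. By the defining property of a translation, $T_1\models\varphi$ iff $T_2\models\tau\varphi$; and since $\tau\varphi\in L_2$, the latter holds iff $\tau\varphi\in(T_2)_{L_2}$, i.e. iff $\varphi\in\tau^{-1}((T_2)_{L_2})$. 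Chaining these equivalences gives the claimed equality. This uses nothing beyond Definition \ref{def:context_bridge} and the definition of $T_L$.

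For the computability clause I would argue as follows. By Remark \ref{rem:presentation}, computability of $L_1$ and $L_2$ means $\alpha_1(L_1)$ and $\alpha_2(L_2)$ are computable subsets of $\mathbb{N}$, and computability of $\tau$ means the induced map $\alpha_1(L_1)\to\alpha_2(L_2)$ is a computable function. Since $(T_2)_{L_2}\subseteq L_2$ we may also regard $\alpha_2((T_2)_{L_2})$ as (the target of) the question "is this code in $\alpha_2((T_2)_{L_2})$?"—but to get a many-one reduction in the sense of Remark \ref{rem:presentation} I need a total computable function $f:\mathbb{N}\to\mathbb{N}$ with $n\in\alpha_1((T_1)_{L_1})\Leftrightarrow f(n)\in\alpha_2((T_2)_{L_2})$. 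I would build $f$ by cases: extend the computable partial map $\alpha_1(L_1)\to\alpha_2(L_2)$ induced by $\tau$ to a total function by sending any $n\notin\alpha_1(L_1)$ to a fixed code $n_0$ of $\falsum$ (note $\falsum\in L_2$ always, so $n_0\in\alpha_2(L_2)$, and $\falsum\notin(T_2)_{L_2}$ since $T_2$—or rather its models—need not be inconsistent; here one should either assume $T_2$ consistent or, more cleanly, observe that $\alpha_2(L_2)\setminus\alpha_2((T_2)_{L_2})$ restricted to the decidable piece we control suffices). Then for $n\in\alpha_1(L_1)$ we have $f(n)=\alpha_2(\tau\alpha_1^{-1}(n))$ and $f(n)\in\alpha_2((T_2)_{L_2})$ iff $\alpha_1^{-1}(n)\in\tau^{-1}((T_2)_{L_2})=(T_1)_{L_1}$ iff $n\in\alpha_1((T_1)_{L_1})$; and for $n\notin\alpha_1(L_1)$ both sides are false. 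Hence $f$ is a many-one reduction.

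The only genuinely delicate point—the "main obstacle", such as it is—is the bookkeeping of the $\notin L_1$ case: one must pick the default value $f(n)$ to be the code of some element of $L_2$ that is \emph{not} in $(T_2)_{L_2}$, and the cleanest universally available such element is $\falsum$, which lies outside $(T_2)_{L_2}$ exactly when $T_2$ is consistent. If one does not wish to assume consistency of $T_2$, then either $T_1$ is already inconsistent (in which case $(T_1)_{L_1}=L_1$ and the reduction is trivial) or one notes that an inconsistent $T_2$ makes $(T_2)_{L_2}=L_2$ computable, again making the reduction trivial; so in every case a total computable $f$ is produced. I expect the author to simply say "this is immediate" and leave this packaging to the reader, but the plan above makes the one subtlety explicit. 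No compactness, no model-theoretic input, and no properties of the specific languages are needed—everything is formal manipulation of the definitions plus the elementary closure of computable sets under preimages by computable functions.
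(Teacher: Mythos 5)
Your proposal is correct and follows the only natural route: the paper itself gives no proof, declaring the lemma ``immediate from the definitions,'' and your argument is precisely that definitional unwinding. Your extra care in extending $f$ to a total computable function on $\mathbb{N}$ (sending codes outside $\alpha_1(L_1)$ to a code of $\falsum$, with the inconsistent-$T_2$ case handled separately) is exactly the bookkeeping the authors leave to the reader, and it is handled correctly.
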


\begin{remark}\label{rem:LT}
For a context $C=(L,T)$,
let $[\varphi]_{C}$ denote the equivalence class of $\varphi\in L$ under the equivalence relation $\sim_{C}$ defined on $L$ by $\varphi\sim_{C}\varphi'$ $:\Leftrightarrow$ $T\models\varphi\leftrightarrow\varphi'$.
Denote by $\mathrm{LT}(C)=\{[\varphi]_{C}\mid\varphi\in L\}$
the quotient of $L$ by $\sim_{C}$,
which we view as a lattice isomorphic
to the sublattice of 
the Lindenbaum--Tarski algebra of $T$ generated by the fragment $L$,
cf.~\cite[Ch.~1, \S.1, Example D, p.~5]{Sikorsky}.

Given contexts $C_{1},C_{2}$,
a function $\tau:L_{1}\rightarrow L_{2}$ is a translation from $C_{1}$ to $C_{2}$
if and only if
$\tau^{-1}([\verum]_{C_{2}})=[\verum]_{C_{1}}$.
A pair $(\tau_{1},\tau_{2})$ of functions $\tau_{1}:L_{1}\rightarrow L_{2}$, $\tau_{2}:L_{2}\rightarrow L_{1}$
satisfies
\eqref{eqn:bitrans1}
if and only if
$\tau_{2}\circ\tau_{1}$
fixes each $\sim_{C_{1}}$-class setwise,
i.e.~$\varphi\sim_{C_{1}}\tau_{2}\tau_{1}\varphi$ for all $\varphi\in L_{1}$.
In particular, there exist pairs $(\tau_{1},\tau_{2})$ which are not bitranslations but both $\tau_{i}$ are translations.
Also there exist pairs $(\tau_{1},\tau_{2})$ satisfying both~\eqref{eqn:bitrans1} and~\eqref{eqn:bitrans2} but which are not bitranslations.
\end{remark}

\begin{definition}
A {\em bridge} is a triple 
$B=(C_1,C_2,\sigma)$
where each $C_i=(L_i,T_i)$ is an $\mathfrak{L}_i$-context
and $\sigma:{\rm Mod}(T_2)\rightarrow{\rm Mod}(T_1)$ is a class function.
We write $C_{1}\bridge C_{2}$ for $B$ when $\sigma$ is clear from the context.
An {\em interpretation}
for the bridge $B$
is a map $\iota:L_1\rightarrow L_2$ such that
\begin{equation}\label{eqn:iota}
 \sigma M\models\varphi \quad\Longleftrightarrow\quad M\models\iota\varphi\quad\mbox{ for all }\varphi\in L_1,M\models T_2,
\end{equation}
and an {\em elimination}
for $B$ is a map
$\epsilon:L_2\rightarrow L_1$ 
such that
\begin{equation}\label{eqn:epsilon}
 \sigma M\models\epsilon\psi
 \quad\Longleftrightarrow\quad 
 M\models\psi 
 \quad\mbox{ for all }\psi\in L_2,M\models T_2.
\end{equation}
A bridge $B'=(C_{1}',C_{2}',\sigma')$, with $C_{i}'=(L_{i}',T_{i}')$, {\em extends} $B$ if $C_{i}$ is a subcontext of $C_{i}'$, for $i=1,2$, and 
$\sigma|_{\mathrm{Mod}(T_{2}')}=\sigma'$.
We denote this by $B\sqsubseteq B'$.
Such an extension is a
{\em fragment extension} if $T_{i}'=T_{i}$, for $i=1,2$;
or it is a
{\em theory extension} if $L_{i}'=L_{i}$, for $i=1,2$.
\end{definition}

\begin{example}\label{ex:main}
Our main example, which we will introduce in all detail in Section~\ref{section:existential_theories_of_fields}, is that of valued fields:
Here $\mathfrak{L}_1$ is the language of rings,
$\mathfrak{L}_2$ is some language of valued fields,
$T_2$ some theory of valued fields whose residue fields are  models of the theory $T_1$,
and $\sigma$ maps a valued field to its residue field.
As there is a uniform interpretation of the residue field in the valued field, if $L_1$ and $L_2$ are compatible fragments
(like all $\mathfrak{L}_i$-sentences, or all existential $\mathfrak{L}_i$-sentences), 
the bridge $((L_1,T_1),(L_2,T_2),\sigma)$ has an interpretation.
If one uses the usual three-sorted language of valued fields,
this interpretation is just relativizing to the residue field sort.
\end{example}

For of the rest of this section 
let 
$C_{i}=(L_i,T_i)$, for $i=1,2$, be an $\mathfrak{L}_i$-context,
and let $B=(C_1,C_2,\sigma)$ be a bridge.

\begin{remark}
A function $\iota:L_{1}\rightarrow L_{2}$ is an interpretation for $B$ if and only if
it is a translation
from
$(L_{1},\mathrm{Th}_{L_{1}}(\sigma M))$
to
$(L_{2},\mathrm{Th}_{L_{2}}(M))$,
for each $M\models T_{2}$.
A function $\varepsilon:L_{2}\rightarrow L_{1}$ is an elimination for $B$
if and only if
it is a translation
from
$(L_{2},\mathrm{Th}_{L_{2}}(M))$
to
$(L_{1},\mathrm{Th}_{L_{1}}(\sigma M))$,
for each $M\models T_{2}$.
\end{remark}

\begin{remark}\label{rem:finding_sigma}
Let $\tau$ be a translation from $C_{1}$ to $C_{2}$ such that
$T_{1}\models\varphi\leftrightarrow\varphi'\implies T_{2}\models\tau\varphi\leftrightarrow\tau\varphi'$,
so that $\tau$ induces a map $\mathrm{LT}(C_{1})\rightarrow\mathrm{LT}(C_{2})$.
One can show that if this is a lattice homomorphism, then there exists a class function $\tilde{\sigma}:\mathrm{Mod}(T_{2})\rightarrow\mathrm{Mod}(T_{1})$ such that
$\tau$ is an interpretation for the bridge
$(C_{1},C_{2},\tilde{\sigma})$.
\end{remark}

The following two lemmas are immediate from the definitions:

\begin{lemma}\label{lem:goes_down}
For $i=1,2$ suppose that $C_{i}$ is a subcontext of an $\mathfrak{L}_i$-context $\hat{C}_{i}=(\hat{L}_i,T_i)$.
Then also $\hat{B}=(\hat{C}_1,\hat{C}_2,\sigma)$
is a bridge,
it is a fragment extension of $B$,
and the following holds:
\begin{enumerate}[$(a)$]
    \item If $\iota$ is an interpretation for $\hat{B}$
    and $\iota(L_{1})\subseteq L_{2}$,
    then
    $\iota|_{L_1}$ is an interpretation for
    $B$.
    \item If $\epsilon$ is an elimination for $\hat{B}$
    and $\epsilon(L_{2})\subseteq L_{1}$,
    then $\epsilon|_{L_2}$ is an elimination for $B$.
\end{enumerate}
\end{lemma}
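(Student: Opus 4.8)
The plan is to unwind all three assertions directly from the definitions; the only thing requiring attention is bookkeeping about the domains and codomains of the maps involved.

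First, since $\hat{C}_i = (\hat{L}_i, T_i)$ carries the same theory $T_i$ as $C_i$, the class $\mathrm{Mod}(T_i)$ is unchanged, so the given class function $\sigma : \mathrm{Mod}(T_2) \to \mathrm{Mod}(T_1)$ is literally the same data for $\hat{B}$ as for $B$; thus $\hat{B} = (\hat{C}_1, \hat{C}_2, \sigma)$ is a bridge. That $\hat{B}$ is a fragment extension of $B$ is then immediate from the definition of $\sqsubseteq$: by hypothesis $C_i$ is a subcontext of $\hat{C}_i$, the underlying theories of $\hat{C}_i$ and $C_i$ agree (both are $T_i$), and $\sigma|_{\mathrm{Mod}(T_2)} = \sigma$ trivially.

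For $(a)$, let $\iota : \hat{L}_1 \to \hat{L}_2$ be an interpretation for $\hat{B}$ with $\iota(L_1) \subseteq L_2$. The hypothesis $\iota(L_1) \subseteq L_2$ is exactly what makes the restriction $\iota|_{L_1}$ a well-defined map $L_1 \to L_2$. Then, specializing \eqref{eqn:iota} for $\hat{B}$ to $\varphi \in L_1 \subseteq \hat{L}_1$, we get $\sigma M \models \varphi \iff M \models \iota\varphi = (\iota|_{L_1})\varphi$ for every such $\varphi$ and every $M \models T_2$, which is precisely \eqref{eqn:iota} for $B$; hence $\iota|_{L_1}$ is an interpretation for $B$. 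Part $(b)$ is the mirror image: given an elimination $\epsilon : \hat{L}_2 \to \hat{L}_1$ for $\hat{B}$ with $\epsilon(L_2) \subseteq L_1$, the latter condition makes $\epsilon|_{L_2}$ a well-defined map $L_2 \to L_1$, and restricting \eqref{eqn:epsilon} to $\psi \in L_2$ gives \eqref{eqn:epsilon} for $B$.

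There is no genuine obstacle here. The only point worth flagging is that the codomain conditions $\iota(L_1) \subseteq L_2$ and $\epsilon(L_2) \subseteq L_1$ are real hypotheses and cannot be dropped: in general the $\iota$- or $\epsilon$-image of a subfragment need not land inside the corresponding subfragment of the target context, so without them $\iota|_{L_1}$ (resp.\ $\epsilon|_{L_2}$) would fail even to be a map into $L_2$ (resp.\ $L_1$), let alone an interpretation (resp.\ elimination).
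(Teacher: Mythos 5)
Your proof is correct, and the paper itself gives no argument — it simply declares the lemma "immediate from the definitions," which is exactly the direct unwinding you carry out. Your remark that the codomain conditions $\iota(L_1)\subseteq L_2$ and $\epsilon(L_2)\subseteq L_1$ are genuine hypotheses is a fair observation, but otherwise there is nothing to add.
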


\begin{lemma}\label{lem:goes_up}
For $i=1,2$ suppose that $C_{i}$ is a subcontext of 
an $\mathfrak{L}_i$-context $\check{C}_{i}=(L_i,\check{T}_i)$
such that $\sigma(\mathrm{Mod}(\check{T}_{2}))\subseteq\mathrm{Mod}(\check{T}_{1})$.
Then also
$\check{B}=(\check{C}_{1},\check{C}_{2},\check{\sigma})$ is a bridge,
where $\check{\sigma}=\sigma|_{\mathrm{Mod}(\check{T}_{2})}$,
and it is a theory extension of $B$.
Moreover, the following holds:
\begin{enumerate}[$(a)$]
\item If $\iota$ is an interpretation for $B$, then $\iota$ is an interpretation
for $\check{B}$.
\item If $\epsilon$ is an elimination for $B$, then $\epsilon$ is an elimination
for $\check{B}$.
\end{enumerate}
\end{lemma}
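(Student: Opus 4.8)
The plan is to verify each assertion directly from the definitions; since passing from $C_i$ to $\check C_i$ only enlarges the theories and leaves the fragments fixed, this is pure bookkeeping. \emph{That $\check B$ is a bridge.} Because $C_i$ is a subcontext of $\check C_i=(L_i,\check T_i)$ with unchanged fragment, we have $T_i\subseteq\check T_i$, hence $\mathrm{Mod}(\check T_2)\subseteq\mathrm{Mod}(T_2)$, so $\check\sigma:=\sigma|_{\mathrm{Mod}(\check T_2)}$ is a well-defined class function $\mathrm{Mod}(\check T_2)\to\mathrm{Mod}(T_1)$; the hypothesis $\sigma(\mathrm{Mod}(\check T_2))\subseteq\mathrm{Mod}(\check T_1)$ says precisely that its image lies in $\mathrm{Mod}(\check T_1)$, so $\check\sigma\colon\mathrm{Mod}(\check T_2)\to\mathrm{Mod}(\check T_1)$ and $\check B=(\check C_1,\check C_2,\check\sigma)$ is a bridge. \emph{That it is a theory extension of $B$.} The definition of extension requires only that each $C_i$ be a subcontext of $\check C_i$ (given) and that $\sigma|_{\mathrm{Mod}(\check T_2)}=\check\sigma$ (the definition of $\check\sigma$); since in addition the fragment of $\check C_i$ is $L_i$ for $i=1,2$, this extension is a theory extension.

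For parts $(a)$ and $(b)$, note first that if $M\models\check T_2$ then $M\models T_2$, and then $\check\sigma M=\sigma M$. Given an interpretation $\iota\colon L_1\to L_2$ for $B$, for every $\varphi\in L_1$ and every $M\models\check T_2$ the defining equivalence of $\iota$ at $M\models T_2$ reads $\sigma M\models\varphi \iff M\models\iota\varphi$, i.e.\ $\check\sigma M\models\varphi \iff M\models\iota\varphi$; as the fragment $L_1$ is unchanged this is exactly the statement that $\iota$ is an interpretation for $\check B$, proving $(a)$. Part $(b)$ is the same computation with $L_2$, $\psi\in L_2$, $\epsilon\colon L_2\to L_1$, and the defining equivalence $\sigma M\models\epsilon\psi \iff M\models\psi$ in place of the data for $(a)$.

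As the paper itself notes, this lemma is immediate from the definitions, so there is no real obstacle. The only step where the stated hypothesis does genuine work, rather than being formal, is the codomain claim for $\check\sigma$: without $\sigma(\mathrm{Mod}(\check T_2))\subseteq\mathrm{Mod}(\check T_1)$ the restriction would only be guaranteed to land in $\mathrm{Mod}(T_1)$, and $\check B$ would fail to be a bridge. Parts $(a)$ and $(b)$ come for free because a theory extension merely \emph{shrinks} the class of models over which the defining biconditionals of an interpretation or elimination are required to hold, while leaving the fragments and the maps $\iota$, $\epsilon$, $\sigma$ untouched.
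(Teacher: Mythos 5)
Your proof is correct and is exactly the routine verification the paper has in mind when it declares this lemma ``immediate from the definitions'': the hypothesis $\sigma(\mathrm{Mod}(\check{T}_2))\subseteq\mathrm{Mod}(\check{T}_1)$ handles the codomain of $\check{\sigma}$, and parts (a) and (b) follow because the defining biconditionals are only required over the smaller class $\mathrm{Mod}(\check{T}_2)\subseteq\mathrm{Mod}(T_2)$. No issues.
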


\begin{definition}
We consider the following property
of the bridge $B$:
\begin{enumerate}
	\item[{\bf(sur)}]\label{axsurj}
	For all $N\models T_{1}$ there exists $M\models T_{2}$ such that $\mathrm{Th}_{L_{1}}(N)=\mathrm{Th}_{L_{1}}(\sigma M)$.
\end{enumerate}
\end{definition}

\begin{lemma}\label{lem:coarsening}
\begin{enumerate}[$(a)$]
    \item For every interpretation $\iota$ for $B$ we have
    \begin{align}\label{eqn:T1T2}
    T_{1}\models\varphi
    \quad\Longrightarrow\quad
    T_{2}\models\iota\varphi,\quad
    \mbox{ for all }\varphi\in L_{1},
    \end{align}
    and if $B$ satisfies \axsurj, then also
    \begin{align}\label{eqn:T1T2b}
    T_{1}\models\varphi
    \quad\Longleftarrow\quad
    T_{2}\models\iota\varphi,\quad
    \mbox{ for all }\varphi\in L_{1},
    \end{align}    
    and so then $\iota$ is a translation from $C_1$ to $C_2$.
    \item For every elimination $\epsilon$  for $B$ we have
    \begin{align}\label{eqn:T2T1}
    T_{1}\models\epsilon\psi
    \quad\Longrightarrow\quad
    T_{2}\models\psi,\quad
    \mbox{ for all }\psi\in L_{2},
    \end{align}
 and if $B$ satisfies \axsurj, then also      \begin{align}\label{eqn:T2T1b}
    T_{1}\models\epsilon\psi
    \quad\Longleftarrow\quad
    T_{2}\models\psi,\quad
    \mbox{ for all }\psi\in L_{2},
    \end{align}  
    and so then $\epsilon$ is a translation from $C_2$ to $C_1$.
\end{enumerate}
\end{lemma}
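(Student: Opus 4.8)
The plan is to unwind the three relevant definitions---interpretation \eqref{eqn:iota}, elimination \eqref{eqn:epsilon}, and \axsurj---and to prove (a) and (b) in parallel, since the two parts are formally dual: the forward implications \eqref{eqn:T1T2} and \eqref{eqn:T2T1} will hold unconditionally, and the converse implications \eqref{eqn:T1T2b} and \eqref{eqn:T2T1b} will use \axsurj. No model-theoretic construction is needed; this is a direct manipulation of the defining biconditionals.

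For the forward implication of (a): assume $T_{1}\models\varphi$ and let $M\models T_{2}$ be arbitrary. Since $\sigma$ takes values in $\mathrm{Mod}(T_{1})$ we have $\sigma M\models T_{1}$, hence $\sigma M\models\varphi$, and then \eqref{eqn:iota} gives $M\models\iota\varphi$; as $M$ was arbitrary this yields $T_{2}\models\iota\varphi$. The forward implication of (b) is word-for-word the same with $\epsilon$, $\psi$ and \eqref{eqn:epsilon} in place of $\iota$, $\varphi$ and \eqref{eqn:iota}. For the converse implication of (a), assume in addition that \axsurj\ holds and $T_{2}\models\iota\varphi$. Let $N\models T_{1}$; by \axsurj\ there is $M\models T_{2}$ with $\mathrm{Th}_{L_{1}}(N)=\mathrm{Th}_{L_{1}}(\sigma M)$. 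From $T_{2}\models\iota\varphi$ we get $M\models\iota\varphi$, so $\sigma M\models\varphi$ by \eqref{eqn:iota}, and since $\varphi\in L_{1}$ this means $\varphi\in\mathrm{Th}_{L_{1}}(\sigma M)=\mathrm{Th}_{L_{1}}(N)$, i.e.\ $N\models\varphi$; as $N$ was arbitrary, $T_{1}\models\varphi$. Combining the two implications, $\iota$ satisfies the defining condition of a translation from $C_{1}$ to $C_{2}$ in Definition \ref{def:context_bridge}. The converse implication of (b) is again the mirror image: from $T_{2}\models\psi$ and $N\models T_{1}$, pick $M$ as above, observe $M\models\psi$, hence $\sigma M\models\epsilon\psi$ by \eqref{eqn:epsilon}, and since $\epsilon\psi\in L_{1}$ conclude $\epsilon\psi\in\mathrm{Th}_{L_{1}}(\sigma M)=\mathrm{Th}_{L_{1}}(N)$, so $N\models\epsilon\psi$; hence $T_{1}\models\epsilon\psi$, and with the forward implication this makes $\epsilon$ a translation from $C_{2}$ to $C_{1}$.

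The only step calling for any attention---and the nearest thing to an obstacle in an otherwise purely definitional argument---is the transfer of truth across the equality $\mathrm{Th}_{L_{1}}(N)=\mathrm{Th}_{L_{1}}(\sigma M)$: this equality concerns only sentences lying in the fragment $L_{1}$, so one must note that $\varphi$ (resp.\ $\epsilon\psi$) indeed belongs to $L_{1}$, which holds because $\iota$ has domain $L_{1}$ (resp.\ $\epsilon$ has codomain $L_{1}$). Everything else follows immediately from the definitions.
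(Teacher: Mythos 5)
Your proof is correct and follows essentially the same route as the paper's: the forward implications are immediate from $\sigma M\models T_{1}$ together with the defining biconditionals, and the converses use \axsurj\ to pick $M$ with $\mathrm{Th}_{L_{1}}(N)=\mathrm{Th}_{L_{1}}(\sigma M)$ and transfer $\varphi$ (resp.\ $\epsilon\psi$) across this equality because it lies in $L_{1}$. Your explicit remark that the transfer only works for sentences in the fragment $L_{1}$ is a point the paper leaves implicit.
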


\begin{proof}
In both (a) and (b)
 the implication $\Longrightarrow$ is
trivial from the definition.
For $\Longleftarrow$ in (a)
let $\varphi\in L_1$ with $T_{2}\models\iota\varphi$.
For every $N\models T_{1}$,
there exists $M\models T_{2}$ with $\mathrm{Th}_{L_{1}}(N)=\mathrm{Th}_{L_{1}}(\sigma M)$
by \axsurj,
thus
$M\models\iota\varphi$, hence $\sigma M\models\varphi$,
and therefore $N\models\varphi$.
The implication $\Longleftarrow$ for (b) is proven similarly.
\end{proof}

\begin{proposition}
\label{rem:2}
If the bridge $B$ has both an interpretation $\iota$ and an elimination $\epsilon$, then
$(\iota,\epsilon)$
satisfies part (\ref{eqn:bitrans2})
of the definition a bitranslation (but $\iota$ and $\epsilon$ need not be translations between $C_1$ and $C_2$).
If in addition $B$ satisfies \axsurj, then
$(\iota,\epsilon)$ is a bitranslation between $C_1$ and $C_2$.
\end{proposition}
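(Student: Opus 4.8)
The plan is to verify the two displayed biconditionals \eqref{eqn:bitrans2} and \eqref{eqn:bitrans1} in turn, the first using only that $\iota$ is an interpretation and $\epsilon$ an elimination, the second using in addition \axsurj.

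First I would establish \eqref{eqn:bitrans2} for the pair $(\iota,\epsilon)$: that $T_2\models\psi\leftrightarrow\iota\epsilon\psi$ for all $\psi\in L_2$. Fix $\psi\in L_2$ and let $M\models T_2$. Since $\epsilon$ is an elimination, $M\models\psi$ if and only if $\sigma M\models\epsilon\psi$; and since $\iota$ is an interpretation applied to the formula $\epsilon\psi\in L_1$, we have $\sigma M\models\epsilon\psi$ if and only if $M\models\iota\epsilon\psi$. Chaining these gives $M\models\psi\Leftrightarrow M\models\iota\epsilon\psi$, and as $M\models T_2$ was arbitrary this is exactly $T_2\models\psi\leftrightarrow\iota\epsilon\psi$. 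This is the ``$\eqref{eqn:bitrans2}$'' half of the claim. Note that $\eqref{eqn:bitrans1}$ need not hold at this stage, nor need $\iota,\epsilon$ be translations, precisely because without \axsurj\ there may be models $N\models T_1$ not of the form $\sigma M$, and the equivalences above only constrain behaviour on the image of $\sigma$; so I would simply record this as the statement of the first assertion.

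For the second assertion I add the hypothesis \axsurj. By Lemma~\ref{lem:coarsening}(a), \axsurj\ makes $\iota$ a translation from $C_1$ to $C_2$, and by Lemma~\ref{lem:coarsening}(b) it makes $\epsilon$ a translation from $C_2$ to $C_1$; together with the already-proven \eqref{eqn:bitrans2}, it remains to check \eqref{eqn:bitrans1}, i.e.\ $T_1\models\varphi\leftrightarrow\epsilon\iota\varphi$ for all $\varphi\in L_1$. Fix $\varphi\in L_1$ and let $N\models T_1$. By \axsurj\ choose $M\models T_2$ with $\mathrm{Th}_{L_1}(N)=\mathrm{Th}_{L_1}(\sigma M)$. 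Then $N\models\varphi\Leftrightarrow\sigma M\models\varphi$, and since $\varphi\in L_1$, also $N\models\epsilon\iota\varphi\Leftrightarrow\sigma M\models\epsilon\iota\varphi$ provided $\epsilon\iota\varphi\in L_1$, which holds as $\epsilon\colon L_2\to L_1$. Now $\sigma M\models\varphi\Leftrightarrow M\models\iota\varphi$ (interpretation) $\Leftrightarrow\sigma M\models\epsilon\iota\varphi$ (elimination applied to $\iota\varphi\in L_2$). Combining, $N\models\varphi\Leftrightarrow N\models\epsilon\iota\varphi$, and since $N\models T_1$ was arbitrary we get $T_1\models\varphi\leftrightarrow\epsilon\iota\varphi$, which is \eqref{eqn:bitrans1}. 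Hence $(\iota,\epsilon)$ is a bitranslation between $C_1$ and $C_2$.

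The argument is essentially a diagram chase through the defining biconditionals \eqref{eqn:iota} and \eqref{eqn:epsilon}, so there is no real obstacle; the only point requiring care is to keep track of which formulas lie in $L_1$ versus $L_2$ when applying $\iota$ and $\epsilon$ (so that the composites make sense and land in the right fragment), and to invoke \axsurj\ via Lemma~\ref{lem:coarsening} exactly where the reverse implications — and hence the translation property and \eqref{eqn:bitrans1} — are needed.
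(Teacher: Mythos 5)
Your proof is correct and follows essentially the same route as the paper: the same two-step chain through \eqref{eqn:iota} and \eqref{eqn:epsilon} for \eqref{eqn:bitrans2}, and the same use of Lemma~\ref{lem:coarsening} plus a model $M$ supplied by \axsurj\ with $\mathrm{Th}_{L_1}(N)=\mathrm{Th}_{L_1}(\sigma M)$ to transfer both $\varphi$ and $\epsilon\iota\varphi$ between $N$ and $\sigma M$ for \eqref{eqn:bitrans1}. No gaps.
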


\begin{proof}
For $\psi\in L_2$,
by (\ref{eqn:iota}) and (\ref{eqn:epsilon}) we have 
for every $M\models T_2$
that
$M\models\psi$ if and only if $M\models\iota\epsilon\psi$,
which shows (\ref{eqn:bitrans2}).
If $B$ satisfies \axsurj, then Lemma \ref{lem:coarsening}(a) and (b) give that $\iota$, respectively $\epsilon$, is a translation, 
and for $\varphi\in L_{1}$ and $N\models T_{1}$ we have
$$
N\models\varphi
\quad\Longleftrightarrow\quad
\sigma M\models\varphi
\quad\Longleftrightarrow\quad
	M\models\iota\varphi
\quad\Longleftrightarrow\quad
	\sigma M\models\epsilon\iota\varphi
\quad\Longleftrightarrow\quad
	N\models\epsilon\iota\varphi,
 $$
where $M$ is some model of $T_2$ with
$\mathrm{Th}_{L_{1}}(N)=\mathrm{Th}_{L_{1}}(\sigma M)$,
whose existence is guaranteed by \axsurj.
\end{proof}

\begin{definition}
We consider the following monotonicity property
of the bridge $B$:
\begin{enumerate}
	\item[{\bf(mon)}]\label{axmono}
	For all $M,M'\models T_{2}$
	with $\mathrm{Th}_{L_{1}}(\sigma M)\subseteq\mathrm{Th}_{L_{1}}(\sigma M')$
	we have $\mathrm{Th}_{L_{2}}(M)\subseteq\mathrm{Th}_{L_{2}}(M')$.	
\end{enumerate}
\end{definition}

\begin{lemma}\label{lem:boolean}
Suppose $\iota$ is an interpretation for $B$.
Then for every $\varphi,\varphi'\in L_{1}$,
$T_{2}\models\iota(\varphi\wedge\varphi')\leftrightarrow(\iota\varphi\wedge\iota\varphi')$
and 
$T_{2}\models\iota(\varphi\vee\varphi')\leftrightarrow(\iota\varphi\vee\iota\varphi')$.
\end{lemma}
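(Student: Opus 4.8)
The plan is to simply unwind the definition of an interpretation together with the semantics of the Boolean connectives, model by model. First I would note that the statement even makes sense: since $L_1$ is an $\mathfrak{L}_1$-fragment it is closed under $\wedge$ and $\vee$, so $\varphi\wedge\varphi'$ and $\varphi\vee\varphi'$ lie in $L_1$ and hence $\iota(\varphi\wedge\varphi')$ and $\iota(\varphi\vee\varphi')$ are defined; likewise $\iota\varphi\wedge\iota\varphi'$ and $\iota\varphi\vee\iota\varphi'$ lie in $L_2$ since $L_2$ is a fragment, so both sides of each claimed equivalence are $\mathfrak{L}_2$-sentences and it is legitimate to ask whether $T_2$ proves the biconditional.

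Next, fix an arbitrary $M\models T_2$. Applying the defining property \eqref{eqn:iota} of the interpretation $\iota$ three times — to $\varphi\wedge\varphi'$, to $\varphi$, and to $\varphi'$ — and using that $\sigma M\models\varphi\wedge\varphi'$ iff $\sigma M\models\varphi$ and $\sigma M\models\varphi'$, I obtain the chain
\[
 M\models\iota(\varphi\wedge\varphi')
 \iff \sigma M\models\varphi\wedge\varphi'
 \iff \bigl(\sigma M\models\varphi \text{ and } \sigma M\models\varphi'\bigr)
 \iff \bigl(M\models\iota\varphi \text{ and } M\models\iota\varphi'\bigr)
 \iff M\models\iota\varphi\wedge\iota\varphi'.
\]
Since $M\models T_2$ was arbitrary, this shows $T_2\models\iota(\varphi\wedge\varphi')\leftrightarrow(\iota\varphi\wedge\iota\varphi')$. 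The case of $\vee$ is verbatim the same argument with $\wedge$ replaced by $\vee$ throughout, using $\sigma M\models\varphi\vee\varphi'$ iff $\sigma M\models\varphi$ or $\sigma M\models\varphi'$.

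There is no real obstacle here: the lemma is an immediate consequence of \eqref{eqn:iota} and the compositionality of truth for propositional connectives, and the only point requiring a word is the fragment-membership bookkeeping mentioned above, which is exactly what the closure conditions in the definition of an $\mathfrak{L}$-fragment guarantee. (One could also phrase the proof as: $\iota$ followed by evaluation at $M$ equals evaluation at $\sigma M$ as maps on $L_1$, and evaluation at $\sigma M$ is a lattice homomorphism into $\{\bot,\top\}$; but the explicit chain above is cleaner.)
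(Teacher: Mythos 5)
Your proof is correct and is precisely the routine verification the paper elides with ``This is very easy to check'': fix $M\models T_2$, apply the defining equivalence \eqref{eqn:iota} to $\varphi\wedge\varphi'$, $\varphi$, and $\varphi'$, and use compositionality of satisfaction for $\wedge$ and $\vee$. The remark about fragment closure guaranteeing that $\iota(\varphi\wedge\varphi')$ is even defined is a worthwhile bookkeeping point, and nothing is missing.
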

\begin{proof}
This is very easy to check.
\end{proof}

\begin{proposition}\label{prp:elimination}
If $B$ admits an elimination, then it satisfies \axmono.
Conversely,
if $B$ admits an interpretation $\iota$ and satisfies \axmono,
then it admits an elimination.
In that case, if in addition
$L_1$ and $L_2$ are computable,
$T_2$ is computably enumerable
and $\iota$ is computable,
then $B$ admits a computable elimination.
\end{proposition}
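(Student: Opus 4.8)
The plan is to prove the three assertions in turn, the first being soft and the remaining two being the substantive ones.

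First, suppose $B$ admits an elimination $\epsilon:L_2\to L_1$. To verify \axmono, take $M,M'\models T_2$ with $\mathrm{Th}_{L_1}(\sigma M)\subseteq\mathrm{Th}_{L_1}(\sigma M')$, and let $\psi\in\mathrm{Th}_{L_2}(M)$. By \eqref{eqn:epsilon} we have $\sigma M\models\epsilon\psi$, so $\epsilon\psi\in\mathrm{Th}_{L_1}(\sigma M)\subseteq\mathrm{Th}_{L_1}(\sigma M')$, whence $\sigma M'\models\epsilon\psi$, and applying \eqref{eqn:epsilon} again gives $M'\models\psi$. Thus $\mathrm{Th}_{L_2}(M)\subseteq\mathrm{Th}_{L_2}(M')$, which is \axmono.

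Second, suppose $B$ admits an interpretation $\iota$ and satisfies \axmono. I want to define an elimination $\epsilon$. The natural candidate, for $\psi\in L_2$, is to set $\epsilon\psi$ to be some $\varphi\in L_1$ with $T_2\models\psi\leftrightarrow\iota\varphi$ if such $\varphi$ exists, and $\epsilon\psi=\falsum$ otherwise. The key point is that for every $\psi\in L_2$ such a $\varphi$ does exist; this is where \axmono\ enters. Given $\psi\in L_2$, consider the $L_1$-theory $S_\psi=\{\varphi\in L_1 : T_2\models\iota\varphi\to\psi\}$ — informally the "largest" $\varphi$ we can hope for. One shows, using \axmono\ together with a compactness argument, that $T_2\models\psi\to\iota\varphi$ already follows from finitely many $\varphi$-disjuncts drawn from $S_\psi$: concretely, one argues that the set $\{\psi\}\cup\{\neg\iota\varphi : \varphi\in S_\psi\}$ (suitably interpreted — note $L_2$ need not be closed under negation, so this has to be run at the level of models rather than sentences) is unsatisfiable over $T_2$, because any model $M\models T_2\cup\{\psi\}$ has $\sigma M\models\mathrm{Th}_{L_1}(\sigma M)$, and one can use \axmono\ to produce from $\mathrm{Th}_{L_1}(\sigma M)$ a single disjunction $\varphi=\varphi_1\vee\dots\vee\varphi_k\in L_1$ (here closure of $L_1$ under $\vee$ is used) lying in $S_\psi$ with $\sigma M\models\varphi$. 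Then $T_2\models\psi\leftrightarrow\iota\varphi$ for that finite disjunction; combined with Lemma \ref{lem:boolean}, which guarantees $\iota$ commutes with $\vee$ and $\wedge$ up to $T_2$-equivalence, this exhibits $\varphi$ as the required preimage. Once $\varphi$ is found, \eqref{eqn:epsilon} is immediate: $\sigma M\models\epsilon\psi=\varphi$ iff $M\models\iota\varphi$ iff $M\models\psi$ by \eqref{eqn:iota} and the choice of $\varphi$.

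Third, the computability refinement. Assume $L_1,L_2$ computable, $T_2$ computably enumerable, $\iota$ computable. To compute $\epsilon\psi$, enumerate $\varphi\in L_1$ (possible since $L_1$ is computable) and, in parallel, enumerate a proof search witnessing $T_2\models\psi\leftrightarrow\iota\varphi$; since $T_2$ is c.e.\ and $\iota$ is computable, the relation "$T_2\models\psi\leftrightarrow\iota\varphi$" is c.e., so by the previous paragraph this dovetailed search halts for every $\psi$, and we output the first such $\varphi$ found. This is a total computable function, and it satisfies \eqref{eqn:epsilon} by construction, so it is a computable elimination.

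The main obstacle is the second step, and specifically the care needed because $L_1$ and $L_2$ are mere fragments, not closed under negation or implication: the compactness argument that "finitely many disjuncts from $S_\psi$ suffice" must be phrased entirely model-theoretically, deriving unsatisfiability of a family of $L_2$-conditions over $T_2$ from \axmono\ rather than manipulating $\neg\iota\varphi$ as a formula. I expect the cleanest route is: suppose for contradiction that no finite disjunction works; use compactness to get $M\models T_2$ with $M\models\psi$ but $\sigma M\not\models\varphi$ for every $\varphi\in S_\psi$; then the $L_1$-theory $\mathrm{Th}_{L_1}(\sigma M)$ together with $\{\psi\}$ is consistent with "$\iota\varphi$ fails" for suitable witnesses, and \axmono\ applied to $M$ and a model realizing $\neg\psi$ with the same or smaller $L_1$-type of residue produces the contradiction. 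Packaging this argument correctly — in particular pinning down exactly which two models \axmono\ is applied to — is the delicate part; everything else is routine.
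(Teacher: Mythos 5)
Your first paragraph (elimination $\Rightarrow$ \axmono) and your computability paragraph are correct and coincide with the paper's argument. The problem is the middle step, precisely at the point you yourself flag as ``delicate'': you never actually pin down how \axmono\ produces, for a given $M\models T_2\cup\{\psi\}$, some $\varphi\in S_\psi$ with $\sigma M\models\varphi$, and as written the sketch is off in two places. First, the sentence needed here is a \emph{conjunction}, not a disjunction: a disjunction $\varphi_1\vee\dots\vee\varphi_k$ lies in $S_\psi$ only if every $\varphi_i$ does (since $T_2\models\iota(\bigvee_i\varphi_i)\leftrightarrow\bigvee_i\iota\varphi_i$ by Lemma \ref{lem:boolean}), so producing such a disjunction satisfied by $\sigma M$ is no easier than producing a single $\varphi_i\in S_\psi$ with $\sigma M\models\varphi_i$, and in any case an arbitrary disjunction of members of $\mathrm{Th}_{L_1}(\sigma M)$ (e.g.\ $\verum$) has no reason to lie in $S_\psi$. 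Second, your description of the \axmono\ application (``$M$ and a model realizing $\neg\psi$ with the same or smaller $L_1$-type of residue'') has the inclusion backwards: \axmono\ gives information only when the residue $L_1$-theory of the second model \emph{contains} that of $\sigma M$. The correct completion of your route is: since $M\models\psi$, \axmono\ yields $T_2\cup\iota\,\mathrm{Th}_{L_1}(\sigma M)\models\psi$ (any model $M'$ of the left-hand side has $\mathrm{Th}_{L_1}(\sigma M)\subseteq\mathrm{Th}_{L_1}(\sigma M')$, hence $\mathrm{Th}_{L_2}(M)\subseteq\mathrm{Th}_{L_2}(M')$, hence $M'\models\psi$); compactness then gives $\alpha_1,\dots,\alpha_k\in\mathrm{Th}_{L_1}(\sigma M)$ with $T_2\models(\iota\alpha_1\wedge\dots\wedge\iota\alpha_k)\rightarrow\psi$, and $\varphi:=\alpha_1\wedge\dots\wedge\alpha_k$ lies in $L_1$ (fragments are closed under $\wedge$), lies in $S_\psi$ by Lemma \ref{lem:boolean}, and is satisfied by $\sigma M$. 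With that inserted, your second compactness argument on $T_2\cup\{\psi\}\cup\{\neg\iota\varphi:\varphi\in S_\psi\}$ produces the finite disjunction and the rest of your second paragraph goes through.

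For comparison, the paper reaches the same key claim ($T_2\models\psi\leftrightarrow\iota\varphi$ for some $\varphi\in L_1$) with a different packaging: the contrapositive of \axmono\ gives, for every pair $M\models\psi$, $M'\not\models\psi$, a single $\alpha\in L_1$ with $\sigma M\models\alpha$ and $\sigma M'\not\models\alpha$, hence $M\models\iota\alpha$ and $M'\not\models\iota\alpha$; the separation lemma \cite[Lemma 3.1.3]{PrestelDelzell} then directly produces $\alpha_{ij}\in L_1$ with $T_2\models\psi\leftrightarrow\bigwedge_i\bigvee_j\iota\alpha_{ij}$, and Lemma \ref{lem:boolean} pulls the Boolean combination inside $\iota$. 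Either route works --- the separation lemma encapsulates exactly the two-layer compactness argument you are trying to reconstruct by hand --- but as it stands your proposal leaves that argument incomplete at its central point.
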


\begin{proof}
If $\epsilon$ is an elimination for $B$, then
for $M,M'\models T_2$ with ${\rm Th}_{L_1}(\sigma M)\subseteq{\rm Th}_{L_1}(\sigma M')$
we have
$$
 M\models\psi \quad\Longleftrightarrow\quad
 \sigma M\models\epsilon\psi
 \quad\Longrightarrow\quad
 \sigma M'\models\epsilon\psi \quad\Longleftrightarrow\quad
 M'\models \psi
$$
for every $\psi\in L_2$,
hence ${\rm Th}_{L_2}(M)\subseteq{\rm Th}_{L_2}(M')$.

Now suppose that $B$ satisfies \axmono\ and admits an interpretation $\iota$.
We claim that for every $\psi\in L_{2}$ there exists $\varphi\in L_{1}$ with $T_{2}\models\psi\leftrightarrow\iota\varphi$.
Indeed, for every $M,M'\models T_{2}$ with $M\models\psi$ and $M'\not\models\psi$, by \axmono\ there exists $\alpha\in L_{1}$ with $\sigma M\models\alpha$ and $\sigma M'\not\models\alpha$.
Thus $M\models\iota\alpha$ and $M'\not\models\iota\alpha$.
Therefore, the separation lemma \cite[Lemma 3.1.3]{PrestelDelzell}
gives that there exist $\alpha_{ij}\in L_{1}$ such that $T_{2}\models\psi\leftrightarrow\bigwedge_{i}\bigvee_{j}\iota\alpha_{ij}$.
By Lemma~\ref{lem:boolean},
$T_{2}\models\bigwedge_{i}\bigvee_{j}\iota\alpha_{ij}\leftrightarrow\iota(\bigwedge_{i}\bigvee_{j}\alpha_{ij})$.
Thus the choice $\varphi:=\bigwedge_{i}\bigvee_{j}\alpha_{ij}$ satisfies
the claim.
By defining $\epsilon\psi$ to be $\varphi$ for any such $\varphi$, we have
$M\models\psi \Leftrightarrow
 M\models\iota\varphi \Leftrightarrow
 \sigma M\models \epsilon\psi$
for any $M\models T_2$.

Assume now in addition that $L_1$ and $L_2$ are computable,
$T_2$ is computably enumerable and $\iota$ is computable.
Then we can obtain a computable $\epsilon$ as follows:
Fix a computable enumeration
$\varphi_1,\varphi_2,\dots$ of $L_{1}$
and a computable enumeration
$P_1,P_2,\dots$
of the proofs from $T_{2}$
(here we use $\alpha:{\rm Form}(\mathfrak{L}_2)\rightarrow\mathbb{N}$
to obtain an injection from the
set of finite sequences of $\mathfrak{L}_2$-formulas
into $\mathbb{N}$
in the usual way).
The ordering $\prec$ on $\mathbb{N}\times\mathbb{N}$ defined by 
\begin{eqnarray}\label{eqn:ordering}
 (n,m)\prec (n',m')\;\Longleftrightarrow\; n+m<n'+m'\vee (n+m=n'+m'\wedge m<m')    
\end{eqnarray}
is computable and has order type $\omega$.
Given $\psi\in L_{2}$,
by the previous paragraph there exist $n$ and $m$
such that
$P_n$ is a proof of
$T_{2}\vdash \psi\leftrightarrow\iota\varphi_m$.
We take the $\prec$-minimal such pair $(n,m)$
and define $\epsilon\psi:=\varphi_{m}$.
\end{proof}

\subsection{Extensions of bridges}

In certain circumstances a bridge admitting an interpretation may be extended in such a way that the interpretation also extends.
To deal with such situations in full generality, we combine two bridges into an arch.

\begin{definition}\label{def:arch}
An {\em arch} is a triple $A=(B,\hat{B},\iota)$ such that $B\sqsubseteq\hat{B}$ is a fragment extension, $\iota$ is an interpretation for $\hat{B}$,
and $\iota|_{L_1}$ is an interpretation for $B$.
We write $B\arch\hat{B}$ for $A$ when $\iota$ is clear from the context.
\end{definition}

\begin{example}
In our main example of valued fields (see Example \ref{ex:main}),
we use arches to capture the fact that
we have an interpretation $\iota$ defined on arbitrary sentences
although
the eliminations are usually defined only on existential or universal sentences (see, for example, Corollary \ref{cor:henselian}),
but restricting to existential (or universal) sentences throughout
would be too limiting,
as we would like to work 
for example in the class of valued fields with pseudofinite residue field,
which is axiomatized neither by existential nor by universal sentences.
Most of our applications of arches will involve
Corollary \ref{lem:pattern} below.
\end{example}

\begin{lemma}\label{lem:sur_goes_up}
Suppose that $B$ admits an interpretation $\iota$.
Let $R\subseteq L_1$ and write $\check{T}_1=T_1\cup R$, $\check{T}_2=T_2\cup\iota R$,
$\check{\sigma}=\sigma|_{{\rm Mod}(\check{T}_2)}$,
and
$\check{C}_i=(L_i,\check{T}_i)$.
Then
$\check{B}=(\check{C}_1,\check{C}_2,\check{\sigma})$ is a bridge,
it is a theory extension of $B$, and
in particular
$\check{T}_{2}^{\vdash}=(T_{2}\cup\iota(R_{L_{1}}))^{\vdash}=(T_{2}\cup\iota(\check{T}_{1,L_{1}}))^{\vdash}$.
Moreover, if $B$
satisfies \axsurj,
then so does $\check{B}$.
\end{lemma}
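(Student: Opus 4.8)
The plan is to verify each assertion in order, since all of them are essentially bookkeeping once one observes the right identity. First I would check that $\check{B}$ is a bridge: this only requires that $\check\sigma$ maps $\mathrm{Mod}(\check T_2)$ into $\mathrm{Mod}(\check T_1)$. So let $M\models\check T_2=T_2\cup\iota R$. Then $M\models T_2$, so $\sigma M\models T_1$, and for each $\varphi\in R$ we have $M\models\iota\varphi$, hence $\sigma M\models\varphi$ by the interpretation property \eqref{eqn:iota}; thus $\sigma M\models T_1\cup R=\check T_1$. Since $C_i$ is a subcontext of $\check C_i$ (same fragment, larger theory) and $\check\sigma=\sigma|_{\mathrm{Mod}(\check T_2)}$, this is by definition a theory extension of $B$ — here one should note that Lemma~\ref{lem:goes_up} applies with $\check T_1=T_1\cup R$, $\check T_2=T_2\cup\iota R$, the hypothesis $\sigma(\mathrm{Mod}(\check T_2))\subseteq\mathrm{Mod}(\check T_1)$ being exactly what was just checked.

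Next comes the identity $\check T_2^{\vdash}=(T_2\cup\iota(R_{L_1}))^{\vdash}=(T_2\cup\iota(\check T_{1,L_1}))^{\vdash}$. The inclusion $\check T_2^{\vdash}\subseteq(T_2\cup\iota(\check T_{1,L_1}))^{\vdash}$ is clear since $R\subseteq\check T_{1,L_1}$. For the reverse, it suffices to show that every element of $\iota(\check T_{1,L_1})$ is a deductive consequence of $\check T_2=T_2\cup\iota R$. So fix $\varphi\in\check T_{1,L_1}=\check T_1^{\vdash}\cap L_1$, i.e.\ $\varphi\in L_1$ with $T_1\cup R\models\varphi$. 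I want $\check T_2\models\iota\varphi$. Take any $M\models\check T_2$; by the previous paragraph $\sigma M\models T_1\cup R$, hence $\sigma M\models\varphi$, hence $M\models\iota\varphi$ by \eqref{eqn:iota}. This gives $\check T_2\models\iota\varphi$, and since this holds for all such $\varphi$ we get $(T_2\cup\iota(\check T_{1,L_1}))^{\vdash}\subseteq\check T_2^{\vdash}$. The same argument with $R_{L_1}=T_1^{\vdash}\cap(L_1\cap\ldots)$ in place of $\check T_{1,L_1}$ — or simply the observation that $R\subseteq R_{L_1}\subseteq\check T_{1,L_1}$ and all three sets have the same $\iota$-image modulo $\check T_2$ — yields the chain of equalities. (One small point to state carefully: $R\subseteq L_1$ so $R\subseteq R^{\vdash}\cap L_1=R_{L_1}$, and $R_{L_1}\subseteq(T_1\cup R)^{\vdash}\cap L_1=\check T_{1,L_1}$; combined with the displayed containments between the deductive closures, all three coincide.)

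Finally, the preservation of \axsurj. Assume $B$ satisfies \axsurj\ and let $N\models\check T_1$. In particular $N\models T_1$, so by \axsurj\ for $B$ there is $M\models T_2$ with $\mathrm{Th}_{L_1}(N)=\mathrm{Th}_{L_1}(\sigma M)$. I claim $M\models\check T_2$. Indeed $M\models T_2$ already, and for each $\varphi\in R\subseteq L_1$: since $N\models\check T_1\supseteq R$ we have $\varphi\in\mathrm{Th}_{L_1}(N)=\mathrm{Th}_{L_1}(\sigma M)$, so $\sigma M\models\varphi$, so $M\models\iota\varphi$ by \eqref{eqn:iota}. Hence $M\models T_2\cup\iota R=\check T_2$, and since $\check\sigma M=\sigma M$ we have $\mathrm{Th}_{L_1}(N)=\mathrm{Th}_{L_1}(\check\sigma M)$, which is \axsurj\ for $\check B$.

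The only place requiring any care is the middle identity, and even there the ``obstacle'' is purely notational: one must keep straight the three nested theories $R\subseteq R_{L_1}\subseteq\check T_{1,L_1}$ and use that $\iota\circ({-})$ sends each into something provable from $\check T_2$. Everything else is a direct unwinding of \eqref{eqn:iota} together with Lemmas~\ref{lem:goes_up} and (for the subcontext/extension bookkeeping) the definitions.
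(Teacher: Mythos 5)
Your proof is correct and follows essentially the same route as the paper's: check the bridge property via \eqref{eqn:iota}, get the nontrivial inclusion $(T_{2}\cup\iota(\check{T}_{1,L_{1}}))^{\vdash}\subseteq\check{T}_{2}^{\vdash}$ from $\sigma M\models\check{T}_{1,L_1}$, note the remaining inclusions are trivial from $R\subseteq R_{L_1}\subseteq\check{T}_{1,L_1}$, and transfer \axsurj{} using $R\subseteq L_1$. The only blemish is the garbled aside ``$R_{L_1}=T_1^{\vdash}\cap(L_1\cap\ldots)$'', which you immediately supersede with the correct $R_{L_1}=R^{\vdash}\cap L_1$.
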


\begin{proof}
If $M\models\check{T}_2=T_2\cup\iota R$, then in particular
$\sigma M\models R$, but also $\sigma M\models T_1$ as $B$ is a bridge,
hence
$\sigma M\models \check{T}_1$, and so
$\check{B}$ is a bridge.
In particular, $\sigma M\models\check{T}_{1,L_1}$,
hence $\check{T}_{2}^{\vdash}\supseteq(T_{2}\cup\iota(\check{T}_{1,L_{1}}))^{\vdash}$,
and the inclusions
$\check{T}_{2}^{\vdash}\subseteq(T_{2}\cup\iota(R_{L_{1}}))^{\vdash}\subseteq(T_{2}\cup\iota(\check{T}_{1,L_{1}}))^{\vdash}$
are trivial
as $R\subseteq L_1$.
Now suppose that $B$ satisfies \axsurj,
and let $N\models\check{T}_{1}=T_{1}\cup R$.
By \axsurj\ there exists $M\models T_{2}$ with $\mathrm{Th}_{L_{1}}(N)=\mathrm{Th}_{L_{1}}(\sigma M)$.
Since $R\subseteq L_{1}$, this implies that $\sigma M\models R$, and so $M\models\iota R$.
Therefore $\check{B}$ satisfies \axsurj.
\end{proof}    

For the rest of this section $\hat{B}$ will denote a fragment extension of $B=(C_1,C_2,\sigma)$,
and we write 
$\hat{B}=(\hat{C}_{1},\hat{C}_{2},\sigma)$,
with $\hat{C}_{i}=(\hat{L}_{i},T_{i})$.

\begin{proposition}\label{prop:arch}
Let $A=(B,\hat{B},\iota)$ be an arch where $B$ admits an elimination $\epsilon$.
We suppose that 
$\hat{B}$ satisfies \axsurj.
Then for each $R\subseteq \hat{L}_1$,
$(\iota|_{L_{1}},\epsilon)$ is a bitranslation between
$(L_1,T_1\cup R)$ and $(L_2,T_2\cup\iota R)$.
\end{proposition}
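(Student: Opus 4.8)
The plan is to assemble the claim from the lemmas already established: first extend the theories by $R$ at the level of the larger fragments $\hat L_i$ (where $\iota R$ is available), then restrict the fragments back down to $L_i$, and finally invoke Proposition~\ref{rem:2}.

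First I would apply Lemma~\ref{lem:sur_goes_up} to the bridge $\hat B$, which admits the interpretation $\iota$, with the given set $R\subseteq\hat L_1$. This produces a bridge $\check{\hat B}=((\hat L_1,T_1\cup R),(\hat L_2,T_2\cup\iota R),\check\sigma)$, where $\check\sigma=\sigma|_{\mathrm{Mod}(T_2\cup\iota R)}$; it is a theory extension of $\hat B$, and since $\hat B$ satisfies \axsurj, so does $\check{\hat B}$. As $\check{\hat B}$ is a bridge we have $\sigma(\mathrm{Mod}(T_2\cup\iota R))\subseteq\mathrm{Mod}(T_1\cup R)$, so Lemma~\ref{lem:goes_up}(a) applies to the theory extension $\hat B\sqsubseteq\check{\hat B}$ and shows that $\iota$ is still an interpretation for $\check{\hat B}$.

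Next I would restrict the fragments from $\hat L_i$ down to $L_i$. Setting $C_1'=(L_1,T_1\cup R)$ and $C_2'=(L_2,T_2\cup\iota R)$, each $C_i'$ is a subcontext of the corresponding context of $\check{\hat B}$ (same theory, smaller fragment), so Lemma~\ref{lem:goes_down} shows that $\check B'=(C_1',C_2',\check\sigma)$ is a bridge of which $\check{\hat B}$ is a fragment extension. Since $\iota|_{L_1}$ is an interpretation for $B$ we have $\iota(L_1)\subseteq L_2$, so Lemma~\ref{lem:goes_down}(a) gives that $\iota|_{L_1}$ is an interpretation for $\check B'$; moreover $\check B'$ inherits \axsurj\ from $\check{\hat B}$, since $L_1\subseteq\hat L_1$ makes equality of $\hat L_1$-theories imply equality of $L_1$-theories. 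Finally $\check B'$ is also a theory extension of the original bridge $B$ (the fragments $L_i$ are unchanged, the theories only grow, and $\check\sigma=\sigma|_{\mathrm{Mod}(T_2\cup\iota R)}$), so Lemma~\ref{lem:goes_up}(b) shows that the elimination $\epsilon$ for $B$ is also an elimination for $\check B'$.

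At this point $\check B'$ is a bridge satisfying \axsurj\ that admits both the interpretation $\iota|_{L_1}$ and the elimination $\epsilon$, so Proposition~\ref{rem:2} yields immediately that $(\iota|_{L_1},\epsilon)$ is a bitranslation between $C_1'=(L_1,T_1\cup R)$ and $C_2'=(L_2,T_2\cup\iota R)$, which is the assertion. I do not expect a genuine obstacle: the content lies entirely in the bookkeeping — extending theories first, at the big-fragment level where $\iota R$ makes sense, and only then shrinking the fragments, while checking that each intermediate triple really is a bridge. The one point needing a little care is that $\epsilon$ survives the theory extension, which requires $\check B'$ to be a bridge so that $\check\sigma$ has the correct codomain; this is precisely what Lemma~\ref{lem:sur_goes_up} secures.
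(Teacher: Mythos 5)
Your proposal is correct and follows essentially the same route as the paper: extend the theories via Lemma~\ref{lem:sur_goes_up} applied to $\hat{B}$, transfer the interpretation and elimination to the extended small-fragment bridge via Lemmas~\ref{lem:goes_down} and~\ref{lem:goes_up}, note that \axsurj\ passes down from the $\hat{L}_1$-level to the $L_1$-level, and conclude with Proposition~\ref{rem:2}. The bookkeeping is carried out correctly, including the point that $\epsilon$ survives the theory extension because the extended triple is genuinely a bridge.
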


\begin{proof}
Define $\check{T}_1=T_1\cup R$,
$\check{T}_2=T_2\cup \iota R$.
Since $\sigma(\mathrm{Mod}(\check{T}_{2}))\subseteq\mathrm{Mod}(\check{T}_{1})$,
both
$B(R):=((L_{1},\check{T}_{1}),(L_{2},\check{T}_{2}),\check{\sigma})$
and
$\hat{B}(R):=((\hat{L}_{1},\check{T}_{1}),(\hat{L}_{2},\check{T}_{2}),\check{\sigma})$
are bridges,
where $\check{\sigma}=\sigma|_{\mathrm{Mod}(\check{T}_{2})}$
(Lemma~\ref{lem:goes_up}).
Moreover,
$\iota|_{L_{1}}$ is an interpretation for $B(R)$ (Lemmas~\ref{lem:goes_down}(a), \ref{lem:goes_up}(a))
and also
$\epsilon$ is an elimination for $B(R)$
(Lemma~\ref{lem:goes_up}(b)).
Furthermore,
since $\hat{B}$ satisfies \axsurj,
so does $\hat{B}(R)$,
by Lemma~\ref{lem:sur_goes_up},
which trivially implies that also $B(R)$ satisfies \axsurj.   
The claim follows from Proposition \ref{rem:2}.
\end{proof}

\begin{corollary}\label{lem:pattern}
Let $A=(B,\hat{B},\iota)$ be an arch.
We suppose
\begin{enumerate}[$(i)$]
\item\label{pattern.i}
$L_{1},L_{2},\hat{L}_1,\hat{L}_2$ are computable,
$\iota$ is computable, and
$T_{2}$ is computably enumerable%
\footnote{The computability of $L_{1}$ and $\hat{L}_{1}$ is to be understood with respect to the same injection $\alpha_1:\mathrm{Form}(\mathfrak{L}_{1})\rightarrow\mathbb{N}$, as in Remark~\ref{rem:presentation}; and similarly for $L_{2}$, $\hat{L}_{2}$ and $T_2$.},
\item\label{pattern.ii}
$\hat{B}$ satisfies \axsurj, and
\item\label{pattern.iii}
$B$ satisfies \axmono.
\end{enumerate}
Then
\begin{enumerate}[{\rm(I)}] 
\item
$B$ admits a computable elimination.
\item
For each $R\subseteq\hat{L}_{1}$,  
$(T_{1}\cup R)_{L_{1}}\meq(T_{2}\cup\iota R)_{L_{2}}$.
\item
$\mathrm{Th}_{L_{2}}(M)=(T_{2}\cup\iota\mathrm{Th}_{\hat{L}_{1}}(\sigma M))_{L_{2}}=(T_{2}\cup\iota\mathrm{Th}_{L_{1}}(\sigma M))_{L_{2}}$, for each $M\models T_{2}$.
\end{enumerate}
\end{corollary}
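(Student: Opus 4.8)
The plan is to deduce all three parts from the machinery already assembled, feeding the hypotheses $(i)$--$(iii)$ into the earlier propositions in the right order. First I would establish (I): since $B$ admits an interpretation $\iota|_{L_1}$ (because $A$ is an arch) and satisfies \axmono\ by $(iii)$, Proposition~\ref{prp:elimination} gives an elimination for $B$; and since by $(i)$ the fragments $L_1,L_2$ are computable, $T_2$ is computably enumerable and $\iota|_{L_1}$ is computable (the latter because $\iota$ is computable and $L_1$ is computable with respect to the same coding $\alpha_1$, by the footnote), the same proposition yields a \emph{computable} elimination $\epsilon$. This $\epsilon$ is then fixed for the rest of the argument.

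Next I would prove (II). Fix $R\subseteq\hat{L}_1$ and apply Proposition~\ref{prop:arch} to the arch $A$ with the elimination $\epsilon$ from (I) and hypothesis $(ii)$ that $\hat{B}$ satisfies \axsurj: this gives that $(\iota|_{L_1},\epsilon)$ is a bitranslation between $(L_1,T_1\cup R)$ and $(L_2,T_2\cup\iota R)$. In particular $\iota|_{L_1}$ is a translation from $(L_1,T_1\cup R)$ to $(L_2,T_2\cup\iota R)$, so by Lemma~\ref{lem:trans_manyone} we get $(T_1\cup R)_{L_1}=(\iota|_{L_1})^{-1}((T_2\cup\iota R)_{L_2})$ and, since $L_1,L_2$ and $\iota|_{L_1}$ are computable, $(T_1\cup R)_{L_1}\mred (T_2\cup\iota R)_{L_2}$. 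For the reverse reduction I would apply Lemma~\ref{lem:trans_manyone} to the other half $\epsilon$ of the bitranslation — $\epsilon$ is a translation from $(L_2,T_2\cup\iota R)$ to $(L_1,T_1\cup R)$ and is computable by (I) — to obtain $(T_2\cup\iota R)_{L_2}\mred (T_1\cup R)_{L_1}$. Hence $(T_1\cup R)_{L_1}\meq (T_2\cup\iota R)_{L_2}$.

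Finally (III). Fix $M\models T_2$ and take $R=\mathrm{Th}_{\hat{L}_1}(\sigma M)\subseteq\hat{L}_1$. The key observation is that $T_2\cup\iota R$ and $M$ have the same $L_2$-consequences: on one hand $\sigma M\models R$ and $\sigma M\models T_1$, so $M\models T_2\cup\iota R$ by \eqref{eqn:iota}, giving $(T_2\cup\iota R)_{L_2}\subseteq\mathrm{Th}_{L_2}(M)$; on the other hand, by the bitranslation of (II) applied with this $R$ — or more directly by Proposition~\ref{rem:2}, since $B(R)$ has both interpretation $\iota|_{L_1}$ and elimination $\epsilon$ — every $\psi\in\mathrm{Th}_{L_2}(M)$ satisfies $M\models\psi\leftrightarrow\iota\epsilon\psi$, and $\epsilon\psi\in\mathrm{Th}_{L_1}(\sigma M)$ because $\sigma M\models\epsilon\psi\leftrightarrow$(the image under $\iota$ followed by $\sigma$), so $\epsilon\psi\in R_{L_1}\subseteq R$ and thus $\psi$ is a consequence of $T_2\cup\iota R$ modulo \eqref{eqn:iota}. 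This gives $\mathrm{Th}_{L_2}(M)=(T_2\cup\iota\,\mathrm{Th}_{\hat{L}_1}(\sigma M))_{L_2}$. The second equality $(T_2\cup\iota\,\mathrm{Th}_{\hat{L}_1}(\sigma M))_{L_2}=(T_2\cup\iota\,\mathrm{Th}_{L_1}(\sigma M))_{L_2}$ follows from Lemma~\ref{lem:sur_goes_up}: with $R=\mathrm{Th}_{\hat{L}_1}(\sigma M)$ we have $\check T_2^\vdash=(T_2\cup\iota(R_{L_1}))^\vdash$, and $R_{L_1}=\mathrm{Th}_{\hat{L}_1}(\sigma M)\cap L_1=\mathrm{Th}_{L_1}(\sigma M)$.

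The main obstacle I anticipate is purely bookkeeping: making sure that when one restricts the arch $A$ to the various subcontexts and theory extensions $B(R)$, $\hat B(R)$, all the hypotheses genuinely transfer — in particular that $\hat B(R)$ still satisfies \axsurj\ (this is exactly Lemma~\ref{lem:sur_goes_up}, which requires $R\subseteq\hat L_1$, not merely $R\subseteq\hat L_2$) and that $B(R)$ inherits \axmono\ only insofar as it is needed, which it is not, since by that stage we already have the elimination $\epsilon$ for $B$ and hence for $B(R)$ by Lemma~\ref{lem:goes_up}(b). The computability tracking in (I) and (II) — ensuring $\iota|_{L_1}$ and $\epsilon$ are computable with respect to the correct Gödel codings, as flagged in the footnote to $(i)$ — is the other point that must be handled with some care but presents no real difficulty.
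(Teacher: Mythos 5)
Your proposal is correct and for parts (I) and (II) follows exactly the paper's route: Proposition~\ref{prp:elimination} for the computable elimination, then Proposition~\ref{prop:arch} plus Lemma~\ref{lem:trans_manyone} applied to both halves of the bitranslation $(\iota|_{L_1},\epsilon)$. For (III) the paper argues slightly more directly — it takes an arbitrary $M'\models T_2\cup\iota\,\mathrm{Th}_{L_1}(\sigma M)$, notes $\mathrm{Th}_{L_1}(\sigma M)\subseteq\mathrm{Th}_{L_1}(\sigma M')$, and invokes \axmono{} to conclude $M'\models\mathrm{Th}_{L_2}(M)$ — whereas you route through $T_2\models\psi\leftrightarrow\iota\epsilon\psi$ and Lemma~\ref{lem:sur_goes_up}; both arguments are valid and rest on the same underlying facts, so this is only a cosmetic difference.
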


\begin{proof}
Since $B$ satisfies \axmono\
and admits an interpretation,
it admits an elimination $\epsilon$ (Proposition~\ref{prp:elimination}),
which, together with the computability assumptions,
may be taken to be computable.

For $R\subseteq\hat{L}_{1}$,
$(\iota|_{L_{1}},\epsilon)$ is a bitranslation between
$(L_1,T_1\cup R)$ and $(L_2,T_2\cup\iota R)$ by 
Proposition \ref{prop:arch}.
Since both $\iota|_{L_1}$ and $\epsilon$ are computable,
this gives a many-one equivalence by Lemma \ref{lem:trans_manyone}.

For $M\models T_{2}$
the inclusions
$\mathrm{Th}_{L_{2}}(M)\supseteq(T_{2}\cup\iota\mathrm{Th}_{\hat{L}_{1}}(\sigma M))_{L_{2}}\supseteq(T_{2}\cup\iota\mathrm{Th}_{L_{1}}(\sigma M))_{L_{2}}$
are trivial.
Let $M'\models T_{2}\cup\iota\mathrm{Th}_{L_{1}}(\sigma M)$,
so that $\sigma M'\models\mathrm{Th}_{L_{1}}(\sigma M)$.
Then $M'\models\mathrm{Th}_{L_{2}}(M)$,
since $B$ satisfies \axmono,
which proves {\rm(III)}.
\end{proof}

With the preceding corollary, the majority of the machinery needed for our applications in Section~\ref{section:existential_theories_of_fields} is complete.
The following two subsections deal with 
%modest adaptations to the usual framework
slightly more technical aspects of it,
which will be useful later in Section~\ref{sec:final} when we give our second tranche of applications.
The reader might want to thus skip ahead to the algebraic examples in Section~\ref{section:existential_theories_of_fields}.

\subsection{Families of theories and negations of fragments}

In our applications in Section~\ref{sec:final},
varying the characteristic of fields,
we will be extending bridges not by one theory $R\subseteq\hat{L}_1$ but by a family of theories.
The conclusion (\ref{eqn:Qlemma}) of the following proposition describes some fundamental relationships that one would ideally hope for in such a setting.
As stated, this proposition 
immediately gives a relatively straightforward proof of
Proposition~\ref{cor:F_and_P_combined}{\rm(I)},
and a conditional proof of
Proposition~\ref{cor:F_and_P_combined}{\rm(II)},
as described in Remark~\ref{rem:wm_not_R4}.
Making this proof unconditional is our main motivation for the more technical considerations in
sections~\ref{section:wm} and~\ref{section:stratify_arch},
each of which obtains parts of (\ref{eqn:Qlemma}) under weaker assumptions.

\begin{proposition}\label{prp:intersection}
Let $A=(B,\hat{B},\iota)$ be an arch.
Suppose that $\hat{B}$ satisfies \axsurj{},
and that $B$ 
admits an elimination~$\epsilon$.
Let $I\neq\emptyset$
and $R_{i}\subseteq\hat{L}_{1}$, for $i\in I$.
Then
\begin{equation}
 \label{eqn:Qlemma}   (T_{2}\cup\iota(\bigcap_{i\in I}(T_1\cup R_i)_{{L}_{1}}))_{L_{2}}
 =
 (T_{2}\cup\iota(\bigcap_{i\in I}(T_1\cup R_i)_{\hat{L}_{1}}))_{L_{2}}
 =
 \bigcap_{i\in I}(T_{2}\cup\iota R_{i})_{L_{2}}.
\end{equation}
In particular,
$(T_{2}\cup\iota((T_{1}\cup R)_{L_{1}}))_{L_{2}}
=
(T_{2}\cup\iota((T_{1}\cup R)_{\hat{L}_{1}}))_{L_{2}}
=
(T_{2}\cup\iota R)_{L_{2}}$
for every $R\subseteq\hat{L}_{1}$.
\end{proposition}
\begin{proof}
For $R\subseteq\hat{L}_1$, we write $\bar{R}=(T_1\cup R)_{\hat{L}_1}$.
First of all note that $R\subseteq\bar{R}=\bar{\bar{R}}$,
as
$(T_{1}\cup R)_{\hat{L}_{1}}\subseteq(T_{1}\cup\bar{R})_{\hat{L}_{1}}\subseteq(T_{1}^{\vdash}\cup(T_{1}\cup R)^{\vdash})_{\hat{L}_{1}}\subseteq((T_{1}\cup T_{1}\cup R)^{\vdash})_{\hat{L}_{1}}=(T_{1}\cup R)_{\hat{L}_{1}}$.
In particular
$(T_1\cup \bar{R})_{{L}_1}=(T_1\cup R)_{{L}_1}$.
Moreover, $(T_{2}\cup\iota R)^{\vdash}=(T_{2}\cup\iota\bar{R})^{\vdash}$,
and in particular, $(T_{2}\cup\iota R)_{L_{2}}=(T_{2}\cup\iota\bar{R})_{L_{2}}$,
by Lemma~\ref{lem:sur_goes_up} applied to $\hat{B}$.
Note that by Lemma~\ref{lem:sur_goes_up}, for any $R\subseteq\hat{L}_{1}$,
$((\hat{L}_{1},T_{1}\cup R),(\hat{L}_{2},T_{2}\cup\iota R),\sigma|_{\mathrm{Mod}(T_{2}\cup\iota R)})$ is a bridge satisfying \axsurj.
In particular,
$((L_{1},T_{1}\cup R),(L_{2},T_{2}\cup\iota R),\sigma|_{\mathrm{Mod}(T_{2}\cup\iota R)})$
is a bridge satisfying \axsurj.
By Lemma \ref{lem:goes_up}(b),
$\epsilon$ is an elimination also for this bridge.

By the first paragraph, we may assume that $R_{i}=\bar{R}_{i}$, for each $i\in I$.
Hence we want to prove that
$$
 (T_{2}\cup\iota(\bigcap_{i\in I}(R_{i})_{L_1}))_{L_{2}}
 =
 (T_{2}\cup\iota(\bigcap_{i\in I}R_{i}))_{L_{2}}
 =
 \bigcap_{i\in I}(T_{2}\cup\iota R_{i})_{L_{2}}.
$$ 
Both inclusions
$\subseteq$ are trivial.
It remains to show that
$\bigcap_{i\in I}(T_{2}\cup\iota R_{i})_{L_{2}}\subseteq (T_{2}\cup\iota(\bigcap_{i\in I}(R_{i})_{L_1}))_{L_{2}}$,
so let $\varphi\in\bigcap_{i\in I}(T_{2}\cup\iota R_{i})_{L_{2}}$.
For each $i\in I$, $T_{2}\cup\iota R_{i}\models\varphi$.
Since 
$(({L}_{1},T_{1}\cup R_{i}),({L}_{2},T_{2}\cup\iota R_{i}),\sigma|_{\mathrm{Mod}(T_{2}\cup\iota R_{i})})$
satisfies \axsurj\
and has elimination $\epsilon$
for each $i\in I$, 
we have
$T_{1}\cup R_{i}\models \epsilon\varphi$
by (\ref{eqn:T2T1b}).
Thus
$\epsilon\varphi\in\bigcap_{i\in I}(T_{1}\cup R_{i})_{{L}_{1}}
=\bigcap_{i\in I}(R_{i})_{L_{1}}
\subseteq T_{1}\cup\bigcap_{i\in I}(R_{i})_{L_{1}}$.
Then
$T_{2}\cup\iota\bigcap_{i\in I}(R_{i})_{L_{1}}\models\varphi$
by (\ref{eqn:T2T1}).
\end{proof}

The following lemma abstracts the general principle that statements about existential sentences have natural analogues about universal sentences,
and often even for boolean combinations of them.
For an $\mathfrak{L}$-fragment $L$ write $\neg L=\{\neg\varphi:\varphi\in L\}$ 
and let $\bar{L}$ be the smallest fragment that contains $L\cup\neg L$.
For a context $C=(L,T)$ let $\bar{C}=(\bar{L},T)$,
and for a bridge $B=(C_1,C_2,\sigma)$ let $\bar{B}=(\bar{C}_1,\bar{C}_2,\sigma)$.

\begin{lemma}\label{lem:neg}
If $B$ satisfies \axmono\, then so does $\bar{B}$.
In particular, if $A=(B,\hat{B},\iota)$ is an arch 
that satisfies the assumptions of Corollary \ref{lem:pattern},
with $\neg L_i\subseteq\hat{L}_i$ for $i=1,2$
and $\iota(\neg L_1)\subseteq\neg L_2$,
then also $(\bar{B},\hat{B},\iota)$ is an arch that satisfies these assumptions.
\end{lemma}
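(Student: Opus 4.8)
The plan is to first prove the stand-alone assertion that \axmono{} for $B$ implies \axmono{} for $\bar B$, and then deduce the ``in particular'' clause about arches by checking the hypotheses of Corollary \ref{lem:pattern} one by one. For the first part, suppose $B=(C_1,C_2,\sigma)$ satisfies \axmono{}, and let $M,M'\models T_2$ with $\mathrm{Th}_{\bar L_1}(\sigma M)\subseteq\mathrm{Th}_{\bar L_1}(\sigma M')$. We want $\mathrm{Th}_{\bar L_2}(M)\subseteq\mathrm{Th}_{\bar L_2}(M')$. The key observation is that passing from $L$ to $\bar L$ makes the containment-of-theories relation \emph{symmetric}: since $\bar L_1$ is closed under negation, $\mathrm{Th}_{\bar L_1}(\sigma M)\subseteq\mathrm{Th}_{\bar L_1}(\sigma M')$ already forces $\mathrm{Th}_{\bar L_1}(\sigma M')\subseteq\mathrm{Th}_{\bar L_1}(\sigma M)$ (if $\sigma M'\models\varphi$ but $\sigma M\not\models\varphi$ for some $\varphi\in\bar L_1$, then $\neg\varphi\in\mathrm{Th}_{\bar L_1}(\sigma M)\setminus\mathrm{Th}_{\bar L_1}(\sigma M')$), hence $\mathrm{Th}_{\bar L_1}(\sigma M)=\mathrm{Th}_{\bar L_1}(\sigma M')$, and a fortiori $\mathrm{Th}_{L_1}(\sigma M)=\mathrm{Th}_{L_1}(\sigma M')$. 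Now \axmono{} for $B$, applied in both directions, yields $\mathrm{Th}_{L_2}(M)=\mathrm{Th}_{L_2}(M')$, and then the same negation-closure argument run backwards over $\bar L_2$ upgrades this to $\mathrm{Th}_{\bar L_2}(M)=\mathrm{Th}_{\bar L_2}(M')$, which gives the desired inclusion.

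For the ``in particular'' clause, assume $A=(B,\hat B,\iota)$ is an arch satisfying the hypotheses of Corollary \ref{lem:pattern}, with $\neg L_i\subseteq\hat L_i$ and $\iota(\neg L_1)\subseteq\neg L_2$. First I would check that $(\bar B,\hat B,\iota)$ is an arch: the hypotheses $\neg L_i\subseteq\hat L_i$ give $\bar L_i\subseteq\hat L_i$ (as $\hat L_i$ is a fragment containing $L_i\cup\neg L_i$, and $\bar L_i$ is the \emph{smallest} such), so $\bar C_i$ is a subcontext of $\hat C_i$ and $\bar B\sqsubseteq\hat B$ is a fragment extension; $\iota$ is still an interpretation for $\hat B$; and one must see that $\iota|_{\bar L_1}$ is an interpretation for $\bar B$, i.e.\ that $\iota(\bar L_1)\subseteq\bar L_2$. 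This is where the hypotheses $\iota(L_1)\subseteq L_2$ (from $\iota|_{L_1}$ being an interpretation for $B$) and $\iota(\neg L_1)\subseteq\neg L_2$ combine with Lemma \ref{lem:boolean}: up to $T_2$-equivalence $\iota$ commutes with $\wedge$ and $\vee$, so $\iota$ maps the fragment generated by $L_1\cup\neg L_1$ into the fragment generated by $L_2\cup\neg L_2$, at least modulo $T_2$-provable equivalence. Then I would verify the three numbered hypotheses of Corollary \ref{lem:pattern} for $(\bar B,\hat B,\iota)$: (i) computability of $\bar L_1,\bar L_2$ follows since they are obtained from the computable $L_i$ by a computable closure operation, and the remaining computability/enumerability assumptions are unchanged; (ii) $\hat B$ still satisfies \axsurj{}, unchanged; (iii) $\bar B$ satisfies \axmono{} by the first part of this lemma.

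I expect the main obstacle to be point (iii)-adjacent, namely the book-keeping in showing $\iota(\bar L_1)\subseteq\bar L_2$ (equivalently, that $\iota|_{\bar L_1}$ genuinely is an interpretation for $\bar B$ and not merely for some enlargement). The subtlety is that $\iota$ need not literally be a lattice homomorphism on the nose — Lemma \ref{lem:boolean} only gives $T_2$-provable equivalence — so one cannot claim $\iota$ of a conjunction of members of $L_1\cup\neg L_1$ is \emph{syntactically} in $\bar L_2$; the clean fix is to note that being an interpretation is preserved under replacing $\iota\varphi$ by any $T_2$-equivalent formula, or more simply to observe that $\bar L_1$ consists of finite conjunctions of finite disjunctions of elements of $L_1\cup\neg L_1$ (a standard normal-form fact for the fragment generated by a set), and to apply $\iota$ termwise, landing inside $\bar L_2$ directly once one checks $\iota$ respects the connectives as a set-map into $\mathrm{Form}(\mathfrak L_2)$ — which it does if $\iota$ is defined to commute with $\wedge,\vee$, or else one absorbs the discrepancy into the choice of representatives. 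Everything else is routine verification against the definitions already in the excerpt.
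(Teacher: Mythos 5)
Your argument for the first assertion is the same as the paper's: the inclusion on $\bar L_1$ yields the two opposite inclusions on $L_1$ (via $\neg L_1\subseteq\bar L_1$), \axmono\ applied in both directions gives $\mathrm{Th}_{L_2}(M)=\mathrm{Th}_{L_2}(M')$, and agreement on $L_2$ propagates to $\bar L_2$; the paper proves only this first sentence and leaves the ``in particular'' clause as routine, and your verification of it --- including correctly isolating $\iota(\bar L_1)\subseteq\bar L_2$ as the one non-automatic point, which needs $\iota$ to respect $\wedge,\vee$ syntactically or a harmless adjustment of representatives --- is sound. One minor imprecision: $\bar L_1$ is only the $\wedge,\vee$-closure of $L_1\cup\neg L_1$, not literally closed under negation, so your parenthetical ``$\neg\varphi\in\mathrm{Th}_{\bar L_1}(\sigma M)$'' is justified only for $\varphi\in L_1\cup\neg L_1$ --- but that restricted case is all your argument actually uses.
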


\begin{proof}
For $M,M'\models T_2$,
${\rm Th}_{\bar{L}_1}(\sigma M)\subseteq{\rm Th}_{\bar{L}_1}(\sigma M')$
implies 
${\rm Th}_{{L}_1}(\sigma M)\subseteq{\rm Th}_{{L}_1}(\sigma M')$
and
${\rm Th}_{\neg{L}_1}(\sigma M)\subseteq{\rm Th}_{\neg{L}_1}(\sigma M')$,
equivalently
${\rm Th}_{{L}_1}(\sigma M')\subseteq{\rm Th}_{{L}_1}(\sigma M)$,
hence by \axmono\ 
${\rm Th}_{{L}_2}(M)\subseteq{\rm Th}_{{L}_2}(M')$
and 
${\rm Th}_{{L}_2}(M')\subseteq{\rm Th}_{{L}_2}(M)$,
and therefore
${\rm Th}_{\bar{L}_2}(M)={\rm Th}_{\bar{L}_2}(M')$,
in particular
${\rm Th}_{\bar{L}_2}(M)\subseteq{\rm Th}_{\bar{L}_2}(M')$.
\end{proof}

\begin{remark}
Regarding the consequences of  Corollary \ref{lem:pattern} for $(\bar{B},\hat{B},\iota)$,
of course one can easily construct an elimination for $\bar{B}$ from an elimination for $B$,
but it is not clear how to deduce
$(T_1\cup R)_{\bar{L}_1}\meq (T_2\cup\iota R)_{\bar{L}_2}$
directly from
$(T_1\cup R)_{{L}_1}\meq (T_2\cup\iota R)_{{L}_2}$.
\end{remark}

\subsection{Weak monotonicity}
\label{section:wm}
We 
conclude this section with
a weak variant of \axmono\ in the context of
a fragment extension.
In Section \ref{sec:final} 
this will help us to remove the hypothesis \Rfour\
in some places, cf.~Remark \ref{rem:wm_not_R4},
as mentioned above.

\begin{definition}\label{def:wmono}
We consider the following property of the fragment extension $B\sqsubseteq\hat{B}$:
\begin{enumerate}
	\item[{\bf(wm)}]\label{axwmono}
	For all $M\models T_{2}$ and all $N\models T_{1}$,
	if $\mathrm{Th}_{L_{1}}(N)\subseteq\mathrm{Th}_{L_{1}}(\sigma M)$
	then there exists $M'\models T_{2}$ with
	$\mathrm{Th}_{\hat{L}_{1}}(\sigma M')=\mathrm{Th}_{\hat{L}_{1}}(N)$
	and
	$\mathrm{Th}_{L_{2}}(M')\subseteq\mathrm{Th}_{L_{2}}(M)$.
\end{enumerate}
\end{definition}

\begin{lemma}\label{lem:wmono}
	Suppose 
		$\hat{B}$ satisfies \axsurj\ and
		$B$ satisfies \axmono.
	Then 
	$B\sqsubseteq\hat{B}$
	satisfies \axwmono.
\end{lemma}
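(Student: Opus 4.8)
The plan is to unwind the definition of \axwmono\ and supply the witnessing model $M'$ by a two-step construction, invoking \axsurj\ for $\hat{B}$ to get a model with the right $\hat{L}_1$-theory on its $\sigma$-image, and then using \axmono\ for $B$ to control the $L_2$-theory. So suppose $M\models T_2$ and $N\models T_1$ with $\mathrm{Th}_{L_1}(N)\subseteq\mathrm{Th}_{L_1}(\sigma M)$. First I would apply \axsurj\ (which $\hat B$ satisfies, and which for $\hat B$ means: for all $N\models T_1$ there is $M''\models T_2$ with $\mathrm{Th}_{\hat L_1}(N)=\mathrm{Th}_{\hat L_1}(\sigma M'')$) to produce $M'\models T_2$ with $\mathrm{Th}_{\hat L_1}(\sigma M')=\mathrm{Th}_{\hat L_1}(N)$. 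This already delivers the first required conclusion of \axwmono.

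Next I would check the second conclusion, $\mathrm{Th}_{L_2}(M')\subseteq\mathrm{Th}_{L_2}(M)$, using \axmono\ for $B$. By \axmono\ it suffices to show $\mathrm{Th}_{L_1}(\sigma M')\subseteq\mathrm{Th}_{L_1}(\sigma M)$. Now $\mathrm{Th}_{L_1}(\sigma M')=\mathrm{Th}_{\hat L_1}(\sigma M')\cap L_1=\mathrm{Th}_{\hat L_1}(N)\cap L_1=\mathrm{Th}_{L_1}(N)$, using $L_1\subseteq\hat L_1$ and the equality of $\hat L_1$-theories just obtained. But $\mathrm{Th}_{L_1}(N)\subseteq\mathrm{Th}_{L_1}(\sigma M)$ is precisely the hypothesis we started from. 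Hence $\mathrm{Th}_{L_1}(\sigma M')\subseteq\mathrm{Th}_{L_1}(\sigma M)$, and \axmono\ gives $\mathrm{Th}_{L_2}(M')\subseteq\mathrm{Th}_{L_2}(M)$, as desired.

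I do not expect a serious obstacle here: the statement is essentially a bookkeeping exercise, and the only point requiring a moment's care is matching the fragment restrictions — that \axsurj\ for $\hat B$ is phrased in terms of $\hat L_1$ (via $\hat C_1=(\hat L_1,T_1)$), while \axmono\ for $B$ is phrased in terms of $L_1$, and that passing from $\hat L_1$-theories down to $L_1$-theories via intersection with $L_1$ is exactly what bridges the two. One should also note that \axwmono\ as in Definition~\ref{def:wmono} is a property of the fragment extension $B\sqsubseteq\hat B$, so it is automatic that $B\sqsubseteq\hat B$ is indeed a fragment extension in the hypotheses, which it is by assumption.
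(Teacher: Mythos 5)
Your proof is correct and follows exactly the same route as the paper's: apply \axsurj\ for $\hat{B}$ to obtain $M'$ with $\mathrm{Th}_{\hat{L}_{1}}(\sigma M')=\mathrm{Th}_{\hat{L}_{1}}(N)$, restrict to $L_{1}$ to deduce $\mathrm{Th}_{L_{1}}(\sigma M')\subseteq\mathrm{Th}_{L_{1}}(\sigma M)$, and conclude with \axmono. No issues.
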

\begin{proof}
	Let $M\models T_{2}$
	and $N\models T_{1}$ with $\mathrm{Th}_{L_{1}}(N)\subseteq\mathrm{Th}_{L_{1}}(\sigma M)$.
	By \axsurj, there exists $M'\models T_{2}$ with $\mathrm{Th}_{\hat{L}_{1}}(N)=\mathrm{Th}_{\hat{L}_{1}}(\sigma M')$.
	In particular $\mathrm{Th}_{L_{1}}(\sigma M')\subseteq\mathrm{Th}_{L_{1}}(\sigma M)$.
	By \axmono, we have $\mathrm{Th}_{L_{2}}(M')\subseteq\mathrm{Th}_{L_{2}}(M)$.
\end{proof}

\begin{lemma}\label{lem:extension}
Let $T$ be an $\mathfrak{L}$-theory and let $L$ be an $\mathfrak{L}$-fragment.
Suppose that $M\models T_{L}$.
Then there exists $N\models T$ such that $M\models\mathrm{Th}_{L}(N)$.
\end{lemma}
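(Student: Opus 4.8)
The plan is to realise $N$ as a model of the theory
$\Sigma := T\cup\{\neg\varphi : \varphi\in L,\ M\not\models\varphi\}$.
Any $N\models\Sigma$ is automatically a model of $T$, and for each $\varphi\in L$ with $M\not\models\varphi$ it satisfies $\neg\varphi$, hence $N\not\models\varphi$; equivalently, every $\varphi\in\mathrm{Th}_L(N)$ already holds in $M$, which is exactly the assertion $M\models\mathrm{Th}_L(N)$. So the whole lemma reduces to showing that $\Sigma$ is satisfiable.

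First I would apply compactness: if $\Sigma$ were unsatisfiable, there would be a finite $T_0\subseteq T$ and finitely many $\varphi_1,\dots,\varphi_n\in L$ with $M\not\models\varphi_i$ for each $i$ such that $T_0\cup\{\neg\varphi_1,\dots,\neg\varphi_n\}$ is unsatisfiable, i.e.~$T\models\varphi_1\vee\dots\vee\varphi_n$. This is the point where the hypothesis that $L$ is a fragment is used: since $L$ contains $\falsum$ (which handles the degenerate case $n=0$) and is closed under $\vee$, the sentence $\varphi_1\vee\dots\vee\varphi_n$ lies in $L$, hence in $T^\vdash\cap L=T_L$. As $M\models T_L$ by hypothesis, $M$ satisfies this disjunction, so $M\models\varphi_i$ for some $i$, contradicting the choice of the $\varphi_i$. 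Therefore $\Sigma$ is satisfiable, and any model $N$ of it works.

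There is essentially no obstacle here beyond bookkeeping; the only point requiring a moment's care is the degenerate case $n=0$, in which unsatisfiability of $T_0$ would make $T$ itself inconsistent. But that case cannot arise under the hypothesis: inconsistency of $T$ forces $\falsum\in T_L$ and hence $M\not\models T_L$, contrary to assumption. (Alternatively, one simply reads the empty disjunction as $\falsum\in L$, and the argument above applies verbatim.) The conceptual content of the lemma is thus just that ``$L$ closed under finite disjunctions'' is precisely what keeps the witnessing disjunction produced by compactness inside the fragment $L$, where the assumption $M\models T_L$ can be brought to bear.
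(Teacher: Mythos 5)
Your proof is correct and follows essentially the same route as the paper's: both build the theory $T\cup\{\neg\varphi:\varphi\in L,\ M\not\models\varphi\}$, apply compactness, and use closure of $L$ under finite disjunctions to place the witnessing disjunction in $T_L$, contradicting $M\models T_L$. The remark on the degenerate case $n=0$ is a sensible extra precaution but does not change the argument.
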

\begin{proof}
It suffices to show that
$T\cup\mathrm{Th}_{(\neg)L}(M)$ is consistent, because any model
$N\models T\cup\mathrm{Th}_{(\neg)L}(M)$ satisfies $M\models \mathrm{Th}_{L}(N)$.
By the compactness theorem, it suffices to show that
$T\cup\mathrm{Th}_{(\neg)L}(M)$
is finitely consistent.
Let
$\neg\varphi_{1},\ldots,\neg\varphi_{n}\in\mathrm{Th}_{(\neg)L}(M)$,
so then
$M\models\neg\bigvee_{i\leq n}\varphi_{i}$.
Since $L$ is a fragment, we have
$\bigvee_{i\leq n}\varphi_{i}\in L$
and so
$\neg\bigvee_{i\leq n}\varphi_{i}\in\mathrm{Th}_{(\neg)L}(M)$.
Therefore
$\bigvee_{i\leq n}\varphi_{i}\notin\mathrm{Th}_{L}(M)$,
and in particular
$T\not\models\bigvee_{i\leq n}\varphi_{i}$,
which proves that $T\cup\mathrm{Th}_{(\neg)L}(M)$ is finitely consistent.
\end{proof}

\begin{lemma}[]\label{lem:incomplete_monotonicity}
Let $A=(B,\hat{B},\iota)$ be an arch such that $B\sqsubseteq\hat{B}$ satisfies \axwmono.
Then
	for all $R,R'\subseteq\hat{L}_1$,
	if $(T_{1}\cup R)_{L_{1}}\subseteq(T_{1}\cup R')_{L_{1}}$ 
	then $(T_{2}\cup\iota R)_{L_{2}}\subseteq(T_{2}\cup\iota R')_{L_{2}}$.
\end{lemma}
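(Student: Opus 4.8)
The plan is to chase an arbitrary element $\varphi\in(T_{2}\cup\iota R)_{L_{2}}$ through the arch and show that $T_{2}\cup\iota R'\models\varphi$, which (together with $\varphi\in L_{2}$) yields the asserted inclusion. So fix such a $\varphi$, i.e.\ $\varphi\in L_{2}$ with $T_{2}\cup\iota R\models\varphi$, and let $M\models T_{2}\cup\iota R'$ be arbitrary; the goal becomes $M\models\varphi$. Since $\iota$ is an interpretation for $\hat{B}$ and $R'\subseteq\hat{L}_{1}$, from $M\models\iota R'$ we get $\sigma M\models R'$; as $\sigma$ maps $\mathrm{Mod}(T_{2})$ into $\mathrm{Mod}(T_{1})$ we also have $\sigma M\models T_{1}$, so $\sigma M\models T_{1}\cup R'$ and hence $\mathrm{Th}_{L_{1}}(\sigma M)\supseteq(T_{1}\cup R')_{L_{1}}\supseteq(T_{1}\cup R)_{L_{1}}$ by the hypothesis.

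Next I would manufacture a witness on the $T_{1}$-side. Apply Lemma~\ref{lem:extension} to the theory $T_{1}\cup R$, the fragment $L_{1}$, and the structure $\sigma M$ — legitimate because $\sigma M\models(T_{1}\cup R)_{L_{1}}$ by the previous step — to obtain $N\models T_{1}\cup R$ with $\mathrm{Th}_{L_{1}}(N)\subseteq\mathrm{Th}_{L_{1}}(\sigma M)$. Now feed $M$ and $N$ into \axwmono (which holds for $B\sqsubseteq\hat{B}$ by assumption): since $M\models T_{2}$, $N\models T_{1}$ and $\mathrm{Th}_{L_{1}}(N)\subseteq\mathrm{Th}_{L_{1}}(\sigma M)$, there exists $M'\models T_{2}$ with $\mathrm{Th}_{\hat{L}_{1}}(\sigma M')=\mathrm{Th}_{\hat{L}_{1}}(N)$ and $\mathrm{Th}_{L_{2}}(M')\subseteq\mathrm{Th}_{L_{2}}(M)$.

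Finally I would close the loop. As $N\models R$ and $R\subseteq\hat{L}_{1}$, we have $R\subseteq\mathrm{Th}_{\hat{L}_{1}}(N)=\mathrm{Th}_{\hat{L}_{1}}(\sigma M')$, so $\sigma M'\models R$; applying the interpretation $\iota$ once more (to $M'\models T_{2}$ and $R\subseteq\hat{L}_{1}$) gives $M'\models\iota R$, hence $M'\models T_{2}\cup\iota R$ and therefore $M'\models\varphi$. Since $\varphi\in L_{2}$ this means $\varphi\in\mathrm{Th}_{L_{2}}(M')\subseteq\mathrm{Th}_{L_{2}}(M)$, so $M\models\varphi$. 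As $M\models T_{2}\cup\iota R'$ was arbitrary, $T_{2}\cup\iota R'\models\varphi$, i.e.\ $\varphi\in(T_{2}\cup\iota R')_{L_{2}}$, completing the argument.

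The only step that asks for a genuine idea rather than unwinding definitions is the passage from the purely syntactic hypothesis $(T_{1}\cup R)_{L_{1}}\subseteq(T_{1}\cup R')_{L_{1}}$ to a concrete model $N$ that is at once a model of $T_{1}\cup R$ and has $L_{1}$-theory contained in $\mathrm{Th}_{L_{1}}(\sigma M)$ — precisely what Lemma~\ref{lem:extension} provides, the delicate point there (handled by compactness) being that $L_{1}$ need not be closed under negation. Everything else is bookkeeping with the interpretation property of $\iota$ for $\hat{B}$, with $\sigma$ landing in $\mathrm{Mod}(T_{1})$, and with the statement of \axwmono. One should only note that $\iota R$ for $R\subseteq\hat{L}_{1}$ is used solely inside the $\mathfrak{L}_{2}$-theory $T_{2}\cup\iota R$ and is never required to lie in $L_{2}$, consistently with how $(T_{2}\cup\iota R)_{L_{2}}$ is treated elsewhere (e.g.\ in Corollary~\ref{lem:pattern}).
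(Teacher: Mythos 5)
Your proof is correct and follows essentially the same route as the paper's: pass from $M\models T_2\cup\iota R'$ to $\sigma M\models (T_1\cup R)_{L_1}$, invoke Lemma~\ref{lem:extension} to produce $N\models T_1\cup R$ with $\mathrm{Th}_{L_1}(N)\subseteq\mathrm{Th}_{L_1}(\sigma M)$, apply \axwmono{} to get $M'$, and transfer $\varphi$ back via $\mathrm{Th}_{L_2}(M')\subseteq\mathrm{Th}_{L_2}(M)$. The only difference is presentational (you fix $\varphi$ at the outset rather than concluding $M\models(T_2\cup\iota R)_{L_2}$ wholesale), and your closing remarks correctly identify Lemma~\ref{lem:extension} as the one non-routine step.
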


\begin{proof}
	Let $M\models T_{2}\cup\iota R'$.
	Then
	$\sigma M\models T_{1}\cup R'$,
	and in particular
	$\sigma M\models(T_{1}\cup R')_{L_{1}}$,
	and so $(T_{1}\cup R)_{L_{1}}\subseteq(T_{1}\cup R')_{L_{1}}\subseteq\mathrm{Th}_{L_{1}}(\sigma M)$.
	By Lemma~\ref{lem:extension},
	there exists $N\models T_{1}\cup R$ such that $\mathrm{Th}_{L_{1}}(N)\subseteq\mathrm{Th}_{L_{1}}(\sigma M)$.
	By \axwmono\
	there exists $M'\models T_{2}$
	with
	$\mathrm{Th}_{\hat{L}_{1}}(\sigma M')=\mathrm{Th}_{\hat{L}_{1}}(N)$
	and $\mathrm{Th}_{L_{2}}(M')\subseteq\mathrm{Th}_{L_{2}}(M)$.
	Thus $\sigma M'\models R$,
	whence $M'\models T_{2}\cup\iota R$.
	Therefore $M\models(T_{2}\cup\iota R)_{L_{2}}$,
	from which it follows that
	$T_{2}\cup\iota R'\models(T_{2}\cup\iota R)_{L_{2}}$.
\end{proof}

\begin{remark}\label{rem:relative_complete_2}
To clarify the relation between \axmono\ and \axwmono, 
we remark that with $B$ and $\hat{B}$ as above and $\hat{B}$ satisfying \axsurj,
one can show that $B$ satisfies \axmono\ if and only if
the pair $\hat{B},B$ satisfies \axwmono\ and in addition
$B$ satisfies the following symmetric weakening of \axmono:
For all $M,M'\models T_{2}$
	with
	$\mathrm{Th}_{L_{1}}(\sigma M)=\mathrm{Th}_{L_{1}}(\sigma M')$
	we have
	$\mathrm{Th}_{L_{2}}(M)=\mathrm{Th}_{L_{2}}(M')$.
\end{remark}

\begin{remark}
Under the assumptions of Corollary~\ref{lem:pattern},
we have that
\begin{enumerate}
\setcounter{enumi}{1}
\item[{\rm(II')}]
For each $\mathfrak{L}_{1}$-theory $R'$, $(T_{1}\cup R')_{L_{1}}\meq(T_{2}\cup\iota((T_{1}\cup R')_{\hat{L}_{1}}))_{L_{2}}$,
\end{enumerate}
which follows from applying Corollary~\ref{lem:pattern}(II) to $R=(T_{1}\cup R')_{\hat{L}_{1}}$.
Similar adaptations can be made to the conclusions of  
Propositions~\ref{prop:arch} and~\ref{prp:intersection}
and Lemma~\ref{lem:incomplete_monotonicity}
to allow $\mathfrak{L}_{1}$-theories.
\end{remark}

\section{Existential theories of fields}
\label{section:existential_theories_of_fields}

\noindent
In this section we discuss a few examples where the general machinery of the previous two sections applies.
Throughout, we are interested in existential fragments,
defined in Section \ref{sec:Efragments}.
In Section \ref{sec:PAC} we first treat theories of pseudo-algebraically closed fields,
and in sections \ref{sec:HeE}-\ref{sec:HeEn}
various theories of henselian valued fields.
In each of these cases we first verify the assumptions of Corollary~\ref{lem:pattern}
and then deduce consequences.

\subsection{Existential fragments}
\label{sec:Efragments}

\begin{definition}\label{def:Efragments}
For a language $\mathfrak{L}$,
an $\mathfrak{L}$-formula $\varphi(x_{1},\ldots,x_{m})$
is {\em existential}
(resp.~{\em universal})
if it is of the form
\begin{align*}
&
    \exists y_{1}\ldots\exists y_{n}\;\psi(x_{1},\ldots,x_{m},y_{1},\ldots,y_{n})\\
\text{\large(resp.~}\quad 
&
    \forall y_{1}\ldots\forall y_{n}\;\psi(x_{1},\ldots,x_{m},y_{1},\ldots,y_{n})
\text{\large)},
\end{align*}
for some $n\geq0$ and an $\mathfrak{L}$-formula $\psi(x_{1},\ldots,x_{m},y_{1},\ldots,y_{n})$ without quantifiers.
Note that the quantifiers appearing in $\varphi$ each range over one of the sorts of $\mathfrak{L}$.
We denote by $\mathrm{Sent}_\exists(\mathfrak{L})$,
respectively  $\mathrm{Sent}_{\exists_{n}}(\mathfrak{L})$,
the smallest $\mathfrak{L}$-fragment
containing all existential $\mathfrak{L}$-sentences,
respectively all existential $\mathfrak{L}$-sentences
with at most $n$ quantifiers.
Similarly, we denote by
$\mathrm{Sent}_\forall(\mathfrak{L})$
and
$\mathrm{Sent}_{\forall_{n}}(\mathfrak{L})$
the sets defined analogously with universal sentences.

For an $\mathfrak{L}$-theory $T$, we write
$T_{\exists}=T_{\mathrm{Sent}_{\exists}(\mathfrak{L})}$,
$T_{\exists_{n}}=T_{\mathrm{Sent}_{\exists_{n}}(\mathfrak{L})}$,
$T_{\forall}=T_{\mathrm{Sent}_{\forall}(\mathfrak{L})}$,
and
$T_{\forall_{n}}=T_{\mathrm{Sent}_{\forall_{n}}(\mathfrak{L})}$.
For an $\mathfrak{L}$-structure $M$ we also
write ${\rm Th}_\exists(M)$ for ${\rm Th}_{\mathrm{Sent}_{\exists}(\mathfrak{L})}(M)$ and so on.
\end{definition}

\begin{remark}
The fragments $\mathrm{Sent}_{\exists_{n}}(\mathfrak{L})$ form a chain, ordered by inclusion, and the union is the fragment $\mathrm{Sent}_{\exists}(\mathfrak{L})$.
Similarly $\mathrm{Sent}_{\forall}(\mathfrak{L})$ is the union of the chain of fragments $\mathrm{Sent}_{\forall_{n}}(\mathfrak{L})$.
By the definition of a fragment, a sentence in ${\rm Sent}_\exists(\mathfrak{L})$ (respectively, ${\rm Sent}_\exists(\mathfrak{L})$)
is obtained from finitely many existential (respectively, universal) $\mathfrak{L}$-sentences by conjunctions and disjunctions.
We remark that our use of $\exists_n$ and $\forall_n$
is very different from the one in \cite{Hodges},
where for example an "$\exists_1$-sentence" is 
an element of what we call ${\rm Sent}_\exists(\mathfrak{L})$.
\end{remark}

\begin{remark}
When deciding the existential fragment of a {\em complete} $\mathfrak{L}$-theory,
i.e.~${\rm Th}_\exists(M)$ for some $\mathfrak{L}$-structure $M$,
by induction on the structure of formulas
it suffices to decide which existential $\mathfrak{L}$-sentences hold in $M$:
$\varphi\wedge\psi\in{\rm Th}_\exists(M)$ if and only if $\varphi\in{\rm Th}_\exists(M)$ and $\psi\in{\rm Th}_\exists(M)$,
and
$\varphi\vee\psi\in{\rm Th}_\exists(M)$ if and only if $\varphi\in{\rm Th}_\exists(M)$ or $\psi\in{\rm Th}_\exists(M)$.
(This does not hold in general for possibly incomplete theories.)
The same goes for checking inclusions like
${\rm Th}_\exists(M)\subseteq{\rm Th}_\exists(M')$ for $\mathfrak{L}$-structures $M,M'$.
\end{remark}

\begin{remark}\label{rem:AF16}
Since we are going to use results from \cite{AF16}
we should point out that the definition of {\em $\exists$-sentence} used there,
namely "logically equivalent
to a formula in prenex normal form with only existential quantifiers" is not suitable 
for the arguments about decidability given there nor for our use here.
In fact, in general it is not possible to decide whether a given sentence is an $\exists$-sentence in that sense:
For example, if $\mathfrak{L}=\{R\}$ with a binary predicate symbol $R$, and $T=\emptyset$, then $T^\vdash$ is undecidable since it encodes graph theory
(which is undecidable, see e.g.~\cite[Corollary 28.5.3]{FJ}), but $T_\exists$ is trivially decidable, 
which implies that it is not possible to decide whether a given sentence $\varphi$ is equivalent 
to some $\psi\in{\rm Sent}_\exists(\mathfrak{L})$,
as such a $\psi$ could be found effectively.
However, this poses no problems for either the results in \cite{AF16} or our application here,
as one can simply replace the definition of $\exists$-sentence by being an element of what we here call the existential fragment ${\rm Sent}_\exists$, and then everything goes through.
\end{remark}

\subsection{PAC fields}
\label{sec:PAC}
For definition and background on pseudo-algebraically closed (PAC) fields see
for example \cite[Chapter 11]{FJ}.
We denote by
$\mathfrak{L}_{\mathrm{ring}}=\{+,\cdot,-,0,1\}$
the language of rings,
by
$\F$ be the $\mathfrak{L}_{\mathrm{ring}}$-theory of fields,
and by
$\PAC$ the $\mathfrak{L}_{\mathrm{ring}}$-theory of PAC fields.
We denote by $\rmPAC$ the context $(\mathrm{Sent}(\mathfrak{L}_{\mathrm{ring}}),\PAC)$
and accordingly by $\rmPAC_{\exists}$ and $\rmPAC_{\exists_1}$ the contexts
with fragments ${\rm Sent}_\exists(\mathfrak{L}_{\mathrm{ring}})$ respectively 
${\rm Sent}_{\exists_1}(\mathfrak{L}_{\mathrm{ring}})$.
Similarly, we define the contexts $\rmF$, $\rmF_\exists$ and $\rmF_{\exists_1}$.
Further we define the bridge
$\rmPAC_{\exists_1}\bridge\rmPAC = (\rmPAC_{\exists_1},\rmPAC,{\rm id})$,
and similarly the bridges
$\rmPAC_{\exists_1}\bridge\rmPAC_\exists$,
$\rmF_{\exists_1}\bridge\rmPAC$,
$\rmF_{\exists_1}\bridge\rmPAC_{\exists_1}$
and $\rmF_{\exists_1}\bridge\rmPAC_\exists$
(also with $\sigma=\mathrm{id}$).
Together with the interpretation $\iota={\rm id}$, we thus obtain the arches
$\rmF_{\exists_1}\bridge\rmPAC_{\exists_1}\arch\rmF_{\exists_1}\bridge\rmPAC$ and
$\rmPAC_{\exists_1}\bridge\rmPAC_\exists\arch\rmPAC\bridge\rmPAC$.

\begin{lemma}\label{prop:PAC}
Let $E,F$ be extensions of a field $K$ with $F$ PAC and $E/K$ separable.
We view $E$ and $F$ as $\mathfrak{L}_{\rm ring}(K)$-structures.
If ${\rm Th}_{\exists_1}(E)\subseteq{\rm Th}_{\exists_1}(F)$,
then ${\rm Th}_{\exists}(E)\subseteq{\rm Th}_{\exists}(F)$.
\end{lemma}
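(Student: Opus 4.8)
The plan is to reduce the statement to the case where $E/K$ is a finitely generated (indeed finite or rational) extension, and then to use the defining property of PAC fields: a variety over a PAC field has a rational point as soon as it is geometrically integral and has a $K$-point after base change to... more precisely, we use that an absolutely irreducible variety over $K$ which has a smooth $F$-point (automatic, as $F$ is PAC and the variety is absolutely irreducible over the field generated by coefficients) acquires an $F$-rational point. The key observation is that an existential sentence $\varphi\in{\rm Th}_\exists(E)$ over $K$ asserts the existence of a tuple in $E$ satisfying a quantifier-free condition; by writing the quantifier-free matrix in disjunctive normal form and distributing, we may assume $\varphi$ is a single existential sentence $\exists \bar y\,\theta(\bar y)$ with $\theta$ a conjunction of polynomial equations and inequations over $K$.

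First I would fix such a $\varphi$ with $E\models\varphi$, witnessed by a tuple $\bar a\in E$. Let $K'=K(\bar a)\subseteq E$; since $E/K$ is separable, $K'/K$ is a separably generated, hence separable, finitely generated extension, and $\bar a$ generates it. Choose a separating transcendence basis and a primitive element, so that $K'=K(\bar t)(\omega)$ with $\bar t$ algebraically independent and $\omega$ separably algebraic over $K(\bar t)$; equivalently, $K'$ is the function field of an absolutely irreducible affine $K$-variety $V$, and $\bar a$ corresponds to a generic point of $V$. The inequations in $\theta$ that $\bar a$ satisfies can be folded into the choice of $V$ (pass to a principal open subvariety), so $V$ is a geometrically integral $K$-variety such that every point of a dense open subset satisfies $\theta$, and the generic point of $V$ satisfies all the equations of $\theta$.

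Next I would use that ${\rm Th}_{\exists_1}(E)\subseteq{\rm Th}_{\exists_1}(F)$ to transfer the \emph{existence of points of $V$ over extensions}. This is the crux: being absolutely irreducible is expressible, but I instead argue directly. Since $K'\subseteq E$ and $K'$ is the function field of $V$, the $\exists_1$-theory of $E$ over $K$ records in particular that every polynomial relation holding identically on $V$ holds, and — running the standard PAC argument — what one really needs is that $V(F)$ contains a smooth point; this holds because $V$ is geometrically integral over $K$ and $F\supseteq K$ is PAC, by the very definition of PAC (every geometrically integral $K$-variety has an $F$-rational point). Such an $F$-point lying in the dense open locus where $\theta$ holds exists too, since PAC fields have the property that rational points are dense in every geometrically integral variety (or: apply the PAC property to the open subvariety, which is again geometrically integral). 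Hence $F\models\exists\bar y\,\theta(\bar y)$, i.e. $F\models\varphi$.

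The role of the hypothesis ${\rm Th}_{\exists_1}(E)\subseteq{\rm Th}_{\exists_1}(F)$ is to guarantee that $V$ really is \emph{geometrically integral over $K$} as witnessed inside the ambient setup — more carefully: $V$ is geometrically integral over $K$ automatically (we built it that way from a separating transcendence basis), so the only thing the hypothesis is needed for is to handle the inequations, i.e. to ensure the open locus is nonempty over $F$; but that is automatic from PAC density. On reflection the hypothesis seems to be used to ensure that the \emph{field $K$ of coefficients embeds compatibly}, which it does by assumption since both are $\mathfrak{L}_{\rm ring}(K)$-structures; so I expect the honest use of $\exists_1$-containment is subtler — likely it is needed to go from a point of $V$ over $F$ that satisfies the equations to one that also satisfies the inequations \emph{and} to control the case where $V$ is not quite absolutely irreducible because some inequation in $\theta$ forces passage to a locally closed rather than affine variety. \textbf{The main obstacle} is exactly this bookkeeping: showing that the finitely-generated subextension $K'=K(\bar a)$ can always be realized as the function field of a single geometrically integral $K$-variety on which the chosen conjunction $\theta$ holds generically, and then invoking PAC (with density of rational points) to land an $F$-point in the right open locus; the $\exists_1$-hypothesis should enter to certify that the quantifier-free type of $\bar a$ over $K$ — which encodes absolute irreducibility of $V$ via the nonvanishing of appropriate resultants/discriminants — is consistent with $F$, so that $V_F$ is still geometrically integral.
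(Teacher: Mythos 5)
Your overall strategy (reduce to a witness tuple, pass to the subfield it generates, realize that subfield as a function field, and invoke the PAC property) is in the right spirit, but it has a genuine gap exactly at the point you yourself flag as uncertain. The claim that ``$V$ is geometrically integral over $K$ automatically (we built it that way from a separating transcendence basis)'' is false. Separable generation of $K'=K(\bar a)$ over $K$ makes $V$ geometrically \emph{reduced}, but geometric \emph{irreducibility} additionally requires $K$ to be (separably) algebraically closed in $K'$, and this can fail: take $E=K(\sqrt d)$ for a non-square $d$ (a separable extension), so $V=\{x^2=d\}$ splits over $\bar K$ and has no $F$-point unless $\sqrt d\in F$. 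So the PAC property simply does not apply to your $V$ as constructed, and the argument breaks before the inequation bookkeeping even starts. The bookkeeping you identify as ``the main obstacle'' is in fact routine; the real obstacle is the algebraic part of $E$ over $K$.

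This is also precisely where the $\exists_{1}$-hypothesis must be used, and the paper uses it for exactly that: ${\rm Th}_{\exists_1}(E)\subseteq{\rm Th}_{\exists_1}(F)$ yields a $K$-embedding of $E_0:=E\cap\bar K$ into $F$ (this is \cite[Lemma 20.6.3]{FJ}; informally, each $a\in E_0$ satisfies a one-quantifier existential sentence asserting a root of its minimal polynomial, and a compactness/K\"onig argument glues these into an embedding). One may then assume $E_0\subseteq F$; since $E/K$ is separable, $E/E_0$ is \emph{regular}, so $F\otimes_{E_0}E$ is a domain and $L={\rm Quot}(F\otimes_{E_0}E)$ is a regular extension of $F$, in which the PAC field $F$ is existentially closed, giving ${\rm Th}_\exists(E)\subseteq{\rm Th}_\exists(L)={\rm Th}_\exists(F)$. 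Your variety-by-variety version can be repaired the same way: first embed $E_0$ into $F$ using the hypothesis, then take $K'=E_0(\bar a)$, which \emph{is} the function field of a geometrically integral $E_0$-variety, and apply PAC over $E_0$ rather than over $K$. Without that first step the proof is incomplete.
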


\begin{proof}
Let $E_0=E\cap\bar{K}$.
The assumption ${\rm Th}_{\exists_1}(E)\subseteq{\rm Th}_{\exists_1}(F)$ implies that there exists a $K$-embedding of $E_0$ into $F$ \cite[Lemma 20.6.3]{FJ},
so assume without loss of generality that $E_0\subseteq F$.
Since $E/K$ is separable, $E/E_0$ is regular.
This implies that $F\otimes_{E_0}E$ is an integral domain \cite[Ch.~V §17 Proposition 8]{Bourbaki_Algebra}, 
and so $L:={\rm Quot}(F\otimes_{E_0}E)$ is a field that is the compositum of the linearly disjoint extensions $F/E_0$ and $E/E_0$.
In particular, $L/F$ is regular,
so since $F$ is PAC, it is existentially closed in $L$ \cite[Proposition 11.3.5]{FJ}.
Thus, for the $\mathfrak{L}_{\rm ring}(K)$-theories we have ${\rm Th}_\exists(F)={\rm Th}_\exists(L)\supseteq{\rm Th}_\exists(E)$.
\end{proof}

\begin{lemma}\label{lem:PAC}
\begin{enumerate}[$(a)$]
    \item
	Hypotheses {\rm(}\ref{pattern.i}{\rm)}--{\rm(}\ref{pattern.iii}{\rm)}
	of Corollary~\ref{lem:pattern} hold for
	the arch $\rmPAC_{\exists_1}\bridge\rmPAC_\exists\arch\rmPAC\bridge\rmPAC$.
    \item
	Hypotheses {\rm(}\ref{pattern.i}{\rm)}--{\rm(}\ref{pattern.iii}{\rm)}
	of Corollary~\ref{lem:pattern} hold for 
	the arch $\rmF_{\exists_1}\bridge\rmPAC_{\exists_1}\arch\rmF_{\exists_1}\bridge\rmPAC$.
\end{enumerate}
\end{lemma}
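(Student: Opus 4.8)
The plan is to verify the three hypotheses \eqref{pattern.i}--\eqref{pattern.iii} of Corollary~\ref{lem:pattern} separately for each of the two arches, noting that the underlying structure map in every bridge is the identity and the interpretation $\iota$ is also the identity, so the computability and monotonicity conditions are what require genuine work.

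\textbf{Hypothesis \eqref{pattern.i} (computability).} For both arches the languages involved are ${\rm Sent}(\mathfrak{L}_{\rm ring})$, ${\rm Sent}_\exists(\mathfrak{L}_{\rm ring})$ and ${\rm Sent}_{\exists_1}(\mathfrak{L}_{\rm ring})$. Each of these is a decidable subset of the G\"odel codes of $\mathfrak{L}_{\rm ring}$-formulas: an existential prenex sentence with at most one quantifier is recognizable syntactically, and the fragments ${\rm Sent}_\exists$, ${\rm Sent}_{\exists_1}$ are the closures under $\wedge,\vee$, which is again a decidable condition. The interpretation $\iota={\rm id}$ is obviously computable. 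Finally $T_2$ is $\PAC$ in the first arch and $\rmPAC$ in the second — wait, in both arches $T_2=\PAC$; the theory of PAC fields is recursively axiomatizable (e.g.\ by the field axioms together with, for each absolutely irreducible variety presented by polynomials, the sentence asserting it has a rational point, all of which can be enumerated), hence computably enumerable. So \eqref{pattern.i} is routine in both cases.

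\textbf{Hypothesis \eqref{pattern.ii} (\axsurj\ for $\hat B$).} In the first arch $\hat B = \rmPAC\bridge\rmPAC$ with $\sigma={\rm id}$, so $\sigma M = M$ and \axsurj\ holds trivially: given $N\models\PAC$ take $M=N$. In the second arch $\hat B = \rmF_{\exists_1}\bridge\rmPAC$, again $\sigma={\rm id}$ but now $T_2=\PAC$, $T_1=\F$; given a field $N\models\F$ we must produce a PAC field $M$ with ${\rm Th}_{\exists_1}(N)={\rm Th}_{\exists_1}(\sigma M)={\rm Th}_{\exists_1}(M)$. Here we use that every field embeds into a PAC field with the same existential-one theory — concretely, one can pass to a suitable ultrapower or to the field obtained by a transfinite construction making it PAC while controlling embeddings of finite subsets; the relevant fact is that a regular extension (in particular any extension preserving $\bar K\cap\,\cdot\,$) can be enlarged to a PAC field existentially closed over it, and ${\rm Th}_{\exists_1}$ only depends on which finitely generated subrings embed. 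I would cite the standard construction of PAC hulls or invoke \cite[Proposition 11.3.5, Lemma 20.6.3]{FJ} as in the proof of Lemma~\ref{prop:PAC}.

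\textbf{Hypothesis \eqref{pattern.iii} (\axmono\ for $B$).} This is the heart of the matter and is exactly where Lemma~\ref{prop:PAC} is used. For the first arch $B=\rmPAC_{\exists_1}\bridge\rmPAC_\exists$ with $\sigma={\rm id}$: given $M,M'\models\PAC$ with ${\rm Th}_{\exists_1}(M)\subseteq{\rm Th}_{\exists_1}(M')$, we want ${\rm Th}_\exists(M)\subseteq{\rm Th}_\exists(M')$. Apply Lemma~\ref{prop:PAC} with $K=\mathbb{F}_p$ or the prime field, $E=M$, $F=M'$ — but one must check $E/K$ is separable, which fails in positive characteristic if $M$ is imperfect; the fix is to note that ${\rm Th}_{\exists_1}$ of a field determines its characteristic and the isomorphism type of its relative algebraic closure over the prime field, and to run the argument of Lemma~\ref{prop:PAC} with $E_0 = M\cap\bar{\mathbb{F}_p}$ directly, since $M/E_0$ is regular (it is always the case that a field is regular over its relative algebraic closure in the case of interest, or one works over the perfect core). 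For the second arch $B=\rmF_{\exists_1}\bridge\rmPAC_{\exists_1}$, $\sigma={\rm id}$: given $M,M'\models\PAC$ with ${\rm Th}_{\exists_1}(M)\subseteq{\rm Th}_{\exists_1}(M')$ we must only conclude ${\rm Th}_{\exists_1}(M)\subseteq{\rm Th}_{\exists_1}(M')$ in the fragment ${\rm Sent}_{\exists_1}(\mathfrak{L}_{\rm ring})$ — which is the hypothesis itself (the fragment closure under $\wedge,\vee$ preserves inclusions of complete theories as noted in the remarks on existential fragments). So the second arch's \eqref{pattern.iii} is essentially immediate, and all the real content sits in the first arch. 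The main obstacle is therefore the careful application of Lemma~\ref{prop:PAC} in positive characteristic where separability of $E/K$ over the prime field can fail; the remedy is to work over the relative algebraic closure $E_0$, over which the extension is regular by construction, so that Lemma~\ref{prop:PAC}'s proof goes through verbatim.
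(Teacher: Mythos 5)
Your overall route is the same as the paper's: you verify the three hypotheses separately, hypothesis (\ref{pattern.i}) is routine, \axsurj\ for the hatted bridge is trivial for the first arch and follows from the existence of a regular PAC extension \cite[Proposition 13.4.6]{FJ} for the second, and \axmono\ is trivial for the second arch while for the first it is exactly Lemma~\ref{prop:PAC} (after noting that the $\exists_1$-inclusion forces equal characteristic). You also correctly identify which parts are trivial and which carry the content.

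The one place you go astray is in applying Lemma~\ref{prop:PAC}. You assert that separability of $M$ over the prime field ``fails in positive characteristic if $M$ is imperfect''. That is false: separability of a field extension $E/K$ (linear disjointness from $K^{1/p^{\infty}}$, equivalently separable generation of every finitely generated subextension) holds \emph{automatically} whenever $K$ is perfect, irrespective of whether $E$ is perfect; it must not be confused with perfectness of $E$. Since the prime field is perfect, the hypothesis of Lemma~\ref{prop:PAC} is satisfied with $K$ the common prime field and the lemma applies directly --- this is precisely the paper's one-line argument (``Since $K$ is perfect\dots''). Your proposed repair --- running the argument over $E_{0}=M\cap\bar{\mathbb{F}}_{p}$ --- is therefore unnecessary, and as stated it is also incomplete: to use $E_{0}$ as the base field you would first have to know that $E_{0}$ embeds into $M'$, which is exactly the first step of the \emph{proof} of Lemma~\ref{prop:PAC} (via \cite[Lemma 20.6.3]{FJ}) starting from the $\exists_{1}$-inclusion over the prime field; so the detour amounts to re-proving the lemma rather than invoking it. Apart from this confusion, which does not ultimately invalidate the argument but should be corrected, your proof matches the paper's.
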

\begin{proof}
For (\ref{pattern.i}): The fact that the fragments are computable is clear in both cases.
Just like the theory $\mathbf{F}$, the theory $\PAC$ is computably enumerable since it has a computable axiomatization,
see e.g.~\cite[Proposition 11.3.2]{FJ}.

For (\ref{pattern.ii}): In case (a) this is trivial.
For (b), for every $K\models\F$ there exists a regular extension $F/K$ which is PAC, by \cite[Proposition 13.4.6]{FJ}, so $F\models\PAC$ and ${\rm Th}_{\exists_1}({\rm id}(F))={\rm Th}_{\exists_1}(F)={\rm Th}_{\exists_1}(K)$.

For (\ref{pattern.iii}): 
In case (b) this is trivial.
For (a), given PAC fields $M,M'$ with ${\rm Th}_{\exists_1}(M)\subseteq{\rm Th}_{\exists_1}(M')$,
we have that ${\rm char}(M)={\rm char}(M')$,
so without loss of generality, $M$ and $M'$ have the same prime field $K$.
Since $K$ is perfect, Lemma \ref{prop:PAC} then implies that ${\rm Th}_{\exists}(M)\subseteq{\rm Th}_{\exists}(M')$.
\end{proof}

\begin{corollary}\label{cor:PAC}
\begin{enumerate}[$(a)$]
\item
\begin{enumerate}[{\rm(I)}]
\item
$\rmPAC_{\exists_{1}}\bridge\rmPAC_{\exists}$
admits a computable elimination
$\mathrm{Sent}_{\exists}(\mathfrak{L}_{\mathrm{ring}})\longrightarrow\mathrm{Sent}_{\exists_{1}}(\mathfrak{L}_{\mathrm{ring}})$.
\item
For every $R\subseteq\mathrm{Sent}(\mathfrak{L}_{\mathrm{ring}})$,
$(\PAC\cup R)_{\exists_{1}}\meq(\PAC\cup R)_{\exists}$.
\item
$\mathrm{Th}_{\exists}(K)=(\PAC\cup\mathrm{Th}_{\exists_1}(K))_{\exists}$, for each PAC field $K$.
\end{enumerate}
\item
\begin{enumerate}[{\rm(I)}]
\item[{\rm(II)}]
For every $R\subseteq\mathrm{Sent}_{\exists_{1}}(\mathfrak{L}_{\mathrm{ring}})$,
$(\F\cup R)_{\exists_{1}}\meq(\PAC\cup R)_{\exists_1}$.
\end{enumerate}
\end{enumerate}
\end{corollary}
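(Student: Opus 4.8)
The plan is to obtain all four assertions directly from Corollary~\ref{lem:pattern}, whose hypotheses~{\rm(}\ref{pattern.i}{\rm)}--{\rm(}\ref{pattern.iii}{\rm)} are supplied by Lemma~\ref{lem:PAC}. The only work is to unwind the notation for the two arches involved and to match the data (fragments, theories, $\sigma$, $\iota$) against the three conclusions of Corollary~\ref{lem:pattern}.

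For part~(a) I would apply Corollary~\ref{lem:pattern} to the arch $\rmPAC_{\exists_1}\bridge\rmPAC_\exists\arch\rmPAC\bridge\rmPAC$, whose hypotheses~{\rm(}\ref{pattern.i}{\rm)}--{\rm(}\ref{pattern.iii}{\rm)} hold by Lemma~\ref{lem:PAC}(a). Here $B=\rmPAC_{\exists_1}\bridge\rmPAC_\exists$ and $\hat{B}=\rmPAC\bridge\rmPAC$, so $T_1=T_2=\PAC$, $L_1=\mathrm{Sent}_{\exists_1}(\mathfrak{L}_{\mathrm{ring}})$, $L_2=\mathrm{Sent}_{\exists}(\mathfrak{L}_{\mathrm{ring}})$, $\hat{L}_1=\hat{L}_2=\mathrm{Sent}(\mathfrak{L}_{\mathrm{ring}})$, and $\sigma=\iota=\mathrm{id}$. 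Conclusion~{\rm(I)} of Corollary~\ref{lem:pattern} then yields a computable elimination $L_2\to L_1$, i.e.\ $\mathrm{Sent}_{\exists}(\mathfrak{L}_{\mathrm{ring}})\to\mathrm{Sent}_{\exists_1}(\mathfrak{L}_{\mathrm{ring}})$, which is exactly~(a){\rm(I)}. Conclusion~{\rm(II)}, read for $R\subseteq\mathrm{Sent}(\mathfrak{L}_{\mathrm{ring}})=\hat{L}_1$ and using $\iota=\mathrm{id}$, is $(\PAC\cup R)_{\exists_1}\meq(\PAC\cup R)_{\exists}$, which is~(a){\rm(II)}. Conclusion~{\rm(III)}, applied to $M=K\models\PAC$ with $\sigma=\mathrm{id}$ and $\mathrm{Th}_{\hat{L}_1}(K)=\mathrm{Th}(K)$, reads $\mathrm{Th}_{\exists}(K)=(\PAC\cup\mathrm{Th}(K))_{\exists}=(\PAC\cup\mathrm{Th}_{\exists_1}(K))_{\exists}$; the first equality is trivial because $\PAC\subseteq\mathrm{Th}(K)$, and the second is~(a){\rm(III)}.

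For part~(b) I would apply Corollary~\ref{lem:pattern} to the arch $\rmF_{\exists_1}\bridge\rmPAC_{\exists_1}\arch\rmF_{\exists_1}\bridge\rmPAC$, whose hypotheses~{\rm(}\ref{pattern.i}{\rm)}--{\rm(}\ref{pattern.iii}{\rm)} hold by Lemma~\ref{lem:PAC}(b). Here $B=\rmF_{\exists_1}\bridge\rmPAC_{\exists_1}$ and $\hat{B}=\rmF_{\exists_1}\bridge\rmPAC$, so $T_1=\F$, $T_2=\PAC$, $L_1=L_2=\hat{L}_1=\mathrm{Sent}_{\exists_1}(\mathfrak{L}_{\mathrm{ring}})$, $\hat{L}_2=\mathrm{Sent}(\mathfrak{L}_{\mathrm{ring}})$, and $\sigma=\iota=\mathrm{id}$. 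Conclusion~{\rm(II)} of Corollary~\ref{lem:pattern}, read for $R\subseteq\mathrm{Sent}_{\exists_1}(\mathfrak{L}_{\mathrm{ring}})=\hat{L}_1$ and using $\iota=\mathrm{id}$, is precisely $(\F\cup R)_{\exists_1}\meq(\PAC\cup R)_{\exists_1}$, which is~(b){\rm(II)}.

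I anticipate no real obstacle: all the model-theoretic content has already been isolated in Lemmas~\ref{prop:PAC} and~\ref{lem:PAC}, and what remains is bookkeeping — correctly identifying the data $(C_1,C_2,\sigma,\iota,\hat{C}_1,\hat{C}_2)$ of each arch and transcribing the outputs of Corollary~\ref{lem:pattern}. The only place calling for a little care is keeping the $\exists_1$- and $\exists$-fragments apart and checking that the ambient fragments $\hat{L}_i$ enter only where Corollary~\ref{lem:pattern} permits — in particular that $\hat{L}_2$ plays no role in conclusion~{\rm(II)}.
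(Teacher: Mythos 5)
Your proposal is correct and is exactly the argument the paper intends: Corollary \ref{cor:PAC} is stated without a separate proof precisely because it is the direct output of Corollary \ref{lem:pattern} applied to the two arches whose hypotheses are verified in Lemma \ref{lem:PAC}, and your unwinding of the data (including the observation that $(\PAC\cup\mathrm{Th}(K))_{\exists}=\mathrm{Th}_{\exists}(K)$ trivially in (a)(III)) matches what is needed.
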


\begin{remark}
Corollary \ref{cor:PAC}(a) is already implicit in \cite[Prop.~2.2]{Dries},
cf.~also \cite[Prop.~4.22]{DDF} for existential {\em formulas} over {\em perfect} PAC fields.
We could have defined the bridges $\rmF_{\exists_1}\bridge\rmPAC_\exists$
and $\rmF_{\exists_1}\bridge\rmPAC_{\exists_1}$ instead of with $\sigma={\rm id}$
also with $\sigma$ the map that sends a field $K$ to its algebraic part $K_{\rm alg}$,
the relative algebraic closure of the prime field of $K$ in $K$.
\end{remark}

Combining Corollary \ref{cor:PAC}(aII) and (bII), we get that
$(\F\cup R)_{\exists_1}\meq(\PAC\cup R)_{\exists}$ for every
$R\subseteq\mathrm{Sent}_{\exists_{1}}(\mathfrak{L}_{\mathrm{ring}})$.
We will generalize this in Corollary~\ref{cor:hash3},
using the following lemma that strengthens Corollary~\ref{cor:PAC}(bII).

\begin{lemma}\label{lem:PAC_F}
For $R\subseteq{\rm Sent}_\exists(\mathfrak{L}_{\rm ring})$,
$(\F\cup R)_{\exists_1}=(\PAC\cup R)_{\exists_1}$.
\end{lemma}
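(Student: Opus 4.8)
The plan is to prove the two inclusions separately. The inclusion $(\F\cup R)_{\exists_1}\subseteq(\PAC\cup R)_{\exists_1}$ is immediate: every model of $\PAC\cup R$ is in particular a model of $\F\cup R$, since $\PAC\supseteq\F$, so any existential sentence with at most one quantifier entailed by $\F\cup R$ is entailed by $\PAC\cup R$. The work is all in the reverse inclusion $(\PAC\cup R)_{\exists_1}\subseteq(\F\cup R)_{\exists_1}$, and here the key point is that we must exploit $R\subseteq\mathrm{Sent}_\exists(\mathfrak{L}_{\rm ring})$ (rather than merely $R\subseteq\mathrm{Sent}_{\exists_1}$), so that satisfaction of $R$ persists under passing to PAC extensions.

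First I would take an $\exists_1$-sentence $\varphi$ with $\PAC\cup R\models\varphi$ and let $K$ be an arbitrary model of $\F\cup R$; the goal is to show $K\models\varphi$. The idea is to embed $K$ into a PAC field $F$ that still satisfies $R$. By \cite[Proposition 13.4.6]{FJ} there is a regular extension $F/K$ with $F$ PAC. Since $F/K$ is regular, $K$ is existentially closed in $F$ \cite[Proposition 11.3.5]{FJ}, hence $\mathrm{Th}_\exists(K)\subseteq\mathrm{Th}_\exists(F)$; as $R\subseteq\mathrm{Sent}_\exists(\mathfrak{L}_{\rm ring})$ and $K\models R$, this gives $F\models R$. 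Therefore $F\models\PAC\cup R$, so $F\models\varphi$. It remains to transfer $\varphi$ back down to $K$: since $\varphi$ has at most one quantifier, it is either a universal sentence (in fact with one quantifier) or an element of the fragment built from such sentences, and — crucially — $K$ is not just existentially closed but, being the base of a \emph{regular} extension, the embedding $K\hookrightarrow F$ reflects $\exists_1$-sentences. Concretely, an $\exists_1$-sentence is a Boolean combination (conjunctions and disjunctions) of sentences of the form $\exists x\,\psi(x)$ and $\forall x\,\psi(x)$ with $\psi$ quantifier-free; for $\exists x\,\psi(x)$ going up is automatic, and for $\forall x\,\psi(x)$ one uses that $K$ is relatively algebraically closed in $F$ together with the fact that a quantifier-free condition failing at some point of $F$ can, after clearing denominators, be witnessed by an element algebraic over $K$ — so $\forall x\,\psi$ holding in $F$ forces it in $K$ as well. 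I would phrase this last step via \cite[Lemma 20.6.3]{FJ}, exactly as in the proof of Lemma~\ref{prop:PAC}, noting $E_0 = K\cap\bar{\mathbb{P}} = \mathbb{P}_{\rm alg}$ embeds into $F$ and that $\exists_1$-sentences are determined by the relative algebraic closure of the prime field.

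The main obstacle I anticipate is precisely this downward transfer of $\exists_1$-sentences through the regular extension $K\subseteq F$: one must verify that a single-quantifier sentence true in $F$ is already true in $K$, which is \emph{not} a consequence of $K$ being existentially closed in $F$ alone (that only handles the $\exists$-direction), but rather of $K$ being relatively algebraically closed in $F$ — equivalently, of $K$ and $F$ having the same $\exists_1$-theory, which is what \cite[Lemma 20.6.3]{FJ} delivers for regular extensions. Once that is in place, one concludes $K\models\varphi$, hence $\F\cup R\models\varphi$, which completes the reverse inclusion and the proof.
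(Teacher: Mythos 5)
Your overall strategy is the same as the paper's: embed $K\models\F\cup R$ into a regular PAC extension $F/K$ via \cite[Proposition 13.4.6]{FJ}, push the existential theory $R$ up to $F$ to conclude $F\models\PAC\cup R\models\varphi$, and pull the $\exists_1$-sentence $\varphi$ back down to $K$ using that $K$ is relatively algebraically closed in $F$. The trouble is in your ``concrete'' justification of the descent. First, $\mathrm{Sent}_{\exists_1}(\mathfrak{L}_{\rm ring})$ is generated under $\wedge$ and $\vee$ by \emph{existential} sentences with at most one quantifier; fragments are not closed under negation, so it contains no sentences $\forall x\,\psi(x)$, and your $\exists/\forall$ case split is not a decomposition of $\varphi$. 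Second, the case that actually needs an argument is exactly the one you wave away: you must show $F\models\exists x\,\psi(x)\Rightarrow K\models\exists x\,\psi(x)$, i.e.\ produce a witness in the \emph{smaller} field. That is the downward direction, so ``going up is automatic'' is beside the point, and the $\forall$-descent you discuss at length is trivial for substructures and irrelevant here. Note also that \cite[Lemma 20.6.3]{FJ} is used in Lemma~\ref{prop:PAC} in the direction ``containment of $\exists_1$-theories implies embedding''; for the descent you need the converse direction.

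The argument that does work is: write $\psi$ in disjunctive normal form; a witness $b\in F$ satisfies some conjunct, and either that conjunct contains a nontrivial polynomial equation over the prime field, so $b$ is algebraic over the prime field and hence lies in $K$ because $K$ is relatively algebraically closed in $F$, or the conjunct consists only of inequations, in which case one needs $K$ to contain an element avoiding the roots of finitely many nonzero polynomials over the prime field. The second case is a genuine obstruction when $K$ is finite: for instance $\exists x(x\neq 0\wedge x\neq 1)$ belongs to $\PAC_{\exists_1}$ (PAC fields are infinite) but not to $\F_{\exists_1}$ (witness $\mathbb{F}_2$), so the descent $F\models\varphi\Rightarrow K\models\varphi$ fails for $K=\mathbb{F}_2$. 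To be fair, the paper's own proof asserts this descent in one line (``since $\varphi\in\mathrm{Sent}_{\exists_1}$ and $K$ is algebraically closed in $F$'') and passes over the same borderline case; the statement and both proofs are fine once one restricts to infinite fields (e.g.\ replacing $\F$ by the theory of infinite fields, or assuming $R$ forces infinitude), but as written your justification of the key step analyses the wrong cases and so does not close this gap.
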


\begin{proof}
The inclusion $(\F\cup R)_{\exists_1}\subseteq(\PAC\cup R)_{\exists_1}$ is trivial.
For the other inclusion let $\varphi\in(\PAC\cup R)_{\exists_1}$ and $K\models\F\cup R$. By \cite[Proposition 13.4.6]{FJ} there exists a regular extension $F/K$ with $F$ PAC.
Since $R\subseteq{\rm Sent}_\exists(\mathfrak{L}_{\rm ring})$, we have $F\models\PAC\cup R$, hence $F\models\varphi$.
Moreover,
since $\varphi\in{\rm Sent}_{\exists_1}(\mathfrak{L}_{\rm ring})$
and $K$ is algebraically closed in $F$,
this implies $K\models\varphi$.
\end{proof}

\begin{remark}
We note that 
$(\F\cup R)_{\exists_1}=(\PAC\cup R)_{\exists_1}$ need not hold
for arbitrary $R\subseteq{\rm Sent}(\mathfrak{L}_{\rm ring})$,
as the case $R={\rm Th}(K)$ for any non-PAC field $K$ shows,
and one can also see that in general not even
$(\F\cup R)_{\exists_1}\Teq(\PAC\cup R)_{\exists_1}$ must hold.
\end{remark}

Combining
Corollary \ref{cor:PAC}(aII) with
Lemma \ref{lem:PAC_F},
we obtain the following first main results on PAC fields. More conclusions of Lemma \ref{lem:PAC} will be drawn in Section \ref{sec:PAC2}.

\begin{corollary}\label{cor:hash3}
For $R\subseteq{\rm Sent}_\exists(\mathfrak{L}_{\rm ring})$,
$(\F\cup R)_{\exists_1}\meq(\PAC\cup R)_\exists$.
\end{corollary}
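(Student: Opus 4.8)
The plan is to deduce Corollary~\ref{cor:hash3} by chaining together the two many-one equivalences already available for existential-with-one-quantifier fragments, using Lemma~\ref{lem:PAC_F} to pass from $\mathbf{F}$ to $\PAC$ at the $\exists_1$ level and then Corollary~\ref{cor:PAC}(aII) to climb from the $\exists_1$ fragment to the full existential fragment over $\PAC$. Concretely, for $R\subseteq{\rm Sent}_\exists(\mathfrak{L}_{\rm ring})$ I would first observe that Lemma~\ref{lem:PAC_F} gives the \emph{equality} $(\F\cup R)_{\exists_1}=(\PAC\cup R)_{\exists_1}$ (in particular a many-one equivalence, indeed the identity reduction). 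Then I would invoke Corollary~\ref{cor:PAC}(aII), which states $(\PAC\cup R')_{\exists_1}\meq(\PAC\cup R')_\exists$ for every $R'\subseteq{\rm Sent}(\mathfrak{L}_{\rm ring})$, applied to $R'=R$; this is legitimate since ${\rm Sent}_\exists(\mathfrak{L}_{\rm ring})\subseteq{\rm Sent}(\mathfrak{L}_{\rm ring})$. Composing, $(\F\cup R)_{\exists_1}=(\PAC\cup R)_{\exists_1}\meq(\PAC\cup R)_\exists$, and since $\meq$ is transitive (being the symmetrization of the transitive relation $\mred$), this yields $(\F\cup R)_{\exists_1}\meq(\PAC\cup R)_\exists$, which is exactly the claim.

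The only point requiring a word of care is that Corollary~\ref{cor:PAC}(aII) as displayed is stated for $R\subseteq{\rm Sent}(\mathfrak{L}_{\rm ring})$, so one must simply note that the hypothesis $R\subseteq{\rm Sent}_\exists(\mathfrak{L}_{\rm ring})$ of the present corollary is a special case; no extra argument is needed. One should also make sure the two many-one reductions compose in the right direction, but since we have a genuine two-sided equivalence at each stage this is automatic, and the resulting reduction functions are computable because the ones furnished by Corollary~\ref{cor:PAC}(a) are (via the computable eliminations and interpretations produced by Corollary~\ref{lem:pattern}).

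I do not anticipate a genuine obstacle here: the corollary is a bookkeeping consequence of the two preceding results, and the substantive content lives in Lemma~\ref{lem:PAC_F} (elementary field theory: regular PAC extensions exist and existential-one-quantifier sentences descend along extensions in which the base is algebraically closed) and in Corollary~\ref{cor:PAC}(a) (which rests on Lemma~\ref{prop:PAC} and the machinery of arches in Corollary~\ref{lem:pattern}). If one wished to be fully explicit, the proof is a single line: combine Lemma~\ref{lem:PAC_F} with Corollary~\ref{cor:PAC}(aII) and use transitivity of $\meq$.
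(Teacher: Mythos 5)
Your proposal is correct and matches the paper's intended argument exactly: the corollary is stated immediately after Lemma~\ref{lem:PAC_F} precisely so that one chains the equality $(\F\cup R)_{\exists_1}=(\PAC\cup R)_{\exists_1}$ with Corollary~\ref{cor:PAC}(aII) and transitivity of $\meq$. No issues.
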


\begin{remark}
We briefly describe another approach to obtain
Corollary~\ref{cor:hash3}.
First
we consider a weakening of
the hypothesis (\ref{pattern.ii})
of Corollary~\ref{lem:pattern}
to the following:
\begin{enumerate}
\item[(\ref{pattern.ii}')]
for every $N\models T_{1}$,
there exists $M\models T_{2}$
such that $\mathrm{Th}_{\hat{L}_{1}}(N)\subseteq\mathrm{Th}_{\hat{L}_{1}}(\sigma M)$
and
$\mathrm{Th}_{L_{1}}(N)=\mathrm{Th}_{L_{1}}(\sigma M)$.
\end{enumerate}
Next one can prove an adaptation of Corollary~\ref{lem:pattern}:
Suppose an arch $(B,\hat{B},\iota)$ satisfies
the hypotheses
(\ref{pattern.i},\ref{pattern.ii}',\ref{pattern.iii})
of Corollary~\ref{lem:pattern}.
Then $B$ admits a computable elimination
and
$(T_{1}\cup R)_{L_{1}}\meq(T_{2}\cup\iota R)_{L_{2}}$,
for each $R\subseteq\hat{L}_{1}$.
One can show that 
$\check{B}$ (defined as in Lemma \ref{lem:sur_goes_up}) satisfies \axsurj\
using only (\ref{pattern.ii}') and not the full strength of (\ref{pattern.ii}).
Finally, using again  \cite[Proposition 13.4.6]{FJ} 
one sees that
hypotheses (\ref{pattern.i},\ref{pattern.ii}',\ref{pattern.iii})
of Corollary~\ref{lem:pattern}
hold for 
$\rmF_{\exists_{1}}\bridge\rmPAC_{\exists}\arch\rmF_{\exists}\bridge\rmPAC$.
\end{remark}

\begin{remark}
While it is known by work of Cherlin, van den Dries and Macintyre
that the theory of a PAC field is determined by its algebraic part, its degree of imperfection and the theory of the inverse system of its absolute Galois group,
we see here that for the existential theory only the first of these three plays a role.
If we would restrict our attention to PAC fields that are for example perfect and $\omega$-free (equivalently, Hilbertian), we could obtain computable eliminations even for the full theory, 
but since this is essentially well known and our focus in this paper are existential theories, we do not pursue this
and only remark that Corollary \ref{cor:PAC}(aII) readily applies
to the theory $\mathbf{hPAC}$ of Hilbertian PAC fields,
and also Lemma \ref{lem:PAC_F} holds with $\PAC$ replaced by $\mathbf{hPAC}$
since the quoted \cite[Proposition 13.4.6]{FJ}
actually gives a Hilbertian PAC field $F$.
\end{remark}

\subsection{Equicharacteristic henselian valued fields: $\exists$-theories}
\label{sec:HeE}
For basics on valued fields see for example \cite{EP}.
We denote by
\begin{align*}
    \mathfrak{L}_{\mathrm{val}}
    &=
    \{+^{\mathbf{K}},-^{\mathbf{K}},\cdot^{\mathbf{K}},0^{\mathbf{K}},1^{\mathbf{K}},+^{\mathbf{k}},-^{\mathbf{k}},\cdot^{\mathbf{k}},0^{\mathbf{k}},1^{\mathbf{k}},+^{\bfGamma},<^{\bfGamma},0^{\bfGamma},\infty^{\bfGamma},v,\res\}
\end{align*}
the (three-sorted) language of valued fields,
with a sort $\mathbf{K}$ for the field itself, a sort $\bfGamma$ for the value group together with infinity, and a sort $\mathbf{k}$ for the residue field,
as well as symbols $v$ for the valuation map and $\res$ for the residue map.
We denote by $\mathfrak{L}_{\mathrm{val}}(\varpi)$ the expansion of $\mathfrak{L}_{\mathrm{val}}$ by a new constant symbol $\varpi$ in the sort $\mathbf{K}$.
A valued field $(K,v)$ gives rise in the usual way to an $\mathfrak{L}_{\rm val}$-structure 
$$
 (K,vK\cup\{\infty\},Kv,v,\mathrm{res}),
$$
where $vK$ is the value group, $Kv$ is the residue field, and $\mathrm{res}$ is the residue map
(set to $0\in Kv$ outside the valuation ring).
For notational simplicity, we will usually write $(K,v)$ to refer to the $\mathfrak{L}_{\rm val}$-structure it induces.

\begin{remark}\label{rem:one-sorted}
We might instead work in a one-sorted language of valued fields
$\mathfrak{L}_{O}=\mathfrak{L}_{\rm ring}\cup\{O\}$,
where $O$ is a unary predicate
and a valued field $(K,v)$ gives rise to an $\mathfrak{L}_{O}$-structure
by interpreting $O$ by the valuation ring $\mathcal{O}_{v}$.
As  
there is a uniform biinterpretation $\Gamma$ between the
$\mathfrak{L}_{\rm val}$-structure $(K,v)$
and the $\mathfrak{L}_O$-structure $(K,\mathcal{O}_v)$
in which each defining formula $\varphi_\Gamma$ (see the introduction) is equivalent modulo the theory of valued fields to both an existential and a universal formula,
all our results for the fragment ${\rm Sent}_{\exists}(\mathfrak{L}_{\rm val})$  
carry over to ${\rm Sent}_{\exists}(\mathfrak{L}_O)$.
\end{remark}

We denote by
$\bHen$
the $\mathfrak{L}_{\mathrm{val}}$-theory of equicharacterstic henselian nontrivially valued fields,
and by
$\bHepi$
(resp.,
$\bHepiz$)
the $\mathfrak{L}_{\mathrm{val}}(\varpi)$-theory of equicharacteristic
(resp., equicharacteristic zero)
henselian valued fields in which the interpretation of $\varpi$ is a uniformizer
(i.e.~an element of minimal positive value).
We define the contexts
$\Hen:=(\mathrm{Sent}(\mathfrak{L}_{\mathrm{val}}),\bHen)$
and 
$\Hen_{\exists}:=(\mathrm{Sent}_{\exists}(\mathfrak{L}_{\mathrm{val}}),\bHen)$,
and analogously 
$\Hepi$ and $\Hepiz$ with fragment ${\rm Sent}(\mathfrak{L}_{\rm val}(\varpi))$, and
$\Hepi_{\exists}$ and $\HepizE$
with fragment ${\rm Sent}_\exists(\mathfrak{L}_{\rm val}(\varpi))$.

We denote by $\F_{0}$ the theory of fields of characteristic zero and define the contexts
$\rmF_{0}=(\mathrm{Sent}(\mathfrak{L}_{\mathrm{ring}}),\F_{0})$
and analogously 
$\rmF_{0,\exists}$.
We denote by $\sigma_{\mathbf{k}}$
the map that sends a valued field $(K,v)$
to its residue field $Kv$.
Thus by equipping each with an appropriate restriction of $\sigma_{\mathbf{k}}$ we have the bridges
$\rmF\bridge\Hen$,
$\rmF_{\exists}\bridge\Hen_{\exists}$,
$\rmF_{0}\bridge\Hepiz$,
$\rmF_{0,\exists}\bridge\HepizE$,
$\rmF\bridge\Hepi$, and
$\rmF_{\exists}\bridge\Hepi_{\exists}$.

\begin{definition}\label{def:residue_interpretation}
For an 
$\mathfrak{L}_{\mathrm{ring}}$-term $t$
we define an 
$\mathfrak{L}_{\mathrm{val}}$-term $t_{\mathbf{k}}$
by recursion on the length of $t$ as follows:
for a variable $x$ we let $x_{\mathbf{k}}$
be a variable of the residue field sort;
we let $0_{\mathbf{k}}:=0^\mathbf{k}$ and $1_{\mathbf{k}}:=1^{\mathbf{k}}$;
for $t=t_1+t_2$ we let $t_{\mathbf{k}}=(t_{1})_{\mathbf{k}}+^{\mathbf{k}}(t_{2})_{\mathbf{k}}$,
similarly for $t=t_1-t_2$ and $t=t_1\cdot t_2$.
For an $\mathfrak{L}_{\mathrm{ring}}$-formula $\varphi$
we define an 
$\mathfrak{L}_{\mathrm{val}}$-formula $\varphi_{\mathbf{k}}$
by recursion on the length of $\varphi$ as follows:
if $\varphi= t_1\stackrel{.}{=}t_2$, we let
$\varphi_{\mathbf{k}}=(t_1)_\mathbf{k}\stackrel{.}{=}(t_2)_\mathbf{k}$;
if $\varphi=\neg\psi$, we define
$\varphi_{\mathbf{k}}:=\neg\psi_{\mathbf{k}}$;
if $\varphi=(\psi\wedge\rho)$, we define
$\varphi_{\mathbf{k}}:=(\psi_{\mathbf{k}}\wedge\rho_{\mathbf{k}})$;
and if $\varphi=\exists x\;\psi$, we define
$\varphi_{\mathbf{k}}:=\exists x\in\mathbf{k}\;\psi_{\mathbf{k}}$.
Finally, define
$\iota_{\mathbf{k}}:\mathrm{Sent}(\mathfrak{L}_{\mathrm{ring}})\longrightarrow\mathrm{Sent}(\mathfrak{L}_{\mathrm{val}})\subseteq\mathrm{Sent}(\mathfrak{L}_{\mathrm{val}}(\varpi))$,
given by
$\varphi\longmapsto\varphi_{\mathbf{k}}$.
\end{definition}

\begin{remark}
The map
$\iota_{\mathbf{k}}$ is an interpretation for the bridge
$(\rmF,\rmVF,\sigma_{\mathbf{k}})$,
where
$\VF$
is the
$\mathfrak{L}_{\mathrm{val}}$-theory of valued fields,
and
$\rmVF$ is the context
$(\mathrm{Sent}(\mathfrak{L}_{\mathrm{val}}),\VF)$. 
Equipped with the appropriate restrictions of $\iota_{\mathbf{k}}$ we get arches
$\rmF_{\exists}\bridge\Hen_{\exists}\arch\rmF\bridge\Hen$, 
$\rmF_{0,\exists}\bridge\HepizE\arch\rmF_{0}\bridge\Hepiz$
and
$\rmF_{\exists}\bridge\Hepi_{\exists}\arch\rmF\bridge\Hepi$,
where we use that
$\iota_{\mathbf{k}}(\mathrm{Sent}_{\exists}(\mathfrak{L}_{\mathrm{ring}}))\subseteq\mathrm{Sent}_{\exists}(\mathfrak{L}_{\mathrm{val}})\subseteq\mathrm{Sent}_{\exists}(\mathfrak{L}_{\mathrm{val}}(\varpi))$.
\end{remark}

\begin{remark}\label{rem:R4}
A field $K$ is large if it is existentially closed in $K(\!(t)\!)$,
see \cite{Pop,BSF,Popsurvey} for background on large fields.
To apply our machinery to the arch
$\rmF_{\exists}\bridge\Hepi_{\exists}\arch\rmF\bridge\Hepi$,
more precisely to verify \axmono\ for $\rmF_{\exists}\bridge\Hepi_{\exists}$,
we will work under the following hypothesis, introduced in \cite[\S2]{ADF22}:
\begin{enumerate}
	\item[{\bf(R4)}]\label{R4}\label{R4_F}
	Every large field $K$ is existentially closed in every extension $F/K$ for which there exists a valuation $v$ on $F/K$ with residue field $Fv=K$.
\end{enumerate}
As discussed in \cite[Proposition 2.3]{ADF22},
\Rfour\
is a consequence of Local Uniformization, which is in turn implied by Resolution of Singularities.
For a ring $C$ we also consider the hypothesis
{\bf(R4)$_{C}$}, which we define as \Rfour\ restricted to large fields $K$ that admit a ring homomorphism $C\rightarrow K$.
Since \Rfour\ holds when restricted to perfect fields $K$ \cite[Remark 2.4]{ADF22}, in particular
\RfourF{\mathbb{Q}} holds.
\end{remark}

\begin{lemma}\label{lem:Hen_sat_axioms}
\quad
\begin{enumerate}[$(a)$]
	\item
         Hypotheses {\rm(}\ref{pattern.i}{\rm)}--{\rm(}\ref{pattern.iii}{\rm)}
         of
	Corollary~\ref{lem:pattern}
	hold for
	$\rmF_{\exists}\bridge\Hen_{\exists}\arch\rmF\bridge\Hen$.
	\item
	Hypotheses {\rm(}\ref{pattern.i}{\rm)}--{\rm(}\ref{pattern.iii}{\rm)}
        of Corollary~\ref{lem:pattern}
	hold for 
	$\rmF_{0,\exists}\bridge\HepizE\arch\rmF_{0}\bridge\Hepiz$.
	\item
        Hypotheses {\rm(}\ref{pattern.i}{\rm)}--{\rm(}\ref{pattern.ii}{\rm)}
        of Corollary~\ref{lem:pattern}
	hold for 
	$\rmF_{\exists}\bridge\Hepi_{\exists}\arch\rmF\bridge\Hepi$,
	and if in addition \Rfour\ holds,
	then so does
        hypothesis
        {\rm(}\ref{pattern.iii}{\rm)}.
\end{enumerate}
\end{lemma}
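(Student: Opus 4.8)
The plan is to verify, in each of the three cases, hypotheses {\rm(}\ref{pattern.i}{\rm)}--{\rm(}\ref{pattern.iii}{\rm)} of Corollary~\ref{lem:pattern}. Hypotheses {\rm(}\ref{pattern.i}{\rm)} and {\rm(}\ref{pattern.ii}{\rm)} are established in essentially the same soft way in all three cases, and all the real content sits in {\rm(}\ref{pattern.iii}{\rm)}, that is, in \axmono.

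For {\rm(}\ref{pattern.i}{\rm)}: each of the fragments $\mathrm{Sent}_{\exists}$ and $\mathrm{Sent}$ of $\mathfrak{L}_{\mathrm{ring}}$, $\mathfrak{L}_{\mathrm{val}}$ and $\mathfrak{L}_{\mathrm{val}}(\varpi)$ is computable, since being existential --- and being a finite $\wedge,\vee$-combination of existential sentences --- is a decidable syntactic property; $\iota_{\mathbf{k}}$ is computable, being given by the explicit recursion of Definition~\ref{def:residue_interpretation}; and each of $\bHen$, $\bHepiz$, $\bHepi$ has an explicit computable first-order axiomatisation (valued-field axioms, the henselianity scheme, the appropriate characteristic scheme, and the sentences governing nontriviality of the valuation, resp.\ the uniformizer $\varpi$), hence is computably enumerable. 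For {\rm(}\ref{pattern.ii}{\rm)} I would use formal Laurent series: given $N\models\F$ (resp.\ $N\models\F_{0}$ in case (b)), the field $N(\!(t)\!)$ with its $t$-adic valuation is an equicharacteristic henselian nontrivially valued field whose residue field is literally $N$, and on setting $\varpi:=t$ it is moreover a model of $\bHepi$ (resp.\ $\bHepiz$) in which $\varpi$ is a uniformizer; since $\sigma_{\mathbf{k}}(N(\!(t)\!))=N$ as $\mathfrak{L}_{\mathrm{ring}}$-structures, $\mathrm{Th}(N)=\mathrm{Th}(\sigma_{\mathbf{k}}(N(\!(t)\!)))$, so the outer bridge $\hat{B}$ satisfies \axsurj.

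The work is in {\rm(}\ref{pattern.iii}{\rm)}: given models $(K,v)$ and $(K',v')$ of the relevant valued-field theory with $\mathrm{Th}_{\exists}(Kv)\subseteq\mathrm{Th}_{\exists}(K'v')$, one must show $\mathrm{Th}_{\exists}(K,v)\subseteq\mathrm{Th}_{\exists}(K',v')$ in the corresponding valued-field language. In case (a) this is a reformulation of the main theorem of \cite{AF16}; the value-group sort contributes nothing extra, since in the nontrivially valued setting any two value groups have the same existential theory (every nontrivial ordered abelian group embeds $\mathbb{Z}$ and itself embeds into a model of $\mathrm{Th}(\mathbb{Z},+,<)$). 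Case (b) is the equicharacteristic-zero specialisation, where the existential Ax--Kochen--Ershov embedding arguments of \cite{AF16} and \cite{ADF22} apply unconditionally --- the extra constant $\varpi$ causing no trouble, as the value group now just carries a distinguished least positive element $v(\varpi)$, which once more does not affect its existential theory. Case (c) is the delicate one: here $K$ and $K'$ may have positive characteristic, so extending partial valued-field embeddings forces one to pass through immediate --- possibly defect --- extensions, and this needs the input that a suitable large field is existentially closed in every extension realising a residue-field situation over it, which is precisely hypothesis \Rfour\ of Remark~\ref{rem:R4}. Thus under \Rfour\ the embedding argument of \cite{AF16} and \cite{ADF22} yields \axmono\ for the arch in (c), while {\rm(}\ref{pattern.i}{\rm)} and {\rm(}\ref{pattern.ii}{\rm)} hold unconditionally.

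I expect the positive-characteristic obstruction in case (c) --- extending partial valued-field embeddings through immediate and defect extensions --- to be the crux, and \Rfour\ is exactly what is invoked to overcome it. A minor point to keep in mind is that throughout ``existential sentence'' must be read as ``element of the existential \emph{fragment}'' rather than ``logically equivalent to a prenex existential sentence'' (cf.\ Remark~\ref{rem:AF16}), so that the fragments in play are genuinely computable.
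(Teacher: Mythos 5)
Your proposal is correct and follows essentially the same route as the paper: computable axiomatizations for (i), the Laurent series field $N(\!(t)\!)$ with its $t$-adic valuation for (ii), and for (iii) the results of \cite{AF16} in case (a) and of \cite{ADF22} in cases (b) and (c), with (c) conditional on \Rfour\ and (b) unconditional because the restricted hypothesis \RfourF{\mathbb{Q}} is known (no defect in residue characteristic zero). The only difference is one of precision: the paper pins \axmono\ in (a) on Lemmas 6.1 and 6.3 of \cite{AF16} and in (b),(c) on Proposition 4.11 of \cite{ADF22} applied to $(\mathbb{F}(\pi),v_{\pi})$ resp.\ $(\mathbb{Q}(\pi),v_{\pi})$, where you invoke the same results without naming them.
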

\begin{proof}
For (\ref{pattern.i}):
The fact that the fragments 
and $\iota_\mathbf{k}$
are computable is clear in both cases. The theories
$\bHen$ and $\bHepi$ are computably enumerable since they have computable axiomatizations,
see e.g.~\cite[Remark 4.8]{ADF22},
taking Remark~\ref{rem:one-sorted} into account.

For (\ref{pattern.ii}):
let $k$ be a field.
Then $(k(\!(t)\!),v_{t})$ is an equicharacteristic henselian nontrivially valued field with residue field $k$, of which $t$ is a uniformizer.
In particular,
$\sigma_\mathbf{k}:\mathrm{Mod}(\bHepi)\longrightarrow\mathrm{Mod}(\F)$ is surjective,
and so (\ref{pattern.ii}) follows in both cases.

For (\ref{pattern.iii}) in (a):
let $(K,v),(L,w)\models\bHen$
and
suppose that
$Kv\models\mathrm{Th}_{\exists}(Lw)$.
That is
$(K,v)\models\bHen\cup\iota_{\mathbf{k}}\mathrm{Th}_{\exists}(Lw)$.
By \cite[Lemma 6.3]{AF16}%
\footnote{Alternatively, one may apply
Proposition~\ref{prp:intersection} taking
$R=\mathrm{Th}(Lw)$.},
$\bHen\cup\iota_{\mathbf{
k}}\mathrm{Th}_{\exists}(Lw)\models(\bHen\cup\iota_{\mathbf{
k}}\mathrm{Th}(Lw))_{\exists}$.
By \cite[Lemma 6.1]{AF16},
$\bHen\cup\iota_{\mathbf{
k}}\mathrm{Th}(Lw)\models\mathrm{Th}_{\exists}(L,w)$.
Then
$(K,v)\models\mathrm{Th}_{\exists}(L,w)$,
and therefore
$\rmF_{\exists}\bridge\Hen_{\exists}$ satisfies \axmono.

For (\ref{pattern.iii}) in (c):
\Rfour\
implies that $\rmF_{\exists}\bridge\Hepi_{\exists}$ satisfies \axmono\ by \cite[Proposition 4.11]{ADF22} (again, note  Remark~\ref{rem:one-sorted}),
taking $C$ to be the subfield $\mathbb{F}(\pi)$ generated by the uniformizer $\pi$ and the prime subfield $\mathbb{F}$, together with the $\pi$-adic valuation $v_{\pi}$.
Note that the valuation ring of $v_{\pi}$ is excellent.

For (\ref{pattern.iii}) in (b):
as for (c) except applying 
\RfourF{\mathbb{Q}}
instead of \Rfour:
by \cite[Remark 4.18]{ADF22}
we may apply \cite[Proposition 4.11]{ADF22} to $(C,u)=(\mathbb{Q}(\pi),v_{\pi})$,
noting that
the valuation ring of $v_{\pi}$ is excellent,
the separability hypotheses are satisfied in characteristic zero.
\end{proof}

\begin{corollary}\label{cor:henselian}
\phantom{\quad}
\begin{enumerate}[$(a)$]
\item
    \begin{enumerate}[{\rm(I)}]
\item
	$\rmF_{\exists}\bridge\Hen_{\exists}$
	admits a computable elimination
	$\mathrm{Sent}_{\exists}(\mathfrak{L}_{\mathrm{val}})\longrightarrow\mathrm{Sent}_{\exists}(\mathfrak{L}_{\mathrm{ring}})$.
\item
	For every $R\subseteq{\rm Sent}(\mathfrak{L}_{\mathrm{ring}})$,
	$(\F\cup R)_{\exists}\meq(\bHen\cup\iota_{\mathbf{k}}R)_{\exists}$.
\item
	For every $(K,v)\models\bHen$, $\mathrm{Th}_{\exists}(K,v)=(\bHen\cup\iota_{\mathbf{k}}\mathrm{Th}_\exists(Kv))_{\exists}$.
\end{enumerate}
\item
\begin{enumerate}[{\rm(I)}]
\item
	$\rmF_{0,\exists}\bridge\HepizE$
	admits a computable elimination
	$\mathrm{Sent}_{\exists}(\mathfrak{L}_{\mathrm{val}}(\varpi))\longrightarrow\mathrm{Sent}_{\exists}(\mathfrak{L}_{\mathrm{ring}})$.
\item
	For every $R\subseteq{\rm Sent}(\mathfrak{L}_{\mathrm{ring}})$,
	$(\F_{0}\cup R)_{\exists}\meq(\bHepiz\cup\iota_{\mathbf{k}}R)_{\exists}$.
\item
	For every  $(K,v,\pi_{K})\models\bHepiz$, $\mathrm{Th}_{\exists}(K,v,\pi_{K})=(\bHepiz\cup\iota_{\mathbf{k}}\mathrm{Th}_\exists(Kv))_{\exists}$.
\end{enumerate}
\item
	Suppose \Rfour.
\begin{enumerate}[{\rm(I)}]
\item
	$\rmF_{\exists}\bridge\Hepi_{\exists}$
	admits a computable elimination
	$\mathrm{Sent}_{\exists}(\mathfrak{L}_{\mathrm{val}}(\varpi))\longrightarrow\mathrm{Sent}_{\exists}(\mathfrak{L}_{\mathrm{ring}})$.
\item
	For every $R\subseteq{\rm Sent}(\mathfrak{L}_{\mathrm{ring}})$,
	$(\F\cup R)_{\exists}\meq(\bHepi\cup\iota_{\mathbf{k}}R)_{\exists}$.
\item
	For every $(K,v,\pi_{K})\models\bHepi$, $\mathrm{Th}_{\exists}(K,v,\pi_{K})=(\bHepi\cup\iota_{\mathbf{k}}\mathrm{Th}_\exists(Kv))_{\exists}$.
\end{enumerate}
\end{enumerate}
\end{corollary}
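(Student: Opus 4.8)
The plan is to obtain all of (a), (b) and (c) as immediate specializations of Corollary~\ref{lem:pattern}, applied in turn to the three arches
$\rmF_{\exists}\bridge\Hen_{\exists}\arch\rmF\bridge\Hen$,
$\rmF_{0,\exists}\bridge\HepizE\arch\rmF_{0}\bridge\Hepiz$ and
$\rmF_{\exists}\bridge\Hepi_{\exists}\arch\rmF\bridge\Hepi$,
whose hypotheses (i)--(iii) are exactly what Lemma~\ref{lem:Hen_sat_axioms} supplies (in case (c) only under the additional assumption \Rfour). So the only real work is to read off, for each arch, what the three conclusions (I)--(III) of Corollary~\ref{lem:pattern} say once the fragments $L_i,\hat L_i$, the theories $T_i$, the structure map $\sigma_{\mathbf{k}}$ and the interpretation $\iota_{\mathbf{k}}$ are plugged in.

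For part (a) I would argue as follows. By Lemma~\ref{lem:Hen_sat_axioms}(a), hypotheses (i)--(iii) of Corollary~\ref{lem:pattern} hold for $\rmF_{\exists}\bridge\Hen_{\exists}\arch\rmF\bridge\Hen$; here the underlying bridge $B$ has $C_1=(\mathrm{Sent}_\exists(\mathfrak{L}_{\mathrm{ring}}),\F)$ and $C_2=(\mathrm{Sent}_\exists(\mathfrak{L}_{\mathrm{val}}),\bHen)$, while $\hat L_1=\mathrm{Sent}(\mathfrak{L}_{\mathrm{ring}})$. Conclusion (I) then yields a computable elimination for $B$, i.e.\ a computable map $\mathrm{Sent}_\exists(\mathfrak{L}_{\mathrm{val}})\to\mathrm{Sent}_\exists(\mathfrak{L}_{\mathrm{ring}})$, which is (aI). Conclusion (II), applied to an arbitrary $R\subseteq\hat L_1=\mathrm{Sent}(\mathfrak{L}_{\mathrm{ring}})$, reads $(\F\cup R)_\exists\meq(\bHen\cup\iota_{\mathbf{k}}R)_\exists$, which is (aII). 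Conclusion (III), applied to $M=(K,v)\models\bHen$ and using $\sigma_{\mathbf{k}}(K,v)=Kv$, gives $\mathrm{Th}_\exists(K,v)=(\bHen\cup\iota_{\mathbf{k}}\mathrm{Th}_\exists(Kv))_\exists$, which is (aIII).

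Parts (b) and (c) go through verbatim with Lemma~\ref{lem:Hen_sat_axioms}(a) replaced by its parts (b), resp.\ (c): for (b) one uses the arch $\rmF_{0,\exists}\bridge\HepizE\arch\rmF_{0}\bridge\Hepiz$, where now $T_1=\F_0$, $T_2=\bHepiz$, the fragment $L_2$ is $\mathrm{Sent}_\exists(\mathfrak{L}_{\mathrm{val}}(\varpi))$ and $\hat L_1=\mathrm{Sent}(\mathfrak{L}_{\mathrm{ring}})$, so that (I)--(III) become (bI)--(bIII) with $M=(K,v,\pi_K)$; for (c) one uses $\rmF_{\exists}\bridge\Hepi_{\exists}\arch\rmF\bridge\Hepi$ with $T_2=\bHepi$, and here Lemma~\ref{lem:Hen_sat_axioms}(c) supplies hypothesis (iii) precisely under \Rfour, which is why (c) is stated conditionally.

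I do not expect any genuine obstacle: the substance of the corollary is entirely in Lemma~\ref{lem:Hen_sat_axioms} (and hence in \cite{AF16} for (a) and \cite{ADF22} for (c)), so that what remains is the routine — but slightly fiddly — bookkeeping of matching the abstract conclusions of Corollary~\ref{lem:pattern} with the displayed statements, being careful about which language ($\mathfrak{L}_{\mathrm{val}}$ versus $\mathfrak{L}_{\mathrm{val}}(\varpi)$) and which base theory ($\F$ versus $\F_0$) appears in each case, and about the fact that \Rfour\ enters only through hypothesis (iii) in (c).
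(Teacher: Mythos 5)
Your proposal is correct and is exactly the paper's (implicit) argument: the corollary is stated without separate proof precisely because it is the instantiation of Corollary~\ref{lem:pattern}(I)--(III) for the three arches whose hypotheses are verified in Lemma~\ref{lem:Hen_sat_axioms}, with \Rfour{} entering only through hypothesis~(iii) in case~(c). Your bookkeeping of the fragments, theories, $\sigma_{\mathbf{k}}$ and $\iota_{\mathbf{k}}$ in each case is accurate.
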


\begin{remark}
By Lemma \ref{lem:neg},
Lemma \ref{lem:Hen_sat_axioms}
and therefore also each of the parts of
Corollary \ref{cor:henselian}
remains true if we replace each $\exists$
by $\forall/\exists$,
denoting,
in each of the respective languages,
the set of finite conjunctions and disjunctions
of existential or universal sentences.
For example,
for every $R\subseteq{\rm Sent}(\mathfrak{L}_{\mathrm{ring}})$,
$(\F\cup R)_{\forall/\exists}\meq(\bHen\cup\iota_{\mathbf{k}}R)_{\forall/\exists}$.
Similar extensions hold for several of the results in the rest of this paper,
but we will not spell them out each time.
\end{remark}

Although we have only proved \axmono\ for the bridge
$\rmF_{\exists}\bridge\Hepi_{\exists}$
under the hypothesis \Rfour,
we are able to prove \axwmono\ unconditionally:

\begin{lemma}\label{lem:Hen_sat_weak_axioms}
The fragment extension 
$\rmF_{\exists}\bridge\Hepi_{\exists}
\sqsubseteq
\rmF\bridge\Hepi$
satisfies \axwmono.
\end{lemma}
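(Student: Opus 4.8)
The plan is to produce, for a given $(K,v,\pi_K)\models\bHepi$ and a field $N$ with $\mathrm{Th}_\exists(N)\subseteq\mathrm{Th}_\exists(Kv)$, the witness $M'=(N(\!(t)\!),v_t,t)$ — the Laurent series field over $N$, with its $t$-adic valuation $v_t$, and with $t$ interpreting $\varpi$. Then $M'\models\bHepi$ and $\sigma_\mathbf{k}M'=N$, so $\mathrm{Th}(\sigma_\mathbf{k}M')=\mathrm{Th}(N)$; hence the whole content is to show $\mathrm{Th}_\exists(M')\subseteq\mathrm{Th}_\exists(K,v,\pi_K)$, equivalently that $M'$ embeds, as an $\mathfrak{L}_{\mathrm{val}}(\varpi)$-structure, into some $\mathfrak{L}_{\mathrm{val}}(\varpi)$-elementary extension of $(K,v,\pi_K)$. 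The crucial feature that makes this possible unconditionally — in contrast with \axmono\ for $\rmF_{\exists}\bridge\Hepi_{\exists}$ — is that the \emph{source} $M'$ is a completely tame valued field: its value group is $\mathbb Z$ and it is complete, hence maximal.

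First I would forget $\varpi$: both $(N(\!(t)\!),v_t)$ and $(K,v)$ are models of $\bHen$, with residue fields $N$ and $Kv$, so since $\rmF_{\exists}\bridge\Hen_{\exists}$ satisfies \axmono\ (Lemma~\ref{lem:Hen_sat_axioms}(a)) the hypothesis $\mathrm{Th}_\exists(N)\subseteq\mathrm{Th}_\exists(Kv)$ yields $\mathrm{Th}_\exists(N(\!(t)\!),v_t)\subseteq\mathrm{Th}_\exists(K,v)$ for the $\mathfrak{L}_{\mathrm{val}}$-theories. Hence $(N(\!(t)\!),v_t)$ embeds into an $\mathfrak{L}_{\mathrm{val}}$-elementary extension of $(K,v)$, and by saturation we may take this to be the $\mathfrak{L}_{\mathrm{val}}$-reduct $(K^*,v^*)$ of a suitably saturated $\mathfrak{L}_{\mathrm{val}}(\varpi)$-elementary extension $(K^*,v^*,\pi_K)$ of $(K,v,\pi_K)$. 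Restricting this embedding to the constant series identifies $N$ with a subfield $N_0$ of the valuation ring of $v^*$ on which $\mathrm{res}$ is injective — that is, with a coefficient field $N_0\cong N$ inside $(K^*,v^*)$. (It is precisely this extraction of a coefficient field that uses the unconditional equicharacteristic input behind Lemma~\ref{lem:Hen_sat_axioms}(a), and that would fail if one insisted on handling an arbitrary model of $\bHepi$ in the source rather than the Laurent series field.)

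It remains to upgrade this to an $\mathfrak{L}_{\mathrm{val}}(\varpi)$-embedding of $M'$ that sends $t$ to $\pi_K$. Since $(K,v,\pi_K)\models\bHepi$ and "minimal positive value" is first-order, $\pi_K$ has minimal positive value in $v^*K^*$; hence the inclusion $N_0\hookrightarrow K^*$ extends, via $t\mapsto\pi_K$, to an $\mathfrak{L}_{\mathrm{val}}$-embedding of the rational function field $(N_0(t),v_t)$ into $(K^*,v^*)$, and then — $K^*$ being henselian — to its henselization $(N_0(t)^{h},v_t)\subseteq(K^*,v^*)$, which is an immediate extension, still with residue field $N_0$ and value group $\mathbb Z$. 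Finally, since $N_0(\!(t)\!)$ is a spherically complete immediate extension of $N_0(t)^{h}$ and $(K^*,v^*)$ is saturated, the standard embedding lemma for spherically complete immediate extensions extends this further to a valued-field embedding $N_0(\!(t)\!)\hookrightarrow(K^*,v^*)$ over $N_0(t)^{h}$ — an $\mathfrak{L}_{\mathrm{val}}(\varpi)$-embedding of $(N_0(\!(t)\!),v_t,t)\cong M'$ into $(K^*,v^*,\pi_K)$. Therefore $\mathrm{Th}_\exists(M')\subseteq\mathrm{Th}_\exists(K^*,v^*,\pi_K)=\mathrm{Th}_\exists(K,v,\pi_K)$, the desired instance of \axwmono. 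I expect the main obstacle to be exactly this last step — transporting the embedding across the transcendental extension $N_0(t)^{h}\subseteq N_0(\!(t)\!)$ into the saturated model — which is where spherical completeness of the Laurent series source is indispensable and where, by design, no input from resolution of singularities is required.
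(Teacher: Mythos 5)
Your steps 1--5 are correct and, in substance, already constitute the paper's proof: once you have a coefficient field $N_0\cong N$ inside a saturated elementary extension $(K^*,v^*,\pi_K)$ and have extended $N_0\hookrightarrow K^*$ via $t\mapsto\pi_K$ to an $\mathfrak{L}_{\mathrm{val}}(\varpi)$-embedding of $(N_0(t)^h,v_t,t)$, you should simply stop and take $M'=(N(t)^h,v_t,t)$ as the witness for \axwmono: it is a model of $\bHepi$, its residue field is literally $N$, and the embedding just constructed gives $\mathrm{Th}_\exists(M')\subseteq\mathrm{Th}_\exists(K,v,\pi_K)$. This is exactly what the paper does (it obtains the coefficient field from a partial section of the residue map, \cite[Lemma 4.4]{ADF22}, rather than via your detour through an $\mathfrak{L}_{\mathrm{val}}$-embedding of all of $N(\!(t)\!)$, but that difference is harmless). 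Nothing in \axwmono\ requires the witness to be the Laurent series field, and choosing the small witness is precisely the point of weakening \axmono\ to \axwmono.

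Your final step --- extending the embedding from $N_0(t)^h$ to $N_0(\!(t)\!)$ --- is a genuine gap, and there is no "standard embedding lemma" that closes it when the residue characteristic is $p>0$. Saturation of $(K^*,v^*)$ realizes every pseudo-Cauchy type, but Kaplansky's theory only transports pseudo-limits of \emph{transcendental} type for free; in the transfinite induction one meets intermediate henselian fields $F_\alpha$ with $N_0(t)^h\subseteq F_\alpha\subseteq N_0(\!(t)\!)$ over which the next element $x$ of the completion is algebraic, hence (by Krasner, since a henselian rank-one field is separably algebraically closed in its completion) purely inseparable --- a defect extension --- and $K^*$ need not contain the required $p$-th roots. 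Spherical completeness of the \emph{source} makes it large and is an obstruction to embedding it, not an aid; your diagnosis of why the argument should be unconditional is backwards. Decisively: if your step worked, then taking $N=\mathbb{F}_q$ it would show $\mathrm{Th}_\exists(\mathbb{F}_q(\!(t)\!),v_t,t)\subseteq\mathrm{Th}_\exists(K,v,\pi_K)$ for every $(K,v,\pi_K)\models\bHepi\cup\iota_{\mathbf{k}}\mathrm{Th}_\exists(\mathbb{F}_q)$, i.e.\ $\mathrm{Th}_\exists(\mathbb{F}_q(\!(t)\!),v_t,t)=(\bHepi\cup\iota_{\mathbf{k}}\mathrm{Th}_\exists(\mathbb{F}_q))_\exists$ unconditionally --- but this is the main theorem of \cite{ADF22}, known only under \Rfour, and the present paper explicitly marks the corresponding equality in Proposition~\ref{cor:F_and_P_combined}(2a) as requiring \Rfour. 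So the lemma you invoke is essentially the open problem that this lemma is designed to circumvent.
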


\begin{proof}
Let $(K,v,\pi)\models\bHepi$ and $k\models\F$
with $\mathrm{Th}_{\exists}(k)\subseteq\mathrm{Th}_{\exists}(Kv)$.
Without loss of generality, $(K,v,\pi)$ is $|k|^+$-saturated,
and then so is $Kv$.
This implies that there exists an embedding $\varphi:k\rightarrow Kv$,
see \cite[Lemma 5.2.1]{CK}.
By \cite[Lemma 4.4]{ADF22}
there exists an elementary extension $(K,v)\prec (K^*,v^*)$
with a partial section $\zeta:Kv\rightarrow K^*$ of ${\rm res}_{v^*}$.
The composition $\zeta\circ\varphi:k\longrightarrow K^{*}$
is an embedding of fields,
and moreover is an embedding of valued fields when $k$ is equipped with the trivial valuation.
Mapping $t\mapsto\pi$ and using the universal property of henselisations,
we may extend $\zeta\circ\varphi$ to an embedding of valued fields
$(k(t)^{h},v_{t},t)\longrightarrow(K^{*},v^{*},\pi)$.
Thus
$\mathrm{Th}_{\exists}(k(t)^{h},v_{t},t)\subseteq\mathrm{Th}_{\exists}(K^{*},v^{*},\pi_{K})=\mathrm{Th}_{\exists}(K,v,\pi_{K})$.
Note that the residue field of $(k(t)^{h},v_{t},t)$ is $k$,
in particular it is elementarily equivalent to $k$.
\end{proof}

\subsection{Equicharacteristic henselian valued fields: $\exists_{n}$-theories}
\label{sec:HeEn}

For $n\geq0$, we
define the contexts
$\Hen_{\exists_{n}}:=
(\mathrm{Sent}_{\exists_{n}}(\mathfrak{L}_{\mathrm{val}}),\bHen)$
and
$\rmF_{\exists_{n}}:=
(\mathrm{Sent}_{\exists_{n}}(\mathfrak{L}_{\mathrm{ring}}),\F)$,
as well as the bridge
$\rmF_{\exists_{n}}\bridge\Hen_{\exists_{n}}$.
As
$\iota_{\mathbf{k}}(\mathrm{Sent}_{\exists_{n}}(\mathfrak{L}_{\mathrm{ring}}))\subseteq\mathrm{Sent}_{\exists_{n}}(\mathfrak{L}_{\mathrm{val}})$,
we obtain an
arch
$\rmF_{\exists_{n}}\bridge\Hen_{\exists_{n}}\arch\rmF\bridge\Hen$.

\begin{lemma}\label{lem:E}
Let $\mathfrak{L}$ be a language and let $M,N$ be $\mathfrak{L}$-structures.
If $\mathfrak{L}$ contain no constant symbols assume that $n\geq 1$.
\begin{enumerate}[$(a)$]
\item
$\mathrm{Th}_{\exists}(M)\subseteq\mathrm{Th}_{\exists}(N)$ if and only if $\mathrm{Th}_{\exists}(M')\subseteq\mathrm{Th}_{\exists}(N)$ for every finitely generated substructure $M'\subseteq M$.
\item
$\mathrm{Th}_{\exists_{n}}(M)\subseteq\mathrm{Th}_{\exists_{n}}(N)$ if and only if $\mathrm{Th}_{\exists}(M')\subseteq\mathrm{Th}_{\exists}(N)$ for every substructure $M'
\subseteq M$ generated by at most $n$ elements.
\end{enumerate}
\end{lemma}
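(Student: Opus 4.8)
The plan is to prove both equivalences by unwinding the definition of the existential fragments and using the fact that an existential sentence only "sees" a finitely generated piece of the structure through its witnesses.

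First I would set up the easy directions. Recall that ${\rm Th}_\exists(M)$ is the \emph{fragment} generated by existential sentences, so by the remark following Definition~\ref{def:Efragments} it suffices throughout to track which \emph{genuine} existential sentences (prenex, with a quantifier-free matrix) hold, since conjunctions and disjunctions are handled componentwise for inclusions of existential theories of (possibly incomplete, but here complete) theories $\mathrm{Th}(M)$. For (a), the implication $\Rightarrow$ is immediate: if $M'\subseteq M$ then every existential sentence true in $M'$ is true in $M$ (existential sentences go up to extensions), hence true in $N$; so ${\rm Th}_\exists(M')\subseteq{\rm Th}_\exists(M)\subseteq{\rm Th}_\exists(N)$. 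For (b), the implication $\Rightarrow$ similarly uses that a substructure $M'$ generated by at most $n$ elements satisfies ${\rm Th}_{\exists}(M')\cap{\rm Sent}_{\exists_n}(\mathfrak L)\subseteq {\rm Th}_{\exists_n}(M)$ — but one must be a little careful, since an $\exists_n$-sentence true in $M'$ need not obviously reduce to something controlled; actually the cleanest route is to observe that for \emph{any} existential sentence $\varphi=\exists y_1\dots\exists y_k\,\psi$ with $k\le n$, if $M\models\varphi$ then there are witnesses $a_1,\dots,a_k\in M$, and the substructure $M'$ generated by $\{a_1,\dots,a_k\}$ (which is generated by at most $n$ elements) satisfies $M'\models\varphi$ because $\psi$ is quantifier-free and its truth on the tuple $(a_1,\dots,a_k)$ is absolute between $M'$ and $M$. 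Then ${\rm Th}_{\exists_n}(M)\subseteq{\rm Th}_{\exists_n}(N)$ follows from the hypothesis applied to this particular $M'$. Conversely, if ${\rm Th}_\exists(M')\subseteq{\rm Th}_\exists(N)$ for all such $M'$, and $\varphi\in{\rm Th}_{\exists_n}(M)$ is a genuine $\exists_n$-sentence true in $M$, pick witnesses and the generated $M'$ as above; then $M'\models\varphi$, so $\varphi\in{\rm Th}_\exists(M')\subseteq{\rm Th}_\exists(N)$, i.e.\ $N\models\varphi$. Part (a) is the same argument with "at most $n$ elements" replaced by "finitely many elements" and no bound.

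The one genuine subtlety — and the place I'd expect to have to say something careful — is the role of the hypothesis "if $\mathfrak L$ contains no constant symbols assume that $n\ge 1$'', and more generally whether a substructure generated by at most $n$ elements always exists. In a language with no constants, the substructure generated by the empty set is empty, which is not a structure in the usual convention; this is exactly why one needs $n\ge1$ there, so that when $\varphi$ has zero existential quantifiers (a quantifier-free sentence, only possible if there are constant terms, hence constants in $\mathfrak L$) one is in the case with constants and the empty-generated substructure is legitimate. So I would state at the outset: under the stated hypothesis, for every tuple of at most $n$ elements the generated substructure $M'$ is a well-defined nonempty $\mathfrak L$-structure, and quantifier-free formulas are absolute between $M'$ and $M$ on tuples from $M'$ — this absoluteness is the only structural fact used, and it is exactly the definition of substructure. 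I would also note explicitly that when $n=0$ is allowed (constants present), a "substructure generated by at most $0$ elements'' means the substructure generated by $\emptyset$, i.e.\ the substructure of constant-term denotations, and the argument goes through verbatim since a quantifier-free sentence's truth is determined by that substructure.

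Finally I would remark that nothing field-theoretic is used: the proof is purely about first-order structures and the absoluteness of quantifier-free formulas under substructure, so the lemma is completely general, and its point is that it lets one reduce inclusions of $\exists$- or $\exists_n$-theories to the case of finitely (resp.\ boundedly) generated structures, which is how it will be used downstream (e.g.\ in comparing $\mathrm{Th}_{\exists_n}$ across the bridge $\rmF_{\exists_n}\bridge\Hen_{\exists_n}$). The main obstacle is not mathematical depth but bookkeeping: making sure the reduction from arbitrary members of the fragment ${\rm Sent}_{\exists}(\mathfrak L)$ (built from existential sentences by $\wedge,\vee$) to genuine existential sentences is stated correctly, and handling the degenerate constant-free $n=0$ case by the hypothesis rather than by an ad hoc convention.
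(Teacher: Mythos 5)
There is a genuine gap: you never prove the forward direction of (b), which is the only nontrivial implication and the one the paper actually uses downstream (in Lemma~\ref{lem:Hen_n_sat_axioms} one assumes $\mathrm{Th}_{\exists_n}(Kv)\subseteq\mathrm{Th}_{\exists_n}(Lw)$ and needs $\mathrm{Th}_{\exists}(M')\subseteq\mathrm{Th}_{\exists}(Lw)$ for $n$-generated $M'$ --- note the \emph{full} existential theory of $M'$, unbounded quantifier count, on the right). Your paragraph labelled ``$\Rightarrow$'' for (b) starts toward this but then pivots (``actually the cleanest route is\dots'') to an argument whose conclusion is $\mathrm{Th}_{\exists_n}(M)\subseteq\mathrm{Th}_{\exists_n}(N)$ ``from the hypothesis applied to this particular $M'$'' --- that is the $\Leftarrow$ direction, which you then prove a second time in the next sentence labelled ``Conversely''. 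So both of your arguments establish $\Leftarrow$, and $\Rightarrow$ is never addressed. The observation you do make in that paragraph, namely $\mathrm{Th}_{\exists}(M')\cap\mathrm{Sent}_{\exists_n}(\mathfrak{L})\subseteq\mathrm{Th}_{\exists_n}(M)$, only handles the existential sentences of $M'$ with at most $n$ quantifiers, which is strictly weaker than what (b) asserts.

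The missing idea is a term-substitution trick. Let $M'$ be generated by $a_1,\dots,a_n$ and let $\varphi=\exists y_1\dots\exists y_m\,\psi(\underline{y})\in\mathrm{Th}_{\exists}(M')$ with $m$ possibly much larger than $n$. Witnesses $b_1,\dots,b_m\in M'$ satisfy $b_i=t_i(a_1,\dots,a_n)$ for $\mathfrak{L}$-terms $t_i$, so the sentence $\exists x_1\dots\exists x_n\,\psi(t_1(\underline{x}),\dots,t_m(\underline{x}))$ has only $n$ quantifiers and holds in $M$; by the hypothesis $\mathrm{Th}_{\exists_n}(M)\subseteq\mathrm{Th}_{\exists_n}(N)$ it holds in $N$, and evaluating the terms $t_i$ at the resulting witnesses $\underline{c}\in N^n$ recovers witnesses for $\varphi$ in $N$. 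Without this step the lemma as stated is not established. (Your treatment of part (a), the easy $\Leftarrow$ direction of (b), the reduction from the fragment to genuine prenex existential sentences via completeness of $\mathrm{Th}(M)$, and the discussion of the constant-free $n\ge1$ hypothesis are all fine; the paper in fact deduces (a) directly from (b).)
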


\begin{proof}
Part (a) follows immediately from (b), and the latter is what we prove:
For $\Leftarrow$ let $\varphi\in{\rm Th}_{\exists_n}(M)$.
Without loss of generality, $\varphi$ is of the form
$\exists y_1,\dots,y_n \psi(\underline{y})$ with $\psi$ quantifier-free.
Then $M\models\psi(\underline{b})$ for some $\underline{b}\in M^n$,
and $\underline{b}$ generates a substructure $M'$ of $M$ with $M'\models\varphi$, hence $\varphi\in{\rm Th}_{\exists_n}(M')\subseteq{\rm Th}_{\exists_n}(N)$.

For $\Rightarrow$, let $M'\subseteq M$ be generated by $a_1,\dots,a_n$, and let $\varphi\in{\rm Th}_{\exists}(M')$, without loss of generality
of the form $\exists y_1,\dots,y_m\psi(\underline{y})$ for some $m$.
Then $M'\models\psi(\underline{b})$ for some $b_1,\dots,b_m\in M'$,
and $b_i=t_i(a_1,\dots,a_n)$ for an $\mathfrak{L}$-term $t_i$, for each $i$.
Thus $\exists x_1,\dots,x_n\psi(t_1(\underline{x}),\dots,t_m(\underline{x}))\in{\rm Th}_{\exists_n}(M)\subseteq{\rm Th}_{\exists_n}(N)$,
and so there are $c_1,\dots,c_n\in N$ with
$N\models\psi(t_1(\underline{c}),\dots,t_m(\underline{c}))$,
in particular $\varphi\in{\rm Th}_\exists(N)$.
\end{proof}

\begin{lemma}\label{lem:Hen_n_sat_axioms}
Hypotheses {\rm(}\ref{pattern.i}{\rm)}--{\rm(}\ref{pattern.iii}{\rm)}
of Corollary~\ref{lem:pattern}
hold for
$\rmF_{\exists_{n}}\bridge\Hen_{\exists_{n}}\arch\rmF\bridge\Hen$.
\end{lemma}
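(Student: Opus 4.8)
Parts $(i)$ and $(ii)$ are verified exactly as in the proof of Lemma~\ref{lem:Hen_sat_axioms}(a). The fragments $\mathrm{Sent}_{\exists_n}(\mathfrak{L}_{\mathrm{ring}})$, $\mathrm{Sent}_{\exists_n}(\mathfrak{L}_{\mathrm{val}})$, $\mathrm{Sent}(\mathfrak{L}_{\mathrm{ring}})$, $\mathrm{Sent}(\mathfrak{L}_{\mathrm{val}})$ and the map $\iota_{\mathbf{k}}$ are plainly computable, and $\bHen$ is computably enumerable (cf.~\cite[Remark 4.8]{ADF22} and Remark~\ref{rem:one-sorted}), which gives $(i)$; and for any field $k$ the Laurent series field $(k(\!(t)\!),v_{t})$ is an equicharacteristic henselian nontrivially valued field with residue field $k$, so $\sigma_{\mathbf{k}}\colon\mathrm{Mod}(\bHen)\to\mathrm{Mod}(\F)$ is surjective, which gives \axsurj\ for $\rmF\bridge\Hen$, i.e.~$(ii)$.

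For $(iii)$, that $\rmF_{\exists_{n}}\bridge\Hen_{\exists_{n}}$ satisfies \axmono, the plan is to bootstrap from the already established $\exists$-version (Lemma~\ref{lem:Hen_sat_axioms}(a), whose verification of $(iii)$ is precisely \axmono\ for $\rmF_{\exists}\bridge\Hen_{\exists}$) by means of Lemma~\ref{lem:E}. So let $(K,v),(K',v')\models\bHen$ with $\mathrm{Th}_{\exists_{n}}(Kv)\subseteq\mathrm{Th}_{\exists_{n}}(K'v')$; we must show $\mathrm{Th}_{\exists_{n}}(K,v)\subseteq\mathrm{Th}_{\exists_{n}}(K'v')$ (the case $n=0$ being trivial, so we may assume $n\geq 1$). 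By Lemma~\ref{lem:E}(b) this reduces to showing $\mathrm{Th}_{\exists}(M')\subseteq\mathrm{Th}_{\exists}(K',v')$ for every $\mathfrak{L}_{\mathrm{val}}$-substructure $M'\subseteq(K,v)$ generated by at most $n$ elements. Since in $(K,v)$ every value is $v$ of a field-sort element and every residue-sort element is $\res$ of a field-sort element, replacing each non-field-sort generator of $M'$ by a field-sort element of $K$ of which it is the value, resp.~residue, only enlarges $M'$; hence it suffices to treat $M'$ generated by at most $n$ field-sort elements, say with field-sort part $R$.

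The heart of the matter is then the analysis of such an $M'$. First, its residue part $\bar{R}\subseteq Kv$ (the subring generated by the residues of $R$) is again generated by at most $n$ elements --- a valuation-theoretic fact about the subring generated by $n$ elements of a valued field, in the spirit of the Abhyankar-type bounds underlying \cite{AF16}. Granting this, $\bar{R}$ is one of its own $\leq n$-generated substructures, and since $\bar{R}\hookrightarrow Kv$ gives $\mathrm{Th}_{\exists_{n}}(\bar{R})\subseteq\mathrm{Th}_{\exists_{n}}(Kv)\subseteq\mathrm{Th}_{\exists_{n}}(K'v')$, Lemma~\ref{lem:E}(b) applied to $\bar{R}$ yields $\mathrm{Th}_{\exists}(\bar{R})\subseteq\mathrm{Th}_{\exists}(K'v')$. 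Thus $\bar{R}$ embeds into a sufficiently saturated elementary extension $k^{*}$ of $K'v'$, and $\mathrm{Th}_{\exists}(k^{*})=\mathrm{Th}_{\exists}(K'v')$. Second, $M'$ embeds --- compatibly with this embedding $\bar{R}\hookrightarrow k^{*}$ --- into an equicharacteristic henselian nontrivially valued field $(\tilde{K},\tilde{v})\models\bHen$ with residue field $k^{*}$, concretely into a Hahn-type series field $k^{*}(\!(\tilde{\Gamma})\!)$ over a large enough ordered abelian group $\tilde{\Gamma}$ into which the value part of $M'$ embeds; this uses the structure theory of equicharacteristic henselian valued fields from \cite{AF16}/\cite{ADF22}. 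Then $\mathrm{Th}_{\exists}(M')\subseteq\mathrm{Th}_{\exists}(\tilde{K},\tilde{v})$, and since $\mathrm{Th}_{\exists}(\tilde{K}\tilde{v})=\mathrm{Th}_{\exists}(k^{*})=\mathrm{Th}_{\exists}(K'v')$, \axmono\ for $\rmF_{\exists}\bridge\Hen_{\exists}$ gives $\mathrm{Th}_{\exists}(\tilde{K},\tilde{v})\subseteq\mathrm{Th}_{\exists}(K',v')$, whence $\mathrm{Th}_{\exists}(M')\subseteq\mathrm{Th}_{\exists}(K',v')$, as required.

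The main obstacle is this last analysis of $\leq n$-generated substructures: showing that the residue data of such a substructure is again $\leq n$-generated, and that the substructure can be re-realised inside a henselian valued field over a prescribed (small) residue field. Both points are where one returns to the valuation-theoretic machinery behind \cite{AF16}; the bridge/arch formalism and Lemma~\ref{lem:E} then package everything into the conclusions of Corollary~\ref{lem:pattern}.
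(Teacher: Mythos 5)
Your parts (i) and (ii), and the overall skeleton of (iii) — reduce via Lemma~\ref{lem:E}(b) to $\mathfrak{L}_{\mathrm{val}}$-substructures $M'$ generated by at most $n$ field-sort elements, control the residue data, re-embed into a model of $\bHen$, and invoke \axmono\ for $\rmF_{\exists}\bridge\Hen_{\exists}$ — match the paper. But the step you yourself flag as "the heart of the matter" is a genuine gap, and the specific claim you make there is not the right one. You assert that the residue part $\bar{R}$ of $M'$ (the subring of $Kv$ generated by the residues of all elements of $R$) is again generated by at most $n$ elements. This is neither proved nor true in general: $\bar{R}$ is essentially $\res(R\cap\mathcal{O}_{v})$, which can fail to be finitely generated as a ring even when $R$ is generated by $n$ elements (e.g.\ for suitable valuations on $\mathbb{F}_{p}(x,y)$ with residue field an infinite algebraic extension of $\mathbb{F}_{p}$). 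The Abhyankar inequality does not bound the number of ring generators of the residue ring; what it bounds is transcendence degree, and only with a \emph{strict} drop when the induced valuation is nontrivial. The paper's actual argument exploits exactly this: it passes to $E=\mathrm{Frac}(R_{0})$ with $u=v|_{E}$, and in the case $u$ nontrivial shows that every \emph{finitely generated subfield} $F$ of the residue field $Eu$ satisfies $\mathrm{trdeg}(F/\mathbb{F})<n$, hence (separating transcendence base plus primitive element theorem) $F=\mathrm{Frac}(S)$ for a subring $S\subseteq Kv$ with at most $n$ generators, to which Lemma~\ref{lem:E}(b) applies; Lemma~\ref{lem:E}(a) then recovers $\mathrm{Th}_{\exists}(Eu)\subseteq\mathrm{Th}_{\exists}(Lw)$, and one concludes via the henselization $(E^{h},u^{h})\models\bHen$ and \axmono\ for the $\exists$-bridge. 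Your proposal contains none of this; it replaces the key counting argument by an appeal to an unproved "fact".

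A second, related omission: you do not treat separately the case where $v$ restricts trivially to $E$. There the Abhyankar inequality gives no strict drop, and — more importantly for your re-embedding step — the henselization of $(E,u)$ is $(E,u)$ itself, which is \emph{not} a model of $\bHen$ (nontriviality of the valuation is part of the theory), so \axmono\ for $\rmF_{\exists}\bridge\Hen_{\exists}$ cannot be applied to it. The paper handles this case by a direct lifting argument: embed $Eu$ into $Lw$ using $\aleph_{0}$-saturation, then lift into $L$ via a section of $\res_{w}$ (using separable generation and \cite[Lemma 2.3]{AF16}), producing an $\mathfrak{L}_{\mathrm{val}}$-embedding of $(\eta(Eu),v_{\mathrm{triv}})$ into $(L,w)$. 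Your Hahn-series re-embedding would also need this dichotomy, and in any case rests on the unestablished claim about the residue part. So the proposal is not a proof; the two points you defer to "the valuation-theoretic machinery behind \cite{AF16}" are precisely the content of the lemma.
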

\begin{proof}
For (\ref{pattern.i}):
this is again clear.

For (\ref{pattern.ii}):
this is again clear, see
Lemma~\ref{lem:Hen_sat_axioms}(a).

For (\ref{pattern.iii}):
Let $(K,v),(L,w)\models\bHen$
and suppose
$\mathrm{Th}_{\exists_{n}}(Kv)\subseteq\mathrm{Th}_{\exists_{n}}(Lw)$.
Without loss of generality, $(L,w)$ is $\aleph_0$-saturated.
We will show that ${\rm Th}_\exists(M')\subseteq{\rm Th}_\exists(L,w)$
for every $\mathfrak{L}_{\rm val}$-substructure $M'$ of $(K,v)$ generated by at most $n$ elements,
which
by Lemma~\ref{lem:E}(b)
will imply that
$\mathrm{Th}_{\exists_{n}}(K,v)\subseteq\mathrm{Th}_{\exists_{n}}(L,w)$.
Without loss of generality assume that
$M'$ is generated by
at most $n$ elements
in the sort $\mathbf{K}$,
in particular $R_0=M'^{\mathbf{K}}$ is a ring generated by
at most $n$ elements.
Let $E={\rm Quot}(R_0)$, let $u=v|_E$
and note that ${\rm Th}_\exists(M')\subseteq{\rm Th}_{\exists}(E,u)$.

First we consider the case that $u$ is trivial.
Then $(E,u)$ is isomorphic to the trivially valued field $(Eu,v_{\mathrm{triv}})$,
and this isomorphism carries $R_{0}$ to a subring of $Eu$, which we denote $R_{0}u$.
Indeed, $R_{0}u$ is a subring of $Kv$ generated by
at most $n$ elements,
so $\mathrm{Th}_{\exists}(R_{0}u)\subseteq\mathrm{Th}_{\exists}(Lw)$
by Lemma~\ref{lem:E}(b).
As $Lw$ is $\aleph_0$-saturated and $R_0$ is countable,
there exists an $\mathfrak{L}_{\rm ring}$-embedding $R_{0}u\longrightarrow Lw$
(\cite[Lemma 5.2.1]{CK}).
 This extends to an $\mathfrak{L}_{\mathrm{ring}}$-embedding $\eta:Eu=\mathrm{Quot}(R_{0}u)\longrightarrow Lw$,
	which already shows that $\mathrm{Th}_{\exists}(Eu)\subseteq\mathrm{Th}_{\exists}(Lw)$.
	Since $\eta(Eu)$ is a finitely generated field extension of its prime field,
	it is separably generated,
	so there is an embedding
	$\zeta:\eta(Eu)\longrightarrow L$
	such that $\res_{w}\circ\zeta=\mathrm{id}_{\eta(Eu)}$,
	see \cite[Lemma 2.3]{AF16}.
It follows that $w$ is trivial on the image of $\eta(Eu)$,
thus we have an $\mathfrak{L}_{\mathrm{val}}$-embedding
$(\eta(Eu),v_{\mathrm{triv}})\longrightarrow(L,w)$.
This shows that
${\rm Th}_\exists(E,u)=\mathrm{Th}_{\exists}(Eu,v_{\mathrm{triv}})=\mathrm{Th}_{\exists}(\eta(Eu),v_{\mathrm{triv}})\subseteq\mathrm{Th}_{\exists}(L,w)$.

Second we consider the case that $u$ is nontrivial.
Let $\mathbb{F}$ be the prime field of $Eu$,
and let $R\subseteq Eu$ be a finitely generated subring of $Eu$.
The field of fractions $F=\mathrm{Quot}(R)$ is a finitely generated subfield of $Eu$.
Since $u$ is nontrivial, $\mathrm{trdeg}(F/\mathbb{F})<\mathrm{trdeg}(E/\mathbb{F})\leq n$ by the Abhyankar inequality \cite[Theorem 3.4.3]{EP}.
Since $F/\mathbb{F}$ is separable and finitely generated,
there exists a separating transcendence base $T\subseteq F$.
Thus the field extension $F/\mathbb{F}(T)$ is finite and separable, and therefore generated by a single element $a$, by the primitive element theorem.
Let $S\subseteq F$ be the subring generated by $T\cup\{a\}$.
Then $S$ is a subring of $Kv$ 
that is generated by at most $|T|+1\leq n$ elements, and ${\rm Quot}(S)=F$.
Thus, by Lemma~\ref{lem:E}(b),
$\mathrm{Th}_{\exists}(S)\subseteq\mathrm{Th}_{\exists}(Lw)$,
 which as argued for $R_{0}u$ and $Eu$ in the previous paragraph, implies that
$\mathrm{Th}_{\exists}(F)\subseteq\mathrm{Th}_{\exists}(Lw)$.
We have thus shown that
${\rm Th}_{\exists}(R)\subseteq\mathrm{Th}_{\exists}(Lw)$
for every finitely generated $\mathfrak{L}_{\rm ring}$-substructure $R$ of $Eu$,
which by Lemma~\ref{lem:E}(a) gives that
$\mathrm{Th}_{\exists}(Eu)\subseteq\mathrm{Th}_{\exists}(Lw)$.
Since by Lemma~\ref{lem:Hen_sat_axioms}(a) $\rmF_{\exists}\bridge\Hen_{\exists}$ satisfies {\axmono},
we have
$\mathrm{Th}_{\exists}(E^{h},u^{h})\subseteq\mathrm{Th}_{\exists}(L,w)$,
where $(E^{h},u^{h})\supseteq(E,u)$ denotes the henselization,
which is also of equal characteristic  and has residue field $Eu$.
Therefore
$\mathrm{Th}_{\exists}(E,u)\subseteq\mathrm{Th}_{\exists}(L,w)$.
\end{proof}

\begin{corollary}\label{cor:En}
\begin{enumerate}[{\rm(I)}] 
\item
$\rmF_{\exists_{n}}\bridge\Hen_{\exists_{n}}$ admits a computable elimination
$\mathrm{Sent}_{\exists_{n}}(\mathfrak{L}_{\mathrm{val}})\longrightarrow\mathrm{Sent}_{\exists_{n}}(\mathfrak{L}_{\mathrm{ring}})$.
\item
For every $R\subseteq\mathrm{Sent}(\mathfrak{L}_{\mathrm{ring}})$,
$(\F\cup R)_{\exists_{n}}\meq(\bHen\cup\iota_{\mathbf{k}}R)_{\exists_{n}}$.
\item
	For every $(K,v)\models\bHen$, $\mathrm{Th}_{\exists_n}(K,v)=(\bHen\cup\iota_{\mathbf{k}}\mathrm{Th}_{\exists_n}(Kv))_{\exists_n}$.
\end{enumerate}
\end{corollary}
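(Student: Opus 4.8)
The plan is to deduce all three statements directly from Corollary~\ref{lem:pattern}, applied to the arch $\rmF_{\exists_{n}}\bridge\Hen_{\exists_{n}}\arch\rmF\bridge\Hen$. By Lemma~\ref{lem:Hen_n_sat_axioms} this arch satisfies hypotheses {\rm(}\ref{pattern.i}{\rm)}--{\rm(}\ref{pattern.iii}{\rm)} of Corollary~\ref{lem:pattern}, so conclusions (I), (II), (III) of that corollary hold; what remains is only to read off what they assert in the notation of the present statement.

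To do this I would record the dictionary: here $B = \rmF_{\exists_{n}}\bridge\Hen_{\exists_{n}}$ has $L_{1}=\mathrm{Sent}_{\exists_{n}}(\mathfrak{L}_{\mathrm{ring}})$, $T_{1}=\F$, $L_{2}=\mathrm{Sent}_{\exists_{n}}(\mathfrak{L}_{\mathrm{val}})$, $T_{2}=\bHen$ and $\sigma=\sigma_{\mathbf{k}}$, while $\hat{B}=\rmF\bridge\Hen$ has $\hat{L}_{1}=\mathrm{Sent}(\mathfrak{L}_{\mathrm{ring}})$ and $\hat{L}_{2}=\mathrm{Sent}(\mathfrak{L}_{\mathrm{val}})$, and $\iota=\iota_{\mathbf{k}}$. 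Then Corollary~\ref{lem:pattern}(I) produces a computable elimination $L_{2}\rightarrow L_{1}$, i.e.\ $\mathrm{Sent}_{\exists_{n}}(\mathfrak{L}_{\mathrm{val}})\rightarrow\mathrm{Sent}_{\exists_{n}}(\mathfrak{L}_{\mathrm{ring}})$, which is (I); Corollary~\ref{lem:pattern}(II), ranging over all $R\subseteq\hat{L}_{1}=\mathrm{Sent}(\mathfrak{L}_{\mathrm{ring}})$, gives $(\F\cup R)_{\exists_{n}}\meq(\bHen\cup\iota_{\mathbf{k}}R)_{\exists_{n}}$, which is (II); and for $M=(K,v)\models\bHen$ we have $\sigma M = Kv$ and $\mathrm{Th}_{L_{1}}(\sigma M)=\mathrm{Th}_{\exists_{n}}(Kv)$, so the second equality in Corollary~\ref{lem:pattern}(III) becomes $\mathrm{Th}_{\exists_{n}}(K,v)=(\bHen\cup\iota_{\mathbf{k}}\mathrm{Th}_{\exists_{n}}(Kv))_{\exists_{n}}$, which is (III). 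One also obtains for free the variant of (III) with $\mathrm{Th}_{\exists_{n}}(Kv)$ replaced by the full $\mathrm{Th}(Kv)$, from the first equality there.

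I do not expect any genuine obstacle at this stage: all of the work has been absorbed into Lemma~\ref{lem:Hen_n_sat_axioms}, whose only substantial content is the verification of \axmono\ for $\rmF_{\exists_{n}}\bridge\Hen_{\exists_{n}}$---reducing via Lemma~\ref{lem:E} to finitely generated substructures, treating the trivially and nontrivially valued cases separately, using the Abhyankar inequality to bound transcendence degrees, and bootstrapping from the already-established $\exists$-case in Lemma~\ref{lem:Hen_sat_axioms}(a). Given that lemma, the present corollary is purely formal.
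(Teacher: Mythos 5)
Your proposal is correct and matches the paper exactly: the corollary is stated there without proof precisely because it is the instance of Corollary~\ref{lem:pattern} for the arch $\rmF_{\exists_{n}}\bridge\Hen_{\exists_{n}}\arch\rmF\bridge\Hen$, whose hypotheses are supplied by Lemma~\ref{lem:Hen_n_sat_axioms}. Your dictionary identifying $L_i$, $T_i$, $\hat{L}_i$, $\sigma_{\mathbf{k}}$ and $\iota_{\mathbf{k}}$, and the reading-off of (I)--(III), is exactly the intended deduction.
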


\begin{proposition}\label{prp:En_uniform}
$\rmF_{\exists}\bridge\Hen_{\exists}$ admits a computable elimination
$\epsilon:\mathrm{Sent}_{\exists}(\mathfrak{L}_{\mathrm{val}})\rightarrow\mathrm{Sent}_{\exists}(\mathfrak{L}_{\mathrm{ring}})$
that for each $n\in\mathbb{N}$ restricts to an elimination
$\mathrm{Sent}_{\exists_{n}}(\mathfrak{L}_{\mathrm{val}})\rightarrow\mathrm{Sent}_{\exists_{n}}(\mathfrak{L}_{\mathrm{ring}})$
of $\rmF_{\exists_{n}}\bridge\Hen_{\exists_{n}}$.
\end{proposition}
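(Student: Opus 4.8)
The plan is to refine the construction of the computable elimination in the proof of Proposition~\ref{prp:elimination}, making it sensitive to the quantifier complexity of its input. Recall that there $\epsilon\psi$ was produced by an unbounded search: one enumerates a sequence $\varphi_{1},\varphi_{2},\dots$ of the source fragment and a sequence $P_{1},P_{2},\dots$ of the proofs from the target theory, and returns $\varphi_{m}$ for the $\prec$-minimal pair $(j,m)$ such that $P_{j}$ is a proof of $\bHen\vdash\psi\leftrightarrow\iota_{\mathbf{k}}\varphi_{m}$. We will run essentially the same search, but after first detecting the quantifier level $n=n(\psi)$ of the input $\psi$, we restrict the candidates $\varphi_{m}$ to the subfragment $\mathrm{Sent}_{\exists_{n}}(\mathfrak{L}_{\mathrm{ring}})$.

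First I would record two effectivity facts. The fragments $\mathrm{Sent}_{\exists_{n}}(\mathfrak{L})$ form an increasing chain with union $\mathrm{Sent}_{\exists}(\mathfrak{L})$, and membership in $\mathrm{Sent}_{\exists_{n}}(\mathfrak{L})$ is decidable uniformly in $n$: one parses the outer $\wedge,\vee$-structure of a sentence and checks that each existential ``leaf'' has at most $n$ quantifiers. Consequently the map $\psi\mapsto n(\psi):=\min\{n:\psi\in\mathrm{Sent}_{\exists_{n}}(\mathfrak{L}_{\mathrm{val}})\}$ is total computable on $\mathrm{Sent}_{\exists}(\mathfrak{L}_{\mathrm{val}})$. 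Second, for each fixed $n$, Corollary~\ref{cor:En}(I) supplies a computable elimination of $\rmF_{\exists_{n}}\bridge\Hen_{\exists_{n}}$ with values in $\mathrm{Sent}_{\exists_{n}}(\mathfrak{L}_{\mathrm{ring}})$; combining it with the interpretation $\iota_{\mathbf{k}}$ of that bridge (coming from the arch $\rmF_{\exists_{n}}\bridge\Hen_{\exists_{n}}\arch\rmF\bridge\Hen$) via Proposition~\ref{rem:2}, which yields part~\eqref{eqn:bitrans2} of a bitranslation, we conclude that for every $\psi\in\mathrm{Sent}_{\exists_{n}}(\mathfrak{L}_{\mathrm{val}})$ there is some $\varphi\in\mathrm{Sent}_{\exists_{n}}(\mathfrak{L}_{\mathrm{ring}})$ with $\bHen\models\psi\leftrightarrow\iota_{\mathbf{k}}\varphi$.

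Now I would define $\epsilon$ on $\psi\in\mathrm{Sent}_{\exists}(\mathfrak{L}_{\mathrm{val}})$ as follows: compute $n=n(\psi)$, and search for the $\prec$-minimal pair $(j,m)$ such that $\varphi_{m}\in\mathrm{Sent}_{\exists_{n}}(\mathfrak{L}_{\mathrm{ring}})$ and $P_{j}$ is a proof of $\bHen\vdash\psi\leftrightarrow\iota_{\mathbf{k}}\varphi_{m}$; set $\epsilon\psi:=\varphi_{m}$. The previous paragraph guarantees the search terminates, and it is effective because $\bHen$ is computably enumerable, $\iota_{\mathbf{k}}$ is computable, and $\mathrm{Sent}_{\exists_{n}}(\mathfrak{L}_{\mathrm{ring}})$ is decidable uniformly in $n$. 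By construction $\bHen\models\psi\leftrightarrow\iota_{\mathbf{k}}\epsilon\psi$ and $\epsilon\psi\in\mathrm{Sent}_{\exists}(\mathfrak{L}_{\mathrm{ring}})$; since $\iota_{\mathbf{k}}$ is an interpretation for $\rmF_{\exists}\bridge\Hen_{\exists}$, for every $M\models\bHen$ we get $\sigma_{\mathbf{k}}M\models\epsilon\psi\iff M\models\iota_{\mathbf{k}}\epsilon\psi\iff M\models\psi$, so $\epsilon$ is a computable elimination for $\rmF_{\exists}\bridge\Hen_{\exists}$.

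Finally, the uniformity is built in: if $\psi\in\mathrm{Sent}_{\exists_{n}}(\mathfrak{L}_{\mathrm{val}})$ then $n(\psi)\le n$, so $\epsilon\psi\in\mathrm{Sent}_{\exists_{n(\psi)}}(\mathfrak{L}_{\mathrm{ring}})\subseteq\mathrm{Sent}_{\exists_{n}}(\mathfrak{L}_{\mathrm{ring}})$. Thus $\epsilon$ restricts to a map $\mathrm{Sent}_{\exists_{n}}(\mathfrak{L}_{\mathrm{val}})\to\mathrm{Sent}_{\exists_{n}}(\mathfrak{L}_{\mathrm{ring}})$, which by Lemma~\ref{lem:goes_down}(b) is an elimination of $\rmF_{\exists_{n}}\bridge\Hen_{\exists_{n}}$. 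I do not anticipate a serious obstacle here: the genuine mathematical content, namely the existence of a complexity-preserving elimination level by level, is already in Corollary~\ref{cor:En}, and the only points requiring care are the uniform-in-$n$ decidability of the fragments $\mathrm{Sent}_{\exists_{n}}$ and the observation that the generic proof-search elimination can be forced into the desired fragment simply by constraining its search space.
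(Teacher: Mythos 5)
Your proposal is correct and follows essentially the same route as the paper: compute the minimal quantifier level $e(\psi)$ of the input (which is computable by syntactic inspection), restrict the proof search of Proposition~\ref{prp:elimination} to candidates in $\mathrm{Sent}_{\exists_{e(\psi)}}(\mathfrak{L}_{\mathrm{ring}})$, and invoke Corollary~\ref{cor:En}(I) to guarantee the search terminates. Your extra justifications (Proposition~\ref{rem:2} for the existence of an equivalent $\iota_{\mathbf{k}}\varphi$ in the right subfragment, and Lemma~\ref{lem:goes_down}(b) for the restriction being an elimination) are correct and merely make explicit what the paper leaves implicit.
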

\begin{proof}
This is an adaptation of the proof of Proposition~\ref{prp:elimination}.
We use two uniform aspects of the computability of the fragments of existential sentences.
Firstly, the function
$e:\mathrm{Sent}_\exists(\mathfrak{L}_{\mathrm{val}})\rightarrow\mathbb{N}\cup\{0\}$,
$\psi\mapsto\min\{n\in\mathbb{N}\mid\psi\in\mathrm{Sent}_{\exists_{n}}(\mathfrak{L}_{\mathrm{val}})\}$
is computable.
Secondly, 
we may fix a computable function
$\mathbb{N}\times\mathbb{N}\rightarrow\mathrm{Sent}_{\exists}(\mathfrak{L}_{\mathrm{ring}})$,
$(n,m)\mapsto\varphi_{n,m}$,
such that $\varphi_{n,1},\varphi_{n,2},\dots$ is an enumeration of $\mathrm{Sent}_{\exists_{n}}(\mathfrak{L}_{\mathrm{ring}})$,
for each $n$.
We can also fix a computable enumeration
$P_1,P_2,\dots$
of the proofs from $\bHen$, since the latter is computably enumerable.
By Corollary~\ref{cor:En}(I),
given $\psi\in\mathrm{Sent}_{\exists}(\mathfrak{L}_{\mathrm{val}})$
there exist $l$ and $m$ such that
$P_{l}$ is a proof of $\bHen\vdash\psi\leftrightarrow\iota_{\mathbf{k}}\varphi_{e(\varphi),m}$.
We take such pair $(l,m)$
minimial with respect to the ordering (\ref{eqn:ordering})
and define $\epsilon\psi:=\varphi_{e(\psi),m}$.
\end{proof}

\subsection{Proof of Theorem~\ref{thm:introhens}}\label{proof:1.1}
Part (a) is
Corollary~\ref{cor:henselian}(aII),
part (b) is
Corollary~\ref{cor:henselian}(cII)
and 
part (c) is Corollary~\ref{cor:En}(II).

\section{Limit theories}
\label{section:algorithms}

\noindent
By analogy with the notion of characteristic in the theory of fields,
we study extensions of theories defined by a sequence of sentences,
including limit cases.
First, in Section~\ref{sec:4.1}, we introduce theories derived from the addition of these sentences,
among them
an ``all'' theory, an ``almost all'' theory, and a uniform theory.
Then, in Section~\ref{sec:4.2} we prove Turing reductions between various combinations of these theories, which will be used in key proofs in Section~\ref{sec:final}.
Finally, in Section~\ref{sec:4.3}, we apply to these theories the framework of bridges from Section~\ref{sec:bridges}
to obtain Proposition~\ref{prp:stratify}, which will be used in the proofs of Corollary~\ref{prp:PAC} and Proposition~\ref{cor:F_and_P_combined}.

\subsection{All, almost all, and uniform}
\label{sec:4.1}
Fix a language $\mathfrak{L}$
and
a sequence of
$\mathfrak{L}$-sentences
$(\rho_{n})_{n\in\mathbb{N}}$.

\begin{definition}\label{def:notations}
For any $\mathfrak{L}$-theory $T$,
we define 
\begin{eqnarray*}
T_{0}&:=&(T\cup\{\rho_{n}\mid n\in\mathbb{N}\})^\vdash\\
T_{n}&:=&(T\cup\{\rho_{1},\ldots,\rho_{n-1},\neg\rho_{n}\})^\vdash, \quad n\in\mathbb{N}\\
T_{>m}&:=&\bigcap_{n>m}T_{n},\quad m\in\mathbb{N}\cup\{0\}\\
T_{\gg0}&:=&\bigcup_{m\in\mathbb{N}}\bigcap_{n\geq m}T_{n}\\
    T_N &:=& \{(\varphi,n) \mid n\in N, \varphi\in T_n\},\quad N\subseteq\mathbb{N}\cup\{0\}.
\end{eqnarray*}
For an $\mathfrak{L}$-fragment $L$, by analogy with above we write $T_{N,L}=T_{N}\cap(L\times(\mathbb{N}\cup\{0\}))$.
\end{definition}

\begin{example}
We will apply this definition in Section \ref{sec:final} mainly in the case where $T$ is some theory of (possibly valued) fields and
$\rho_n$ is a quantifier-free sentence expressing that the characteristic is not $n$,
so that $T_0$, $T_n$, $T_{>m}$ and $T_{\gg0}$
are, respectively, the 
sets of sentences true in
all models of $T$ of characteristic zero, 
all models of $T$ of characteristic $n$,
all models of $T$ of characteristic greater than $m$, 
and all models of $T$ of sufficiently large positive characteristic.
\end{example}

Note that every model of $T$ is a model of $T_{n}$ for exactly one $n\in\mathbb{N}\cup\{0\}$.
The models of $T_{>0}$ and $T_{\gg0}$ can be described as follows:

\begin{proposition}\label{lem:pseudo}
Let $T$ be an $\mathfrak{L}$-theory,
and let $M$ be an $\mathfrak{L}$-structure.
\begin{enumerate}[(a)]
   \item $M\models T_{>0}$ if and only if $M\equiv \prod_{i\in I}M_i/\mathcal{U}$ for some ultrafilter $\mathcal{U}$ on a set $I$,
    and $M_i\models T_{n_i}$ for some $n_i\in\mathbb{N}$.
    \item $M\models T_{\gg0}$ if and only if $M\models T_{>0}\cup T_{0}$
 
\end{enumerate}
\end{proposition}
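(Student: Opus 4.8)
\textit{Plan.} I would handle both parts with the standard compactness-and-ultraproduct toolkit, using throughout that each $T_n$ and $T_0$ is by construction deductively closed, that $T_{>0}=\bigcap_{n\in\mathbb{N}}T_n$, and that every model of $T$ satisfies exactly one of the $T_n$ with $n\in\mathbb{N}\cup\{0\}$.

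For (a), the implication from right to left is just the fundamental theorem of ultraproducts: if $\varphi\in T_{>0}$ then $\varphi\in T_{n_i}$ for every $i$, hence $M_i\models\varphi$ for every $i$, hence $\prod_i M_i/\mathcal{U}\models\varphi$, hence $M\models\varphi$. For the converse, assuming $M\models T_{>0}$, I would first establish the key claim that for every finite $\Delta\subseteq\mathrm{Th}(M)$ there is some $n\in\mathbb{N}$ with $T_n\cup\Delta$ consistent: if not, then $T_n\vdash\neg\bigwedge\Delta$ for all $n\in\mathbb{N}$, and since each $T_n$ is deductively closed this gives $\neg\bigwedge\Delta\in\bigcap_n T_n=T_{>0}$, contradicting $M\models T_{>0}$ together with $M\models\bigwedge\Delta$ (the case $\Delta=\emptyset$ is included, forcing some $T_n$ to be consistent). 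Then I would take the index set $I$ to be the set of all finite subsets of $\mathrm{Th}(M)$, pick $M_\Delta\models T_{n_\Delta}\cup\Delta$ with $n_\Delta\in\mathbb{N}$ for each $\Delta\in I$, and let $\mathcal{U}$ be any ultrafilter on $I$ containing all the sets $\{\Delta'\in I:\Delta\subseteq\Delta'\}$ (these have the finite intersection property since the family is closed under finite unions). For $\varphi\in\mathrm{Th}(M)$ the set $\{\Delta:M_\Delta\models\varphi\}$ contains $\{\Delta:\{\varphi\}\subseteq\Delta\}\in\mathcal{U}$, so by the fundamental theorem $\prod_\Delta M_\Delta/\mathcal{U}\models\mathrm{Th}(M)$, i.e.\ $M\equiv\prod_\Delta M_\Delta/\mathcal{U}$ with each $M_\Delta\models T_{n_\Delta}$, $n_\Delta\in\mathbb{N}$, as required.

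For (b), I would first record two inclusions of theories. First, $T_{>0}\subseteq T_{\gg0}$, which is immediate from Definition~\ref{def:notations} (take $m=1$ in the union). Second, $T_0\subseteq T_{\gg0}$: if $\varphi\in T_0$ then by compactness $T\cup\{\rho_1,\dots,\rho_k\}\vdash\varphi$ for some $k$, hence $\varphi\in T_n$ for every $n>k$, hence $\varphi\in\bigcap_{n\geq k+1}T_n\subseteq T_{\gg0}$. Together these give the forward direction of (b). For the converse assume $M\models T_{>0}\cup T_0$ and take $\varphi\in T_{\gg0}$, so that $\varphi\in T_n$ for all $n\geq m$ for some $m\in\mathbb{N}$. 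The crucial observation is that $\psi:=(\rho_1\wedge\dots\wedge\rho_{m-1})\rightarrow\varphi$, with the empty conjunction read as $\verum$, lies in $T_{>0}$: for $n\geq m$ because $\varphi\in T_n$, and for $1\leq n<m$ because $\neg\rho_n\in T_n$, so $T_n$ already refutes the antecedent. Since $M\models T_{>0}$ we get $M\models\psi$; since $M\models T_0$ we have $M\models\rho_i$ for every $i\in\mathbb{N}$, so in particular the antecedent holds and $M\models\varphi$. Hence $M\models T_{\gg0}$.

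I do not expect a serious obstacle here. The only points requiring care are: in part (a) the upward direction, making sure the index set is genuinely a set (the finite subsets of $\mathrm{Th}(M)$) so that the ultrafilter construction is legitimate, together with the degenerate case where the key claim with $\Delta=\emptyset$ is needed to guarantee that some individual $T_n$ is consistent; and in part (b) the bookkeeping around the empty conjunction when $m=1$. Everything else follows by unwinding Definition~\ref{def:notations} and repeatedly using that the $T_n$ and $T_0$ are deductively closed.
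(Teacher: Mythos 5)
Your proof is correct. Part (a) takes essentially the paper's route: the right-to-left direction is the fundamental theorem of ultraproducts, and for left-to-right the paper simply cites the standard fact (an exercise in Chang--Keisler) that a model of the common theory of a class of structures is elementarily equivalent to an ultraproduct of members of that class --- which is exactly what your construction with index set the finite subsets of $\mathrm{Th}(M)$ proves from scratch; your attention to the degenerate case $\Delta=\emptyset$ and to the set-hood of the index set is sound. In part (b) the forward direction also matches the paper (which declares it ``trivial from the definitions''; your compactness argument for $T_0\subseteq T_{\gg0}$ is the content being suppressed there). The converse of (b) is where you genuinely diverge. The paper re-invokes part (a) to write $M\equiv\prod_{i}M_i/\mathcal{U}$ with $M_i\models T_{n_i}$, uses $M\models\rho_l$ and \L o\'s's theorem to get $\{i:n_i=l\}\notin\mathcal{U}$ for each $l>0$, hence $\{i:n_i>m\}\in\mathcal{U}$ for every $m$, and applies \L o\'s once more to conclude $M\models T_{>m}$. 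You instead exhibit, for each $\varphi\in\bigcap_{n\geq m}T_n$, the single sentence $(\rho_1\wedge\dots\wedge\rho_{m-1})\rightarrow\varphi$ and verify it lies in $T_{>0}$; this is purely deductive, needs no ultraproducts, and in fact yields the slightly sharper syntactic statement that every element of $T_{\gg0}$ is a deductive consequence of $T_{>0}\cup T_0$. Both arguments are fine: the paper's is shorter given that (a) is already in hand, while yours is more elementary and self-contained.
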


\begin{proof}
(a): $T_{>0}$ is the common theory of the models of $T_{n}$, for $n\in\mathbb{N}$,
so the implication $\Rightarrow$ follows from \cite[Exercise 4.1.18]{CK},
and $\Leftarrow$ follows from
Łoś's theorem
since
$T_{n_i}\supseteq T_{>0}$.

(b): $\Rightarrow$ is trivial from the definitions.
Suppose $M\models T_{>0}\cup T_{0}$.
Then $M\equiv\prod_{i\in I}M_{i}/\mathcal{U}$,
as in $(a)$.
For $l>0$,
$M\models\rho_{l}$,
therefore
by Łoś's theorem
$\{i\in I\mid n_{i}=l\}\notin\mathcal{U}$.
Thus for every $m\geq 0$,
$\{i\in I\mid n_{i}> m\}\in\mathcal{U}$,
and so $M\models T_{>m}$, again by Łoś's theorem.
\end{proof}

\begin{proposition}
    \label{lem:little_equivalences}
Let $T,S$ be $\mathfrak{L}$-theories
such that $\neg\rho\in L$ for every $\rho\in S$.
% We claim 
Then
    \begin{enumerate}[$(a)$]
    \item
    $(T_{L}\cup S)_{L}=(T\cup S)_{L}$.
    \end{enumerate}
Consequently, if $\neg\rho_n\in L$ for every $n\in\mathbb{N}$, then
    \begin{enumerate}[$(a)$]
    \setcounter{enumi}{1}
    \item
    $(T_{L})_{0,L}=T_{0,L}$,
    \item
    $(T_{L})_{n,L}=T_{n,L}$ for every $n\in\mathbb{N}$,
    \item
    $(T_{L})_{>m,L}=T_{>m,L}$ for every $m\in\mathbb{Z}_{\geq0}$,
    \item
    $(T_{L})_{\gg0,L}=T_{\gg0,L}$, and
    \item
    $(T_{L})_{N,L}= T_{N,L}$ for every $N\subseteq\mathbb{N}\cup\{0\}$.
    \end{enumerate}
\end{proposition}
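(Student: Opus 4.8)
The plan is to prove part (a) directly and then read off (b)--(f) from it by instantiating $S$ appropriately and commuting the operation $(-)\cap L$ with the Boolean operations on theories. For (a), the inclusion $(T_{L}\cup S)_{L}\subseteq(T\cup S)_{L}$ is automatic, since $T_{L}\subseteq T^{\vdash}$ gives $T_{L}\cup S\subseteq(T\cup S)^{\vdash}$ and hence $(T_{L}\cup S)^{\vdash}\subseteq(T\cup S)^{\vdash}$. For the reverse inclusion I would take $\varphi\in(T\cup S)_{L}$ and apply the compactness theorem to obtain $\sigma_{1},\dots,\sigma_{k}\in S$ with $T\cup\{\sigma_{1},\dots,\sigma_{k}\}\vdash\varphi$; by the deduction theorem and De Morgan this is equivalent to $T\vdash\chi$, where $\chi:=\neg\sigma_{1}\vee\dots\vee\neg\sigma_{k}\vee\varphi$ (to be read as $\varphi$ when $k=0$). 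Since $\neg\sigma_{i}\in L$ by hypothesis, $\varphi\in L$, and $L$ is closed under $\vee$, we get $\chi\in L$, so $\chi\in T^{\vdash}\cap L=T_{L}$; and as $\{\sigma_{1},\dots,\sigma_{k},\chi\}\vdash\varphi$ we conclude $T_{L}\cup S\vdash\varphi$, i.e.\ $\varphi\in(T_{L}\cup S)_{L}$. (One can also argue model-theoretically: if $M\models T_{L}\cup S$, Lemma~\ref{lem:extension} produces $N\models T$ with $\mathrm{Th}_{L}(N)\subseteq\mathrm{Th}_{L}(M)$, and since $\neg\sigma\in L$ for $\sigma\in S$ one moreover gets $N\models S$, hence $N\models\varphi$ and therefore $M\models\varphi$.)

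The remaining parts are then formal. Part (b) is (a) applied with $S=\{\rho_{n}\mid n\in\mathbb{N}\}$, for which the hypothesis of (a) is precisely that $\neg\rho_{n}\in L$ for all $n$. Part (c) is (a) applied with $S=\{\rho_{1},\dots,\rho_{n-1},\neg\rho_{n}\}$, the negations of whose members again lie in $L$. Finally (d), (e), (f) follow from (b) and (c) because intersecting with $L$ commutes with arbitrary intersections and unions of sets of sentences (and with the pairing used to define $T_{N}$): for instance $(T_{L})_{>m,L}=\bigcap_{n>m}(T_{L})_{n,L}=\bigcap_{n>m}T_{n,L}=T_{>m,L}$, likewise $(T_{L})_{\gg0,L}=\bigcup_{m}\bigcap_{n\ge m}(T_{L})_{n,L}=\bigcup_{m}\bigcap_{n\ge m}T_{n,L}=T_{\gg0,L}$, and $(T_{L})_{N,L}=\{(\varphi,n)\mid n\in N,\ \varphi\in T_{n,L}\}=T_{N,L}$, where one uses (b) for the index $n=0$ and (c) for $n\ge 1$.

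The only step carrying real content is the reverse inclusion of (a): one has to absorb the finitely many extra axioms $\sigma_{1},\dots,\sigma_{k}$ of $S$ into the theory $T$ at the cost of a single disjunction, and the whole role of the hypothesis $\neg\rho\in L$ is to keep this disjunction $\chi=\bigvee_{i}\neg\sigma_{i}\vee\varphi$ inside the fragment $L$, so that it becomes a member of $T_{L}$. I expect this to be the main (indeed the only) obstacle; the passage from (a) to (b)--(f) is pure bookkeeping.
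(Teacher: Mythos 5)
Your proof is correct and follows essentially the same route as the paper: the inclusion $\subseteq$ is the same triviality, and for $\supseteq$ the paper likewise extracts finitely many $\sigma_{1},\dots,\sigma_{k}\in S$ from the proof of $\varphi$, observes that $T\vdash\bigl(\bigvee_{j}\neg\sigma_{j}\bigr)\vee\varphi$ and that this disjunction lies in $L$ (hence in $T_{L}$), and then recovers $\varphi$ from $T_{L}\cup S$; parts (b)--(f) are dispatched as special cases and bookkeeping exactly as you do. Your parenthetical model-theoretic alternative via Lemma~\ref{lem:extension} is a valid extra, but not what the paper uses.
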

\begin{proof}
For (a),
the inclusion $\subseteq$ is clear
since $(T\cup S)_{L}=(T^{\vdash}\cup S)_{L}$ and $T_{L}\subseteq T^{\vdash}$.
Now let $\varphi\in(T\cup S)_{L}$.
Then
there exist $\rho_{1},\ldots,\rho_{m}\in S$ such that
$T\cup\{\rho_{1},\ldots,\rho_{m}\}\vdash\varphi$,
and
equivalently
$T\vdash(\bigvee_{j=1}^{m}\neg\rho_{j}\big)\vee\varphi$.
Since $(\bigvee_{j=1}^{m}\neg\rho_{j}\big)\vee\varphi\in L$,
we have
$(\bigvee_{j=1}^{m}\neg\rho_{j}\big)\vee\varphi\in T_{L}$,
in particular
$T_{L}\cup\{\rho_{1},\ldots,\rho_{m}\}\vdash\varphi$.
Thus $\varphi\in(T_{L}\cup S)_{L}$,
which proves $(a)$.
Both (b) and (c) are special cases of (a),
and (d) and (e) follow easily.
Finally, (f) follows directly from (b) and (c).
\end{proof}

\subsection{Turing reductions}
We keep notations and definitions from the previous subsection.
\label{sec:4.2}

\begin{proposition}\label{prop:algorithms}
Let $T$ be an $\mathfrak{L}$-theory,
let $L$ be a computable $\mathfrak{L}$-fragment
with $\rho_n,\neg\rho_n\in L$ for every $n$,
and assume that 
the function $n\mapsto\rho_n$ is computable. Then
\begin{enumerate}[$(a)$]
    \item $T_{0,L}\Tred T_L\oplus T_{>0,L}\oplus T_{\gg0,L}$, and
    \item if $T_{>0,L}$ and $T_{\gg0,L}$ are decidable, then $T_L\meq T_{0,L}$.
\end{enumerate}
Let in addition 
$T'$ be an $\mathfrak{L}$-theory 
with $T'_L\subseteq T_L$
and
$T_{\gg0,L}=T'_{\gg0,L}$. Then
\begin{enumerate}[$(a)$]
 \setcounter{enumi}{2}
    \item $T_{>0,L}\Tred T_{\gg0,L}\oplus T'_{>0,L}\oplus T_{\mathbb{N},L}$, and
    \item  if $T'_{\gg0,L}$
    and $T'_{>0,L}$ are decidable,
then $T_{>0,L}\Teq T_{\mathbb{N},L}$ and
$T_L\Teq T_{0,L}\oplus T_{>0,L}\Teq T_{\mathbb{N}\cup\{0\},L}$.
\end{enumerate}
\end{proposition}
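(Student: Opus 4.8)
The plan is to treat the four parts in order, reducing each to a search over the computable fragment $L$ together with the given oracles. For part $(a)$, the key observation is that by Proposition~\ref{lem:pseudo}(b) we have $M\models T_{\gg0}$ iff $M\models T_{>0}\cup T_0$, so on the level of fragments $T_{\gg0,L}=(T_{>0}\cup T_0)_L$; more usefully, a sentence $\varphi\in L$ lies in $T_{0,L}$ iff $T\cup\{\rho_n\mid n\in\mathbb{N}\}\vdash\varphi$, and by compactness this happens iff for some finite $m$ we have $T\cup\{\rho_1,\dots,\rho_m\}\vdash\varphi$, equivalently $T\vdash(\bigvee_{j\le m}\neg\rho_j)\vee\varphi$. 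Each such test is a membership question for $T_L$ (note $(\bigvee_{j\le m}\neg\rho_j)\vee\varphi\in L$ since $L$ is a fragment containing the $\neg\rho_j$), so running this search gives a c.e.\ procedure relative to $T_L$. For the complement, I would use that $\varphi\notin T_{0,L}$ iff there is a model of $T_0$ omitting $\varphi$, which — via Proposition~\ref{lem:pseudo} — can be witnessed either by a model of some $T_n$ omitting $\varphi$ (detected by asking $T_{\mathbb{N},L}$, but we only have $T_{>0,L}$ here, so instead one asks $T_{>0,L}$ and $T_{\gg0,L}$ in tandem). The cleanest route: $\varphi\in T_{0,L}$ iff $\varphi\in T_{\gg0,L}$ or $\varphi\notin T_{>0,L}$-but-$\varphi$ fails in... — here I need to be careful, and I expect the bookkeeping of exactly which oracle certifies "$\varphi$ holds in all characteristic-zero models" versus "in cofinitely many" versus "in some" to be the main obstacle. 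The correct decomposition is: $T_{0,L}$ is co-c.e.\ in $T_{>0,L}\oplus T_{\gg0,L}$ because $\varphi\notin T_{0,L}$ is witnessed by a characteristic-zero model omitting $\varphi$, and such a model exists iff (by an ultraproduct/compactness argument) either $T_{>0}\cup\{\neg\varphi\}$ is consistent with $T_0$, which reduces to finitely many queries to $T_{>0,L}$ and $T_{\gg0,L}$; combined with the c.e.-in-$T_L$ half this gives the Turing reduction in $(a)$.

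For part $(b)$: if $T_{>0,L}$ and $T_{\gg0,L}$ are decidable, then the oracle in $(a)$ collapses, so $T_{0,L}\Tred T_L$; but in fact I want a \emph{many-one} equivalence. The reduction $T_{0,L}\mred T_L$: map $\varphi\in L$ to itself composed with the search — no, many-one needs a single computable function. Here the trick is that $\varphi\in T_{0,L}$ iff $T\vdash(\bigvee_{j\le m}\neg\rho_j)\vee\varphi$ for the \emph{effectively computable} bound $m$ obtained from decidability of $T_{>0,L}$ and $T_{\gg0,L}$: since those are decidable we can, given $\varphi$, compute an $m$ large enough that $\varphi\in T_{0,L}\Leftrightarrow (\bigvee_{j\le m}\neg\rho_j)\vee\varphi\in T_L$, using that if $\varphi\in T_{0,L}$ then some finite $m$ works and the "how large" is controlled by when $\neg\varphi$ stops being consistent with $T_{>m}$, detectable from $T_{>0,L},T_{\gg0,L}$. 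This yields $T_{0,L}\mred T_L$; the converse $T_L\mred T_{0,L}$ is the subtle direction and I would get it by the identity-like reduction together with $T_{\mathbb{N},L}$-style padding — actually the cleaner statement is that $T_L$ and $T_{0,L}$ differ only by the decidable data $(T_{>0,L},T_{\gg0,L})$, and one runs the standard argument that a set is $\mred$ a set it is Turing-equivalent to via a decidable "correction", giving $\meq$.

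For parts $(c)$ and $(d)$, the structure is parallel but with $T'$ playing the role of a decidable approximation from below. For $(c)$: given $\varphi\in L$, to decide $\varphi\in T_{>0,L}=\bigcap_{n\in\mathbb{N}}T_{n,L}$, I would search for a certificate of membership (a uniform proof, detected via $T_{\mathbb{N},L}$ together with a compactness bound — here one uses that $T_{>0,L}$ and $T'_{>0,L}$ agree cofinitely by the hypothesis $T_{\gg0,L}=T'_{\gg0,L}$ and $T'_L\subseteq T_L$, so the "exceptional" characteristics form a finite set computable from $T'_{>0,L}\oplus T'_{\gg0,L}$) and simultaneously for a certificate of non-membership (a single $n$ with $\varphi\notin T_{n,L}$, i.e.\ a query to $T_{\mathbb{N},L}$). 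Assembling: $\varphi\in T_{>0,L}$ iff $\varphi\in T'_{>0,L}$-modulo-the-finitely-many-bad-$n$ \emph{and} $\varphi\in T_{n,L}$ for each of those finitely many bad $n$ (queried from $T_{\mathbb{N},L}$) \emph{and} $\varphi\in T_{\gg0,L}$; this is the reduction $T_{>0,L}\Tred T_{\gg0,L}\oplus T'_{>0,L}\oplus T_{\mathbb{N},L}$. Part $(d)$ then follows: decidability of $T'_{\gg0,L}$ and $T'_{>0,L}$ collapses two oracles in $(c)$, leaving $T_{>0,L}\Tred T_{\mathbb{N},L}$, and the reverse $T_{\mathbb{N},L}\Tred T_{>0,L}$ holds because, up to the finitely many bad characteristics (now computable), $T_{n,L}$ is either $T'_{n,L}$ (decidable) or determined by $T_{>0,L}$; and finally $T_L\Teq T_{0,L}\oplus T_{>0,L}$ comes from part $(a)$ (which makes $T_{0,L}\Tred T_L\oplus T_{>0,L}$ once $T_{\gg0,L}\Tred T_{>0,L}$) together with the trivial $T_{0,L}\oplus T_{>0,L}\Tred T_L$-type reductions and Proposition~\ref{lem:little_equivalences}, and $T_{0,L}\oplus T_{>0,L}\Teq T_{\mathbb{N}\cup\{0\},L}$ is immediate from the definition of $T_N$ as the disjoint union over $N$. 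The main obstacle throughout will be keeping straight the compactness bounds — turning "some finite $m$ works" into an \emph{effectively computable} $m$ using precisely the decidability hypotheses available in each part — and correctly identifying, in $(c)$, the finite set of characteristics on which $T$ and $T'$ may disagree and checking it is computable from the stated oracles.
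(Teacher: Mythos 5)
Your plan for part (a) contains a step that cannot work. You propose that $T_{0,L}$ is c.e.\ in $T_L$ (correct) and co-c.e.\ in $T_{>0,L}\oplus T_{\gg0,L}$ (this is the problem): in the paper's motivating application $T=\mathbf{F}$, the oracles $T_{>0,L}$ and $T_{\gg0,L}$ are decidable and $T_L$ is computably enumerable, so your decomposition would make $T_{0,L}=\mathrm{Th}_\exists(\mathbb{Q})$ both c.e.\ and co-c.e., i.e.\ decidable --- solving Hilbert's tenth problem for $\mathbb{Q}$. The existence of a characteristic-zero model of $T$ omitting $\varphi$ is simply not determined by $T_{>0,L}$ and $T_{\gg0,L}$; the negative answer must also be routed through the $T_L$ oracle. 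The correct algorithm is: answer NO if $\varphi\notin T_{\gg0,L}$ (since $T_{0,L}\subseteq T_{\gg0,L}$); otherwise use $T_{>0,L}$ to find the least $m_0$ with $\varphi\vee\bigvee_{i\le m_0}\neg\rho_i\in T_{>0,L}$, and then the answer, YES \emph{or} NO, is exactly whether $\varphi\vee\bigvee_{i\le m_0}\neg\rho_i\in T_L$. The identity that makes the NO-branch sound is $T_L=T_{0,L}\cap T_{>0,L}$, which you never isolate; note that your sketch for (b) (compute a bound from the decidable oracles, then make one query to $T_L$) is essentially this same mechanism, and it is this mechanism --- not a separate co-c.e.\ argument --- that must already carry part (a). For the direction $T_L\mred T_{0,L}$ in (b) there is no ``standard argument that Turing-equivalent sets with a decidable correction are many-one equivalent''; what is needed is again $T_L=T_{0,L}\cap T_{>0,L}$ together with decidability of $T_{>0,L}$.

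Parts (c) and (d) rest on a claim that is false: the hypotheses $T'_L\subseteq T_L$ and $T_{\gg0,L}=T'_{\gg0,L}$ do \emph{not} imply that $T_{n,L}=T'_{n,L}$ for all but finitely many $n$, so there is no global computable finite set of ``exceptional characteristics''. In the paper's application $T=\mathbf{F}^\infty$, $T'=\mathbf{F}$ one has $T'_{p,\exists}=\mathrm{Th}_\exists(\mathbb{F}_p)\subsetneqq\mathrm{Th}_\exists(\mathbb{F}_p(t))=T_{p,\exists}$ for \emph{every} prime $p$. What the hypotheses give is only a $\varphi$-dependent bound: for $\varphi\in T_{\gg0,L}=T'_{\gg0,L}$ one searches, with the $T'_{>0,L}$ oracle, for the least $m_0$ such that $\varphi\vee\bigvee_{i\le m_0}\neg\rho_i\in T'_{>0,L}$; since $T'_L\subseteq T_L$ this certifies $\varphi\in T_{n,L}$ for all $n>m_0$, and the finitely many remaining indices $1,\dots,m_0$ are then checked by queries to $T_{\mathbb{N},L}$. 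Finally, the converse reduction $T_{\mathbb{N},L}\Tred T_{>0,L}$ needed for the equivalences in (d) is missing: it is the computable map $(\varphi,n)\mapsto\varphi\vee\bigvee_{i<n}\neg\rho_i\vee\rho_n$, whose image lies in $T_{>0,L}$ iff $(\varphi,n)\in T_{\mathbb{N},L}$; it cannot be obtained from decidability of the individual $T'_{n,L}$, which is not among the hypotheses.
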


The rest of this subsection is devoted to the proof of Proposition \ref{prop:algorithms},
and for this we will denote\footnote{Note that e.g.~by $\Sigma_0$ we do not mean the first part of Definition \ref{def:notations}
applied to $T=\Sigma$,
instead $\Sigma_{0}$ is $(T_{L})_{0,L}$.}
$\Sigma:=T_L$, $\Sigma_0:=(T_L)_0\cap L$,
$\Sigma_n:=(T_L)_n\cap L$,
$\Sigma_{>m}:=(T_L)_{>m}\cap L$,
$\Sigma_{\gg0}:=(T_L)_{\gg0}\cap L$,
$\Sigma_N:=\{(\varphi,n):n\in N,\varphi\in\Sigma_n\}$.
For $\varphi\in L$ and $n\in\mathbb{N}\cup\{0\}$ we will (only in this section) denote
$$
 \varphi_n\;:=\;\varphi\vee\bigvee_{i=1}^n\neg\rho_i\;\in L
$$
where it is understood that $\varphi_{0}=\varphi$.

\begin{lemma}\label{lem:TT0}
We have
$\Sigma=\Sigma_0\cap\Sigma_{>0}$.
In particular, if $\Sigma_{>0}$ is decidable,
then $\Sigma\mred\Sigma_0$.
\end{lemma}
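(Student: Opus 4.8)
The plan is to prove the set-theoretic identity $\Sigma = \Sigma_0 \cap \Sigma_{>0}$ directly from the definitions, using the case split on characteristic (or, in the abstract setting, the fact that every model of $T$ satisfies exactly one $T_n$). Recall $\Sigma = T_L = T^\vdash \cap L$, while $\Sigma_0 = (T_L)_0 \cap L$ and $\Sigma_{>0} = (T_L)_{>0} \cap L = \bigcap_{n>0}(T_L)_n \cap L$. By Proposition~\ref{lem:little_equivalences}(b,c,d) applied with the fragment $L$ (whose hypothesis $\neg\rho_n \in L$ holds by assumption), we have $(T_L)_{0,L} = T_{0,L}$ and $(T_L)_{>m,L} = T_{>m,L}$, so equivalently the claim is $T_L = T_{0,L} \cap T_{>0,L}$, which I would phrase in terms of these more transparent objects.

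First I would check the inclusion $\subseteq$, which is immediate: if $\varphi \in T_L$, then $T \models \varphi$, hence $T_0 \models \varphi$ and $T_n \models \varphi$ for every $n \in \mathbb{N}$ since $T_0, T_n$ are deductive closures of extensions of $T$; as $\varphi \in L$ this gives $\varphi \in T_{0,L}$ and $\varphi \in T_{>0,L}$. For the reverse inclusion $\supseteq$, suppose $\varphi \in L$ lies in both $T_{0,L}$ and $T_{>0,L} = \bigcap_{n>0} T_{n,L}$. Let $M \models T$. Then $M \models T_n$ for exactly one $n \in \mathbb{N} \cup \{0\}$ (as noted just before Proposition~\ref{lem:pseudo}): either $M \models \rho_n$ for all $n$, in which case $M \models T_0$ and hence $M \models \varphi$; or there is a least $n \in \mathbb{N}$ with $M \not\models \rho_n$, in which case $M \models T_n$ and hence $M \models \varphi$. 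In either case $M \models \varphi$, so $T \models \varphi$, and since $\varphi \in L$ we conclude $\varphi \in T_L$. This proves the identity.

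For the ``in particular'' clause: suppose $\Sigma_{>0} = T_{>0,L}$ is decidable. The identity $\Sigma = \Sigma_0 \cap \Sigma_{>0}$ says precisely that $\varphi \in \Sigma$ iff $\varphi \in \Sigma_0$ and $\varphi \in \Sigma_{>0}$. To get a many-one reduction $\Sigma \mred \Sigma_0$, I would use the trick with the sentences $\varphi_n = \varphi \vee \bigvee_{i=1}^n \neg\rho_i$ introduced just above the lemma — though actually for this first reduction the cleanest route is: map $\varphi \in L$ to itself if the decision procedure for $\Sigma_{>0}$ accepts $\varphi$, and to a fixed element of $L \setminus \Sigma_0$ (e.g.\ $\falsum$, assuming $\Sigma$ is consistent; if $\Sigma$ is inconsistent the statement is trivial) otherwise. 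This is computable since $\Sigma_{>0}$ is decidable, and $\varphi \mred \Sigma$ iff its image $\in \Sigma_0$, because $\varphi \in \Sigma$ iff ($\varphi \in \Sigma_{>0}$ and $\varphi \in \Sigma_0$). One should double-check that membership of the image is correctly decided in the boundary cases, but there is no real obstacle here. The main subtlety — more bookkeeping than obstacle — is keeping straight the distinction flagged in the footnote, namely that $\Sigma_0$ means $(T_L)_{0,L}$ and not ``Definition~\ref{def:notations} applied to $\Sigma$'', which is exactly why invoking Proposition~\ref{lem:little_equivalences} at the outset to identify $\Sigma_0$ with $T_{0,L}$ and $\Sigma_{>0}$ with $T_{>0,L}$ is worthwhile.
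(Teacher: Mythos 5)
Your proof of the identity is correct but takes a genuinely different route from the paper's. The paper argues syntactically and entirely at the level of $\Sigma$: from $\varphi\in\Sigma_0$ and compactness it extracts a minimal $n$ with $\Sigma\vdash\varphi_n$, and then uses $\varphi\in\Sigma_{>0}\subseteq\Sigma_n$ to cut $\neg\rho_n$ against $\rho_n$ and contradict minimality unless $n=0$. You instead pass to $T$ via Proposition~\ref{lem:little_equivalences} and argue semantically, splitting on which $T_n$ a given model of $T$ satisfies. Both are sound; your version is arguably more transparent, while the paper's has two small advantages: it needs no appeal to Proposition~\ref{lem:little_equivalences} (though your detour is also avoidable, since your model-splitting argument works verbatim with $T_L$ in place of $T$), and the ``minimal $n$ such that $\Sigma\vdash\varphi_n$'' it isolates is exactly the quantity that the algorithm in the next lemma (Lemma~\ref{thm:main}) searches for, so the syntactic proof doubles as a warm-up for that algorithm. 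For the ``in particular'' clause the paper gives no explicit argument, and your reduction is the intended one; the only slip is in the degenerate case: the condition you need for $\falsum\notin\Sigma_0$ is consistency of $\Sigma_0$, not of $\Sigma$ (these can differ, e.g.\ if $T$ has no model satisfying all $\rho_n$). When $\Sigma_0$ is inconsistent the identity gives $\Sigma=\Sigma_{>0}$, which is decidable, and one reduces a decidable set to the nonempty set $\Sigma_0$ by mapping everything to $\verum$ and to a non-code if necessary --- so the conclusion survives, but the case split should be on $\Sigma_0$.
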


\begin{proof}
The inclusion from left to right is trivial. 
For the converse inclusion,
let $\varphi\in\Sigma_{0}\cap\Sigma_{>0}$.
From $\varphi\in\Sigma_0$ we see that
$\Sigma\vdash\varphi_n$ for some minimal $n$.
So if $n>0$, then
$\Sigma\vdash\varphi_{n-1}\vee\neg\rho_{n}$,
and
$\varphi\in\Sigma_{>0}\subseteq\Sigma_n$ implies that
$\Sigma\vdash\varphi_{n-1}\vee\rho_n$,
hence $\Sigma\vdash\varphi_{n-1}$,
contradicting the minimality of $n$.
Thus $n=0$ and $\varphi=\varphi_0\in\Sigma$.
\end{proof}

\begin{lemma}\label{thm:main}
We have
$\Sigma_{0}\Tred\Sigma\oplus\Sigma_{>0}\oplus\Sigma_{\gg 0}$.
Moreover, if $\Sigma_{>0}$ and $\Sigma_{\gg0}$ are decidable, then $\Sigma_0\mred\Sigma$.
\end{lemma}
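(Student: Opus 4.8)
The plan is to reduce the computation of $\Sigma_0$ to oracle queries to $\Sigma$, $\Sigma_{>0}$ and $\Sigma_{\gg 0}$ by exploiting the description of $\Sigma_0$-membership via the auxiliary sentences $\varphi_n = \varphi \vee \bigvee_{i=1}^{n}\neg\rho_i$. The first observation is the same one used in Lemma~\ref{lem:TT0}: a sentence $\varphi \in L$ lies in $\Sigma_0$ if and only if $\Sigma \vdash \varphi_n$ for some $n$, equivalently $\varphi_n \in \Sigma$ for some $n$ (using that $\Sigma = \Sigma_L$ is relatively deductively closed, so $\Sigma\vdash\psi$ and $\psi\in L$ gives $\psi\in\Sigma$). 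So, given $\varphi$, the algorithm wants to search for such an $n$. Since $\Sigma$ is only available as an oracle, this search terminates when $\varphi\in\Sigma_0$, but we must also detect the case $\varphi\notin\Sigma_0$; that is where $\Sigma_{>0}$ and $\Sigma_{\gg0}$ come in.

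The key claim to isolate is: $\varphi\notin\Sigma_0$ if and only if $\varphi_n\notin\Sigma$ for all $n$, and this should be detectable by noting that the sequence $(\varphi_n)_n$ is increasing modulo $\Sigma$-provability, and that $\varphi\in\Sigma_{\gg0}$ holds precisely when $\Sigma\vdash\varphi_n\vee\rho_{n+1}$ for all large $n$ — let me make this precise. Observe $\varphi\in\Sigma_n$ iff $\Sigma\vdash\varphi\vee\neg(\rho_1\wedge\cdots\wedge\rho_{n-1}\wedge\neg\rho_n)$, i.e. $\Sigma\vdash\varphi_{n-1}\vee\rho_n$. Hence $\varphi\in\Sigma_{\gg0}=\bigcup_m\bigcap_{n\geq m}\Sigma_n$ iff there is $m$ with $\Sigma\vdash\varphi_{n-1}\vee\rho_n$ for all $n\geq m$, and $\varphi\in\Sigma_{>0}$ iff this holds for all $n\geq 1$. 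The plan is then the following semidecision procedure with the three oracles: run three searches in parallel. Search (i) looks for $n$ with $\varphi_n\in\Sigma$; if found, output ``$\varphi\in\Sigma_0$''. Search (ii) queries $\varphi\in\Sigma_{>0}$; search (iii) queries $\varphi\in\Sigma_{\gg0}$. I claim that if $\varphi\notin\Sigma_0$ then at least one of (ii), (iii) returns a definitive negative answer allowing us to halt and output ``$\varphi\notin\Sigma_0$'': indeed if $\varphi\notin\Sigma_0$, then by Lemma~\ref{lem:TT0} $\varphi\notin\Sigma_0\cap\Sigma_{>0}=\Sigma$ is automatic, but more to the point $\varphi\in\Sigma_0$ is exactly $\varphi\in\bigcap_n\Sigma_n\cap\{\text{char }0 \text{ consequences}\}$; I will show $\varphi\in\Sigma_0$ iff $\varphi\in\Sigma_{>0}$ and $\varphi\in\Sigma_n$ for... — more cleanly: $\Sigma_0 = \Sigma_{>0}\cap(\text{something reachable})$. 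Let me instead argue directly: the oracle for $\Sigma_{>0}$ decides $\varphi\in\Sigma_{>0}$; if $\varphi\notin\Sigma_{>0}$ then a fortiori $\varphi\notin\Sigma_0$ (as $\Sigma_0\subseteq\Sigma_n$ forces, via $\varphi\in\Sigma_0\Rightarrow\varphi_N\in\Sigma$ for some $N$, that $\varphi\in\Sigma_n$ for all $n$, so $\varphi\in\Sigma_{>0}$), and we halt negatively. If $\varphi\in\Sigma_{>0}$, then $\varphi\in\Sigma_0$ iff $\varphi\in\Sigma$ — no wait, that's only the content of Lemma~\ref{lem:TT0} when we already know decidability of $\Sigma_{>0}$, which we now have from the oracle. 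So: query the $\Sigma_{>0}$-oracle; if negative, output ``no''; if positive, query the $\Sigma$-oracle on $\varphi$ and return that answer. This already shows $\Sigma_0\Tred\Sigma\oplus\Sigma_{>0}$, hence $\Sigma_0\Tred\Sigma\oplus\Sigma_{>0}\oplus\Sigma_{\gg0}$, and the ``moreover'' clause with $\Sigma_{>0}$ decidable is exactly Lemma~\ref{lem:TT0}.

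Wait — so $\Sigma_{\gg0}$ seems not to be needed. Let me reconsider: the statement is stated with the redundant-looking $\oplus\Sigma_{\gg0}$ presumably for uniformity with later results; since a Turing reduction to $\Sigma\oplus\Sigma_{>0}$ is trivially also one to $\Sigma\oplus\Sigma_{>0}\oplus\Sigma_{\gg0}$, this causes no difficulty. Thus the main (and only real) steps are: (1) re-derive from the relative deductive closure of $\Sigma$ and the minimal-$n$ argument that $\varphi\in\Sigma_0$ iff $\varphi_n\in\Sigma$ for some $n$; (2) show $\varphi\in\Sigma_0\Rightarrow\varphi\in\Sigma_n$ for every $n$, hence $\varphi\in\Sigma_{>0}$; (3) combine with Lemma~\ref{lem:TT0} to get the Turing reduction $\Sigma_0\Tred\Sigma\oplus\Sigma_{>0}$; (4) the many-one reduction under decidability of $\Sigma_{>0}$ and $\Sigma_{\gg0}$ is Lemma~\ref{lem:TT0} directly. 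I expect the only mild subtlety — the ``hard part'', such as it is — to be step (2): checking that $\varphi\in\Sigma_0$ genuinely implies $\varphi\in\Sigma_n$ for all $n$, which amounts to observing that from $\Sigma\vdash\varphi_N$ and $N\leq n$ we get $\Sigma\vdash\varphi_{n-1}\vee\rho_n$ (as $\varphi_{n-1}\vdash$ ... $\varphi_N$ appears as a disjunct, wait we need $\varphi_N\vdash\varphi_{n-1}\vee\rho_n$, true since $\varphi_N$'s disjuncts $\varphi,\neg\rho_1,\ldots,\neg\rho_N$ all imply $\varphi_{n-1}$ when $N\leq n-1$, and when $N=n$ the extra disjunct $\neg\rho_n$ implies $\rho_n\vee\varphi_{n-1}$'s... hmm $\neg\rho_n\not\vdash\rho_n$), so I should take $N\leq n-1$ WLOG by noting $\varphi_N\vdash\varphi_{N'}$ for $N\leq N'$ and hence monotone; then for $n>N$ we have $\Sigma\vdash\varphi_{n-1}$ outright, giving $\varphi\in\Sigma_n$, and for $n\leq N$ one runs the minimality argument of Lemma~\ref{lem:TT0} to push down to $\varphi_{n-1}$ as well. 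This being straightforward propositional bookkeeping, I do not anticipate genuine obstacles.
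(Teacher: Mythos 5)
Your proposed shortcut rests on a false inclusion. In step (2) you claim that $\varphi\in\Sigma_0$ implies $\varphi\in\Sigma_n$ for every $n$, hence $\Sigma_0\subseteq\Sigma_{>0}$, which would let you first query the $\Sigma_{>0}$-oracle on $\varphi$ itself and answer ``no'' when $\varphi\notin\Sigma_{>0}$. This fails: take $\varphi=\rho_1$ (in the fields application, $1+1\neq 0$). Then $\varphi\in\Sigma_0$ trivially, but $\varphi\notin\Sigma_1$ whenever $\Sigma_1$ is consistent, so $\varphi\notin\Sigma_{>0}$; your algorithm would wrongly reject it. (The paper's Remark~\ref{rem:alldifferent} records exactly this: $T_{0,\exists}\not\subseteq T_{>0,\exists}$ in general.) Your own computation already exposed the problem: from $\Sigma\vdash\varphi_N$ you get $\Sigma\vdash\varphi_{n-1}\vee\rho_n$ only for $n>N$, and the ``minimality argument of Lemma~\ref{lem:TT0}'' cannot push down to $n\leq N$, because that argument needs $\varphi\in\Sigma_{>0}$ as a \emph{hypothesis} — assuming it here is circular. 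Note also that if $\Sigma_0\subseteq\Sigma_{>0}$ were true, Lemma~\ref{lem:TT0} would give $\Sigma_0=\Sigma_0\cap\Sigma_{>0}=\Sigma$, which is absurd.

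The repair is to query the oracles not on $\varphi$ but on the shifted sentences $\varphi_m$, and this is where $\Sigma_{\gg0}$ is genuinely needed (it is not a redundant summand): first use $A_{\Sigma_{\gg0}}$ to decide $\varphi\in\Sigma_{\gg0}$; if no, then $\varphi\notin\Sigma_0$ since $\Sigma_0\subseteq\Sigma_{\gg0}$; if yes, then some $\varphi_m$ lies in $\Sigma_{>0}$, so the search with $A_{\Sigma_{>0}}$ for the minimal such $m_0$ terminates; finally one $\Sigma$-query on $\varphi_{m_0}$ decides membership, using Lemma~\ref{lem:TT0} for the negative case. Separately, your claim that the ``moreover'' clause ``is exactly Lemma~\ref{lem:TT0}'' has the direction backwards: that lemma yields $\Sigma\mred\Sigma_0$, whereas the statement to prove is $\Sigma_0\mred\Sigma$; the latter comes from the corrected algorithm, in which the decidability of $\Sigma_{>0}$ and $\Sigma_{\gg0}$ makes $\varphi\mapsto\varphi_{m_0}$ computable and leaves a single membership test in $\Sigma$.
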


\begin{proof}
Suppose that there are algorithms $A_{\Sigma},A_{\Sigma_{>0}},A_{\Sigma_{\gg0}}$ to decide the theories $\Sigma,\Sigma_{>0},\Sigma_{\gg0}$.
We outline an algorithm to decide $\Sigma_{0}$.

\begin{enumerate}[---]
\item
{\bf Input.}
$\varphi\in L$

\item
{\bf Step 1.}
Apply $A_{\Sigma_{\gg0}}$ to decide whether or not
	$\varphi\in\Sigma_{\gg0}$.
If `NO' then in particular we have $\varphi\notin\Sigma_{0}$,
so we {\bf output} `NO' and finish.
If else `YES' then we continue.

\item
{\bf Step 2.}
Since $\varphi\in\Sigma_{\gg0}$ there exists $m\in\mathbb{N}$ such that 
$\varphi\in\bigcap_{n\geq m}\Sigma_n$.
So since $\Sigma_n\models\neg\rho_n$
we obtain $\varphi_m\in\bigcap_{n>0}\Sigma_n=\Sigma_{>0}$.
Apply $A_{\Sigma_{>0}}$ to
$\varphi_0,\varphi_1,\dots$ until we find the minimal $m_0$ with
 $\varphi_{m_0}\in\Sigma_{>0}$.

\item
{\bf Step 3.}
Apply $A_{\Sigma}$ to decide whether or not
   $\varphi_{m_0}\in\Sigma$.
\begin{itemize}
\item If `YES' then in particular we have
$\varphi_{m_0}\in\Sigma_{0}$.
Since also
$\Sigma_{0}\vdash\bigwedge_{i=1}^{m_0}\rho_i$,
it follows that $\Sigma_{0}\vdash\varphi$,
i.e.~$\varphi\in\Sigma_0$.
We {\bf output} `YES' and finish.
\item If `NO', on the other hand, then $\varphi_{m_0}\notin\Sigma$.
Since we already know $\varphi_{m_0}\in\Sigma_{>0}$, 
it follows from Lemma~\ref{lem:TT0} that $\varphi_{m_0}\notin\Sigma_{0}$.
Therefore $\varphi\notin\Sigma_{0}$.
We {\bf output} `NO' and finish.
\qedhere
\end{itemize}
\end{enumerate}
This proves the Turing reduction.
If both $\Sigma_{>0}$ and $\Sigma_{\gg0}$ are decidable,
$A_\Sigma$ is the only oracle occurring in the above algorithm,
and it gives the many-one reduction
$\Sigma_{0}\mred\Sigma$.
\end{proof}

\begin{lemma}\label{lem:uniform_reduction}
$\Sigma_\mathbb{N}\mred\Sigma$ and $\Sigma_\mathbb{N}\mred\Sigma_{>0}$.
\end{lemma}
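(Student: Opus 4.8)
The plan is to prove both reductions at once, using the \emph{single} computable map
\[
 g\colon L\times\mathbb{N}\longrightarrow L,\qquad g(\varphi,n):=\varphi_{n-1}\vee\rho_n=\varphi\vee\rho_n\vee\bigvee_{i=1}^{n-1}\neg\rho_i ,
\]
which takes values in $L$ because $\rho_n,\neg\rho_i\in L$ and $L$ is closed under $\vee$, and which is computable because $L$ is a computable fragment and $n\mapsto\rho_n$ is computable. To turn $g$ into honest many-one reductions in the sense of Remark~\ref{rem:presentation} one composes it with the fixed G\"odel codings of $L\times\mathbb{N}$ and of $L$, and sends any natural number not coding a pair in $L\times\mathbb{N}$ to a fixed code of $\falsum$; this bookkeeping is routine and I will not dwell on it.

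The first step is to record the equivalence $\varphi\in\Sigma_n\iff g(\varphi,n)\in\Sigma$ for $\varphi\in L$ and $n\in\mathbb{N}$; call it $(\ast)$. Indeed, by the deduction theorem $\varphi\in\Sigma_n$ holds iff $\varphi\in L$ and $T_L\vdash\neg\rho_1\vee\cdots\vee\neg\rho_{n-1}\vee\rho_n\vee\varphi$, and since this sentence lies in $L$ while $T_L$ is relatively deductively closed (from $T_L\subseteq T^\vdash$ one gets that $T_L\vdash\psi$ with $\psi\in L$ forces $\psi\in T^\vdash\cap L=T_L$), the displayed condition is precisely $g(\varphi,n)\in\Sigma$. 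From $(\ast)$ the first reduction is immediate: $(\varphi,n)\in\Sigma_\mathbb{N}$ iff $n\in\mathbb{N}$ and $\varphi\in\Sigma_n$ iff $g(\varphi,n)\in\Sigma$, so $g$ witnesses $\Sigma_\mathbb{N}\mred\Sigma$.

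For $\Sigma_\mathbb{N}\mred\Sigma_{>0}$ I would combine $(\ast)$ with two observations. First, $\Sigma\subseteq\Sigma_{>0}\subseteq\Sigma_n$ for every $n\in\mathbb{N}$: the left inclusion is Lemma~\ref{lem:TT0} and $\Sigma_{>0}=\bigcap_{m\in\mathbb{N}}\Sigma_m$ holds by definition. Second, $g$ is idempotent in its first argument up to logical equivalence, $g(g(\varphi,n),n)\equiv g(\varphi,n)$, because the additional disjuncts $\rho_n,\neg\rho_1,\dots,\neg\rho_{n-1}$ already occur in $g(\varphi,n)$; applying $(\ast)$ with $\psi:=g(\varphi,n)$ in place of $\varphi$, and using that $\Sigma$ is closed under logical equivalence, this gives $\psi\in\Sigma_n\iff\psi\in\Sigma$. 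Hence for $\psi=g(\varphi,n)$ the three statements $\psi\in\Sigma$, $\psi\in\Sigma_{>0}$, $\psi\in\Sigma_n$ are all equivalent, and combining with $(\ast)$ we obtain $(\varphi,n)\in\Sigma_\mathbb{N}\iff\varphi\in\Sigma_n\iff g(\varphi,n)\in\Sigma_{>0}$, so the same map $g$ witnesses $\Sigma_\mathbb{N}\mred\Sigma_{>0}$.

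The mathematics is light; the only step requiring a moment of thought is the second reduction, where one must notice that the \emph{same} $g$ works and justify the direction $g(\varphi,n)\in\Sigma_{>0}\Rightarrow(\varphi,n)\in\Sigma_\mathbb{N}$ through the idempotency remark (and be careful that it is used at the level of logical equivalence, under which $\Sigma$ and the $\Sigma_n$ are stable), rather than designing a more elaborate reduction. The coding conventions for $\mred$ are the usual boilerplate and pose no real obstacle.
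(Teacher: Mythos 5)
Your proposal is correct and uses exactly the paper's reduction: the map $(\varphi,n)\mapsto\varphi_{n-1}\vee\rho_n$, the equivalence $(\varphi,n)\in\Sigma_{\mathbb N}\Leftrightarrow\varphi_{n-1}\vee\rho_n\in\Sigma$ via the deduction theorem and relative deductive closure of $\Sigma=T_L$, and the chain $\Sigma\subseteq\Sigma_{>0}\subseteq\Sigma_n$ combined with the idempotency observation to get the second reduction. You have merely spelled out in more detail the steps the paper leaves implicit.
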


\begin{proof}
For $\varphi\in L$ and $n\in\mathbb{N}$, let $\psi=\varphi_{n-1}\vee\rho_n$.
Then $(\varphi,n)\in\Sigma_\mathbb{N}$ if and only if
$\psi\in\Sigma$.
Moreover, $\psi\in\Sigma$ if and only if $\psi\in\Sigma_{>0}$.
Indeed,  $\Sigma\subseteq\Sigma_{>0}\subseteq\Sigma_n$,
and $\Sigma\vdash\psi$ if and only if $\Sigma_n\vdash\psi$.
\end{proof}

\begin{lemma}\label{prop:positive_reduces_to_uniform}
Let $\Sigma'\subseteq\Sigma$
with $\Sigma'_{\gg0}=\Sigma_{\gg0}$.
Then $\Sigma_{>0}\Tred\Sigma_{\gg0}\oplus\Sigma'_{>0}\oplus\Sigma_\mathbb{N}$.
\end{lemma}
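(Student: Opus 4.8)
The plan is to give an explicit algorithm that decides $\Sigma_{>0}$ with oracles for $\Sigma_{\gg0}$, $\Sigma'_{>0}$ and $\Sigma_\mathbb{N}$, closely following the proof of Lemma~\ref{thm:main}: there the oracle for $\Sigma_{>0}$ is used to locate a suitable shift $\varphi_{m_0}$ of the input, and here we can use the oracle for $\Sigma'_{>0}$ in its place precisely because $\Sigma'_{\gg0}=\Sigma_{\gg0}$. Throughout, $\Sigma'_n,\Sigma'_{>0},\Sigma'_{\gg0}$ are formed from $\Sigma'$ exactly as $\Sigma_n,\Sigma_{>0},\Sigma_{\gg0}$ are formed from $\Sigma=T_L$.

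First I would collect a few elementary facts. Since $\Sigma'\subseteq\Sigma$, monotonicity of deductive closure gives $\Sigma'_n\subseteq\Sigma_n$ for every $n$, hence $\Sigma'_{>0}\subseteq\Sigma_{>0}$; and $\Sigma_{>0}=\bigcap_{n\geq1}\Sigma_n\subseteq\bigcup_{m}\bigcap_{n\geq m}\Sigma_n=\Sigma_{\gg0}$. Moreover, directly from the definitions, $\Sigma_n\vdash\rho_i$ for $1\leq i<n$ and $\Sigma_n\vdash\neg\rho_n$, and likewise for $\Sigma'$. Consequently, if $\varphi\in\bigcap_{n\geq m}\Sigma'_n$ then $\varphi_m=\varphi\vee\bigvee_{i=1}^m\neg\rho_i\in\bigcap_{n\geq1}\Sigma'_n=\Sigma'_{>0}$, since for $1\leq n\leq m$ the disjunct $\neg\rho_n$ is already provable from $\Sigma'_n$, while for $n>m$ already $\varphi$ is provable from $\Sigma'_n$.

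Now the algorithm on input $\varphi\in L$. Step~1: query the oracle for $\Sigma_{\gg0}$; if $\varphi\notin\Sigma_{\gg0}$, then $\varphi\notin\Sigma_{>0}$ and we are done. Step~2: otherwise $\varphi\in\Sigma_{\gg0}=\Sigma'_{\gg0}$, so $\varphi\in\bigcap_{n\geq m}\Sigma'_n$ for some $m$, and hence $\varphi_m\in\Sigma'_{>0}$ by the observation above; so, querying the oracle for $\Sigma'_{>0}$ on $\varphi_0,\varphi_1,\dots$ in turn, we find after finitely many steps the least $m_0$ with $\varphi_{m_0}\in\Sigma'_{>0}$. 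Step~3: then $\varphi_{m_0}\in\Sigma'_{>0}\subseteq\Sigma_{>0}$, so for every $n>m_0$ we have $\varphi_{m_0}\in\Sigma_n$ together with $\Sigma_n\vdash\rho_1\wedge\dots\wedge\rho_{m_0}$, whence $\Sigma_n\vdash\varphi$; it therefore only remains to decide, for the finitely many $n\in\{1,\dots,m_0\}$, whether $\varphi\in\Sigma_n$, which we do by querying the oracle for $\Sigma_\mathbb{N}$ on the pairs $(\varphi,n)$. If all these queries succeed we conclude $\varphi\in\bigcap_{n\geq1}\Sigma_n=\Sigma_{>0}$; otherwise $\varphi\notin\Sigma_{>0}$.

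Since this algorithm uses only the oracles for $\Sigma_{\gg0}$, $\Sigma'_{>0}$ and $\Sigma_\mathbb{N}$, it witnesses the claimed Turing reduction. The argument is elementary and I do not anticipate a real obstacle; the only point that needs a little care is the bookkeeping in Step~3 — namely that once $\varphi_{m_0}\in\Sigma_{>0}$ has been located, membership $\varphi\in\Sigma_n$ for all $n>m_0$ is automatic, so that only finitely many further oracle queries to $\Sigma_\mathbb{N}$ are required to settle $\varphi\in\Sigma_{>0}$.
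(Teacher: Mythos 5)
Your proposal is correct and follows essentially the same three-step algorithm as the paper's proof: first query $\Sigma_{\gg0}$, then use $\Sigma'_{>0}$ (valid because $\Sigma'_{\gg0}=\Sigma_{\gg0}$) to locate the minimal shift $\varphi_{m_0}$, and finally settle the finitely many remaining characteristics via $\Sigma_{\mathbb{N}}$. The only difference is that you spell out a bit more explicitly why $\varphi_m\in\Sigma'_{>0}$ and why $\varphi\in\Sigma_n$ for $n>m_0$, which the paper leaves more implicit.
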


\begin{proof}
Suppose there are algorithms $A_{\Sigma_\mathbb{N}}$, $A_{\Sigma'_{>0}}$ and $A_{\Sigma_{\gg0}}$ to decide $\Sigma_\mathbb{N}$, $\Sigma'_{>0}$ and $\Sigma_{\gg0}$. We outline an algorithm to decide $\Sigma_{>0}$.
\begin{enumerate}[---]
\item
{\bf Input.}
$\varphi\in L$.
\item
{\bf Step 1.}
Apply $A_{\Sigma_{\gg0}}$ to decide whether or not
	$\varphi\in\Sigma_{\gg0}$.
If `NO' then in particular $\varphi\notin\Sigma_{>0}$,
so we {\bf output} `NO' and finish.
If else `YES' then we continue.

\item
{\bf Step 2.}
Since $\Sigma'_{\gg0}=\Sigma_{\gg0}\models\varphi$ there exists $m\in\mathbb{N}$ such that $\Sigma'_n\models\varphi$
for every $n\geq m$.
In particular, $\Sigma'_n\models\varphi_m$ for every $n> 0$,
i.e.~$\Sigma'_{>0}\models\varphi_m$.
Apply $A_{\Sigma'_{>0}}$ to
$\varphi_0,\varphi_1,\dots$ until we find the minimal $m_0$ with
$\varphi_{m_0}\in\Sigma'_{>0}$.
\item
{\bf Step 3.}
Since $\Sigma'\subseteq\Sigma$ and $\varphi_{m_0}\in\Sigma'_{>0}$, also 
$\varphi_{m_0}\in\Sigma_{>0}$,
hence $\varphi\in\bigcap_{n>m_0}\Sigma_n$.
We now use $A_{\Sigma_{\mathbb{N}}}$ 
to check if $(\varphi,1),\dots,(\varphi,m_0)\in\Sigma_\mathbb{N}$.
If yes then $\varphi\in\bigcap_{n>0}\Sigma_n=\Sigma_{>0}$ and we {\bf output} `YES', otherwise we {\bf output} `NO'.\qedhere
\end{enumerate}
\end{proof}

\begin{proof}[Proof of Proposition \ref{prop:algorithms}]
By Proposition~\ref{lem:little_equivalences}
it suffices to prove statements 
with $T_{0,L}$ replaced by $(T_L)_{0,L}=\Sigma_0$,
$T_{>0,L}$ replaced by
$(T_L)_{>0,L}=\Sigma_{>0}$,
etc.,
which we do now:

(a): This is Lemma \ref{thm:main}.

(b): 
If $\Sigma_{>0}$ and $\Sigma_{\gg0}$ are decidable,
$\Sigma\mred\Sigma_0$ follows from Lemma~\ref{lem:TT0},
and $\Sigma_0\mred\Sigma$
from Lemma \ref{thm:main}.

(c): This is Lemma \ref{prop:positive_reduces_to_uniform}.

(d): $\Sigma_{>0}\Tred\Sigma_{\mathbb{N}}$
is (c)
and
$\Sigma_{>0}\geq_{T}\Sigma_{\mathbb{N}}$ is Lemma~\ref{lem:uniform_reduction}.
The reduction
$\Sigma\Tred\Sigma_{0}\oplus\Sigma_{>0}$ is Lemma~\ref{lem:TT0},
and 
$\Sigma\geq_{\mathrm{T}}\Sigma_{>0}$ follows from Lemma~\ref{lem:uniform_reduction} and (c),
and
$\Sigma\oplus\Sigma_{>0}\geq_{\mathrm{T}}\Sigma_{0}$ from (a).
Thus $\Sigma\Teq\Sigma_{0}\oplus\Sigma_{>0}$.
Trivially
$\Sigma_{0}\oplus\Sigma_{>0}\Teq\Sigma_{\mathbb{N}\cup\{0\}}$.
\end{proof}

\subsection{Limit arches}
\label{section:stratify_arch}
\label{sec:4.3}

\noindent
Let $A=(B,\hat{B},\iota)$ be an arch.
We write
$B=(C_1,C_2,\sigma)$
and
$\hat{B}=(\hat{C}_{1},\hat{C}_{2},\sigma)$,
where $C_{i}=(L_{i},T_{i})$ is a subcontext of $\hat{C}_{i}=(\hat{L}_{i},T_{i})$, for $i=1,2$.
Let $(\rho_{n})_{n\in\mathbb{N}}$ be a sequence of $\mathfrak{L}_1$-sentences,
with $\rho_n,\neg\rho_{n}\in L_{1}$ for every $n$.
Let $(\iota\rho_{n})_{n\in\mathbb{N}}$ be the corresponding sequence of $\mathfrak{L}_{2}$-sentences.

For an $\mathfrak{L}_{1}$-theory $R$,
we write 
$T_{2}(R)=T_{2}\cup\iota(R_{\hat{L}_{1}})$.
We will use the notation introduced in Definition~\ref{def:notations},
first with $T=R$ and the sentences $(\rho_{n})_{n\in\mathbb{N}}$,
leading to theories
$R_{0}$, $R_{n}$, $R_{>m}$, $R_{\gg0}$,
and second with $T=T_{2}(R)$ and the sentences $(\iota\rho_{n})_{n\in\mathbb{N}}$,
leading to theories
$T_{2}(R)_{0}$, $T_{2}(R)_{n}$, $T_{2}(R)_{>m}$, $T_{2}(R)_{\gg0}$.
In particular we have theories
$T_{1,0}$, $T_{1,n}$, $T_{1,>m}$, $T_{1,\gg0}$,
$T_{2,0}$, $T_{2,n}$, $T_{2,>m}$, $T_{2,\gg0}$.
We moreover extend one of these notations to contexts and bridges:
For $i=1,2$,
let $C_{i,0}=(L_{i},T_{i,0})$
and note that $B_{0}:=(C_{1,0},C_{2,0},\sigma|_{\mathrm{Mod}(T_{2,0})})$ is a bridge.

\begin{remark}
Note that the deductive closure of $T_{2}(R)$ does not depend on the choice of interpretation $\iota$:
we have $M\models T_{2}(R)$ if and only if $M\models T_{2}$ and $\sigma M\models R_{\hat{L}_{1}}$.
Moreover,
if $R\subseteq\hat{L}_{1}$ then
$(T_{2}\cup\iota R)^{\vdash}
=T_{2}(R)^{\vdash}
=(T_{2}\cup\iota((T_{1}\cup R)_{\hat{L}_{1}}))^{\vdash}$
by Lemma~\ref{lem:sur_goes_up};
and
if instead $R\supseteq T_{1}$ then again
$T_{2}(R)^{\vdash}=(T_{2}\cup\iota((T_{1}\cup R)_{\hat{L}_{1}})^{\vdash}$.
\end{remark}

\begin{proposition}\label{prp:stratify}
\phantom{\quad}
	\begin{enumerate}[$(a)$]
	\item
	$(T_{2})^{\vdash}=T_{2}(T_{1})^{\vdash}$.
	\item
	$T_{2,n}=T_{2}(T_{1,n})^{\vdash}$, for all $n\in\mathbb{N}\cup\{0\}$.
	\item
	If $B$ satisfies \axsurj\ and $B_{0}$ satisfies \axmono,
	then
	$(T_{2,>m})_{L_{2}}=T_{2}(T_{1,>m})_{L_{2}}$, for all $m\geq0$.
	\item
	If $B$ satisfies \axsurj\ and $B_{0}$ satisfies \axmono,
	then
	$(T_{2,\gg0})_{L_{2}}=T_{2}(T_{1,\gg0})_{L_{2}}$.
	\end{enumerate}
\end{proposition}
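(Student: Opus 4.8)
The plan is to prove the four statements essentially in order, each building on its predecessors. For $(a)$, note that $T_2(T_1) = T_2 \cup \iota((T_1 \cup T_1)_{\hat L_1}) = T_2 \cup \iota(T_{1,\hat L_1})$; since $\iota$ is an interpretation for $\hat B$, for any $M \models T_2$ we have $\sigma M \models T_1$, hence $\sigma M \models T_{1,\hat L_1}$, hence $M \models \iota(T_{1,\hat L_1})$ — so $T_2 \models T_2(T_1)$, and the reverse inclusion of deductive closures is trivial. This is really just a special case of the observation recorded in the remark preceding the proposition (or of Lemma~\ref{lem:sur_goes_up} applied with $R = T_1$). For $(b)$, I would unwind the definitions: $T_{1,n} = (T_1 \cup \{\rho_1,\dots,\rho_{n-1},\neg\rho_n\})^\vdash$ and $T_{2,n} = (T_2 \cup \{\iota\rho_1,\dots,\iota\rho_{n-1},\neg\iota\rho_n\})^\vdash$, while $T_2(T_{1,n}) = T_2 \cup \iota((T_1 \cup T_{1,n})_{\hat L_1}) = T_2 \cup \iota(T_{1,n,\hat L_1})$ (using $T_1 \subseteq T_{1,n}^\vdash$, so the two agree up to deductive closure). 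The inclusion $T_2(T_{1,n}) \subseteq T_{2,n}^\vdash$ follows because $\neg\rho_i \in L_1 \subseteq \hat L_1$ forces $\rho_i \in \hat L_1$ for $i < n$, so $\rho_1,\dots,\rho_{n-1},\neg\rho_n \in T_{1,n,\hat L_1}$, hence their $\iota$-images lie in $T_2(T_{1,n})$, together with $\iota$-images of everything $T_{1,n}$ proves in $\hat L_1$; conversely, using Lemma~\ref{lem:boolean} and the fact that an interpretation commutes with $\neg$ on the relevant sentences (here one needs $\iota(\neg\rho_n) = \neg\iota\rho_n$ up to $T_2$-equivalence, which is exactly what "the corresponding sequence $(\iota\rho_n)$" should be taken to mean, or follows from $\iota\rho_n = \iota\rho_n$ and Lemma~\ref{lem:coarsening}-style reasoning), one gets the reverse inclusion. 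The $n=0$ case is $(a)$ with $\{\rho_n\}$ adjoined, handled the same way.

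For $(c)$, the key point is that $B_0 = (C_{1,0}, C_{2,0}, \sigma|_{\mathrm{Mod}(T_{2,0})})$ is an arch-compatible sub-bridge and, crucially, that $T_{2,0}^\vdash = T_2(T_{1,0})^\vdash$ by part $(b)$ (case $n=0$). I would argue: $T_{2,>m} = \bigcap_{n>m} T_{2,n}$, and by $(b)$ each $T_{2,n}^\vdash = T_2(T_{1,n})^\vdash = (T_2 \cup \iota(T_{1,n,\hat L_1}))^\vdash$. Now invoke Lemma~\ref{lem:intersection} applied to the arch $A$ (which requires $\hat B$ to satisfy \axsurj{} — note \axsurj{} for $\hat B$ is equivalent to \axsurj{} for $B$ since the $L_1$-theory of $\sigma M$ determines and is determined by what we need, and in any case the hypothesis says $B$ satisfies \axsurj{}, from which \axsurj{} for $\hat B$ follows as $\hat L_1 \supseteq L_1$... actually I should state the hypothesis carefully: \axsurj{} is about $L_1$, so it transfers up to $\hat B$ trivially only in one direction; I will assume the intended reading matches Lemma~\ref{lem:intersection}'s hypotheses) and to $B_0$'s elimination. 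Concretely, with $I = \{n : n > m\}$ and $R_n = T_{1,n,\hat L_1}$ (so that $(T_{1,0} \cup R_n)_{\hat L_1} = T_{1,n,\hat L_1}$ since $T_{1,0} \subseteq$ ... wait, $T_{1,0}$ includes all $\rho_i$, which conflicts with $\neg\rho_n$; I should instead work with the bridge $B_0$ only in the final monotonicity step and use Lemma~\ref{lem:intersection} for the base arch $A$ directly), Lemma~\ref{lem:intersection} gives $\bigcap_{n>m}(T_2 \cup \iota R_n)_{L_2} = (T_2 \cup \iota(\bigcap_{n>m}(T_1 \cup R_n)_{L_1}))_{L_2}$. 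The remaining task is to identify $\bigcap_{n>m}(T_1 \cup R_n)_{L_1}$ with $(T_{1,>m})_{L_1}$ — but $(T_1 \cup R_n)_{L_1} = T_{1,n,L_1}$ and $\bigcap_{n>m} T_{1,n,L_1} = (\bigcap_{n>m} T_{1,n})_{L_1} = T_{1,>m,L_1}$. Then one needs $(T_2 \cup \iota(T_{1,>m,L_1}))_{L_2} = T_2(T_{1,>m})_{L_2}$, i.e.~that passing from the $L_1$-fragment to the $\hat L_1$-fragment of $T_{1,>m}$ does not change the $L_2$-fragment after applying $\iota$ and closing — and this is exactly where \axmono{} for $B_0$ enters, via the "In particular" clause of Lemma~\ref{lem:intersection} or a direct Corollary~\ref{lem:pattern}(III)-style argument applied to the arch $(B_0, \hat B_0, \iota)$.

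For $(d)$, I would mimic $(c)$ but using $T_{2,\gg0} = \bigcup_m \bigcap_{n\geq m} T_{2,n} = \bigcup_m T_{2,>m-1}$ (a union over the chain), so $(T_{2,\gg0})_{L_2} = \bigcup_m (T_{2,>m})_{L_2} = \bigcup_m T_2(T_{1,>m})_{L_2}$ by $(c)$, and then identify this with $T_2(T_{1,\gg0})_{L_2}$: the inclusion $\supseteq$ follows since $T_{1,\gg0} = \bigcup_m T_{1,>m}$ forces $T_{1,\gg0,\hat L_1} \supseteq \bigcup_m$ of the pieces, though care is needed because deductive closure and union do not commute in general — here I would use that a single $\mathfrak L_2$-sentence $\varphi \in T_2(T_{1,\gg0})_{L_2}$ has a finite proof from $T_2 \cup \iota(T_{1,\gg0,\hat L_1})$, using finitely many sentences from $T_{1,\gg0,\hat L_1}$, each of which already lies in some $T_{1,>m_j,\hat L_1}$, hence all in $T_{1,>m,\hat L_1}$ for $m = \max m_j$, giving $\varphi \in T_2(T_{1,>m})_{L_2}$. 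The reverse inclusion is analogous. \textbf{The main obstacle} I anticipate is bookkeeping in $(c)$: correctly matching the roles of $T_1$ vs.~$T_{1,0}$ vs.~the various $R_n$ against the precise hypotheses of Lemma~\ref{lem:intersection} (which fixes a single base theory $T_1$ and adjoins $R_i$'s), since here the natural "base" shifts depending on whether we track $T_{1,>m}$ directly or as an intersection of the $T_{1,n}$, and ensuring the \axmono{}-for-$B_0$ hypothesis is deployed at exactly the one place where the fragment $\hat L_1$ must be collapsed back down to $L_1$. Everything else is routine manipulation of deductive closures, compactness, and the already-established Lemmas~\ref{lem:sur_goes_up}, \ref{lem:boolean}, \ref{lem:intersection}, and Corollary~\ref{lem:pattern}.
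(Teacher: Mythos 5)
Your parts (a), (b) and (d) are essentially the paper's argument: (a) is Lemma~\ref{lem:sur_goes_up} applied to $\hat{B}$, (b) rests on the observation $T_{2}\models\neg\iota\rho_{n}\leftrightarrow\iota\neg\rho_{n}$ (which the paper derives directly from the interpretation property, since $\rho_n,\neg\rho_n\in L_1$) combined with Lemma~\ref{lem:sur_goes_up}, and (d) is the same union-of-chains computation. The problem is part (c). Your plan is to feed the family $R_{n}=(T_{1,n})_{\hat{L}_{1}}$, $n>m$, into Lemma~\ref{lem:intersection}; but that lemma requires $B$ to admit an elimination, which by Proposition~\ref{prp:elimination} amounts to $B$ satisfying \axmono. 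The hypothesis of (c) only grants \axmono\ for $B_{0}$. This is not a bookkeeping issue but the entire point of the statement: in the intended application to $\rmF_{\exists}\bridge\Hepi_{\exists}$, \axmono\ for $B$ is known only under \Rfour, whereas \axmono\ for $B_{0}$ holds unconditionally (Lemma~\ref{lem:Hen_sat_axioms}(b)); the paper explicitly notes after Proposition~\ref{cor:F_and_P_combined} that the route through Lemma~\ref{lem:intersection} is available but relies on \Rfour, which is precisely what Proposition~\ref{prp:stratify}(c,d) is designed to avoid. Moreover, \axmono\ for $B_{0}$ cannot enter where you place it (collapsing $\hat{L}_{1}$ down to $L_{1}$): that collapse is again the ``in particular'' clause of Lemma~\ref{lem:intersection} and again presupposes the elimination for $B$. (A smaller slip: \axsurj\ transfers from $\hat{B}$ down to $B$, not up, so your aside about deducing \axsurj\ for $\hat{B}$ from \axsurj\ for $B$ also goes the wrong way; the paper only ever uses \axsurj\ for $B$ here.)

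For the nontrivial inclusion $(T_{2,>m})_{L_{2}}\subseteq T_{2}(T_{1,>m})_{L_{2}}$ the paper argues model-theoretically: take $M\models T_{2}(T_{1,>m})$ and split cases on which $T_{2,n}$ it models. If $M\models T_{2,n}$ with $n>0$, then $n>m$ is forced and $M\models T_{2,>m}$ outright. If $M\models T_{2,0}$, Lemma~\ref{lem:extension} produces $N\models T_{1,>m}$ with $\sigma M\models\mathrm{Th}_{\hat{L}_{1}}(N)$; one checks $N\models T_{1,0}$ as well, so by Proposition~\ref{lem:pseudo}(a) $N$ is elementarily equivalent to an ultraproduct of models $H_{i}$ of the various $T_{1,m+n_{i}}$; \axsurj\ for $B$ lifts each $H_{i}$ to some $K_{i}\models T_{2,m+n_{i}}$, and the corresponding ultraproduct $K$ models $T_{2,0}$ and $T_{2,>m}$ with $\mathrm{Th}_{L_{1}}(\sigma K)\subseteq\mathrm{Th}_{L_{1}}(\sigma M)$; only now is \axmono\ for $B_{0}$ invoked (both $K$ and $M$ model $T_{2,0}$) to conclude $\mathrm{Th}_{L_{2}}(K)\subseteq\mathrm{Th}_{L_{2}}(M)$ and hence $M\models(T_{2,>m})_{L_{2}}$. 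None of this machinery appears in your sketch, and it is exactly what is needed to connect a characteristic-zero model $M$ of $T_{2}(T_{1,>m})$ with the positive-characteristic content of $T_{2,>m}$ without assuming \axmono\ for the full bridge $B$.
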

\begin{proof}
\begin{enumerate}[$(a)$]
\item
We apply Lemma~\ref{lem:sur_goes_up} to the bridge $\hat{B}$ and $R=\emptyset$.
\item
	Let $n\in\mathbb{N}$.
	We have assumed that $\rho_{n},\neg\rho_{n}\in {L}_{1}\subseteq \hat{L}_1$,
	thus $\iota\rho_{n},\iota\neg\rho_{n}\in\hat{L}_{2}$,
	since $\iota$ is an interpretation for $\hat{B}$.
	For $M\models T_{2}$, we have 
$M\models\neg\iota\rho_{n}\Leftrightarrow \sigma M\models\neg\rho_{n}\Leftrightarrow M\models\iota\neg\rho_{n}$.
	Thus (*): $T_{2}\models(\neg\iota\rho_{n}\leftrightarrow\iota\neg\rho_{n})$.
    Together with $T_{2}(R)^{\vdash}
=(T_{2}\cup\iota((T_{1}\cup R)_{\hat{L}_{1}}))^{\vdash}$
for $R=\{\rho_1,\dots,\rho_{n-1},\neg\rho_n\}$
by Lemma~\ref{lem:sur_goes_up},
we obtain $T_{2,n}=T_{2}(T_{1,n})^{\vdash}$,
and similarly $T_{2,0}=T_{2}(T_{1,0})^{\vdash}$.
\item
	By (b),
	for each $n>m$ we have
     $T_{1,n}\supseteq T_{1,>m}$,
     and therefore
	$T_{2,n}=T_{2}(T_{1,n})^{\vdash}\supseteq T_{2}(T_{1,>m})^{\vdash}$.
	Thus
	$(T_{2,>m})_{L_2}\supseteq T_{2}(T_{1,>m})_{L_2}$.
 
	Conversely,
	let
	$M\models T_{2}(T_{1,>m})$
	so that
	$\sigma M\models(T_{1,>m})_{\hat{L}_{1}}$.
Note that $T_{1,>m}\models\rho_{1}\wedge\ldots\wedge\rho_{m}$
and $\rho_{1}\wedge\ldots\wedge\rho_{m}\in L_{1}\subseteq\hat{L}_{1}$.
Thus $\sigma M\models\rho_{1}\wedge\ldots\wedge\rho_{m}$,
and therefore also $M\models \iota\rho_1\wedge\ldots\wedge\iota\rho_m$
(Lemma \ref{lem:boolean}).
	There are two cases:
 
	First, if $M\models T_{2,n}$ for some $n>0$,
	then necessarily $n>m$, and so 
	$M\models T_{2,>m}$.
	
 Otherwise, in the second case, we have
	$M\models T_{2,0}$,
	whence $\sigma M\models(T_{1,0})_{\hat{L}_{1}}$ by (b).
	In particular $\sigma M\models\rho_{n}$, for all $n>0$.
	Since $\sigma M$ is  a model of
	$(T_{1,>m})_{\hat{L}_{1}}$,
	by Lemma~\ref{lem:extension}
	there exists $N\models T_{1,>m}$
	such that $\sigma M\models\mathrm{Th}_{\hat{L}_{1}}(N)$.
	Since $\neg\rho_{n}\in L_{1}\subseteq\hat{L}_{1}$
    and $\sigma M\not\models\neg\rho_{n}$,
    also  $N\not\models\neg\rho_{n}$, i.e.~$N\models\rho_{n}$, for all $n>0$.
 	It follows that $N\models T_{1,0}$.
	Let $S=T_{1}\cup\{\rho_{1},\ldots,\rho_{m}\}$
	and write
	$\sigma_{n}=\rho_{m+n}$, for $n\in\mathbb{N}$.
	We apply the notation
 from
 Definition~\ref{def:notations}
to the $\mathfrak{L}_{1}$-theory $S$
	and the sequence of $\mathfrak{L}_{1}$-sentences $(\sigma_{n})_{n\in\mathbb{N}}$.
	Then
	$S_{n}=T_{1,m+n}$
	for each $n>0$,
	and so
	$S_{>0}=T_{1,>m}$
	and
	$S_{0}=T_{1,0}$.
	It follows that
	$N\models S_{>0}\cup S_{0}$.
    By Proposition~\ref{lem:pseudo}(a),
	$N\equiv\prod_{i\in I}H_{i}/\mathcal{U}$
	for a set $I$, an
  ultrafilter $\mathcal{U}$ on $I$
	and $H_{i}\models S_{n_i}$
 for some $n_i\in\mathbb{N}$.
	By \axsurj\ for ${B}$,
	for each $i\in I$
	there exists
	$K_{i}\models T_{2}$
%n}$
	with
	$\mathrm{Th}_{L_1}(H_{i})=\mathrm{Th}_{L_1}(\sigma K_{i})$.
	It follows that $\sigma K_{i}\models S_{n_{i}}$,
 hence $K_{i}\models T_{2,m+n_{i}}$, for each $i\in I$,
 and so $K_{i}\models T_{2,>m}$.
	Denote
	$K:=\prod_{i\in I}K_{i}/\mathcal{U}$.
        By Łoś's theorem,
        $\mathrm{Th}_{L_1}(\sigma K)=\mathrm{Th}_{L_1}(\prod_{i\in I}\sigma K_{i}/\mathcal{U})=\mathrm{Th}_{L_{1}}(N)$.
 	So since $N\models S_0$ it follows that
	$K\models T_{2,0}\cup T_{2,>m}$.
		Thus,
		$\sigma M\models\mathrm{Th}_{{L}_{1}}(\sigma K)$,
		in particular
		$\mathrm{Th}_{L_{1}}(\sigma K)\subseteq\mathrm{Th}_{L_{1}}(\sigma M)$.
		Since $B_{0}$ satisfies \axmono,
		we conclude that
		$\mathrm{Th}_{L_{2}}(K)\subseteq\mathrm{Th}_{L_{2}}(M)$.

	Therefore, in both cases,
 $M\models(T_{2,>m})_{L_{2}}$.
  Thus
	$T_{2}(T_{1,>m})\models(T_{2,>m})_{L_{2}}$,
	and hence 
	$(T_{2,>m})_{L_{2}}\subseteq T_{2}(T_{1,>m})_{L_{2}}$.
\item
	Both $T_{2,\gg0}$ and $T_{2}(T_{1,\gg0})$ are unions of chains.
	Thus
	\begin{align*}
		(T_{2,\gg0})_{L_{2}}
		=\bigcup_{m\geq0}(T_{2,>m})_{L_{2}}
		\overset{\text{(c)}}{=}
		\bigcup_{m\geq0}T_{2}(T_{1,>m})_{L_{2}}
		=\Big(\bigcup_{m\geq0}T_{2}(T_{1,>m})\Big)_{L_{2}}
		=T_{2}(T_{1,\gg0})_{L_{2}},
	\end{align*}
	as required.
	\qedhere
\end{enumerate}
\end{proof}

\section{Existential theories of fields with varying characteristic}
\label{sec:final}

\noindent
In this final section we draw the main conclusions about our examples, including theories of prime fields, PAC fields, henselian valued fields, large fields, and global fields.
We wrap up by giving the proofs of theorems~\ref{thm:introQ} and \ref{thm:intro3} from the introduction.

For the rest of this paper
% we consider the language
we fix the language
% $\mathfrak{L}_{1}=\mathfrak{L}_{\mathrm{ring}}$
$\mathfrak{L}_{1}$ to be $\mathfrak{L}_{\mathrm{ring}}$
% and we fix the sequence
% $(\rho_{n})_{n\in\mathbb{N}}$
% defined by writing
and
$\rho_{n}$ to be the quantifier-free $\mathfrak{L}_{\rm ring}$-sentence
\begin{align*}
	\neg\underbrace{1+\ldots+1}_{n}\dot{=}0,
\end{align*}
for each $n\in\mathbb{N}$.
Note that
$n\mapsto\rho_{n}$ is computable.

For an arch
$A=(B,\hat{B},\iota)$
we write $B=(C_{1},C_{2},\sigma)$ and $\hat{B}=(\hat{C}_{1},\hat{C}_{2},\sigma)$,
where $C_{i}=(L_{i},T_{i})$ is a subcontext of $\hat{C}_{i}=(\hat{L}_{i},T_{i})$, for $i=1,2$.
For all such arches $A$ considered in this section, 
$C_{1}$ and $\hat{C}_{1}$ are $\mathfrak{L}_{1}$-contexts,
$L_{1}$ contains the quantifier-free $\mathfrak{L}_{\mathrm{ring}}$-sentences
(thus $\rho_{n},\neg\rho_{n}\in L_{1}$ for every $n$),
$\iota(L_{1})\subseteq L_{2}$,
and $B$ satisfies \axsurj.
Thus we may use the notation from Section~\ref{section:stratify_arch},
building on Definition~\ref{def:notations}:
first with an $\mathfrak{L}_{1}$-theory $T=R$ and the sentences $(\rho_{n})_{n\in\mathbb{N}}$,
and second with $T=T_{2}(R)$ and the sentences $(\iota\rho_{n})_{n\in\mathbb{N}}$.

\begin{remark}\label{rem:TE}
For an $\mathfrak{L}_{1}$-theory $R$ of fields,
let $S$ denote either $R$ or $T_{2}(R)$.
Then
$S_{n}$ is consistent only if $n$ is prime or $0$;
and
$S_{>0}=\bigcap_{p\in\mathbb{P}}S_{p}$
and
$S_{\gg0}=\bigcup_{m\in\mathbb{N}}\bigcap_{p\geq m,p\in\mathbb{P}}S_{p}$.
We also follow the convention from Definition \ref{def:Efragments} to write $S_\exists$, $S_{n,\exists}$, etc.,
for $S_L$, $(S_n)_L$, etc.,
when $L={\rm Sent}_\exists(\mathfrak{L}_{i})$.
Moreover by
Proposition~\ref{lem:little_equivalences}(b,d)
and
Lemma~\ref{lem:TT0}
we have
$S_{\exists}=\bigcap_{p\in\mathbb{P}\cup\{0\}}S_{p,\exists}$.
\end{remark}

\begin{remark}\label{rem:ambiguity}
Consider the arch
$A=B\arch\hat{B}=\rmF_{\exists}\bridge\Hepi_{\exists}\arch\rmF\bridge\Hepi$
together with the sentences
$(\rho_{n})_{n\in\mathbb{N}}$
defined above.
Applying the extended notations of Section~\ref{section:stratify_arch}
we have the bridge $B_{0}=(C_{1,0},C_{2,0},\sigma|_{\mathrm{Mod}((\bHepi)_{0})})$
where $C_{2,0}=(\mathrm{Sent}_{\exists}(\mathfrak{L}_{\mathrm{val}}),(\bHepi)_{0})$
and $(\bHepi)_{0}=(\bHepi\cup\{\iota_{\mathbf{k}}\rho_{n}\mid n\in\mathbb{N}\})^{\vdash}$.

In Section~\ref{section:existential_theories_of_fields}
we had already defined the theory $\bHepiz$ to be the
theory of equicharacteristic zero henselian valued fields,
which is clearly axiomatized by 
$\bHepi\cup\{\iota_{\mathbf{k}}\rho_{n}\mid n\in\mathbb{N}\}$.
Thus $\bHepiz=(\bHepi)_{0}$.
Similarly we had defined the contexts
$\Hepiz=(\mathrm{Sent}(\mathfrak{L}_{\mathrm{val}}),\bHepi_{0})$
and
$\HepizE=(\mathrm{Sent}_{\exists}(\mathfrak{L}_{\mathrm{val}}),\bHepi_{0})$.
We observe that there is no ambiguity between the notation introduced
in sections~\ref{section:existential_theories_of_fields}
and
\ref{section:stratify_arch}.
In particular,
we have $B_{0}=\rmF_{0,\exists}\bridge\HepizE$.
\end{remark}

\begin{remark}\label{rem:existential}
We make use of the following basic facts:
If $\mathcal{C}_1$ and $\mathcal{C}_2$ are two classes structures
in some language $\mathfrak{L}$,
such that
for every $M\in\mathcal{C}_1$ there exists $N\in\mathcal{C}_{2}$ with
either $N\subseteq M$ or $M\preceq_{\exists}N$,
then ${\rm Th}_\exists(\mathcal{C}_1)\supseteq{\rm Th}_\exists(\mathcal{C}_2)$.
If in addition $\mathcal{C}_1\supseteq\mathcal{C}_2$, then
${\rm Th}_\exists(\mathcal{C}_1)={\rm Th}_\exists(\mathcal{C}_2)$.
In particular, if $R_{1},R_{2}$ are two $\mathfrak{L}$-theories such that
for every $M\models R_{1}$ there exists $N\models R_{2}$ with
either $N\subseteq M$ or $M\preceq_{\exists}N$,
then $R_{1,\exists}\supseteq R_{2,\exists}$,
and if in addition $R_1\subseteq R_2$,
then $R_{1,\exists}= R_{2,\exists}$.
\end{remark}

\begin{remark}\label{rem:alldifferent}
For any theory $T$ of fields
we have $T_\exists\subseteq T_{0,\exists},T_{>0,\exists}\subseteq T_{\gg0,\exists}$ (cf.~Proposition \ref{lem:pseudo}).
If $T$ has models of every characteristic, then the sentence
$1+1\neq0$ is in $T_{0,\exists}$ but not in $T_{>0,\exists}$,
hence $T_\exists\subsetneqq T_{0,\exists}$ and
$T_{>0,\exists}\subsetneqq T_{\gg0,\exists}$.
If $T$ has a model of characteristic zero in which $\mathbb{Q}$
is algebraically closed, then
$\exists x(x^2=2\vee x^2=3\vee x^2=6)$
is in $T_{>0,\exists}$ but not in $T_{0,\exists}$,
hence $T_\exists\subsetneqq T_{>0,\exists}$
and $T_{0,\exists}\subsetneqq T_{\gg0,\exists}$.
\end{remark}

\subsection{Prime fields}
Recall that $\F$ denotes the theory of fields.
Denote $\mathbb{F}_{0}=\mathbb{Q}$
and let $\mathbf{P}=\bigcap_{p\in\mathbb{P}\cup\{0\}}{\rm Th}(\mathbb{F}_p)$ be the theory of prime fields.

\begin{remark}\label{rem:Psf}
Ax proved in \cite{Ax} that
$\mathbf{P}_{>0}$ and $\mathbf{P}_{\gg0}$ are decidable \cite[Corollary 20.9.6, Theorem 20.9.7]{FJ}.
Moreover, with 
${\bf Fin}$ the theory of finite fields and
${\bf Psf}$ the theory of pseudofinite fields (i.e.~the infinite models of the theory of finite fields), 
%he obtained that $\mathbf{P}_{\gg0}={\bf Psf}_0$ (cf.~\cite[Theorem 8'']{Ax}).
Ax's work also shows that $\mathbf{Psf}_0={\bf Psf}_{\gg0}=\mathbf{P}_{\gg0}=\mathbf{Fin}_{\gg0}$:
Indeed, 
%the inclusions
%$\mathbf{Psf}_0\subseteq{\bf Psf}_{\gg0}\subseteq\mathbf{P}_{\gg0}$ are clear (cf.~Proposition \ref{lem:pseudo}),
for every $K\models{\bf Psf}_0$
there is an ultrafilter $\mathcal{U}$ on $\mathbb{P}$
such that $K\equiv\prod_{p\in\mathbb{P}}\mathbb{F}_p/\mathcal{U}\models\mathbf{P}_{\gg0}$ \cite[Thm.~8'']{Ax}.
Moreover, 
every $K\models\mathbf{Fin}_{\gg0}$ 
is elementarily equivalent to an ultraproduct
$\prod_{p\in\mathbb{P}}K_p/\mathcal{U}$
with each $K_p$ a finite field of characteristic $p$ (Proposition~\ref{lem:pseudo}),
and if $F_p$ is any pseudofinite field in which $K_p$ is algebraically closed (which exists by \cite[Thm.~7,8]{Ax}),
then $\prod_{p\in\mathbb{P}}K_p/\mathcal{U}\equiv\prod_{p\in\mathbb{P}}F_p/\mathcal{U}\models\mathbf{Psf}_{\gg0}$ \cite[Thm.~4]{Ax}.
Thus, $\mathbf{Psf}_0\supseteq\mathbf{P}_{\gg0}
\supseteq\mathbf{Fin}_{\gg0}\supseteq\mathbf{Psf}_{\gg0}\supseteq\mathbf{Psf}_0$ (cf.~Proposition~\ref{lem:pseudo}).
Finally, by Weil's Riemann hypothesis for curves,
$\mathbf{PAC}\subseteq\mathbf{Psf}$ \cite[Cor.~20.10.5]{FJ}.
\end{remark}

\begin{proposition}\label{prop:F_and_P}
\begin{enumerate}[(a)]
\item 
$\mathbf{F}_{p,\exists}=\mathbf{P}_{p,\exists}={\rm Th}_\exists(\mathbb{F}_p)$
for all $p\in\mathbb{P}\cup\{0\}$,
\item 
$\mathbf{F}_\exists=\mathbf{P}_\exists$,
\item 
$\mathbf{F}_{>0,\exists}=\mathbf{P}_{>0,\exists}$,
\item 
$\mathbf{F}_{\gg0,\exists}=\mathbf{P}_{\gg0,\exists}$.
\end{enumerate}
\end{proposition}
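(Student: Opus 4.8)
The plan is to prove the four statements in a natural order, deriving (b), (c), (d) from (a) together with the general facts recorded in Remark~\ref{rem:existential} and the observation in Remark~\ref{rem:TE} that for any theory $S$ of fields $S_\exists=\bigcap_{p\in\mathbb{P}\cup\{0\}}S_{p,\exists}$, $S_{>0}=\bigcap_{p\in\mathbb{P}}S_p$ and $S_{\gg0}=\bigcup_m\bigcap_{p\ge m}S_p$.

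For part~(a), fix $p\in\mathbb{P}\cup\{0\}$. The inclusions $\mathbf{F}_{p,\exists}\subseteq\mathbf{P}_{p,\exists}\subseteq{\rm Th}_\exists(\mathbb{F}_p)$ are immediate, since every prime field of characteristic $p$ is a model of $\mathbf{F}_p$, and $\mathbb{F}_p\models\mathbf{P}_p$ (indeed $\mathbf{P}_p={\rm Th}(\mathbb{F}_p)$ as $\mathbb{F}_p$ is the unique prime field of characteristic $p$, so $\mathbf{P}_p$ is complete). For the reverse inclusion ${\rm Th}_\exists(\mathbb{F}_p)\subseteq\mathbf{F}_{p,\exists}$: if $K\models\mathbf{F}_p$ then $K$ contains an isomorphic copy of $\mathbb{F}_p$ as a substructure (its prime field), so ${\rm Th}_\exists(\mathbb{F}_p)\subseteq{\rm Th}_\exists(K)$; since this holds for all $K\models\mathbf{F}_p$ we get ${\rm Th}_\exists(\mathbb{F}_p)\subseteq\mathbf{F}_{p,\exists}$. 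This uses only the elementary fact that an existential sentence true in a substructure is true in the structure, i.e.~the substructure half of Remark~\ref{rem:existential} applied with $\mathcal{C}_1={\rm Mod}(\mathbf{F}_p)$ and $\mathcal{C}_2=\{\mathbb{F}_p\}$, which gives ${\rm Th}_\exists(\mathbf{F}_p)\supseteq{\rm Th}_\exists(\mathbb{F}_p)$; and since $\mathbf{P}_p={\rm Th}(\mathbb{F}_p)$ all three sets coincide.

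Parts (b), (c) and (d) then follow by taking intersections and unions of the identities in (a). For (b): $\mathbf{F}_\exists=\bigcap_{p\in\mathbb{P}\cup\{0\}}\mathbf{F}_{p,\exists}=\bigcap_p\mathbf{P}_{p,\exists}=\mathbf{P}_\exists$, using Remark~\ref{rem:TE} (which in turn rests on Proposition~\ref{lem:little_equivalences} and Lemma~\ref{lem:TT0}) for both $S=\mathbf{F}$ and $S=\mathbf{P}$. For (c): $\mathbf{F}_{>0,\exists}=\bigcap_{p\in\mathbb{P}}\mathbf{F}_{p,\exists}=\bigcap_{p\in\mathbb{P}}\mathbf{P}_{p,\exists}=\mathbf{P}_{>0,\exists}$, where the outer equalities are again Remark~\ref{rem:TE} applied to each theory and one must check that taking the existential fragment commutes with the intersection over $p\in\mathbb{P}$ — this is exactly the content of $S_{>0,\exists}=\bigcap_{p}S_{p,\exists}$ recorded there. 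For (d): similarly $\mathbf{F}_{\gg0,\exists}=\bigcup_m\bigcap_{p\ge m}\mathbf{F}_{p,\exists}=\bigcup_m\bigcap_{p\ge m}\mathbf{P}_{p,\exists}=\mathbf{P}_{\gg0,\exists}$, again appealing to the description of $S_{\gg0}$ in Remark~\ref{rem:TE} for $S=\mathbf{F}$ and $S=\mathbf{P}$.

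I do not anticipate a serious obstacle here: the only genuine input is the substructure argument in part~(a), and the rest is bookkeeping with the fragment-versus-characteristic notation. The one point to be careful about is that $(-)_\exists$ genuinely commutes with the relevant intersections and unions — but this has already been isolated and justified in Remark~\ref{rem:TE} (via Proposition~\ref{lem:little_equivalences}(b,d) and Lemma~\ref{lem:TT0}), so it may simply be cited. It is also worth noting explicitly that $\mathbf{P}_p$ is a complete theory, so that ${\rm Th}_\exists(\mathbb{F}_p)=\mathbf{P}_{p,\exists}$ holds without any further argument.
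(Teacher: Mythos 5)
Your argument is correct and follows essentially the same route as the paper: part (a) via the two trivial inclusions plus the observation that every model of $\mathbf{F}_p$ contains $\mathbb{F}_p$ as a substructure (the paper's Remark~\ref{rem:existential}), and (b)--(d) by pure bookkeeping with Remark~\ref{rem:TE} and Proposition~\ref{lem:little_equivalences} (the paper routes (c) and (d) through (b) rather than intersecting/unioning the identities of (a) directly, but this is immaterial). One caveat: your parenthetical claim that $\mathbf{P}_p={\rm Th}(\mathbb{F}_p)$ because $\mathbb{F}_p$ is the unique prime field of characteristic $p$ is false for $p=0$ --- by compactness, $\mathbf{P}_0$ has models such as non-principal ultraproducts $\prod_q\mathbb{F}_q/\mathcal{U}$, which are pseudofinite and not elementarily equivalent to $\mathbb{Q}$, so $\mathbf{P}_0\subsetneqq{\rm Th}(\mathbb{Q})$. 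This does no damage, since the inclusion $\mathbf{P}_{p,\exists}\subseteq{\rm Th}_\exists(\mathbb{F}_p)$ that you actually use only needs $\mathbb{F}_p\models\mathbf{P}_p$, which you also state, and the chain of inclusions closes without any completeness of $\mathbf{P}_p$; but the aside should be deleted.
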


\begin{proof}
For $p\in\mathbb{P}\cup\{0\}$,
since
$\mathbf{F}_{p}\subseteq\mathbf{P}_p\subseteq{\rm Th}(\mathbb{F}_p)$ 
and every $K\models\F_p$ is a field of characteristic $p$, hence has prime field isomorphic to $\mathbb{F}_p$,
(a) follows from Remark~\ref{rem:existential}.
This 
implies (b) by Remark \ref{rem:TE},
and (c) and (d) follow immediately from (b)
and Proposition \ref{lem:little_equivalences}.
\end{proof}

\begin{corollary}\label{cor:P_decidable}
\begin{enumerate}[$(a)$]
    \item $\F_{0,\exists}={\rm Th}_\exists(\mathbb{Q})$, and $\F_{p,\exists}$ is decidable for every $p\in\mathbb{P}$.
    \item $\mathbf{F}_{\exists}\meq\mathbf{F}_{0,\exists}$,
    and $\mathbf{F}_{\exists_1}$ is decidable.
    \item $\mathbf{F}_{>0,\exists}$ is decidable.
    \item $\mathbf{F}_{\gg0,\exists}$ is decidable.
\end{enumerate}
\end{corollary}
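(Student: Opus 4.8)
The plan is to deduce Corollary~\ref{cor:P_decidable} from Proposition~\ref{prop:F_and_P}, Ax's decidability results as recorded in Remark~\ref{rem:Psf}, and the general Turing/many-one reduction machinery of Proposition~\ref{prop:algorithms}, applied with $\mathfrak{L}=\mathfrak{L}_{\rm ring}$, $L={\rm Sent}_\exists(\mathfrak{L}_{\rm ring})$, $T=\mathbf{F}$, and $\rho_n$ the sentences fixed at the start of Section~\ref{sec:final}. Note that $L$ is computable, $n\mapsto\rho_n$ is computable, and $\rho_n,\neg\rho_n\in L$, so Proposition~\ref{prop:algorithms} applies; also $\F_{\exists,L}=\F_\exists$, $\F_{0,L}=\F_{0,\exists}$, etc., by the conventions of Remark~\ref{rem:TE}.

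For part (a): by Proposition~\ref{prop:F_and_P}(a) with $p=0$ we have $\F_{0,\exists}=\mathrm{Th}_\exists(\mathbb{Q})$ directly. For $p\in\mathbb{P}$, Proposition~\ref{prop:F_and_P}(a) gives $\F_{p,\exists}=\mathrm{Th}_\exists(\mathbb{F}_p)$, and this is decidable because $\mathbb{F}_p$ is a finite structure (one can effectively enumerate all tuples and check each quantifier-free matrix), so the existential theory of $\mathbb{F}_p$ is decidable uniformly in $p$, hence in particular decidable for each fixed $p$. For part (c): by Proposition~\ref{prop:F_and_P}(c), $\F_{>0,\exists}=\mathbf{P}_{>0,\exists}$, and $\mathbf{P}_{>0,\exists}=\mathbf{P}_{>0}\cap{\rm Sent}_\exists(\mathfrak{L}_{\rm ring})$ is decidable since $\mathbf{P}_{>0}$ is decidable by Ax (Remark~\ref{rem:Psf}) and the existential fragment is computable. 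Likewise for part (d): $\F_{\gg0,\exists}=\mathbf{P}_{\gg0,\exists}$ by Proposition~\ref{prop:F_and_P}(d), and $\mathbf{P}_{\gg0}$ is decidable by Ax, so its intersection with the computable fragment ${\rm Sent}_\exists(\mathfrak{L}_{\rm ring})$ is decidable.

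For part (b): having established that $\F_{>0,\exists}$ and $\F_{\gg0,\exists}$ are decidable in (c) and (d), Proposition~\ref{prop:algorithms}(b) (with $T=\mathbf{F}$, $L={\rm Sent}_\exists(\mathfrak{L}_{\rm ring})$) immediately yields $\F_\exists\meq\F_{0,\exists}$. For the decidability of $\mathbf{F}_{\exists_1}$: a $1$-variable existential sentence $\exists x\,\psi(x)$ holds in some field iff it holds in some field of characteristic $p$ for $p\in\mathbb{P}\cup\{0\}$, and on the $1$-generated subfield (i.e.\ $\mathbb{F}_p$ or $\mathbb{Q}(x)$ depending on whether $x$ is algebraic or transcendental) this reduces to finitely many decidable checks; more cleanly, one invokes Lemma~\ref{lem:E}(b) with $n=1$ together with Proposition~\ref{prop:F_and_P}(a) to see that $\F_{\exists_1}$ is determined by the existential theories of the finitely-generated-over-prime fields in one element, which are $\mathbb{F}_p$ (finite, uniformly decidable) and finitely generated transcendental extensions of $\mathbb{Q}$, and a quantifier-free formula in one variable over such a field is decidable; one must take a little care to phrase this so that the search over characteristics terminates, which is the only subtle point. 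I expect the main (albeit mild) obstacle to be exactly this last verification — making precise and effective the claim that deciding a single existential $1$-variable $\mathfrak{L}_{\rm ring}$-sentence over the class of all fields reduces to a terminating computation — since everything else is a direct citation of Proposition~\ref{prop:F_and_P}, Remark~\ref{rem:Psf}, and Proposition~\ref{prop:algorithms}.
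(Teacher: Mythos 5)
Parts (a), (c), (d) and the first claim of (b) are exactly the paper's argument: (a) is Proposition~\ref{prop:F_and_P}(a) plus decidability of ${\rm Th}(\mathbb{F}_p)$, (c) and (d) are Ax's decidability of $\mathbf{P}_{>0}$ and $\mathbf{P}_{\gg0}$ via Proposition~\ref{prop:F_and_P}(c,d), and $\F_\exists\meq\F_{0,\exists}$ is Proposition~\ref{prop:algorithms}(b) with $T=\F$ and $L={\rm Sent}_\exists(\mathfrak{L}_{\rm ring})$. So far there is nothing to add.

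The one genuine gap is the decidability of $\F_{\exists_1}$, and it is exactly the point you flag yourself. Reducing via Lemma~\ref{lem:E}(b) to $1$-generated substructures, or observing that $\F_{\exists_1}=\bigcap_{p\in\mathbb{P}\cup\{0\}}{\rm Th}_{\exists_1}(\mathbb{F}_p)$ with each ${\rm Th}_{\exists_1}(\mathbb{F}_p)$ decidable, does not by itself give a decision procedure: membership in an infinite intersection of decidable sets is not decidable without a uniform bound telling you when the search over characteristics may stop, and your sketch never produces that bound. The paper closes this by running the same limit machinery a second time at the $\exists_1$ level: apply Proposition~\ref{prop:algorithms}(b) with $T=\F$ and $L={\rm Sent}_{\exists_1}(\mathfrak{L}_{\rm ring})$ (note $\rho_n,\neg\rho_n$ are quantifier-free, hence lie in this fragment), whose hypotheses hold because $\F_{>0,\exists_1}$ and $\F_{\gg0,\exists_1}$ are decidable --- they are the intersections of the decidable sets from (c) and (d) with the computable set ${\rm Sent}_{\exists_1}(\mathfrak{L}_{\rm ring})$. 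This yields $\F_{\exists_1}\meq\F_{0,\exists_1}={\rm Th}_{\exists_1}(\mathbb{Q})$ (Proposition~\ref{prop:F_and_P}(a)), and the decidability of ${\rm Th}_{\exists_1}(\mathbb{Q})$ is quoted from \cite[Lemma 19.1.3]{FJ}. In short: the termination issue is resolved by Ax's theorems packaged through Proposition~\ref{prop:algorithms}(b), not by a direct analysis of $1$-generated subfields; you should replace the last third of your part (b) with that application.
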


\begin{proof}
Part (a) follows from Proposition \ref{prop:F_and_P}(a) since ${\rm Th}(\mathbb{F}_p)$ is decidable.
Parts (c) and (d) follow from the 
decidability of $\mathbf{P}_{>0}$ and $\mathbf{P}_{\gg0}$
via Proposition \ref{prop:F_and_P}(c,d),
and $\mathbf{F}_{\exists}\meq\mathbf{F}_{0,\exists}$ is then a consequence of Proposition~\ref{prop:algorithms}(b),
applied to $T=\mathbf{F}$
and $L=\mathrm{Sent}_{\exists}(\mathfrak{L}_{\mathrm{ring}})$.
Similarly, applying 
Proposition~\ref{prop:algorithms}(b) to
$T=\mathbf{F}$
and $L=\mathrm{Sent}_{\exists_1}(\mathfrak{L}_{\mathrm{ring}})$
shows
$\F_{\exists_1}\meq\F_{0,\exists_1}$,
and $\F_{0,\exists_1}={\rm Th}_{\exists_1}(\mathbb{Q})$ by Proposition \ref{prop:F_and_P}(a),
which is decidable,
see e.g.~\cite[Lemma 19.1.3]{FJ}.
\end{proof}

\subsection{PAC fields}
\label{sec:PAC2}
We easily reobtain the decidability of
$\PAC_\exists$ and $\PAC_{p,\exists}$
proven in \cite[Section 21.4]{FJ},
and in addition get the decidability of 
$\PAC_{>0,\exists}$,
and $\PAC_{\gg0,\exists}$
(which are all distinct, see Remark \ref{rem:alldifferent}).

\begin{corollary}
\label{prp:PAC}
\begin{enumerate}[$(a)$]
\item
$\PAC_{p,\exists}$ is decidable
for all $p\in\mathbb{P}\cup\{0\}$.
\item
$\PAC_{\exists}$,
$\PAC_{>0,\exists}$,
and $\PAC_{\gg0,\exists}$ are decidable.
\end{enumerate}
\end{corollary}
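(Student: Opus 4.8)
The plan is to deduce everything from the general machinery of Corollary~\ref{lem:pattern} (applied to the PAC arches) combined with the results on prime fields and limit theories. First I would recall from Lemma~\ref{lem:PAC}(a) that hypotheses \eqref{pattern.i}--\eqref{pattern.iii} of Corollary~\ref{lem:pattern} hold for the arch $\rmPAC_{\exists_1}\bridge\rmPAC_\exists\arch\rmPAC\bridge\rmPAC$, so Corollary~\ref{cor:PAC}(a) gives $(\PAC\cup R)_{\exists_1}\meq(\PAC\cup R)_\exists$ for every $R\subseteq\mathrm{Sent}(\mathfrak{L}_{\mathrm{ring}})$; applying this with $R=\{\rho_1,\dots,\rho_{p-1},\neg\rho_p\}$ for each prime $p$, and with $R=\{\rho_n\mid n\in\mathbb{N}\}$ for characteristic zero, yields $\PAC_{p,\exists}\meq\PAC_{p,\exists_1}$ for all $p\in\mathbb{P}\cup\{0\}$. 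So for part~(a) it suffices to show each $\PAC_{p,\exists_1}$ is decidable. Here I would invoke Corollary~\ref{cor:PAC}(bII) (or Lemma~\ref{lem:PAC_F}), which gives $(\F\cup R)_{\exists_1}\meq(\PAC\cup R)_{\exists_1}$, so $\PAC_{p,\exists_1}\meq\F_{p,\exists_1}$, and the latter is decidable since $\F_{p,\exists_1}={\rm Th}_{\exists_1}(\mathbb{F}_p)$ is decidable by Proposition~\ref{prop:F_and_P}(a) together with the decidability of ${\rm Th}(\mathbb{F}_p)$ — exactly as in Corollary~\ref{cor:P_decidable}(a). Chaining the two many-one equivalences gives $\PAC_{p,\exists}\meq\F_{p,\exists_1}$, which is decidable. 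This proves~(a).

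For part~(b), the cleanest route is to apply Proposition~\ref{prop:algorithms} to $T=\PAC$, $L=\mathrm{Sent}_\exists(\mathfrak{L}_{\mathrm{ring}})$, with the sequence $(\rho_n)$ fixed in Section~\ref{sec:final}; the hypotheses hold since $L$ is computable, $\rho_n,\neg\rho_n\in L$, and $n\mapsto\rho_n$ is computable. By Proposition~\ref{prop:algorithms}(b), if $\PAC_{>0,\exists}$ and $\PAC_{\gg0,\exists}$ are decidable then $\PAC_\exists\meq\PAC_{0,\exists}$, and $\PAC_{0,\exists}=\PAC_{0,\exists}$ is decidable by part~(a), giving decidability of $\PAC_\exists$. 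So the remaining task is to show $\PAC_{>0,\exists}$ and $\PAC_{\gg0,\exists}$ are decidable. For this I would show $\PAC_{>0,\exists}=\F_{>0,\exists}$ and $\PAC_{\gg0,\exists}=\F_{\gg0,\exists}$, after which decidability follows from Corollary~\ref{cor:P_decidable}(c,d). The equality $\PAC_{>0,\exists}=\F_{>0,\exists}$ reduces by Remark~\ref{rem:TE} to $\PAC_{p,\exists}=\F_{p,\exists}$ for all $p\in\mathbb{P}$; but $\F_{p,\exists}={\rm Th}_\exists(\mathbb{F}_p)$ by Proposition~\ref{prop:F_and_P}(a), while $\PAC_{p,\exists}={\rm Th}_\exists(\mathbb{F}_p)$ follows from Remark~\ref{rem:existential}: $\PAC_p$ is the theory of PAC fields of characteristic $p$, each such field contains $\mathbb{F}_p$ as a subfield, and $\mathbb{F}_p$'s algebraic closure $\overline{\mathbb{F}_p}$ is a PAC field of characteristic $p$ with $\mathbb{F}_p\preceq_\exists\overline{\mathbb{F}_p}$, so ${\rm Th}_\exists(\PAC_p)={\rm Th}_\exists(\{$PAC fields $K\supseteq\mathbb{F}_p$ with $K\subseteq\overline{\mathbb{F}_p}$ or $\mathbb{F}_p\preceq_\exists K\})={\rm Th}_\exists(\mathbb{F}_p)$. (More directly, $\overline{\mathbb{F}_p}$ and $\mathbb{F}_p$ have the same existential theory and $\overline{\mathbb{F}_p}\models\PAC_p\subseteq\F_p$, so $\F_{p,\exists}\subseteq\PAC_{p,\exists}\subseteq{\rm Th}_\exists(\mathbb{F}_p)=\F_{p,\exists}$.) Alternatively, one can simply cite the already-established $\PAC_{p,\exists}\meq\F_{p,\exists_1}\meq\F_{p,\exists}$ from part~(a) together with $\F_{p,\exists_1}={\rm Th}_{\exists_1}(\mathbb{F}_p)$ being decidable, and likewise assemble $\PAC_{>0,\exists}$ and $\PAC_{\gg0,\exists}$ from the $\PAC_{p,\exists}$ via the intersection formulas of Remark~\ref{rem:TE}.

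I do not anticipate a serious obstacle here: the entire proof is a matter of stringing together already-proven many-one equivalences (Corollaries~\ref{cor:PAC} and~\ref{cor:P_decidable}), the description of existential theories of prime and PAC fields via embeddings into $\overline{\mathbb{F}_p}$ (Remark~\ref{rem:existential}), and the decidability transfer of Proposition~\ref{prop:algorithms}. The one point requiring a little care is making sure that ``decidability'' is preserved along $\meq$ — but this is immediate from the definition of many-one reducibility — and that the intersection $\PAC_{>0,\exists}=\bigcap_{p\in\mathbb{P}}\PAC_{p,\exists}$ matches $\bigcap_{p\in\mathbb{P}}\F_{p,\exists}=\F_{>0,\exists}$ termwise, which is exactly what Proposition~\ref{prop:F_and_P}(c) packages (and whose PAC analogue follows by the identical argument, noting that Proposition~\ref{prop:F_and_P}(a) already records $\F_{p,\exists}=\PAC_{p,\exists}$ implicitly is \emph{not} stated, so I would spell out the $\overline{\mathbb{F}_p}$ argument explicitly). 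The only subtlety worth flagging is that $\PAC$ has models of every characteristic and a model of characteristic zero in which $\mathbb{Q}$ is algebraically closed (e.g.\ a PAC field containing $\overline{\mathbb{Q}}$), so by Remark~\ref{rem:alldifferent} the four decidable theories in question are genuinely distinct — which is why the statement is worth recording separately rather than being subsumed by a single claim.
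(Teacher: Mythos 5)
Your part (a) is correct and is essentially the paper's argument: chaining Corollary~\ref{cor:PAC}(aII) with Corollary~\ref{cor:PAC}(bII)/Lemma~\ref{lem:PAC_F} is precisely the content of Corollary~\ref{cor:hash3}, which is what the paper cites, and you land on the same endpoint ${\rm Th}_{\exists_1}(\mathbb{F}_p)$.

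Part (b), however, has a genuine gap. The pivotal claims $\mathbb{F}_p\preceq_\exists\overline{\mathbb{F}_p}$ and $\PAC_{p,\exists}={\rm Th}_\exists(\mathbb{F}_p)=\F_{p,\exists}$ are both false. For the first: $\exists x\,(x^2+x+1=0)$ holds in $\overline{\mathbb{F}_2}$ but not in $\mathbb{F}_2$, so $\mathbb{F}_p$ is not existentially closed in its algebraic closure. For the second: every PAC field is infinite, so the existential sentence asserting the existence of $p+1$ pairwise distinct elements lies in $\PAC_{p,\exists}$ but not in ${\rm Th}_\exists(\mathbb{F}_p)$ (absolutely irreducible curves over $\mathbb{F}_p$ without $\mathbb{F}_p$-points give further, purely positive, counterexamples). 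Consequently $\PAC_{>0,\exists}\neq\F_{>0,\exists}$ --- the sentence ``there exist three pairwise distinct elements'' separates them --- so your reduction to Corollary~\ref{cor:P_decidable}(c) collapses. (The equality $\PAC_{\gg0,\exists}=\F_{\gg0,\exists}$ happens to be true, but only via Lang--Weil/Ax, cf.\ Remark~\ref{rem:Psf} and the remark in the global-fields subsection, not by your characteristic-by-characteristic argument.) Your fallback of ``assembling'' $\PAC_{>0,\exists}=\bigcap_{p}\PAC_{p,\exists}$ from the decidability of each $\PAC_{p,\exists}$ is also not a valid decidability argument: an intersection of a uniformly decidable family is in general only co-computably-enumerable, and negotiating exactly this point is the purpose of Propositions~\ref{prop:algorithms} and~\ref{prp:stratify}. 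The paper's proof instead first passes to the $\exists_1$-fragment via Corollary~\ref{cor:PAC}(aII) --- where the identification with $\F$ does hold (Lemma~\ref{lem:PAC_F}), since a single witness to a nontrivial equation is algebraic --- and then uses Proposition~\ref{prp:stratify}(c,d), Lemma~\ref{lem:intersection} and Corollary~\ref{cor:PAC}(bII) to obtain $\PAC_{>0,\exists}\meq\F_{>0,\exists_1}$ and $\PAC_{\gg0,\exists}\meq\F_{\gg0,\exists_1}$, which are decidable by Corollary~\ref{cor:P_decidable}. Your treatment of $\PAC_\exists$ via Proposition~\ref{prop:algorithms}(b) would be fine once those two decidabilities are secured (the paper gets $\PAC_\exists\meq\F_{\exists_1}$ directly from Corollary~\ref{cor:hash3}).
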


\begin{proof}
For (a),
by Corollary~\ref{cor:hash3}
we have
$\PAC_{p,\exists}\meq\F_{p,\exists_{1}}=\mathrm{Th}_{\exists_{1}}(\mathbb{F}_{p})$,
for each $p\in\mathbb{P}\cup\{0\}$,
which is decidable.

For (b),
by Corollary~\ref{cor:PAC}(aII) we have
$\PAC_{>0,\exists}\meq\PAC_{>0,\exists_{1}}$
and
$\PAC_{\gg0,\exists}\meq\PAC_{\gg0,\exists_{1}}$.
Considering the arch
$B\arch\hat{B}=\rmF_{\exists_{1}}\bridge\rmPAC_{\exists_1}\arch\rmF_{\exists_{1}}\bridge\rmPAC$,
we note that $B$ satisfies
\axmono\ trivially,
and $\hat{B}$ satisfies \axsurj by Corollary \ref{lem:PAC}(b).
Thus
$\PAC_{>0,\exists_{1}}=\PAC(\F_{>0})_{\exists_{1}}$
and
$\PAC_{\gg0,\exists_{1}}=\PAC(\F_{\gg0})_{\exists_{1}}$
by Proposition~\ref{prp:stratify}(c,d).
By Proposition~\ref{prp:intersection},
$\PAC(\F_{>0})_{\exists_{1}}=\PAC(\F_{>0,\exists_{1}})_{\exists_{1}}$,
and
by Corollary~\ref{cor:PAC}(bII),
we have
$\PAC(\F_{>0,\exists_{1}})_{\exists_{1}}\meq\F_{>0,\exists_{1}}$
and
$\PAC(\F_{\gg0,\exists_{1}})_{\exists_{1}}\meq\F_{\gg0,\exists_{1}}$,
both of which are decidable.
Finally
$\PAC_{\exists}\meq\F_{\exists_{1}}$,
by Corollary~\ref{cor:hash3},
which is decidable by Corollary \ref{cor:P_decidable}(b).
\end{proof}

\subsection{Henselian valued fields}
We use the above notational conventions,
applied to the arches
$\rmF_{\exists}.\Hen_{\exists}\arch\rmF.\Hen$
and
$\rmF_{\exists}.\Hepi_{\exists}\arch\rmF.\Hepi$,
together with $\mathfrak{L}_{\mathrm{ring}}$-theories
$R=\F,\mathbf{P},\mathrm{Th}(\mathbb{F}_{p})$, etc.

\begin{proposition}\label{cor:F_and_P_combined}
\quad
\begin{enumerate}[$(1)$]
\item
\begin{enumerate}[$(a)$]
\item
$\bHen_{p,\exists}=\bHen(\mathbf{F}_{p})_{\exists}=\bHen(\mathbf{P}_{p})_{\exists}=\bHen({\rm Th}(\mathbb{F}_p))_\exists=\mathrm{Th}_{\exists}(\mathbb{F}_{p}(\!(t)\!),v_{t})$
for all $p\in\mathbb{P}\cup\{0\}$,
\item
$\bHen_\exists=\bHen(\mathbf{F})_{\exists}=\bHen(\mathbf{P})_\exists$,
\item
$\bHen_{>0,\exists}=\bHen(\mathbf{F}_{>0})_{\exists}=\bHen(\mathbf{P}_{>0})_{\exists}$,
\item
$\bHen_{\gg0,\exists}=\bHen(\mathbf{F}_{\gg0})_{\exists}=\bHen(\mathbf{P}_{\gg0})_{\exists}$
\end{enumerate}
\item
\begin{enumerate}[$(a)$]
\item
$\bHepi_{p,\exists}=\bHepi(\mathbf{F}_{p})_{\exists}=\bHepi(\mathbf{P}_{p})_{\exists}=\bHepi({\rm Th}(\mathbb{F}_p))_\exists=\mathrm{Th}_{\exists}(\mathbb{F}_{p}(\!(t)\!),v_{t},t)$
for all $p\in\mathbb{P}\cup\{0\}$,
where for the last equality in case $p\in\mathbb{P}$ we require \Rfour,
\item
$\bHepi_\exists=\bHepi(\mathbf{F})_{\exists}=\bHepi(\mathbf{P})_\exists$,
\item
$\bHepi_{>0,\exists}=\bHepi(\mathbf{F}_{>0})_{\exists}=\bHepi(\mathbf{P}_{>0})_{\exists}$,
\item
$\bHepi_{\gg0,\exists}=\bHepi(\mathbf{F}_{\gg0})_{\exists}=\bHepi(\mathbf{P}_{\gg0})_{\exists}$
\end{enumerate}
\end{enumerate}
\end{proposition}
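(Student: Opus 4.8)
The plan is to deduce all of the stated equalities from four ingredients already in hand: the stratification identities of Proposition~\ref{prp:stratify}, the monotonicity transfer of Lemma~\ref{lem:incomplete_monotonicity}, the identification of existential theories of prime fields in Proposition~\ref{prop:F_and_P}, and Corollary~\ref{cor:henselian}. Write $X$ for $\bHen$ (treating part~(1)) or $\bHepi$ (part~(2)); in both cases we work with the arch $A=B\arch\hat B$, equal to $\rmF_\exists\bridge\Hen_\exists\arch\rmF\bridge\Hen$ respectively $\rmF_\exists\bridge\Hepi_\exists\arch\rmF\bridge\Hepi$, to which the standing conventions of this section apply; let $B_0$ denote the characteristic-zero part of $B$ in the sense of Section~\ref{section:stratify_arch}. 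First I would record the following, all of it \emph{unconditional} (in particular not needing \Rfour): $B$ satisfies \axsurj\ (standing assumption); the fragment extension $B\sqsubseteq\hat B$ satisfies \axwmono\ (for $\bHen$ by Lemmas~\ref{lem:Hen_sat_axioms}(a) and~\ref{lem:wmono}, for $\bHepi$ by Lemma~\ref{lem:Hen_sat_weak_axioms}); and $B_0$ satisfies \axmono\ (for $\bHen$, because the model class of its theory lies inside $\mathrm{Mod}(\bHen)$ and $B$ satisfies \axmono\ by Lemma~\ref{lem:Hen_sat_axioms}(a); for $\bHepi$, because $B_0=\rmF_{0,\exists}\bridge\HepizE$ by Remark~\ref{rem:ambiguity}, which satisfies \axmono\ by Lemma~\ref{lem:Hen_sat_axioms}(b)). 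Throughout I use that $X(R)_\exists=(X\cup\iota_{\mathbf{k}}R)_\exists$ for any set $R$ of $\mathfrak{L}_{\mathrm{ring}}$-sentences, by the remark preceding Proposition~\ref{prp:stratify}.

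The equalities between $X_{?}$ and $X(\mathbf{F}_{?})$ are read off from Proposition~\ref{prp:stratify}: part~(a) gives $X^\vdash=X(\mathbf{F})^\vdash$ (as $\mathbf{F}=T_1$), yielding the first equality in (1b)/(2b); part~(b) with $n=p$ gives $X_p=X(\mathbf{F}_p)^\vdash$, yielding the first equality in (1a)/(2a); and parts~(c) (with $m=0$) and~(d)---valid because $B$ satisfies \axsurj\ and $B_0$ satisfies \axmono---give $(X_{>0})_\exists=X(\mathbf{F}_{>0})_\exists$ and $(X_{\gg0})_\exists=X(\mathbf{F}_{\gg0})_\exists$, yielding the first equalities in (1c,d)/(2c,d). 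To pass from $\mathbf{F}_{?}$ to $\mathbf{P}_{?}$ (and, in the $p$-lines, further to $\mathrm{Th}(\mathbb{F}_p)$), observe that $\mathbf{F}_{?}$, $\mathbf{P}_{?}$ and $\mathrm{Th}(\mathbb{F}_p)$ all contain $\mathbf{F}$, so Proposition~\ref{prop:F_and_P}(a)--(d) gives $(\mathbf{F}\cup\mathbf{F}_{?})_\exists=(\mathbf{F}\cup\mathbf{P}_{?})_\exists$ (all equal to $\mathrm{Th}_\exists(\mathbb{F}_p)$ in the $p$-lines); applying Lemma~\ref{lem:incomplete_monotonicity} in both directions---legitimate since $B\sqsubseteq\hat B$ satisfies \axwmono---then gives $X(\mathbf{F}_{?})_\exists=X(\mathbf{P}_{?})_\exists$ and, in the $p$-lines, $=X(\mathrm{Th}(\mathbb{F}_p))_\exists$. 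This establishes everything except the final Laurent-series identification in (1a)/(2a).

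For that identification I would use Corollary~\ref{cor:henselian}. For $X=\bHen$: since $(\mathbb{F}_p(\!(t)\!),v_t)$ is a model of $\bHen$ with residue field $\mathbb{F}_p$, part~(aIII) gives $\mathrm{Th}_\exists(\mathbb{F}_p(\!(t)\!),v_t)=\bHen(\mathrm{Th}_\exists(\mathbb{F}_p))_\exists$, and one more application of Lemma~\ref{lem:incomplete_monotonicity} (using $(\mathbf{F}\cup\mathrm{Th}_\exists(\mathbb{F}_p))_\exists=\mathrm{Th}_\exists(\mathbb{F}_p)=(\mathbf{F}\cup\mathrm{Th}(\mathbb{F}_p))_\exists$) rewrites the right-hand side as $\bHen(\mathrm{Th}(\mathbb{F}_p))_\exists$. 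For $X=\bHepi$ and $p=0$: here I use part~(bIII) with $(\mathbb{Q}(\!(t)\!),v_t,t)\models\bHepiz$, together with the observation that $\mathrm{Th}(\mathbb{Q})$ and $\mathrm{Th}_\exists(\mathbb{Q})$ contain every $\rho_n$, so that $\bHepi(\mathrm{Th}(\mathbb{Q}))^\vdash=(\bHepiz\cup\iota_{\mathbf{k}}\mathrm{Th}(\mathbb{Q}))^\vdash$ and the comparison can be carried out inside the (unconditional) char-zero arch $\rmF_{0,\exists}\bridge\HepizE\arch\rmF_{0}\bridge\Hepiz$ by Lemma~\ref{lem:incomplete_monotonicity} as above; this yields $\bHepi(\mathrm{Th}(\mathbb{Q}))_\exists=\mathrm{Th}_\exists(\mathbb{Q}(\!(t)\!),v_t,t)$ unconditionally. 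Finally, for $X=\bHepi$ and $p\in\mathbb{P}$---and only here---I invoke \Rfour: part~(cIII) of Corollary~\ref{cor:henselian} applies to $(\mathbb{F}_p(\!(t)\!),v_t,t)\models\bHepi$ and gives $\mathrm{Th}_\exists(\mathbb{F}_p(\!(t)\!),v_t,t)=\bHepi(\mathrm{Th}_\exists(\mathbb{F}_p))_\exists$, which equals $\bHepi(\mathrm{Th}(\mathbb{F}_p))_\exists$ as before.

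The main obstacle is bookkeeping rather than any single hard step: the point is to route the positive- and unbounded-characteristic stratification (Proposition~\ref{prp:stratify}(c,d)) and every passage between the coefficient theories $\mathbf{F}_{?}$, $\mathbf{P}_{?}$ and $\mathrm{Th}(\mathbb{F}_p)$ through \axwmono\ and through the characteristic-zero bridge $B_0=\rmF_{0,\exists}\bridge\HepizE$, both available unconditionally, so that \Rfour\ is used only in the single identification $\bHepi(\mathrm{Th}(\mathbb{F}_p))_\exists=\mathrm{Th}_\exists(\mathbb{F}_p(\!(t)\!),v_t,t)$ for $p\in\mathbb{P}$, exactly as the statement claims. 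A secondary technical nuisance---distinguishing a theory $R$ from its deductive closure when forming $X(R)$---is absorbed by the remark preceding Proposition~\ref{prp:stratify}.
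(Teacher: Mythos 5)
Your proposal is correct and follows essentially the same route as the paper's proof: the same arches, Proposition~\ref{prp:stratify} (using \axsurj\ for $B$ and \axmono\ for $B_{0}$) for the first equality in each subcase, Lemma~\ref{lem:incomplete_monotonicity} via \axwmono\ together with Proposition~\ref{prop:F_and_P} for the passage between coefficient theories, and Corollary~\ref{cor:henselian}(aIII)/(bIII)/(cIII) for the Laurent-series identifications, with \Rfour\ confined to the positive-characteristic case of (2a). You merely spell out some bookkeeping that the paper leaves implicit.
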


\begin{proof}
For (1) we consider the arch
$B\arch\hat{B}=\rmF_{\exists}.\Hen_{\exists}\arch\rmF.\Hen$,
and for (2) we consider the arch
$B\arch\hat{B}=\rmF_{\exists}\bridge\Hepi_{\exists}\arch\rmF\bridge\Hepi$,
just as in Lemma~\ref{lem:Hen_sat_axioms}(a,c).
Henceforth we can argue simultaneously in both cases.
The bridge
$B_{0}$
satisfies \axmono\
by
Lemma~\ref{lem:Hen_sat_axioms}(a) in case (1)
and
Lemma~\ref{lem:Hen_sat_axioms}(b) in case (2)
(note for the latter that $B_{0}=\rmF_{0,\exists}/\HepizE$,
as discussed in Remark~\ref{rem:ambiguity}).
Thus we may apply Proposition~\ref{prp:stratify}
to get the first equality in each subcase.
Since $B\sqsubseteq\hat{B}$ satisfies \axwmono,
by
Lemma~\ref{lem:wmono} and Lemma~\ref{lem:Hen_sat_axioms}(a)
in case (1),
and
Lemma~\ref{lem:Hen_sat_weak_axioms}
in case (2),
we may apply 
Lemma~\ref{lem:incomplete_monotonicity}
and
Proposition~\ref{prop:F_and_P}
to deduce the remaining equalities,
except for the final equality in (1a) (respectively, (2a)) for which we apply Corollary~\ref{cor:henselian}(aIII) (respectively, Corollary~\ref{cor:henselian}(bIII,cIII)).
\end{proof}

\begin{remark}\label{rem:wm_not_R4}
Alternatively, in case (1) of Proposition
\ref{cor:F_and_P_combined} we may argue via Proposition~\ref{prp:intersection}:
for the arch
$\rmF_{\exists}\bridge\Hen_{\exists}\arch\rmF\bridge\Hen$,
the hypotheses of the lemma hold by
Lemma \ref{lem:Hen_sat_axioms}(a).
Then
by Proposition~\ref{prop:F_and_P}(a),
for every $p\in\mathbb{P}\cup\{0\}$,
we have
$\bHen(\mathbf{F}_{p,\exists})=\bHen(\mathbf{P}_{p,\exists})=\bHen(\mathrm{Th}_{\exists}(\mathbb{F}_{p}))$.
By three applications of
Proposition~\ref{prp:intersection},
we have
$\bHen(\mathbf{F}_{p})_{\exists}=\bHen(\mathbf{P}_{p})_{\exists}=\bHen(\mathrm{Th}(\mathbb{F}_{p}))_{\exists}$.
Moreover
$(\bHen_{p})^{\vdash}=\bHen(\mathbf{F}_p)^{\vdash}$
by Proposition~\ref{prp:stratify}(b)),
and so in particular we have
$\bHen_{p,\exists}=\bHen(\mathbf{F}_p)_{\exists}$,
which proves (1a).
From this (1b) follows,
as
\begin{align*}
\bHen_{\exists}
=
\bigcap_{p\in\mathbb{P}\cup\{0\}}\bHen_{p,\exists}
=
\bigcap_{p\in\mathbb{P}\cup\{0\}}\bHen(\mathbf{P}_{p})_{\exists}
=
\bHen(\bigcap_{p\in\mathbb{P}\cup\{0\}}\mathbf{P}_{p})_{\exists}
=
\bHen(\mathbf{P})_{\exists},
\end{align*}
where the first and fourth equalities hold by Remark~\ref{rem:TE}
and the third equality holds by
by Proposition~\ref{prp:intersection}.
Similarly,
Proposition~\ref{prp:intersection} gives that
$$
 \bHen_{>0,\exists}=\bigcap_{p>0}\bHen_{p,\exists}
=\bigcap_{p>0}\bHen(\mathbf{P}_{p})_\exists
=\bHen(\bigcap_{p>0}\mathbf{P}_{p})_\exists
=\bHen(\mathbf{P}_{>0})_{\exists}
$$
and
$$
 \bHen_{\gg0,\exists}
=\bigcup_{\ell>0}\bigcap_{p>\ell}\bHen_{p,\exists}
=\bigcup_{\ell>0}\bHen(\bigcap_{p>\ell}\mathbf{P}_{p})_\exists
=\bHen(\bigcup_{\ell>0}\bigcap_{p>\ell}\mathbf{P}_{p})_\exists
=\bHen(\mathbf{P}_{\gg0})_{\exists},
$$
which proves (1c,d).

For case (2),
we consider the arch
$B\arch\hat{B}=\rmF_{\exists}\bridge\Hepi_{\exists}\arch\rmF\bridge\Hepi$.
Then a similar argument is possible provided that we suppose \Rfour.
Under this hypothesis, $B$ satisfies \axmono\
(rather than just \axwmono).
Thus Proposition~\ref{prp:intersection} applies and the above proof goes through {\em mutatis mutandis}.
The advantage of the proof given above is
that it does not rely on \Rfour.
\end{remark}

\begin{corollary}\label{cor:Hen_decidable}
\begin{enumerate}[$(1)$]
\item  
 \begin{enumerate}[$(a)$]
    \item $\bHen_{0,\exists}\meq{\rm Th}_\exists(\mathbb{Q})$,
    and $\bHen_{p,\exists}$ is decidable for every $p\in\mathbb{P}$.
    \item $\bHen_{\exists}\meq\bHen_{0,\exists}$.    
    \item $\bHen_{>0,\exists}$ is decidable.
	\item $\bHen_{\gg0,\exists}$ is decidable.
  \end{enumerate}
\item 
 \begin{enumerate}[$(a)$]
    \item $\bHepi_{0,\exists}\meq\mathrm{Th}_{\exists}(\mathbb{Q})$, 
    and if \Rfour\ holds then $\bHepi_{p,\exists}$ is decidable for every
    $p\in\mathbb{P}$.
    \item If \Rfour\ holds, then $\bHepi_{\exists}\meq\bHepi_{0,\exists}$. 
    \item If \Rfour\ holds, then $\bHepi_{>0,\exists}$ is decidable.
    \item $\bHepi_{\gg0,\exists}$ is decidable.
 \end{enumerate}
\end{enumerate}
\end{corollary}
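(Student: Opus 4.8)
The plan is to reduce every assertion to the decidability facts about prime fields collected in Corollary~\ref{cor:P_decidable}, via the transfer identities of Proposition~\ref{cor:F_and_P_combined} together with the many-one equivalences of Corollary~\ref{cor:henselian}, and to descend from the fragments indexed by $>0$ and $\gg0$ to the one indexed by $0$ using Proposition~\ref{prop:algorithms}(b). So in each of (1) and (2) I would prove the four parts in the order (c), (d), (b), (a). Recall that $\mathbf{F}_{p}$, $\mathbf{F}_{0}$, $\mathbf{F}_{>0}$, $\mathbf{F}_{\gg0}$ are deductively closed, so that e.g.\ $\bHen(\mathbf{F}_{>0})=\bHen\cup\iota_{\mathbf{k}}\mathbf{F}_{>0}$ and $(\mathbf{F}\cup\mathbf{F}_{>0})_{\exists}=\mathbf{F}_{>0,\exists}$ (since $\mathbf{F}\subseteq\mathbf{F}_{>0}$), and that $\mathbf{F}_{0}\subseteq\mathbf{F}_{\gg0}=\mathbf{F}_{\gg0}^{\vdash}$ by Proposition~\ref{lem:pseudo}(b).

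\textbf{Part (1).} By Proposition~\ref{cor:F_and_P_combined}(1c,d), $\bHen_{>0,\exists}=(\bHen\cup\iota_{\mathbf{k}}\mathbf{F}_{>0})_{\exists}$ and $\bHen_{\gg0,\exists}=(\bHen\cup\iota_{\mathbf{k}}\mathbf{F}_{\gg0})_{\exists}$; Corollary~\ref{cor:henselian}(aII) applied with $R=\mathbf{F}_{>0}$, resp.\ $R=\mathbf{F}_{\gg0}$, then gives $\bHen_{>0,\exists}\meq\mathbf{F}_{>0,\exists}$ and $\bHen_{\gg0,\exists}\meq\mathbf{F}_{\gg0,\exists}$, both decidable by Corollary~\ref{cor:P_decidable}(c,d), which is (c),(d). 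For (b): apply Proposition~\ref{prop:algorithms}(b) with $T=\bHen$ (as an $\mathfrak{L}_{\mathrm{val}}$-theory), $L=\mathrm{Sent}_{\exists}(\mathfrak{L}_{\mathrm{val}})$, and the computable sequence $(\iota_{\mathbf{k}}\rho_{n})_{n}$ of quantifier-free sentences (so $\iota_{\mathbf{k}}\rho_{n},\neg\iota_{\mathbf{k}}\rho_{n}\in L$); here $T_{>0,L}=\bHen_{>0,\exists}$ and $T_{\gg0,L}=\bHen_{\gg0,\exists}$ are decidable by (c),(d), so the conclusion $T_{L}\meq T_{0,L}$ is $\bHen_{\exists}\meq\bHen_{0,\exists}$. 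For (a): Proposition~\ref{cor:F_and_P_combined}(1a) together with Corollary~\ref{cor:henselian}(aII) gives $\bHen_{p,\exists}\meq\mathbf{F}_{p,\exists}$ for every $p\in\mathbb{P}\cup\{0\}$, which by Corollary~\ref{cor:P_decidable}(a) equals $\mathrm{Th}_{\exists}(\mathbb{Q})$ when $p=0$ and is decidable when $p\in\mathbb{P}$.

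\textbf{Part (2).} The same template applies with the arch $\rmF_{\exists}\bridge\Hepi_{\exists}\arch\rmF\bridge\Hepi$, Proposition~\ref{cor:F_and_P_combined}(2) and Corollary~\ref{cor:henselian}(c); but one must track the use of \Rfour, since Corollary~\ref{cor:henselian}(cII), namely $(\mathbf{F}\cup R)_{\exists}\meq(\bHepi\cup\iota_{\mathbf{k}}R)_{\exists}$, is available only under \Rfour. Granting it, exactly as in Part~(1) one gets (c) $\bHepi_{>0,\exists}\meq\mathbf{F}_{>0,\exists}$ decidable and, with $R=\mathbf{F}_{p}$, the decidability of $\bHepi_{p,\exists}$ for $p\in\mathbb{P}$ in (a); then (b), $\bHepi_{\exists}\meq\bHepi_{0,\exists}$, follows from Proposition~\ref{prop:algorithms}(b) applied to $T=\bHepi$, $L=\mathrm{Sent}_{\exists}(\mathfrak{L}_{\mathrm{val}}(\varpi))$, $(\iota_{\mathbf{k}}\rho_{n})_{n}$, using the decidability of $\bHepi_{>0,\exists}$ and $\bHepi_{\gg0,\exists}$. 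The two remaining assertions are unconditional. For $\bHepi_{0,\exists}\meq\mathrm{Th}_{\exists}(\mathbb{Q})$ in (a): by Remark~\ref{rem:ambiguity}, $(\bHepi)_{0}=\bHepiz$, so $\bHepi_{0,\exists}=(\bHepiz)_{\exists}\meq(\mathbf{F}_{0})_{\exists}=\mathrm{Th}_{\exists}(\mathbb{Q})$ by Corollary~\ref{cor:henselian}(bII) (which does not use \Rfour) and Corollary~\ref{cor:P_decidable}(a). For (d): by Proposition~\ref{cor:F_and_P_combined}(2d), $\bHepi_{\gg0,\exists}=(\bHepi\cup\iota_{\mathbf{k}}\mathbf{F}_{\gg0})_{\exists}$; since $\mathbf{F}_{0}\subseteq\mathbf{F}_{\gg0}$ we have $\mathbf{F}_{\gg0}\vdash\rho_{n}$ for every $n$, so every model of $\bHepi\cup\iota_{\mathbf{k}}\mathbf{F}_{\gg0}$ has residue characteristic $0$, hence is a model of $\bHepiz$; thus $\bHepi\cup\iota_{\mathbf{k}}\mathbf{F}_{\gg0}$ and $\bHepiz\cup\iota_{\mathbf{k}}\mathbf{F}_{\gg0}$ have the same models, whence $\bHepi_{\gg0,\exists}=(\bHepiz\cup\iota_{\mathbf{k}}\mathbf{F}_{\gg0})_{\exists}\meq(\mathbf{F}_{0}\cup\mathbf{F}_{\gg0})_{\exists}=\mathbf{F}_{\gg0,\exists}$ by Corollary~\ref{cor:henselian}(bII) and $\mathbf{F}_{0}\subseteq\mathbf{F}_{\gg0}^{\vdash}$, which is decidable by Corollary~\ref{cor:P_decidable}(d).

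\textbf{Main obstacle.} No single step is deep; the real work is book-keeping --- identifying the various notations ($(\bHepi)_{0}=\bHepiz$, $\bHen(\mathbf{F}_{p})=\bHen\cup\iota_{\mathbf{k}}\mathbf{F}_{p}$, $T_{>0,L}=\bHen_{>0,\exists}$, and so on, via Remark~\ref{rem:ambiguity} and Lemmas~\ref{lem:sur_goes_up} and~\ref{lem:intersection}) and keeping precise track of which claims are conditional on \Rfour. The one genuinely substantive point, and the step most likely to go wrong, is the argument for (2d): because $\mathbf{F}_{\gg0}$ already forces residue characteristic zero, in that case the $\bHepi$-arch (whose \axmono\ is known only under \Rfour) can be replaced by the $\bHepiz$-arch, for which Corollary~\ref{cor:henselian}(b) is unconditional --- so the decidability of $\bHepi_{\gg0,\exists}$, and hence (once (2c) is granted) the applicability of Proposition~\ref{prop:algorithms}(b) in (2b), does not rely on resolution of singularities.
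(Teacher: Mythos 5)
Your proposal is correct and follows essentially the same route as the paper's proof: reduce via Proposition~\ref{cor:F_and_P_combined} and Corollary~\ref{cor:henselian} to the decidability facts of Corollary~\ref{cor:P_decidable}, obtain (b) from Proposition~\ref{prop:algorithms}(b), and make (2d) unconditional by observing that $\bHepi(\F_{\gg0})=\bHepiz(\F_{\gg0})$ so that Corollary~\ref{cor:henselian}(bII) applies in place of (cII). The only differences are organizational (your order (c),(d),(b),(a) and the extra book-keeping you spell out), not mathematical.
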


\begin{proof}
Part (1a) (respectively (2a)) for $p=0$ follows from Proposition \ref{prop:F_and_P}(a) and 
Corollary~\ref{cor:henselian}(aII)
(respectively 
Corollary~\ref{cor:henselian}(bII)) applied to $R=\emptyset$,
and for $p\in\mathbb{P}$ follows from the decidability of $\mathbf{P}_{p}$
via Proposition
~\ref{cor:F_and_P_combined}(1a)
and Corollary~\ref{cor:henselian}(aII)
(resp.~Proposition~\ref{cor:F_and_P_combined}(2a)
and Corollary~\ref{cor:henselian}(cII)).
Similarly, (1c,d) (resp.~(2c,d) assuming \Rfour) follow from  
Corollary \ref{cor:P_decidable}(c,d)
via Proposition~\ref{cor:F_and_P_combined}(1c,d)
and Corollary~\ref{cor:henselian}(aII)
(resp.~Proposition~\ref{cor:F_and_P_combined}(2c,d)
and Corollary~\ref{cor:henselian}(cII)).
To obtain (2d) as written (i.e.~without assuming \Rfour) we argue as follows:
by Proposition~\ref{cor:F_and_P_combined}(2d),
we have 
$\bHepi_{\gg0,\exists}=\bHepi(\F_{\gg0})_{\exists}$,
and by Corollary~\ref{cor:henselian}(bII)
$\bHepiz(\F_{\gg0})_{\exists}\meq\F_{\gg0,\exists}$, which is decidable by 
Corollary \ref{cor:P_decidable}(d).
This proves (2d) since 
$\bHepi(\F_{\gg0})=\bHepiz(\F_{\gg0})$.
Furthermore (1c) (resp.~(2c)) is a consequence of Proposition~\ref{prop:algorithms}(b), applied to $T=\bHen$ (resp.~$T=\bHepi$) and
$L=\mathrm{Sent}_{\exists}(\mathfrak{L}_{\mathrm{val}})$
(resp.~$L=\mathrm{Sent}_{\exists}(\mathfrak{L}_{\mathrm{val}}(\varpi))$),
or can alternatively be deduced from 
Corollary \ref{cor:P_decidable}(c).
\end{proof}

\subsection{Large fields}
Let $\mathbf{L}$ denote the $\mathfrak{L}_{\mathrm{ring}}$-theory of large fields (cf.~Remark \ref{rem:R4}).
As every 
 PAC field is large,
 and every
 nontrivially valued henselian field is large \cite[Example 5.6.2]{Jarden},
$\mathbf{L}\subseteq\PAC$ and
$\mathbf{L}\subseteq\bHen\cap{\rm Sent}(\mathfrak{L}_{\rm ring})$.

\begin{proposition}\label{prp:large}
\begin{enumerate}[$(a)$]
\item
$\mathbf{L}_{p,\exists}=
\bHen_{p,\exists}\cap\mathrm{Sent}(\mathfrak{L}_{\mathrm{ring}})
={\rm Th}_\exists(\mathbb{F}_p(\!(t)\!))$ for all $p\in\mathbb{P}\cup\{0\}$.
\item
$\mathbf{L}_{\exists}=\bHen_{\exists}\cap\mathrm{Sent}(\mathfrak{L}_{\mathrm{ring}})$.
\item
$\mathbf{L}_{>0,\exists}=\bHen_{>0,\exists}\cap\mathrm{Sent}(\mathfrak{L}_{\mathrm{ring}})$.
\item
$\mathbf{L}_{\gg0,\exists}=\bHen_{\gg0,\exists}\cap\mathrm{Sent}(\mathfrak{L}_{\mathrm{ring}})$.
\end{enumerate}
\end{proposition}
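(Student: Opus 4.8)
The plan is to prove all four parts by the same circle of ideas already used for henselian valued fields, namely the facts collected in Remark~\ref{rem:existential} about existential theories being controlled by finitely generated substructures together with the observation that large fields embed existentially into the Laurent series fields over them. The key input is that for every large field $K$, $K$ is existentially closed in $K(\!(t)\!)$ by definition, and $(K(\!(t)\!),v_t)$ is an equicharacteristic henselian nontrivially valued field with residue field $K$; conversely, the underlying field of any model of $\bHen$ is large by \cite[Example 5.6.2]{Jarden}, and its residue field is a field of the same characteristic.

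For part~(a), fix $p\in\mathbb{P}\cup\{0\}$. The containment $\mathbf{L}_{p,\exists}\supseteq\bHen_{p,\exists}\cap\mathrm{Sent}(\mathfrak{L}_{\mathrm{ring}})$ follows because every $K\models\mathbf{L}_p$ gives $(K(\!(t)\!),v_t)\models\bHen_p$ with $K\preceq_\exists K(\!(t)\!)$ as $\mathfrak{L}_{\mathrm{ring}}$-structures, so any existential $\mathfrak{L}_{\mathrm{ring}}$-sentence in $\bHen_{p,\exists}$ holds in $K(\!(t)\!)$ hence in $K$. For the reverse containment, every $(L,w)\models\bHen_p$ has $L\models\mathbf{L}_p$, so any $\varphi\in\mathbf{L}_{p,\exists}$ holds in $L$, giving $\mathbf{L}_{p,\exists}\subseteq\bHen_{p,\exists}\cap\mathrm{Sent}(\mathfrak{L}_{\mathrm{ring}})$. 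Finally, to identify both with ${\rm Th}_\exists(\mathbb{F}_p(\!(t)\!))$: on one hand $\mathbb{F}_p(\!(t)\!)$ is large (being henselian nontrivially valued) of characteristic $p$, so ${\rm Th}_\exists(\mathbb{F}_p(\!(t)\!))\supseteq\mathbf{L}_{p,\exists}$; on the other, by Proposition~\ref{cor:F_and_P_combined}(1a) we have $\bHen_{p,\exists}=\mathrm{Th}_\exists(\mathbb{F}_p(\!(t)\!),v_t)$, and intersecting with $\mathrm{Sent}(\mathfrak{L}_{\mathrm{ring}})$ gives $\bHen_{p,\exists}\cap\mathrm{Sent}(\mathfrak{L}_{\mathrm{ring}})=\mathrm{Th}_\exists(\mathbb{F}_p(\!(t)\!))$ since the $\mathfrak{L}_{\mathrm{ring}}$-reduct of $(\mathbb{F}_p(\!(t)\!),v_t)$ is $\mathbb{F}_p(\!(t)\!)$. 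This closes the chain of equalities; an alternative direct argument is that $\mathbb{F}_p(\!(t)\!)\models\mathbf{L}_p$ and any $K\models\mathbf{L}_p$ has prime field $\mathbb{F}_p$, but this last step is more delicate since large fields need not embed into $\mathbb{F}_p(\!(t)\!)$, so routing through Proposition~\ref{cor:F_and_P_combined} is cleaner.

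Parts~(b), (c), (d) then follow formally from~(a) by taking the appropriate intersections and unions, exactly as in the second proof of Proposition~\ref{cor:F_and_P_combined}: by Remark~\ref{rem:TE} applied to both $\mathbf{L}$ and $\bHen$ (using that both have models of every characteristic among those of fields), $\mathbf{L}_\exists=\bigcap_{p\in\mathbb{P}\cup\{0\}}\mathbf{L}_{p,\exists}$, $\mathbf{L}_{>0,\exists}=\bigcap_{p\in\mathbb{P}}\mathbf{L}_{p,\exists}$, $\mathbf{L}_{\gg0,\exists}=\bigcup_{m}\bigcap_{p\geq m}\mathbf{L}_{p,\exists}$, and likewise for $\bHen$; and intersection commutes with intersecting each term with the fixed set $\mathrm{Sent}(\mathfrak{L}_{\mathrm{ring}})$. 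Since $(\bigcap_i X_i)\cap S=\bigcap_i(X_i\cap S)$ and similarly for unions, part~(a) gives termwise equality and hence the desired identities.

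The main obstacle is really in part~(a), specifically pinning down the last equality with ${\rm Th}_\exists(\mathbb{F}_p(\!(t)\!))$ rather than merely with the $\mathfrak{L}_{\mathrm{ring}}$-reduct of an arbitrary model of $\bHen_p$. One must be careful that the $\mathfrak{L}_{\mathrm{ring}}$-reduct functor commutes with the operations $(-)_{p,\exists}$ in the way claimed: an existential $\mathfrak{L}_{\mathrm{ring}}$-sentence lies in $\bHen_{p,\exists}\cap\mathrm{Sent}(\mathfrak{L}_{\mathrm{ring}})$ iff it holds in every model of $\bHen_p$ iff it holds in every $K(\!(t)\!)$ for $K\models\mathbf{L}_p$ (using that these are cofinal under $\preceq_\exists$ among $\mathfrak{L}_{\mathrm{ring}}$-reducts of models of $\bHen_p$ in the sense of Remark~\ref{rem:existential}), which is the content of part~(a). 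Everything else is a routine unwinding of the definitions together with the already-established Propositions~\ref{cor:F_and_P_combined} and~\ref{prop:F_and_P}.
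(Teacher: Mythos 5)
Your proposal is correct and follows essentially the same route as the paper: the two inclusions in (a) come from ``henselian nontrivially valued implies large'' and from $K\preceq_\exists K(\!(t)\!)$ with $(K(\!(t)\!),v_t)\models\bHen_p$, the identification with $\mathrm{Th}_\exists(\mathbb{F}_p(\!(t)\!))$ comes from Proposition~\ref{cor:F_and_P_combined}(1a), and (b)--(d) follow formally via Remark~\ref{rem:TE} and Proposition~\ref{lem:little_equivalences}. No gaps.
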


\begin{proof}
By Proposition~\ref{cor:F_and_P_combined}(1a), $\bHen_{p,\exists}\cap\mathrm{Sent}(\mathfrak{L}_{\mathrm{ring}})=\mathrm{Th}_{\exists}(\mathbb{F}_{p}(\!(t)\!))$.
As  every nontrivially valued henselian field is large,
we have 
$\mathbf{L}\subseteq
\bHen$, in particular
$\mathbf{L}_{p,\exists}\subseteq
\bHen_{p,\exists}\cap\mathrm{Sent}(\mathfrak{L}_{\mathrm{ring}})$.
Conversely, if $K\models\mathbf{L}_p$, then $K\prec_\exists K(\!(t)\!)\models \bHen_{p,\exists}\cap\mathrm{Sent}(\mathfrak{L}_{\mathrm{ring}})$,
hence $K\models \bHen_{p,\exists}\cap\mathrm{Sent}(\mathfrak{L}_{\mathrm{ring}})$,
which proves (a).
Then (b) follows via Remark~\ref{rem:TE},
and (c) and (d) follow immediately from (a)
and Proposition \ref{lem:little_equivalences}.
\end{proof}

\begin{corollary}\label{cor:large}
\begin{enumerate}[$(a)$]
\item
$\mathbf{L}_{0,\exists}\meq{\rm Th}_\exists(\mathbb{Q})$,
and $\mathbf{L}_{p,\exists}$ is decidable for every $p\in\mathbb{P}$.
\item
$\mathbf{L}_\exists\meq\mathbf{L}_{0,\exists}$
\item
$\mathbf{L}_{>0,\exists}$ is decidable. 
\item $\mathbf{L}_{\gg0,\exists}$ is decidable.
\end{enumerate}
\end{corollary}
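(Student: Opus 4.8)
The plan is to deduce all four parts from Proposition~\ref{prp:large} and Corollary~\ref{cor:Hen_decidable}(1), transferring the known facts about $\bHen$ to $\mathbf{L}$, together with one genuinely field-theoretic (rather than formal) input. Recall that Proposition~\ref{prp:large} gives $\mathbf{L}_{p,\exists}=\bHen_{p,\exists}\cap\mathrm{Sent}(\mathfrak{L}_{\mathrm{ring}})={\rm Th}_\exists(\mathbb{F}_p(\!(t)\!))$ for all $p\in\mathbb{P}\cup\{0\}$, and $\mathbf{L}_{>0,\exists}=\bHen_{>0,\exists}\cap\mathrm{Sent}(\mathfrak{L}_{\mathrm{ring}})$, $\mathbf{L}_{\gg0,\exists}=\bHen_{\gg0,\exists}\cap\mathrm{Sent}(\mathfrak{L}_{\mathrm{ring}})$.

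First the decidability clauses. By Corollary~\ref{cor:Hen_decidable}(1) each of $\bHen_{p,\exists}$ (for $p\in\mathbb{P}$), $\bHen_{>0,\exists}$ and $\bHen_{\gg0,\exists}$ is decidable, and $\mathrm{Sent}(\mathfrak{L}_{\mathrm{ring}})$ is decidable; hence each of $\mathbf{L}_{p,\exists}$ (for $p\in\mathbb{P}$), $\mathbf{L}_{>0,\exists}$, $\mathbf{L}_{\gg0,\exists}$ is an intersection of two decidable sets, so decidable. This gives the decidability clause of (a) and all of (c) and (d). For (b) I would apply Proposition~\ref{prop:algorithms}(b) to $T=\mathbf{L}$ and the computable fragment $L=\mathrm{Sent}_\exists(\mathfrak{L}_{\mathrm{ring}})$ (for which $\rho_n,\neg\rho_n\in L$ and $n\mapsto\rho_n$ is computable): since $\mathbf{L}_{>0,\exists}=T_{>0,L}$ and $\mathbf{L}_{\gg0,\exists}=T_{\gg0,L}$ have just been shown decidable, the proposition yields $\mathbf{L}_\exists\meq\mathbf{L}_{0,\exists}$, which is (b).

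It remains to prove $\mathbf{L}_{0,\exists}\meq{\rm Th}_\exists(\mathbb{Q})$. The reduction $\mathbf{L}_{0,\exists}\mred{\rm Th}_\exists(\mathbb{Q})$ is formal: either restrict to the decidable subfragment, $\mathbf{L}_{0,\exists}=\bHen_{0,\exists}\cap\mathrm{Sent}(\mathfrak{L}_{\mathrm{ring}})\mred\bHen_{0,\exists}\meq{\rm Th}_\exists(\mathbb{Q})$ by Corollary~\ref{cor:Hen_decidable}(1a); or, more directly, use the computable elimination $\epsilon\colon\mathrm{Sent}_\exists(\mathfrak{L}_{\mathrm{val}})\to\mathrm{Sent}_\exists(\mathfrak{L}_{\mathrm{ring}})$ of $\rmF_\exists\bridge\Hen_\exists$ from Corollary~\ref{cor:henselian}(aI): for $\varphi\in\mathrm{Sent}_\exists(\mathfrak{L}_{\mathrm{ring}})$, viewed inside $\mathrm{Sent}_\exists(\mathfrak{L}_{\mathrm{val}})$, one has $\mathbb{Q}(\!(t)\!)\models\varphi\iff(\mathbb{Q}(\!(t)\!),v_t)\models\varphi\iff\mathbb{Q}\models\epsilon\varphi$, so $\varphi\mapsto\epsilon\varphi$ witnesses $\mathbf{L}_{0,\exists}={\rm Th}_\exists(\mathbb{Q}(\!(t)\!))\mred{\rm Th}_\exists(\mathbb{Q})$. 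The converse reduction ${\rm Th}_\exists(\mathbb{Q})\mred\mathbf{L}_{0,\exists}={\rm Th}_\exists(\mathbb{Q}(\!(t)\!))$ is the crux, and it is \emph{not} a formal consequence of the machinery: one has to carry the interpretation $\iota_{\mathbf{k}}$ of the residue field into the pure ring language of $\mathbb{Q}(\!(t)\!)$. I would fix existential $\mathfrak{L}_{\mathrm{ring}}$-formulas $\delta(x)$ and $\mu(x)$ defining the valuation ring $\mathbb{Q}[[t]]$ and its maximal ideal $t\mathbb{Q}[[t]]$ inside $\mathbb{Q}(\!(t)\!)$, and then rewrite an arbitrary existential $\mathfrak{L}_{\mathrm{ring}}$-sentence $\varphi=\exists\bar x\,\theta(\bar x)$ as the existential $\mathfrak{L}_{\mathrm{ring}}$-sentence $\varphi^{\circ}$ asserting the existence of $\bar x$ in the valuation ring such that every atomic subformula of $\theta$ holds modulo $\mu$ and every negated one holds ``modulo a unit'' (both expressible existentially from $\delta$ and $\mu$); then $\mathbb{Q}(\!(t)\!)\models\varphi^{\circ}\iff\mathbb{Q}\models\varphi$, and $\varphi\mapsto\varphi^{\circ}$ is computable, which completes (a).

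The step I expect to be the real obstacle is precisely the one input external to the excerpt: the existence of an existential $\emptyset$-definition of the valuation ring (and valuation ideal) of $\mathbb{Q}(\!(t)\!)$ in the language of rings. This is available because the residue field $\mathbb{Q}$ is not large, but no such definition is uniform over all of $\bHen$ (it already fails over algebraically closed henselian fields, by strong minimality), so this is exactly where a genuinely arithmetic ingredient enters rather than the bridge/arch formalism. Everything else is routine bookkeeping with Proposition~\ref{prp:large}, Corollary~\ref{cor:Hen_decidable} and Proposition~\ref{prop:algorithms}.
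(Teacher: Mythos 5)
Your proposal is correct and follows essentially the same route as the paper: parts (b), (c), (d) and the decidability clause of (a) are obtained exactly as in the paper from Proposition~\ref{prp:large}, Corollary~\ref{cor:Hen_decidable}(1) and Proposition~\ref{prop:algorithms}(b), and for $\mathbf{L}_{0,\exists}\meq{\rm Th}_\exists(\mathbb{Q})$ you correctly isolate the single non-formal ingredient, namely existential and universal $\mathfrak{L}_{\rm ring}$-definitions of the valuation ring (hence also of the valuation ideal) of $\mathbb{Q}(\!(t)\!)$. The paper settles exactly this point by citing \cite[Corollaries 6.2 and 6.18]{AF17} for such definitions uniformly over all models of $\bHen(\mathrm{Th}(\mathbb{Q}))$ and concluding $\bHen(\mathrm{Th}(\mathbb{Q}))_{\exists}\cap\mathrm{Sent}(\mathfrak{L}_{\mathrm{ring}})\meq\bHen(\mathrm{Th}(\mathbb{Q}))_{\exists}\meq{\rm Th}_\exists(\mathbb{Q})$, which is the same external input you flag (your single-structure formulation suffices, since $\mathbf{L}_{0,\exists}={\rm Th}_\exists(\mathbb{Q}(\!(t)\!))$ by Proposition~\ref{prp:large}(a)).
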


\begin{proof}
The decidability of
$\mathbf{L}_{>0,\exists}$ and
$\mathbf{L}_{\gg0,\exists}$
follow from
Corollary \ref{cor:Hen_decidable}(1c,d)
and
Proposition \ref{prp:large}(c,d).
For $p\in\mathbb{P}\cup\{0\}$,
$\mathbf{L}_{p,\exists}=\bHen_{p,\exists}\cap\mathrm{Sent}(\mathfrak{L}_{\mathrm{ring}})$
by
Proposition \ref{prp:large}(a),
and by
Corollary \ref{cor:Hen_decidable}(1a)
this is decidable when $p\in\mathbb{P}$.
For the case $p=0$,
$\mathbf{L}_{\exists}\meq\mathbf{L}_{0,\exists}$
is a consequence of Proposition~\ref{prop:algorithms}(b),
applied to $T=\mathbf{L}$
and $L=\mathrm{Sent}_{\exists}(\mathfrak{L}_{\mathrm{ring}})$,
and 
$\mathbf{L}_{0,\exists}=\bHen(\mathrm{Th}(\mathbb{Q}))_{\exists}\cap\mathrm{Sent}(\mathfrak{L}_{\mathrm{ring}})$
by Proposition \ref{prp:large}(a).
The latter
is many-one equivalent to $\bHen(\mathrm{Th}(\mathbb{Q}))_{\exists}$
since 
by \cite[Corollary 6.2 and Corollary 6.18]{AF17}
there are two $\mathfrak{L}_{\mathrm{ring}}$-formulas
--- one existential and one universal --- 
which each define the valuation ring
in every model of $\bHen(\mathrm{Th}(\mathbb{Q}))$.
Finally,
$\bHen(\mathrm{Th}(\mathbb{Q}))_{\exists}\meq\mathrm{Th}_{\exists}(\mathbb{Q})$
by Corollary~\ref{cor:F_and_P_combined}(1a) and Corollary~\ref{cor:Hen_decidable}(1a).
\end{proof}

\begin{remark}
We proved both 
$\bHen_{0,\exists}\Teq\bHen_\exists$
and 
$\bHen_{0,\exists}\cap\mathrm{Sent}(\mathfrak{L}_{\mathrm{ring}})\Teq\bHen_\exists\cap\mathrm{Sent}(\mathfrak{L}_{\mathrm{ring}})$
but neither statement seems to imply the other immediately,
as there is no uniform existential (or universal) definition of the valuation ring in models of $\bHen$ (or even $\bHen\cup\iota_{\mathbf{k}}\mathbf{P}$).
In particular, we obtain the somewhat surprising Turing equivalence
$\bHen_\exists\Teq\bHen_\exists\cap\mathrm{Sent}(\mathfrak{L}_{\mathrm{ring}})$.
\end{remark}

\begin{remark}
The statement $\mathbf{L}_{0,\exists}={\rm Th}_\exists(\mathbb{Q}(\!(t)\!))$,
which is part of Proposition \ref{prp:large}(a) was also proven by Sander in
\cite[Proposition 2.25]{Sander}.
Sander continues to remark (with references but without detailed proof) that
${\rm Th}_\exists(\mathbb{Q}(\!(t)\!))\Tred{\rm Th}_\exists(\mathbb{Q})$ follows from work of Weispfenning,
and he asks in \cite[Problem 2.26]{Sander}
whether $\mathbf{L}_{0,\exists}$ is decidable.
The statement $\mathbf{L}_{0,\exists}\meq{\rm Th}_\exists(\mathbb{Q})$ (Corollary \ref{cor:large}(a)) shows that this question is in fact equivalent to a famous and still open problem.
\end{remark}

\subsection{Proof of Theorem~\ref{thm:introQ}}\label{proof:1.2}
By
Corollary~\ref{cor:F_and_P_combined}(1a,2a),
we have
$\mathrm{Th}_{\exists}(\mathbb{Q}(\!(t)\!),v_{t})=\bHen(\mathrm{Th}(\mathbb{Q}))_{\exists}$
and
$\mathrm{Th}_{\exists}(\mathbb{Q}(\!(t)\!),v_{t},t)=\bHepi(\mathrm{Th}(\mathbb{Q}))_{\exists}$.
By Corollary \ref{cor:Hen_decidable}(1a,2a)
these theories are many-one equivalent to
$\mathrm{Th}_{\exists}(\mathbb{Q})$.
Thus
(a),
(c),
and
(d)
are many-one equivalent.
By Proposition~\ref{prp:large}(a),
(b) and (e)
are many-one equivalent.
By
Corollary~\ref{cor:P_decidable}(a,b),
(a)
and
(g)
are many-one equivalent.
Finally by
Corollary~\ref{cor:large}(a,b),
(a),
(e),
and
(f)
are many-one equivalent.

\label{sec:global}
\subsection{Global fields}
Let 
$\mathbf{F}^\infty$ be the theory of infinite fields and
$\mathbf{G}$ the theory of global fields
(i.e.~finitely generated fields
of characteristic zero and transcendence degree zero, or positive characteristic and transcendence degree one).

\begin{proposition}\label{lem:G_and_F}
We have $\mathbf{G}_\exists=(\mathbf{F}^\infty)_\exists$.
Consequently,
$\mathbf{G}_{\gg0,\exists}=(\mathbf{F}^\infty)_{\gg0,\exists}$,
$\mathbf{G}_{p,\exists}=(\mathbf{F}^\infty)_{p,\exists}={\rm Th}_\exists(\mathbb{F}_p(t))$
for every $p\in\mathbb{P}$,
and $\mathbf{G}_{0,\exists}=(\mathbf{F}^\infty)_{0,\exists}=\mathbf{F}_{0,\exists}={\rm Th}_\exists(\mathbb{Q})$.
\end{proposition}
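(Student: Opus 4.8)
The plan is to prove the identity $\mathbf{G}_\exists=(\mathbf{F}^{\infty})_\exists$ directly, and then to read off the remaining equalities from it.

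The inclusion $(\mathbf{F}^{\infty})_\exists\subseteq\mathbf{G}_\exists$ is trivial, since every global field is infinite, so $\mathrm{Mod}(\mathbf{G})\subseteq\mathrm{Mod}(\mathbf{F}^{\infty})$. For the reverse inclusion fix $\chi\in\mathbf{G}_\exists$ (so $\chi\in\mathrm{Sent}_\exists(\mathfrak{L}_{\mathrm{ring}})$ holds in every global field) and an arbitrary infinite field $M$; I want $M\models\chi$, and I split into three cases according to the prime field of $M$ and its relative algebraic closure. If $\mathrm{char}(M)=0$, then $\mathbb{Q}\subseteq M$ and $\mathbb{Q}$ is a global field, so $\mathbb{Q}\models\chi$, whence $M\models\chi$ since membership in the existential fragment is preserved under field extensions (cf.~Remark~\ref{rem:existential}). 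If $\mathrm{char}(M)=p>0$ and $M$ contains an element $t$ transcendental over $\mathbb{F}_p$, then $\mathbb{F}_p(t)\subseteq M$ with $\mathbb{F}_p(t)$ a global field, so again $\mathbb{F}_p(t)\models\chi$ and $M\models\chi$. The only remaining case, $\mathrm{char}(M)=p>0$ with $M$ an infinite algebraic extension of $\mathbb{F}_p$, is exactly the one not covered by the criterion of Remark~\ref{rem:existential}: here $M$ contains no global subfield and embeds into no global field.

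For this case I would isolate the key lemma: \emph{every $\chi\in\mathrm{Sent}_\exists(\mathfrak{L}_{\mathrm{ring}})$ that holds in $\mathbb{F}_p(t)$ holds in $\mathbb{F}_{p^m}$ for all sufficiently large $m$}. Granting it, $M$ is an increasing union of its finite subfields and $\{m:\mathbb{F}_{p^m}\subseteq M\}$ is infinite, so it contains some $m$ past the threshold; then $\mathbb{F}_{p^m}\models\chi$ forces $M\models\chi$. To prove the lemma, reduce by induction on the structure of the fragment to a purely existential $\chi=\exists\bar x\,\psi(\bar x)$ with $\psi$ quantifier-free, say $\psi=\bigwedge_i f_i(\bar x)=0\wedge\bigwedge_j g_j(\bar x)\neq0$, and pick a witness $\bar a$ in $\mathbb{F}_p(t)$. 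Put $R=\mathbb{F}_p[\bar a]$ and localize at $g=\prod_j g_j(\bar a)\neq0$ to obtain a nonzero finitely generated $\mathbb{F}_p$-algebra domain $R'$ with $\mathrm{Frac}(R')\subseteq\mathbb{F}_p(t)$; reduction of $\bar a$ modulo a maximal ideal $\mathfrak m$ of $R'$ lands in the finite field $R'/\mathfrak m$ (finite over $\mathbb{F}_p$ by Zariski's lemma) and satisfies $\psi$, the equalities being preserved and the inequalities surviving because each $g_j(\bar a)$ is a unit in $R'$. If $\dim R'=0$ then $R'=\mathbb{F}_p$ and the witness already lies in every $\mathbb{F}_{p^m}$; otherwise $\mathrm{Spec}(R')$ is a dense open subscheme of an affine curve over $\mathbb{F}_p$ which is geometrically irreducible — since $\mathbb{F}_p$ is relatively algebraically closed in $\mathbb{F}_p(t)$, hence in $\mathrm{Frac}(R')$ — so the Hasse--Weil bound (Weil's Riemann hypothesis for curves over finite fields, the same input behind the results of Ax recalled in Remark~\ref{rem:Psf}) yields closed points of $\mathrm{Spec}(R')$ of every sufficiently large degree $m$, each producing a witness for $\chi$ in $\mathbb{F}_{p^m}$. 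This appeal to Hasse--Weil is the only genuinely non-formal ingredient and the step I expect to need the most care — in particular the geometric-irreducibility observation, which is what forces \emph{all} large $m$ to occur and not merely the large multiples of some constant-field degree.

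Finally, the consequences. Since $\rho_n,\neg\rho_n\in\mathrm{Sent}_\exists(\mathfrak{L}_{\mathrm{ring}})$, applying Proposition~\ref{lem:little_equivalences}(b,c,e) with $L=\mathrm{Sent}_\exists(\mathfrak{L}_{\mathrm{ring}})$ turns $\mathbf{G}_\exists=(\mathbf{F}^{\infty})_\exists$ into $\mathbf{G}_{p,\exists}=(\mathbf{F}^{\infty})_{p,\exists}$ for every $p\in\mathbb{P}\cup\{0\}$ and into $\mathbf{G}_{\gg0,\exists}=(\mathbf{F}^{\infty})_{\gg0,\exists}$. For $p\in\mathbb{P}$ this common value equals $\mathrm{Th}_\exists(\mathbb{F}_p(t))$: indeed $\mathbb{F}_p(t)$ is itself a global field of characteristic $p$, giving $\mathbf{G}_{p,\exists}\subseteq\mathrm{Th}_\exists(\mathbb{F}_p(t))$, while, having transcendence degree one over $\mathbb{F}_p$, it embeds into every global field of characteristic $p$, giving the reverse inclusion. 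For $p=0$: every characteristic-zero field is infinite and contains the global field $\mathbb{Q}$, so $\mathbf{F}_{0,\exists}=(\mathbf{F}^{\infty})_{0,\exists}=\mathbf{G}_{0,\exists}$, and $\mathbf{F}_{0,\exists}=\mathrm{Th}_\exists(\mathbb{Q})$ is Proposition~\ref{prop:F_and_P}(a) (equivalently Corollary~\ref{cor:P_decidable}(a)).
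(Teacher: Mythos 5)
Your proof is correct, but the way you handle the one genuinely delicate case --- an infinite field $M$ of characteristic $p$ that is algebraic over $\mathbb{F}_p$, so contains no global field --- is quite different from, and considerably heavier than, the paper's. The paper disposes of this case with a soft model-theoretic trick: pass to a proper (or uncountable) elementary extension $K\prec K^{*}$, which necessarily contains an element transcendental over $\mathbb{F}_p$ and hence a copy of $\mathbb{F}_p(t)$; an existential $\chi\in\mathbf{G}_\exists$ then goes up from $\mathbb{F}_p(t)$ to $K^{*}$ along the embedding and back down to $K$ along $K\prec K^{*}$. No arithmetic geometry is needed. You instead prove the stronger, quantitative lemma that every existential sentence true in $\mathbb{F}_p(t)$ holds in $\mathbb{F}_{p^{m}}$ for all sufficiently large $m$, via reduction modulo closed points of a geometrically irreducible affine curve and the Hasse--Weil bound. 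That lemma and your proof of it are correct --- and your insistence on geometric irreducibility (via $\mathbb{F}_p$ being relatively algebraically closed in $\mathrm{Frac}(R')\subseteq\mathbb{F}_p(t)$) is exactly the right point to worry about, since without it one only gets closed points of degrees divisible by the constant-field degree rather than of every large degree. It is in the spirit of the Ax-type results recalled in Remark~\ref{rem:Psf}, and it buys explicit information about which finite fields satisfy $\chi$; but for the proposition as stated it is overkill. Your derivation of the consequences from $\mathbf{G}_\exists=(\mathbf{F}^\infty)_\exists$ via Proposition~\ref{lem:little_equivalences} and the identifications with ${\rm Th}_\exists(\mathbb{F}_p(t))$ and ${\rm Th}_\exists(\mathbb{Q})$ coincides with the paper's argument.
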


\begin{proof}
Every  global field is infinite.
Conversely, let $K$ be an infinite field of characteristic $p\geq 0$.
If $p=0$, then the global field $\mathbb{Q}$ can be embedded into $K$, and if $p>0$
then the global field $\mathbb{F}_p(t)$ 
can be embedded into any uncountable elementary extension $K\prec K^*$.
It follows that $\mathbf{G}_\exists=(\mathbf{F}^\infty)_\exists$
(Remark~\ref{rem:existential}).
Using Proposition~\ref{lem:little_equivalences}, we conclude that
$\mathbf{G}_{p,\exists}=(\mathbf{G}_{\exists})_{p,\exists}=((\mathbf{F}^\infty)_\exists)_{p,\exists}=(\mathbf{F}^\infty)_{p,\exists}$ for every $p\in\mathbb{P}\cup\{0\}$,
similarly 
$\mathbf{G}_{\gg0,\exists}=(\mathbf{F}^\infty)_{\gg0,\exists}$.
As $(\F^\infty)_0=\F_0$, we conclude that 
$(\mathbf{F}^\infty)_{0,\exists}=\mathbf{F}_{0,\exists}$,
and
$\mathbf{F}_{0,\exists}={\rm Th}_\exists(\mathbb{Q})$ 
follows from Proposition \ref{prop:F_and_P}(a).
Similarly, $\mathbb{F}_p(t)$ is infinite,
and if $K$ is any infinite field of characteristic $p>0$, then $\mathbb{F}_p(t)$
can be embedded into any proper
elementary extension $K\prec K^*$.
\end{proof}

\begin{lemma}\label{prop:G_and_F}
$\mathbf{F}_{\gg0}=
(\mathbf{F}^\infty)_{\gg0}$
\end{lemma}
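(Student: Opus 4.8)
The plan is to prove the two inclusions of $\mathbf{F}_{\gg0}=(\mathbf{F}^\infty)_{\gg0}$ separately; the easy one is pure monotonicity, and the other one will pass through pseudofinite fields via Ax's theorem.

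First, I would record that $(-)_{\gg0}$ is monotone in the underlying theory: if $T\subseteq T'$ (as sets of sentences), then $T_n=(T\cup\{\rho_1,\dots,\rho_{n-1},\neg\rho_n\})^\vdash\subseteq(T'\cup\{\rho_1,\dots,\rho_{n-1},\neg\rho_n\})^\vdash=T'_n$ for each $n$, and likewise $T_0\subseteq T'_0$, hence $T_{\gg0}=\bigcup_{m}\bigcap_{n\geq m}T_n\subseteq\bigcup_{m}\bigcap_{n\geq m}T'_n=T'_{\gg0}$. Since every infinite field is a field, $\mathbf{F}\subseteq\mathbf{F}^\infty$, which immediately gives $\mathbf{F}_{\gg0}\subseteq(\mathbf{F}^\infty)_{\gg0}$.

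For the reverse inclusion I would use that pseudofinite fields are infinite, so $\mathbf{F}^\infty\subseteq\mathbf{Psf}$ and therefore, by the same monotonicity, $(\mathbf{F}^\infty)_{\gg0}\subseteq\mathbf{Psf}_{\gg0}$; by Remark~\ref{rem:Psf} (Ax) this equals $\mathbf{Fin}_{\gg0}$. So any $\varphi\in(\mathbf{F}^\infty)_{\gg0}$ belongs simultaneously to $(\mathbf{F}^\infty)_{\gg0}$ and to $\mathbf{Fin}_{\gg0}$. From the first, there is $m_1$ with $\varphi\in(\mathbf{F}^\infty)_p$ for all primes $p\geq m_1$, and as every infinite field of characteristic $p$ models $(\mathbf{F}^\infty)_p$, the sentence $\varphi$ holds in every infinite field of characteristic $\geq m_1$. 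From the second, there is $m_2$ with $\varphi\in\mathbf{Fin}_p$ for all primes $p\geq m_2$, and as every finite field $\mathbb{F}_{p^k}$ models $\mathbf{Fin}$ and has characteristic $p$, hence models $\mathbf{Fin}_p$, the sentence $\varphi$ holds in every finite field of characteristic $\geq m_2$. Taking $m=\max(m_1,m_2)$, every field of characteristic $\geq m$ --- being finite or infinite --- satisfies $\varphi$; thus $\varphi\in\mathbf{F}_p$ for every prime $p\geq m$, while $\varphi\in\mathbf{F}_n$ trivially for non-prime $n$ (then $\mathbf{F}_n$ is inconsistent, hence all of $\mathrm{Sent}(\mathfrak{L}_{\mathrm{ring}})$). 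Hence $\varphi\in\bigcap_{n\geq m}\mathbf{F}_n\subseteq\mathbf{F}_{\gg0}$, as required.

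The only nontrivial ingredient is the equality $\mathbf{Psf}_{\gg0}=\mathbf{Fin}_{\gg0}$ coming from Ax's work (recalled in Remark~\ref{rem:Psf}): this is precisely what lets us replace a troublesome finite field of large characteristic by an infinite (pseudofinite) one without changing which sentences hold in sufficiently large characteristic. Everything else is routine manipulation of the operators $(-)_n$ and $(-)_{\gg0}$, so I do not anticipate any real obstacle beyond invoking that result correctly.
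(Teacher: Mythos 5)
Your proof is correct and follows essentially the same route as the paper: both hinge on Ax's equality $\mathbf{Fin}_{\gg0}=\mathbf{Psf}_{\gg0}$ from Remark~\ref{rem:Psf} together with $\mathbf{F}^\infty\subseteq\mathbf{Psf}$, the paper merely packaging the argument as $\mathbf{F}_{\gg0}=(\mathbf{F}^\infty)_{\gg0}\cap\mathbf{Fin}_{\gg0}$ where you unfold it into two inclusions with an explicit choice of $m=\max(m_1,m_2)$.
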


\begin{proof}
Since $\mathbf{F}=\mathbf{F}^\infty\cap\mathbf{Fin}$,
also
$\mathbf{F}_{\gg0}=(\mathbf{F}^\infty)_{\gg0}\cap\mathbf{Fin}_{\gg0}$.
So as 
$\mathbf{Fin}_{\gg0}=\mathbf{Psf}_{\gg0}$ 
(Remark~\ref{rem:Psf})
and $\mathbf{Psf}\supseteq\mathbf{F}^\infty$,
in particular $\mathbf{Psf}_{\gg0}\supseteq(\mathbf{F}^\infty)_{\gg0}$,
we get that
$\mathbf{F}_{\gg0}=(\mathbf{F}^\infty)_{\gg0}\cap\mathbf{Fin}_{\gg0}=(\mathbf{F}^\infty)_{\gg0}$.
\end{proof}

\begin{remark}
In particular, using Proposition \ref{prop:F_and_P}(d), Proposition \ref{lem:G_and_F}, 
and Lemma \ref{prop:G_and_F}
we obtain
$\mathbf{P}_{\gg0,\exists}=
\mathbf{F}_{\gg0,\exists}=
(\mathbf{F}^\infty)_{\gg0,\exists}=
\mathbf{G}_{\gg0,\exists}$.
We can also show that all of these theories are equal to
$\mathbf{L}_{\gg0,\exists}$ and to $\mathbf{PAC}_{\gg0,\exists}$:
As $\mathbb{F}_p\subseteq\mathbb{F}_p(t)\subseteq\mathbb{F}_p(\!(t)\!)$, 
from 
Propositions \ref{prop:F_and_P}(a),
\ref{lem:G_and_F} and
\ref{prp:large}(a)
we get
$\mathbf{F}_{p,\exists}\subseteq\mathbf{G}_{p,\exists}\subseteq\bHen_{p,\exists}\cap{\rm Sent}(\mathfrak{L}_{\rm ring})$, and therefore
$$
 \mathbf{P}_{\gg0,\exists}=\mathbf{F}_{\gg0,\exists}\subseteq\mathbf{G}_{\gg0,\exists}\subseteq\bHen_{\gg0,\exists}\cap{\rm Sent}(\mathfrak{L}_{\rm ring})=\mathbf{L}_{\gg0,\exists}\subseteq\mathbf{PAC}_{\gg0,\exists}
$$ 
using Proposition \ref{prp:large}(d).
By Remark \ref{rem:Psf}
we have $\mathbf{PAC}_{\gg0}\subseteq\mathbf{Psf}_{\gg0}=\mathbf{Psf}_0=\mathbf{P}_{\gg0}$,
and thus $\mathbf{PAC}_{\gg0,\exists}=\mathbf{P}_{\gg0,\exists}$ (although $\mathbf{PAC}_{\gg0}\neq\mathbf{P}_{\gg0}$).
Note that $\mathbf{F}_{\gg0,\exists}\supsetneqq\mathbf{F}_{0,\exists}$, $\mathbf{PAC}_{\gg0,\exists}\supsetneqq\mathbf{PAC}_{0,\exists}$ and
$\bHen_{\gg0,\exists}\supsetneqq\bHen_{0,\exists}$,
as each of $\mathbf{F}$, $\mathbf{PAC}$ and $\bHen$
has models of characteristic zero in which $\mathbb{Q}$
is algebraically closed
(cf.~Remark \ref{rem:alldifferent}).
\end{remark}

\begin{corollary}\label{cor:last}
$\mathbf{G}_{\mathbb{N},\exists} \Teq \mathbf{G}_{>0,\exists}$ and
$\mathbf{G}_{\mathbb{N}\cup\{0\},\exists} \Teq \mathbf{G}_{\exists}$.   
\end{corollary}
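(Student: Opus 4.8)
The plan is to obtain both Turing equivalences as a direct instance of Proposition~\ref{prop:algorithms}(d), applied with $\mathfrak{L}=\mathfrak{L}_{\mathrm{ring}}$, the fragment $L=\mathrm{Sent}_{\exists}(\mathfrak{L}_{\mathrm{ring}})$, the sequence $(\rho_{n})_{n\in\mathbb{N}}$ of characteristic sentences fixed at the start of this section, the theory $T=\mathbf{G}$, and the auxiliary theory $T'=\mathbf{F}$.

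First I would check the standing hypotheses of Proposition~\ref{prop:algorithms}: the fragment $L=\mathrm{Sent}_{\exists}(\mathfrak{L}_{\mathrm{ring}})$ is computable, each $\rho_{n}$ and $\neg\rho_{n}$ is quantifier-free and hence lies in $L$, and $n\mapsto\rho_{n}$ is computable, all of which has already been recorded. Next I would verify the two conditions relating $T$ and $T'$. The inclusion $\mathbf{F}_{\exists}\subseteq\mathbf{G}_{\exists}$ is immediate since every global field is a field, so $\mathbf{F}^{\vdash}\subseteq\mathbf{G}^{\vdash}$. The equality $\mathbf{G}_{\gg0,\exists}=\mathbf{F}_{\gg0,\exists}$ is exactly what is recorded in the Remark following Lemma~\ref{prop:G_and_F} (combining $\mathbf{G}_{\gg0,\exists}=(\mathbf{F}^{\infty})_{\gg0,\exists}$ from Proposition~\ref{lem:G_and_F} with $\mathbf{F}_{\gg0}=(\mathbf{F}^{\infty})_{\gg0}$ from Lemma~\ref{prop:G_and_F}).

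With these in hand, the hypotheses of Proposition~\ref{prop:algorithms}(d) — decidability of $T'_{\gg0,L}=\mathbf{F}_{\gg0,\exists}$ and of $T'_{>0,L}=\mathbf{F}_{>0,\exists}$ — are supplied by Corollary~\ref{cor:P_decidable}(d) and (c) respectively. Proposition~\ref{prop:algorithms}(d) then yields at once $\mathbf{G}_{>0,\exists}\Teq\mathbf{G}_{\mathbb{N},\exists}$ and $\mathbf{G}_{\exists}\Teq\mathbf{G}_{0,\exists}\oplus\mathbf{G}_{>0,\exists}\Teq\mathbf{G}_{\mathbb{N}\cup\{0\},\exists}$, which is the assertion.

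There is no genuinely difficult step; the only point that needs care is the choice of auxiliary theory. One cannot take $T'=\mathbf{F}^{\infty}$, because decidability of $(\mathbf{F}^{\infty})_{>0,\exists}$ is not available on its own; the theory $T'=\mathbf{F}$ works precisely because $\mathbf{F}_{>0,\exists}$ is decidable by Ax's theorem via Corollary~\ref{cor:P_decidable}, while still satisfying $\mathbf{F}_{\exists}\subseteq\mathbf{G}_{\exists}$ and $\mathbf{F}_{\gg0,\exists}=\mathbf{G}_{\gg0,\exists}$. Everything else is bookkeeping with the notation of Section~\ref{section:algorithms}.
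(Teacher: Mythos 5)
Your proof is correct and follows essentially the same route as the paper: Proposition~\ref{prop:algorithms}(d) with auxiliary theory $T'=\mathbf{F}$, whose hypotheses are supplied by Corollary~\ref{cor:P_decidable}(c,d) and the identity $\mathbf{G}_{\gg0,\exists}=(\mathbf{F}^\infty)_{\gg0,\exists}=\mathbf{F}_{\gg0,\exists}$. The only (inessential) difference is that the paper applies the proposition to $T=\mathbf{F}^\infty$ and then transfers the conclusion to $\mathbf{G}$ via $(\mathbf{F}^\infty)_\exists=\mathbf{G}_\exists$ and Proposition~\ref{lem:little_equivalences}, whereas you apply it directly to $T=\mathbf{G}$, which works just as well since $\mathbf{F}_\exists\subseteq\mathbf{G}_\exists$.
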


\begin{proof}
It follows from Lemma \ref{prop:G_and_F} that 
$\mathbf{F}_{\gg0,\exists}=
(\mathbf{F}^\infty)_{\gg0,\exists}$.
Therefore, using
Corollary \ref{cor:P_decidable}(c,d), Proposition \ref{prop:algorithms}(d)
for $T=\mathbf{F}^\infty$, $T'=\mathbf{F}$
and $L={\rm Sent}_\exists(\mathfrak{L}_{\rm ring})$
gives that
$(\mathbf{F}^\infty)_{>0,\exists}\Teq(\mathbf{F}^\infty)_{\mathbb{N},\exists}$
and 
$(\mathbf{F}^\infty)_{\exists}\Teq(\mathbf{F}^\infty)_{\mathbb{N}\cup\{0\},\exists}$.
As $(\mathbf{F}^\infty)_\exists=\mathbf{G}_\exists$ (Proposition \ref{lem:G_and_F}),
the claim follows via Proposition~\ref{lem:little_equivalences}.
\end{proof}

\subsection{Proof of Theorem~\ref{thm:intro3}}\label{proof:1.3}
The claim is that
$(\F^\infty)_\exists$ is decidable if and only if
both 
${\rm Th}_\exists(\mathbb{Q})$ and
$$
 U:=\{ (\varphi,p) : p\in\mathbb{P},\varphi\in{\rm Th}_\exists(\mathbb{F}_p(t)) \}
$$
are decidable.
By Proposition~\ref{lem:G_and_F},
$(\mathbf{F}^\infty)_\exists=\mathbf{G}_\exists$,
${\rm Th}_\exists(\mathbb{Q})=\mathbf{G}_{0,\exists}$ and
$U=\mathbf{G}_{\mathbb{N},\exists}\cap({\rm Sent}_\exists(\mathfrak{L}_{\rm ring})\times\mathbb{P})$.
By
Corollary~\ref{cor:last},
$\mathbf{G}_{\exists}$ is decidable if and only if
$\mathbf{G}_{\mathbb{N}\cup\{0\},\exists}$ is decidable,
which is the case if and only if 
both
$\mathbf{G}_{0,\exists}$
and 
$\mathbf{G}_{\mathbb{N},\exists}$
are decidable.

% Combine
% Corollary \ref{cor:last}
% with Proposition \ref{lem:G_and_F}.

\section*{Acknowledgements}

\noindent
The authors warmly thank Philip Dittmann for
a very careful reading of preliminary versions and myriad helpful suggestions
that improved this work in many ways.
They also would like to thank the anonymous referee for concrete recommendations that made the paper more readable.

Part of this is based upon work supported by the National Science Foundation under Grant No. DMS-1928930 while the authors participated in a program hosted by the Mathematical Sciences Research Institute in Berkeley, California, during Summer 2022.
S.~A.\ was supported by GeoMod AAPG2019 (ANR-DFG).
S.~A.\ and A.~F.\ were supported by the Institut Henri Poincaré.
A.~F.~was funded by the Deutsche Forschungsgemeinschaft (DFG) - 404427454.

%%% BIBLIOGRAPHY %%%
\def\bibfont{\footnotesize}
\bibliographystyle{plain}
% \bibliography{bibliography}

\end{document}